\renewcommand*\@fnsymbol[1]{\the#1}
\theoremstyle{plain}
\newtheorem{theorem}{Theorem}[section]
\newtheorem{corollary}[theorem]{Corollary}
\newtheorem{lemma}[theorem]{Lemma}
\newtheorem{proposition}[theorem]{Proposition}
\theoremstyle{definition}
\newtheorem{definition}[theorem]{Definition}
\theoremstyle{remark}
\newtheorem{remark}[theorem]{Remark}
\newcommand{\bbf}{\mathbb{F}}
\def\S{{\text{S}}}
\numberwithin{equation}{section}
\DeclareMathOperator{\GL}{GL}
\DeclareMathOperator{\Quot}{Quot}
\DeclareMathOperator{\spec}{spec}
\DeclareMathOperator{\Spec}{Spec}
\DeclareMathOperator{\res}{res}
\DeclareMathOperator{\Hom}{Hom}
\DeclareMathOperator{\Pic}{Pic}
\DeclareMathOperator{\Bun}{Bun}
\DeclareMathOperator{\Proj}{Bun}
\DeclareMathOperator{\Coh}{Coh}
\DeclareMathOperator{\QCoh}{qCoh}
\DeclareMathOperator{\colim}{colim}
\DeclareMathOperator{\coker}{coker}
\DeclareMathOperator{\im}{im}
\DeclareMathOperator{\eq}{eq}
\DeclareMathOperator{\coeq}{coeq}
\DeclareMathOperator{\tensor}{\otimes}
\def\Mod{\mathcal{M}od}
\def\0{{\bf 0}}
\def\A{{\mathbb A}}
\def\F{{\mathbb F}}
\def\G{{\mathbb G}}
\def\P{{\mathbb P}}
\def\Z{{\mathbb Z}}
\def\m{{\mathfrak m}}
\def\p{{\mathfrak p}}
\def\q{{\mathfrak q}}
\def\cB{{\mathcal B}}
\def\cC{{\mathcal C}}
\def\cD{{\mathcal D}}
\def\cF{{\mathcal F}}
\def\cG{{\mathcal G}}
\def\cK{{\mathcal K}}
\def\cL{{\mathcal L}}
\def\cM{{\mathcal M}}
\def\cN{{\mathcal N}}
\def\cO{{\mathcal O}}
\def\cP{{\mathcal P}}
\def\cS{{\mathcal S}}
\def\cT{{\mathcal T}}
\def\cU{{\mathcal U}}
\def\cV{{\mathcal V}}
\def\cX{{\mathcal X}}
\def\cY{{\mathcal Y}}
\def\Fun{{\F_1}}
\def\Gm{\mathbb G_m}
\def\Mo{{\mathcal M_0}}
\def\int{\textup{int}}
\def\id{\textup{id}}
\def\msch{\widetilde}
\newcommand{\arincl}[1]{\ar@{ >->}@<-0,0ex>#1} 
\title{Sheaves and $K$-theory for $\mathbb{F}_1$-schemes}
\author{Chenghao Chu\footnote{Department of Mathematics, Johns Hopkins University, 3400 N. Charles Street, Baltimore MD 21218, USA. cchu@math.jhu.edu, +1-410-516-5128.},
 Oliver Lorscheid\footnote{Department of Mathematics, University of Wuppertal, Gau\ss str. 20, 42097 Wuppertal, Germany. lorscheid@math.uni-wuppertal.de, +49-202-439-2523 \emph{Corresponding author.}},
 Rekha Santhanam\footnote{{Department of Mathematics and Statistics, IIT Kanpur, Kanpur-208016, India. reksan@iitk.ac.in, +91512-259-6950.}}}
\begin{document}

\maketitle

\begin{abstract}
 This paper is devoted to the open problem in $\Fun$-geometry of developing $K$-theory for $\Fun$-schemes. We provide all necessary facts from the theory of monoid actions on pointed sets and we introduce sheaves for $\Mo$-schemes and $\Fun$-schemes in the sense of Connes and Consani. A wide range of results hopefully lies the background for further developments of the algebraic geometry over $\Fun$. Special attention is paid to two aspects particular to $\Fun$-geometry, namely, normal morphisms and locally projective sheaves, which occur when we adopt Quillen's Q-construction to a definition of $G$-theory and $K$-theory for $\Fun$-schemes. A comparison with Waldhausen's $S_{\bullet}$-construction yields the ring structure of $K$-theory. In particular, we generalize Deitmar's $K$-theory of monoids and show that $K_*(\Spec\Fun)$ realizes the stable homotopy of the spheres as a ring spectrum. 
\end{abstract}

\pagebreak

\tableofcontents

\section{Introduction}\label{Recollections and Notations}

In the late 1980's and early 1990's, several ideas shaped a philosophy of what a geometry over $\Fun$, the \emph{field with one element}, should be and which statements should be satisfied (cf.\ \cite{KapranovUN, Manin}). While the main drive was and is the hope to transfer Weil's proof of the Riemann hypothesis from positive characteristics to $\mathbb Q$ by interpreting $\Spec \Z$ as a \emph{scheme over $\Fun$}, the original idea of Jacques Tits (\cite{Tits}) also played an important role in the development of $\Fun$-geometry. Rephrased in nowadays language, Tits proposed that reductive groups should be defined over $\Fun$ and that the $\Fun$-rational points should have the natural structure of the Weyl group of the reductive group.

Tits' idea led further to the expectation that there is a $K$-theory for $\Fun$-schemes with the property that $K_*(\Spec\Fun)$ realizes $\pi_*^s(\mathbb{S})$, the stable homotopy groups of the sphere spectrum (cf.\ \cite{Manin, Soule2004}); namely, one would like to be able to formulate an equation of the form
\begin{equation} \label{sphere}\tag{$\ast$}
 K_*(\Spec\bbf_1) \quad = \quad \pi_*({B\GL(\infty,\Fun)}^+) \quad  = \quad \pi_*({B\Sigma_\infty}^+) \quad \simeq  \quad \pi_*^s(\mathbb{S}) \end{equation}
where the first equality is the definition of $K$-theory via Quillen's +-construc\-tion, naively applied to the elusive field $\Fun$. The equality in the middle is derived from Tits' idea, since the Weyl group of $\GL(n)$ is the symmetric group $\Sigma_n$, and therefore
$$ \GL(\infty,\Fun) \quad = \quad \bigcup_{n\geq 1} \GL(n,\Fun) \quad = \quad \bigcup_{n\geq 1} \Sigma_n \quad = \quad \Sigma_\infty. $$
The last isomorphism in equation \eqref{sphere} is the Barratt-Priddy-Quillen theorem (\cite{Barratt, Priddy}).

So far, this philosophy is partially realized by Deitmar's definitions of $K$-theory for semigroups with a unit (see \cite{Deitmar06}). Deitmar adapted Quillen's +-construction and Q-construction to semigroups, which correspond to affine $\Fun$-schemes as commutative rings correspond to affine schemes. Both theories give the expected outcome $K_*(\Fun)\simeq\pi_*^s(\mathbb{S})$ if one defines $\Fun$ as the trivial monoid $\{1\}$.

From a different point of view, without using the notion of $\Fun$-schemes, H\"uttemann e.a.\ considered the algebraic $K$-theory of projective spaces over monoids which are called nonlinear projective spaces in \cite{H02, H-W01}. They employed Waldhausen's construction to the category of sheaves of these spaces.

The purpose of this paper is to generalize the work on algebraic $K$-theory of affine $\Fun$-schemes, i.e.\ semi-groups as in \cite{Deitmar06}, and projective spaces as in \cite{H02} to general $\Fun$-schemes. In the last few years, around a dozen different definitions of an $\Fun$-scheme were given by generalizing scheme theory from different viewpoints (cf.\ the overview paper \cite{LL09b}). One of this notion was introduced by Connes and Consani in \cite{CC09}. In the present paper, we will follow their approach since it is the only one in which Tits' idea was realized so far (see \cite{L09}). Consequently, we will always refer to Connes and Consani's definition when we consider an $\Fun$-scheme. 

Our basic definition of $K$-theory adopts Quillen's Q-construction to the context of $\Fun$-schemes. A final comparison of the Q-construction with Waldhausen's $S_\bullet$-construction shows that the $K$-theory of an $\cM_0$-scheme is indeed a symmetric $E_\infty$-ring spectrum.

We briefly review the Q-construction applied to $\Fun$ since it gives a good idea of the more complicated construction of $K$-theory for a general $\Fun$-scheme. We consider the category $\Mod_f\ \Fun$ of finite pointed sets together with base point preserving maps. An \emph{admissible monomorphism} is an injective morphism and an \emph{admissible epimorphism} is a surjective morphism whose fibres, except for the fibre of the base point, contain precisely one element. We apply the Q-construction: $Q\Mod_f\ \Fun$ is the category whose objects are finite pointed sets and whose morphisms are isomorphism classes of diagrams
$$\xymatrix{N&\ M\ar@{>->}[l]\ar@{->>}[r]&P}$$
where the first arrow is an admissible monomorphism and the second arrow is an admissible epimorphism. The $i$-th $K$-group $K_i(\Fun)$ is defined as $\pi_{i+1}(BQ\Mod\ \Fun)$, the $(i+1)$-th homotopy group of the classifying space of $Q\Mod_f\ \Fun$, which is computed in the proof of Theorem \ref{KTheoryOfSpecF1[H]}.

The paper is organized as follows. In Section \ref{TheSectionOnMonoids}, we set up a theory of monoids, i.e.\ semigroups with a unit and with a zero, or, an absorbing element. In particular, we introduce and investigate $A$-sets which will play the role of modules over the monoid $A$. Some parts of this section are, in spirit, covered by Deitmar's papers on $\Fun$ (\cite{Deitmar05, Deitmar06, Deitmar07}) and by the theory of $A$-acts (see \cite{MUA})---however,  we require monoids to have a zero and acts to be sets with a base point.  This is not exactly the case in the previously mentioned works; therefore we take the opportunity to give a self-contained treatment of commutative algebra for monoids.

In Section \ref{geometry_of_monoids}, we recall the notion of an $\Mo$-scheme as introduced by Connes and Consani in \cite{CC09}, which is the analogue of a scheme when rings are replaced by monoids. Then we introduce sheaves for $\Mo$-schemes. The sheaf theory for $\Mo$-schemes behaves in many aspects like usual sheaf theory---the notion of $\cO_X$-modules makes sense, and coherent sheaves can be defined by a local property---but they show certain different behaviors: not all epimorphisms are normal and projective $\cO_X$-modules (in the categorical sense) are, in general, not locally free, but only locally projective. Only little parts of this section are covered in literature yet, basically only the definition of an $\Mo$-scheme and a few remarks on $\cO_X$-modules and coherent sheaves by Deitmar in his theory of $\Fun$-schemes (see \cite{Deitmar05}).

In Section \ref{SheavesOfModulesOnF1Schemes}, we review Connes and Consani's definition of an $\Fun$-scheme. We introduce sheaves for $\Fun$-schemes, based on the previous sections, and provide the necessary theory for the definition of $K$-theory. In particular, we define normal morphisms and admissible exact sequences.

In Section \ref{subsection:definition_of_k-theory}, we show that coherent sheaves over $\Fun$-schemes together with admissible exact sequences form a quasi exact category, which makes it possible to define $G$-theory for $\Fun$-schemes. From this, we deduce that the notion of admissible exact sequences also leads to a definition of $K$-theory. This definition generalizes Deitmar's definition via the Q-construction, and in particular realizes the stable homotopy groups of the sphere  as the $K$-groups of $\Spec\Fun$. Usage of Waldhausen's $S_\bullet$-construction allows us to talk about the $K$-theory spectrum of an $\Fun$-scheme, and it turns out that this is indeed an $E_\infty$- symmetric ring spectrum in the case of $\cM_0$-schemes. In particular, this ring structure is compatible with the ring structure of the  sphere spectrum.

{\bf Acknowledgements:} The authors like to thank Professor Consani for many helpful discussions on this work, and the referee for many inspiring suggestions. The main part of the paper was written while the first and third authors were affiliated to the Johns Hopkins University. The second author likes to thank the mathematical institute of the University of Wuppertal for supporting the project \emph{Cohomology over $\Fun$}. The third author would like to thank Bjorn Dundas for his suggestions.

\section{Commutative algebra for monoids} \label{TheSectionOnMonoids}

The study of schemes depends largely on the study of commutative algebras over rings. Similarly, the study of $\Fun$-schemes depends largely on the commutative algebra over monoids.  In the first subsection, we recall basic definitions and facts on monoids (cf.\ \cite{Kato94}, \cite{Deitmar05}, and \cite{CC09}) and complete the picture by some new insights. In the second subsection, we define and investigate $A$-sets (cf.\ the notion of $A$-acts in \cite{K72}), which play the role of modules over a monoid in view towards $\Fun$-geometry.

\subsection{ Monoids}

We will introduce the category $\Mo$ of monoids and provide general facts about limits and colimits. Then we study localizations of monoids at multiplicative subsets, the base extension to $\Z$ and properties of finitely generated monoids. We round of this section on monoids by a list of examples.

\subsubsection{Definition and general properties}

A \emph{monoid} is a (multiplicatively written) commutative  semi-group $A$ with a \emph{zero} (also called an \emph{absorbing element}) and a \emph{one}, i.e.\ elements $0$ and $1$ that satisfy $0\cdot a=0$ and $1\cdot a=a$, respectively, for all $a\in A$. A \emph{morphism of monoids} is a multiplicative map that preserves $0$ and $1$. Following \cite{CC09}, we denote the category of monoids by $\Mo$.

The category $\Mo$ will be interpreted as the category of $\Fun$-algebras. It has an initial object, namely the monoid $\{0,1\}$ with distinctive $0$ and $1$, which we will denote from now on by $\Fun$. The terminal object of $\Mo$ is the \emph{zero monoid $\{0\}$} with one element $0=1$.

Recall that a directed diagram is a commutative diagram where for every pair of objects $A_i$ and $A_j$, there are an object $A_k$ and morphisms $A_i\to A_k$ and $A_j\to A_k$.

\begin{proposition} \label{monoid-limits}
 The category $\Mo$ contains small limits, finite coproducts and colimits of directed diagrams.
\end{proposition}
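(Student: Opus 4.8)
The plan is to exhibit each of the three kinds of (co)limits by an explicit construction on underlying sets.

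\textbf{Small limits.} I would first invoke the standard reduction: a category admits all small limits as soon as it has all small products and all equalizers of parallel pairs. For a family $(A_i)_{i\in I}$ I take the cartesian product $\prod_{i\in I}A_i$ with componentwise multiplication, $1=(1_i)_i$ and $0=(0_i)_i$; since $0_i$ is absorbing and $1_i$ neutral in each coordinate, this is again a monoid, and the projections are morphisms satisfying the universal property (checked set-theoretically). For the equalizer of $f,g\colon A\to B$ I take $E=\{a\in A\mid f(a)=g(a)\}$; because $f,g$ preserve $0$ and $1$ we have $0,1\in E$, and $f(a)=g(a)$, $f(b)=g(b)$ give $f(ab)=f(a)f(b)=g(a)g(b)=g(ab)$, so $E$ is a submonoid and its inclusion is the universal arrow. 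This yields all small limits.

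\textbf{Finite coproducts.} The empty coproduct is the initial object, already identified as $\Fun=\{0,1\}$, so by induction it suffices to build the coproduct of two monoids $A,B$. The correct object is \emph{not} the product but the smash product $A\wedge B=(A\times B)/{\sim}$, where $(a,b)\sim(a',b')$ iff either both pairs have a vanishing coordinate or $(a,b)=(a',b')$, equipped with componentwise multiplication $[(a,b)]\cdot[(a',b')]=[(aa',bb')]$. This is well defined since a vanishing coordinate forces the product to vanish; the class of the collapsed pairs is absorbing and serves as $0$, and $[(1,1)]$ is the unit. The maps $\iota_A(a)=[(a,1)]$ and $\iota_B(b)=[(1,b)]$ are morphisms, and given $f\colon A\to C$, $g\colon B\to C$ the map $h([(a,b)])=f(a)\,g(b)$ is well defined (a vanishing coordinate maps to the absorbing $0$ of $C$), multiplicative by commutativity of $C$, unital, and satisfies $h\circ\iota_A=f$, $h\circ\iota_B=g$; uniqueness follows from the identity $[(a,b)]=\iota_A(a)\,\iota_B(b)$.

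\textbf{Directed colimits.} For a directed diagram $(A_i)$ I form the colimit on underlying pointed sets, i.e.\ the quotient of $\coprod_i A_i$ identifying $a_i\in A_i$ with $a_j\in A_j$ whenever they share a common image in some $A_k$, and transport the monoid structure along this quotient. To multiply two classes, directedness lets me choose one $A_k$ receiving representatives of both and multiply there, the result being independent of the choices again by directedness. The common images of $0$ and $1$ give a well-defined zero and one, the cocone maps $A_i\to\colim A_i$ are morphisms, and the universal property follows from the corresponding one on pointed sets together with compatibility of the operations.

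\textbf{Main obstacle.} The limits and the directed colimit are essentially routine (the latter amounts to the forgetful functor to pointed sets creating directed colimits, the operations being finitary). The step needing genuine care is the coproduct: one must recognize that the absorbing zero forces the smash product rather than the product, check that the collapsing relation is compatible with multiplication, and verify well-definedness of the comparison map $h$. This is also why only \emph{finite} coproducts are claimed: for an infinite family the comparison map would read $h((a_i)_i)=\prod_i f_i(a_i)$, an infinite product with no meaning in a general target monoid.
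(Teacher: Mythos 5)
Your proposal is correct and follows essentially the same route as the paper: reduction of small limits to products and equalizers, the smash product as the finite coproduct, and an explicit construction of colimits of directed diagrams. The only cosmetic differences are that the paper forms the smash product of a finite family all at once rather than by induction on binary coproducts, and it presents the directed colimit via compatible families indexed by cofinal subsets rather than as the disjoint union modulo the common-image relation---two standard, isomorphic descriptions of the same object.
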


\begin{proof}
 To prove that $\Mo$ contains small limits, it suffices to prove that $\Mo$ contains small products and equalizers (cf.\ \cite[Thm.\ 2.8.1]{Borceux}). The product of a family of monoids $\{A_i\}_{i\in I}$ is given by the Cartesian product $\prod A_i$ over $I$ together with componentwise multiplication and componentwise projections to the $A_i$. Its zero is the element whose components are all zero, and its one is the element whose components are all one. The universal property of a product is verified immediately.

 The equalizer of two monoid morphisms $f,g:A\to B$ is the submonoid $\eq(f,g)=\{a\in A\mid f(a)=g(a)\}$ of $A$. Since $f(0)=0=g(0)$ and $f(1)=1=g(1)$, the equalizer contains $0$ and $1$, and since $f(ab)=f(a)f(b)=g(a)g(b)=g(ab)$ for all $a,b\in \eq(f,g)$, the set $\eq(f,g)$ is multiplicatively closed and thus a monoid. The submonoid $\eq(f,g)$ obviously satisfies the universal property of an equalizer of $f$ and $g$ since equalizers are monomorphisms and monomorphisms in $\Mo$ are injective maps.

 The coproduct of a finite family $\{A_i\}_{i\in I}$ is given by the smash product $\bigwedge A_i$ over $I$, which is the quotient of the Cartesian product $\prod A_i$ by the equivalence relation that identifies every element with a component that is $0$ with the zero element $(0)_{i\in I}$ in $\prod A_i$. Multiplication in $\bigwedge A_i$ is defined componentwise, the zero is the class of $(0)_{i\in I}$ and the one is the element $(1)_{i\in I}$ whose components are all one. There are canonical inclusions $A_j\to \prod A_i$ that send $a\in A_j$ to the element $(a_i)$ with $a_i=a$ for $i=j$ and $a_i=1$ otherwise. To verify the universal property, let $\{f_i:A_i\to B\}_{i\in I}$ be a family of monoid morphisms. Then the morphism $f:\bigwedge A_i \to B$ sending $(a_i)$ to $f((a_i))=\prod f_i(a_i)$ satisfies the universal property of a coproduct.

 Let $\cD=\{A_i\}_{i\in I}$ be a commutative diagram of monoids and morphisms indexed by a directed set $I$, i.e.\ for every $i,j\in I$, there is a $k\in I$ and (unique) morphisms $f_i:A_i\to A_k$ and $f_j:A_j\to A_k$ in $\cD$. For $i\in I$, define $J(i)$ as the cofinal directed subset $\{k\in I\mid \exists f:A_i\to A_k\text{ in }\cD\}$ of $I$, and let $\cD(i)$ be the full subdiagram of $\cD$ that contains precisely $\{A_i\}_{i\in J(i)}$. Then the colimit of $\cD$ can be represented by
 $$ \colim \cD \quad = \quad \coprod_{i\in I}\ \biggl\{ (a_j)\in \prod_{j\in J(i)}A_j\biggl| \forall f: A_j\to A_k\text{ in } \cD(i),\, a_k=f(a_j)\biggr\}\; \biggl/ \; \sim $$
 where two elements $(a_j)_{j\in J(i_1)}$ and  $(b_j)_{j\in J(i_2)}$ are equivalent if $a_j=b_j$ for all $j\in J(i_1)\cap J(i_2)$. The canonical morphisms $\iota_i:A_i\to\colim \cD$ map $a_i \in A_i$ to $(f(a_i)\mid f:A_i\to A_k\text{ in }\cD(i))$. Given a family of monoid morphisms $g_i:A_i\to B$ that commute with all morphisms in $\cD$, the map $\displaystyle g:\colim \cD \to B$ that sends an element $(a_j)_{j\in J(i)}$ to $g_i(a_i)$ is the unique morphism that satisfies the universal property of the colimit of $\cD$.
\end{proof}

\begin{remark}
 In section \ref{tensor_products}, we will see that $\Mo$ contains all pushouts of diagrams of the form $B\leftarrow A\to C$ in $\Mo$. Note that this implies the existence of finite coproducts since the coproduct of $B$ and $C$ is the pushout of $B\leftarrow\Fun\to C$.
\end{remark}

A subset $I\subset A$ is called an \emph{ideal} if $I$ is not empty and if $IA\subset I$. In particular, an ideal must contain $0$. A subset $S\subset A$ is called a \emph{multiplicative set} if $1\in S$ and if $ab\in S$ for all $a,b\in S$. An ideal $\p\subset A$ is called \emph{prime ideal} if its complement $A-\p$ is a multiplicative set or, equivalently, if for all $a,b\in A$ with $ab\in\p$ either $a\in\p$ or $b\in\p$ and if $1\notin\p$. If $f:A\to B$ is a monoid morphism and $\p$ is a prime ideal of $B$, then $f^{-1}(\p)$ is a prime ideal of $A$. Every monoid has a unique maximal prime ideal, namely, $A-A^\times$ where $A^\times$ is the \emph{group of units}.

An element $a\in A$ is called \emph{nilpotent} if there is a natural number $n$ such that $a^n=0$. The set of all nilpotent elements of $A$ is an ideal, called the \emph{nilradical of $A$}. The following is proven exactly as in the case of commutative rings; for example, see \cite[Proposition 1.8]{AM}.

\begin{lemma}
 The nilradical of $A$ is the intersection of all prime ideals of $A$, which is the same as the intersection of all minimal prime ideals.
\end{lemma}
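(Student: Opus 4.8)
The plan is to mimic the classical argument (as in \cite[Proposition 1.8]{AM}), establishing two inclusions for the first equality and then comparing all primes with the minimal ones.

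First I would show that the nilradical is contained in every prime ideal $\p$. If $a^n=0$ for some $n\geq 1$, then $a^n\in\p$ because every ideal contains $0$; since $\p$ is prime, applying the defining property that $xy\in\p$ forces $x\in\p$ or $y\in\p$ repeatedly to $a^n=a\cdot a^{n-1}$ yields $a\in\p$. Hence the nilradical lies in $\bigcap_\p\p$, where $\p$ ranges over all primes.

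The reverse inclusion is the main obstacle, and here I would use the multiplicative-set formulation together with Zorn's lemma. Given $a$ that is \emph{not} nilpotent, the multiplicative set $S=\{1,a,a^2,\dots\}$ does not contain $0$, so the family of ideals disjoint from $S$ is nonempty (it contains $\{0\}$). By Zorn's lemma this family has a maximal element $\p$, and the crux is to verify that $\p$ is prime. Suppose $xy\in\p$ while $x,y\notin\p$; since the ideal generated by $\p$ and $x$ equals $\p\cup Ax$ and strictly contains $\p$, maximality forces it to meet $S$, producing an element $s=ux\in S$, and likewise $t=vy\in S$. Then $st=uv\,xy\in\p$ because $\p$ is an ideal, contradicting $st\in S$ and $\p\cap S=\emptyset$. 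Thus $\p$ is a prime avoiding $a$, so any element lying in all primes must be nilpotent, which gives the equality of the nilradical with $\bigcap_\p\p$.

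It remains to replace ``all primes'' by ``minimal primes''. One inclusion is automatic, since intersecting over a smaller family yields a larger set: $\bigcap_{\p\text{ prime}}\p\subseteq\bigcap_{\p\text{ minimal}}\p$. For the other, I would show that every prime contains a minimal one, so that the two intersections coincide. To do this I order the primes contained in a given prime by reverse inclusion and check that the intersection $\q$ of a chain of primes is again prime; this is the only delicate point, and it is handled by taking indices $\lambda,\mu$ witnessing $x\notin\p_\lambda$ and $y\notin\p_\mu$, comparing them via the total ordering of the chain, and concluding that whichever ideal is smaller contains neither $x$ nor $y$ yet contains $xy\in\q$, a contradiction. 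Zorn's lemma then furnishes a minimal prime below every prime, giving the opposite inclusion and completing the proof.
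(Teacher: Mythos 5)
Your proof is correct, and it is exactly the argument the paper intends: the paper's ``proof'' is just a citation of \cite[Proposition 1.8]{AM}, i.e.\ the classical Zorn's lemma argument that you have carried over to monoids (maximal ideal disjoint from $S=\{a^i\}$ is prime, plus the chain-intersection argument to find minimal primes). The adaptation goes through verbatim, as you verify; note only that $1\notin\p$ for your maximal ideal is automatic since $1\in S$ and $\p\cap S=\emptyset$, which the paper's definition of prime ideal requires.
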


\subsubsection{Localization}
\label{localization_of_monoids}

Let $A$ be a monoid and $S\subset A$ a multiplicative set. We define the \emph{localization of $A$ at $S$} as $ S^{-1}A = (A\times S) / \sim$ where the equivalence relation $\sim$ is defined by the rule that $(a,s)\sim(a',s')$ if and only if there is a $t\in S$ such that $tas'=ta's$. We write $\frac as$ for the pair $(a,s)$. The association $a\mapsto \frac a1$ defines a monoid morphism $A\to S^{-1}A$. Note that like in the case of rings, the map $A\to S^{-1}A$ is an epimorphism.

Let $A^\int$ be the set of all elements $a$ of $A$ such that multiplication by $a$ defines an injective map. We say that a monoid $A$ is \emph{integral} if $A=A^\int\cup\{0\}$. If $A$ is integral, then we define the \emph{quotient monoid $\Quot A$} as $(A-\{0\})^{-1} A$.  Note that the canonical morphism $A\to\Quot A$ is injective and that $\Quot A-\{0\}$ is a group.

For every $f\in A$, the set $S_f=\{f^i\}_{i\geq0}$ is a multiplicative set. We write $A_f$ for $S_f^{-1}A$. If $\p\subset A$ is a prime ideal, then we denote  the localization of $A$ at $S=A-\p$ by $A_\p$.  Note that $S^{-1}A=(A^\times S)^{-1}A$, that $S^{-1}A$ is the zero monoid $\{0=1\}$ if and only if $0\in S$, and that $S^{-1}A=A$ if and only if $S\subset A^\times$.

For a multiplicative subset $S$ of $A$, we denote by $U_S$ the subset of all prime ideals of $A$ that do not intersect $S$. Note that $U_S$ is empty if and only if $0\in S$ and that $U_S=U_{A^\times S}$.

\begin{lemma} \label{prime_ideal_in_U_S}
 There is a one to one order preserving bijection between $U_S$ and the prime ideals of $S^{-1}A$.
\end{lemma}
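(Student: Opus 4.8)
The plan is to set up the classical contraction--extension correspondence, adapted to monoids. Write $\iota\colon A\to S^{-1}A$, $a\mapsto\frac a1$, for the localization morphism. To a prime ideal $\q$ of $S^{-1}A$ I would assign its \emph{contraction} $\iota^{-1}(\q)$, and to a prime ideal $\p\in U_S$ its \emph{extension} $S^{-1}\p=\{\frac as\mid a\in\p,\ s\in S\}$. The goal is to show that these two assignments are mutually inverse and both preserve inclusions.

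First I would check that each assignment lands in the intended set. That $\iota^{-1}(\q)$ is a prime ideal of $A$ is immediate from the fact recorded above that preimages of prime ideals under monoid morphisms are prime; it lies in $U_S$ because each $\iota(s)=\frac s1$ with $s\in S$ is a unit of $S^{-1}A$ (inverse $\frac 1s$) and a prime ideal contains no unit, so $\iota^{-1}(\q)\cap S=\emptyset$. For the extension, $S^{-1}\p$ is visibly an ideal (being stable under multiplication by any $\frac bt$, since $\p$ is an ideal and $S$ is multiplicative). To see it is prime I would verify that $1\notin S^{-1}\p$ and that $\frac as\cdot\frac bt\in S^{-1}\p$ forces $\frac as\in S^{-1}\p$ or $\frac bt\in S^{-1}\p$; in each case one peels off the witness $t\in S$ from the defining relation of $\sim$ and combines it with primeness of $\p$ together with the hypothesis $\p\cap S=\emptyset$, which guarantees that elements of $S$ never lie in $\p$.

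Finally I would show the maps are inverse to one another. For $\p\in U_S$, the inclusion $\p\subseteq\iota^{-1}(S^{-1}\p)$ is clear, while conversely $\frac a1=\frac cu$ with $c\in\p$, $u\in S$ produces $t\in S$ with $(tu)a=tc\in\p$, so $a\in\p$ by primeness since $tu\notin\p$; hence $\iota^{-1}(S^{-1}\p)=\p$. For $\q$ prime in $S^{-1}A$, the identities $\frac as=\frac 1s\cdot\frac a1$ and $\frac a1=\frac s1\cdot\frac as$ together with the fact that $\q$ is an ideal give $S^{-1}(\iota^{-1}(\q))=\q$ directly. Both assignments obviously respect inclusions, so the resulting bijection is order preserving. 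I expect the only delicate point to be the two prime-ideal verifications—namely that $S^{-1}\p$ is prime and that $\iota^{-1}(S^{-1}\p)=\p$—since there one must correctly extract the auxiliary element $t\in S$ from the definition of $\sim$ and invoke primeness of $\p$; this is precisely where the assumption $\p\in U_S$ is indispensable, for without $\p\cap S=\emptyset$ the unit $1$ could slip into $S^{-1}\p$ and the correspondence would collapse.
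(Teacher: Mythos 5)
Your proof is correct and takes essentially the same approach as the paper: your extension $\p\mapsto S^{-1}\p$ is exactly the paper's $g(\p)S^{-1}A$, and the contraction $\q\mapsto\iota^{-1}(\q)$ is the paper's inverse map. The only difference is that you spell out the primality and mutual-inverse verifications (correctly, including the extraction of the witness $t\in S$ and the use of $\p\cap S=\emptyset$) which the paper dismisses as ``easily verified.''
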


\begin{proof}
 Let $g:A\to S^{-1}A$ be the canonical map. The bijection is given by mapping a prime ideal $\p$ of $A$ that does not intersect $S$ to the ideal $g(\p)S^{-1}A$, which is easily verified to be prime. The inverse of this mapping sends a prime ideal $\q$ of $S^{-1}A$ to $g^{-1}(\q)$. It is clear that these inverse maps are order preserving.
\end{proof}

\begin{corollary} \label{maximal_ideal_in_U_S}
 If  $U_S$ is not empty, then there is a prime ideal $\m$ in $U_S$ that contains all ideals of $A$ that do not intersect $S$. In particular, $\m$ contains all other prime ideals of $U_S$, and $S^{-1}A=A_\m$.
\end{corollary}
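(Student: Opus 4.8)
The plan is to produce $\m$ as the unique maximal prime ideal of the localization, transported back to $A$ through Lemma \ref{prime_ideal_in_U_S}. Recall from the discussion preceding this corollary that every monoid has a unique maximal prime ideal, equal to the complement of its group of units. I would apply this to $S^{-1}A$, which is a genuine (nonzero) monoid precisely because $U_S\neq\emptyset$ forces $0\notin S$, so that $S^{-1}A\neq\{0=1\}$: its maximal prime is $\mathfrak M=S^{-1}A-(S^{-1}A)^\times$. Let $g:A\to S^{-1}A$ be the canonical morphism and set $\m=g^{-1}(\mathfrak M)$. Since preimages of prime ideals are prime, and since Lemma \ref{prime_ideal_in_U_S} gives an order-preserving bijection between $U_S$ and the primes of $S^{-1}A$ under which $\m$ corresponds to $\mathfrak M$, the ideal $\m$ lies in $U_S$ and, being the image of the top element $\mathfrak M$, contains every other prime ideal of $U_S$.

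The crux is to upgrade ``$\m$ contains all primes of $U_S$'' to ``$\m$ contains all ideals disjoint from $S$''. For this I would compute $\m$ explicitly. By definition $a\in\m$ iff $g(a)=\frac a1$ is not a unit of $S^{-1}A$, and unwinding the equivalence relation $(a,s)\sim(a',s')$ shows that $\frac a1$ is a unit iff there are $c\in A$ and $s,t\in S$ with $tac=ts$, i.e.\ iff the principal ideal $aA$ meets $S$. Hence $\m=\{a\in A\mid aA\cap S=\emptyset\}$. Now if $I$ is any ideal with $I\cap S=\emptyset$ and $a\in I$, then $aA\subseteq I$ because $I$ is an ideal, so $aA\cap S=\emptyset$ and $a\in\m$; thus $I\subseteq\m$. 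The same description shows $\m\cap S=\emptyset$, reconfirming $\m\in U_S$.

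For the final equality $S^{-1}A=A_\m$, I would compare the two localizations via the universal property. Writing $\bar S=A-\m$, the set $\bar S$ is multiplicative as the complement of a prime ideal, and $S\subseteq\bar S$. By the explicit description of $\m$, every $a\in\bar S$ satisfies $aA\cap S\neq\emptyset$, hence maps to a unit under $g$; conversely the canonical map $A\to A_\m=\bar S^{-1}A$ already inverts $S\subseteq\bar S$. The universal property of localization then yields mutually inverse morphisms between $S^{-1}A$ and $\bar S^{-1}A$, giving $S^{-1}A=A_\m$.

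I expect the main obstacle to be the explicit unit computation in the second paragraph: one must carefully exploit the defining relation of $S^{-1}A$ to see that invertibility of $\frac a1$ is equivalent to $aA\cap S\neq\emptyset$ rather than to the stronger $a\in S$, and it is exactly this ``saturated'' characterization that makes both the containment of arbitrary ideals and the equality $S^{-1}A=A_\m$ work. Everything else follows formally from the bijection of Lemma \ref{prime_ideal_in_U_S} and the universal property of localization.
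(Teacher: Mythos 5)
Your proof is correct and takes essentially the same route as the paper: both obtain $\m$ as the preimage under the canonical map $A\to S^{-1}A$ of the maximal ideal $S^{-1}A-(S^{-1}A)^\times$, and then deduce $S^{-1}A=A_\m$ from the fact that elements of $A-\m$ become units. Where the paper dismisses the key containment as ``clear from the previous lemma,'' your explicit saturation description $\m=\{a\in A\mid aA\cap S=\emptyset\}$ supplies precisely the detail needed to see that $\m$ contains all ideals (not merely all prime ideals) disjoint from $S$.
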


\begin{proof}
 It is clear from the previous lemma that the inverse image of the maximal ideal $S^{-1}A-(S^{-1}A)^\times$ of $S^{-1}A$ under the canonical map $A\to S^{-1}A$ is the prime ideal $\m$ with the required property. Since the image of an element $a\in A-\m$ in $A_\m$ is a unit, $S^{-1}A=A_\m$.
\end{proof}

Let $f\in A$. For $S=\{f^i\}_{i\geq0}$, we put $D_f=U_S$, which is the set of all prime ideals of $A$ that do not contain $f$. The following is analogous to ring theory.

\begin{lemma} \label{D_f-topology}
 Let $f,g\in A$. Then $D_f\cap D_g=D_{fg}$. The set $U_0$ is empty and $U_1$ is the set of all prime ideals of $A$.\qed
\end{lemma}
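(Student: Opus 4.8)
The plan is to reduce the three assertions to a single reformulation of $D_f$ and then to read off each claim from the definition of a prime ideal. First I would record that, for $f\in A$, a prime ideal $\p$ lies in $D_f=U_{S_f}$ (where $S_f=\{f^i\}_{i\ge0}$) if and only if $f\notin\p$. Indeed, if $\p\cap S_f=\emptyset$ then in particular $f=f^1\notin\p$; conversely, if $f\notin\p$ then $f$ belongs to the multiplicative set $A-\p$, so every power $f^i$ lies in $A-\p$ and hence $\p\cap S_f=\emptyset$ (the term $f^0=1$ never lies in $\p$ anyway). This turns membership in $D_f$ into the plain condition $f\notin\p$.

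For the identity $D_f\cap D_g=D_{fg}$ I would argue pointwise on the set of primes. By the reformulation, $\p\in D_f\cap D_g$ if and only if $f\notin\p$ and $g\notin\p$, whereas $\p\in D_{fg}$ if and only if $fg\notin\p$. These two conditions agree by the defining property of a prime ideal, namely $fg\in\p$ iff $f\in\p$ or $g\in\p$; taking contrapositives yields $fg\notin\p \iff (f\notin\p\text{ and }g\notin\p)$, which is exactly the claimed equality of subsets.

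For the last two statements I read $U_0$ as $D_0=U_{S_0}$ and $U_1$ as $D_1=U_{S_1}$, with $S_0=\{0^i\}_{i\ge0}=\{0,1\}$ and $S_1=\{1^i\}_{i\ge0}=\{1\}$. Since $0\in S_0$, the earlier observation that $U_S=\emptyset$ precisely when $0\in S$ gives $U_0=\emptyset$; equivalently, every ideal contains $0$, so no prime ideal can avoid it. For $U_1$, the set $S_1=\{1\}$ meets a prime ideal $\p$ only if $1\in\p$, which never occurs by the definition of a prime ideal, so $U_1=U_{S_1}$ consists of all prime ideals of $A$.

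These statements are elementary, and I do not expect a genuine obstacle. The only point that warrants care is the step from ``$\p$ does not meet $S_f$'' to ``$f\notin\p$'', which rests on the multiplicative closedness of the complement $A-\p$, together with the convention $0^0=1$ entering the description of $S_0$.
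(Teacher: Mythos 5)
Your proof is correct, and since the paper states this lemma with no proof at all (it is flagged as ``analogous to ring theory'' and closed with \verb|\qed|), your argument is exactly the routine verification the authors intend: the reduction of $\p\in D_f$ to $f\notin\p$, the contrapositive of primality for $D_f\cap D_g=D_{fg}$, and the facts $0\in\p$ and $1\notin\p$ for the statements about $U_0$ and $U_1$. Your reading of $U_0$ and $U_1$ as $U_{S_0}=D_0$ and $U_{S_1}=D_1$ is also the correct interpretation of the paper's notation.
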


\subsubsection{Base extension to $\Z$}

Let $A$ be a monoid. We define the \emph{base extension of $A$ to $\Z$} as the ring $A_\Z=\Z[A]/(1\cdot 0_A)$, which is the semi-group ring of $A$ modulo the ideal generated by the zero $0_A$ of $A$.

\begin{lemma}\label{localization_and_base_extension}
 Let $A$ be monoid and $S$ a multiplicative subset. The canonical ring homomorphism $S^{-1}A_\Z \to (S^{-1}A)_\Z$, defined by linear extension of the map that sends $\frac as$ to $\frac as$ for $a\in A$ and $s\in S$, is an isomorphism.
\end{lemma}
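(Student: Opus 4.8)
The plan is to identify both rings by their universal properties, thereby exhibiting the canonical homomorphism as an isomorphism. Recall that base extension to $\Z$ is left adjoint to the forgetful functor from commutative rings to $\Mo$: for every ring $R$, regarded as a monoid under multiplication, restriction along $A\to A_\Z$ gives a natural bijection $\Hom_{\mathrm{Ring}}(A_\Z,R)\cong\Hom_{\Mo}(A,R)$, because a ring homomorphism out of $\Z[A]$ is the same as a $1$-preserving multiplicative map $A\to(R,\cdot)$, and killing the ideal $(1\cdot 0_A)$ is exactly the condition $0_A\mapsto 0_R$. Note that the units of $R$ as a monoid are precisely the units $R^\times$ of the ring.

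First I would assemble the three relevant universal properties. For any commutative ring $R$ there are bijections (identifying $s\in S$ with its image wherever needed)
\begin{align*}
 \Hom_{\mathrm{Ring}}(S^{-1}A_\Z,R)
 &\cong \{\, f\in\Hom_{\mathrm{Ring}}(A_\Z,R)\mid f(s)\in R^\times\text{ for all }s\in S\,\} \\
 &\cong \{\, g\in\Hom_{\Mo}(A,R)\mid g(s)\in R^\times\text{ for all }s\in S\,\} \\
 &\cong \Hom_{\Mo}(S^{-1}A,R) \\
 &\cong \Hom_{\mathrm{Ring}}((S^{-1}A)_\Z,R),
\end{align*}
where the first is the universal property of ring localization, the second and fourth are the base-extension adjunction above, and the third is the universal property of the monoid localization $A\to S^{-1}A$, which follows immediately from the explicit construction of $S^{-1}A$ given before the lemma. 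Each bijection is natural in $R$, so the composite is a natural isomorphism of the functors represented by $S^{-1}A_\Z$ and $(S^{-1}A)_\Z$; by Yoneda these two rings are canonically isomorphic.

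It then remains to check that the isomorphism produced by this chain is the map described in the statement. Tracking the identity morphism of each ring through the bijections shows that $\frac a1\in A_\Z$, and hence each localized element $\frac as$, corresponds to the class of $\frac as\in S^{-1}A$ inside $(S^{-1}A)_\Z$, which is precisely the effect of the canonical homomorphism. Thus the canonical homomorphism is an isomorphism.

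The only genuinely delicate point---the one I would spell out if a reader preferred a direct argument---is the well-definedness of the inverse monoid morphism $\theta\colon S^{-1}A\to S^{-1}A_\Z$, $\frac as\mapsto\frac as$, used implicitly in the third bijection. If $\frac as=\frac{a'}{s'}$ in $S^{-1}A$, then by definition there is $t\in S$ with $tas'=ta's$ in $A$; applying $A\to A_\Z$ gives $\bar t\,\bar a\,\bar{s'}=\bar t\,\bar{a'}\,\bar s$, so $\bar t\,(\bar a\,\bar{s'}-\bar{a'}\,\bar s)=0$ in $A_\Z$, and since $\bar t$ lies in the image of $S$ this is exactly the relation $\frac{\bar a}{\bar s}=\frac{\bar{a'}}{\bar{s'}}$ in $S^{-1}A_\Z$. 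This is the step where the auxiliary element $t$ in the monoid-localization relation matches the corresponding $t$ in the ring-localization relation; everything else---multiplicativity and preservation of $0$ and $1$---is immediate, and $\theta$ induces the inverse ring homomorphism by the base-extension adjunction. I expect no computational obstacle beyond carefully lining up these two equivalence relations.
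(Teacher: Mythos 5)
Your argument is correct, but it follows a genuinely different route from the paper's. The paper proves the lemma by direct computation: an arbitrary element $\sum_{i=1}^n m_i\frac{a_i}{s_i}$ of $(S^{-1}A)_\Z$ is rewritten over the common denominator $s=\prod_{j=1}^n s_j\in S$ as $\frac{\sum_i m_i a_i'}{s}$ with $a_i'=a_i\prod_{j\neq i}s_j\in A$, and this denominator-clearing recipe is declared to be the inverse of the canonical homomorphism. You instead show that $S^{-1}A_\Z$ and $(S^{-1}A)_\Z$ represent the same covariant functor on commutative rings, namely $R\mapsto\{g\in\Hom_{\Mo}(A,R)\mid g(S)\subset R^\times\}$, by chaining the universal property of ring localization, the adjunction between $(-)_\Z$ and the forgetful functor from rings to $\Mo$, and the universal property of monoid localization, and you then conclude by Yoneda, tracking the identity to see that the resulting isomorphism is the canonical map of the statement. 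Every step is sound, including the one computation you single out: the monoid-localization relation $tas'=ta's$ in $A$ gives $\bar t\,(\bar a\,\bar{s'}-\bar{a'}\,\bar s)=0$ in $A_\Z$, which is exactly the ring-localization relation; this is the same verification needed for the general form of your third bijection, with $g(t)$ invertible in place of $\bar t$. What your route buys: the well-definedness issue is isolated in one place and actually checked, whereas the paper's ``it is clear that this defines an inverse map'' quietly skips verifying that the common-denominator expression is independent of the chosen representation (equivalently, injectivity of the canonical map); your argument also generalizes readily, since it uses only that both constructions are characterized by universal properties. What the paper's route buys: it is much shorter, entirely elementary, and yields an explicit formula for the inverse with no categorical overhead.
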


\begin{proof}
 This follows from the fact that we can rewrite an element $\sum_{i=1}^n m_i\frac{a_i}{s_i} \in (S^{-1}A)_\Z$ as
$$ \sum_{i=1}^n m_i\frac{a_i}{s_i} \ = \ \sum_{i=1}^n \frac{m_ia_i\prod_{j\neq i}s_j}{\prod_{j=1}^ns_j} \ = \frac{\sum_{i=1}^n m_i a_i'}s $$
where $a_i'=a_i\prod_{j\neq i}s_j\in A$ and $s=\prod_{j=1}^ns_j\in S$. This is an element of $S^{-1}A_\Z$. It is clear that this defines an inverse map to the canonical ring homomorphism $S^{-1}A_\Z \to (S^{-1}A)_\Z$.
\end{proof}

The following is an easy fact.

\begin{lemma}
 A monoid morphism $A\to B$ is injective (surjective) if and only if $A_\Z\to B_\Z$ is injective (surjective).\qed
\end{lemma}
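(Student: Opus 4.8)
The plan is to make the $\Z$-module structure of the base extension completely explicit and then read off both equivalences from the behaviour of $f_\Z$ on a basis. First I would unwind the definition $A_\Z=\Z[A]/(1\cdot 0_A)$. In the semi-group ring $\Z[A]$ the elements of $A$ form a $\Z$-basis, and since $0_A\cdot a=0_A$ for every $a\in A$, multiplying the basis vector $0_A$ by any basis vector returns $0_A$ again; consequently the ideal generated by $0_A$ is simply the rank-one submodule $\Z\cdot 0_A$. Hence $A_\Z$ is a free $\Z$-module whose basis is given by the images of the elements of $A\setminus\{0_A\}$, and likewise for $B_\Z$. The induced homomorphism $f_\Z$ is determined $\Z$-linearly by its effect on this basis: an element $a\in A\setminus\{0_A\}$ is sent to the basis vector $f(a)$ of $B_\Z$ when $f(a)\neq 0_B$, and to $0$ exactly when $f(a)=0_B$ (using $f(0_A)=0_B$).

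For surjectivity, the image of $f_\Z$ is the $\Z$-span inside $B_\Z$ of the set $\{f(a)\mid a\in A\setminus\{0_A\}\}$. Because each of these is either a basis vector of $B_\Z$ or zero, this span equals all of $B_\Z$ if and only if every basis vector $b\in B\setminus\{0_B\}$ occurs as some $f(a)$ — that is, if and only if $f$ is surjective as a map of pointed sets. For injectivity, I would observe that $f$ is injective precisely when it is injective on $A\setminus\{0_A\}$ and sends no nonzero element to $0_B$; under these two conditions $f_\Z$ carries the basis of $A_\Z$ bijectively onto a subset of the basis of $B_\Z$ and is therefore injective. Conversely, a failure of either condition produces a nonzero element of the kernel: either a single basis vector $a$ with $f(a)=0_B$, or a difference $a-a'$ of two distinct basis vectors with $f(a)=f(a')$. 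Hence $f_\Z$ injective forces $f$ injective.

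The one point requiring care — and the only genuine obstacle — is the bookkeeping around the distinguished zero: the passage to $A_\Z$ discards the basis vector $0_A$, so the relevant index set is $A\setminus\{0_A\}$ rather than $A$, and $f_\Z$ collapses exactly those basis vectors $a$ for which $f(a)=0_B$. Once the free-module description and this collapsing behaviour are pinned down, both equivalences follow formally from the standard fact that a $\Z$-linear map which takes basis vectors to basis vectors or to $0$ is injective (respectively surjective) iff the induced map on the index sets is, with the elements sent to $0$ accounted for correctly.
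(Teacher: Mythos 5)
Your proof is correct and complete. The paper offers no proof of this lemma at all---it is introduced with the sentence ``The following is an easy fact'' and stated with only a qed-box---and your explicit argument (identifying $A_\Z$ and $B_\Z$ as free $\Z$-modules on $A\setminus\{0_A\}$ and $B\setminus\{0_B\}$, then reading off both equivalences from the action of $f_\Z$ on basis vectors, with the vectors collapsed to $0$ accounted for) is precisely the bookkeeping the authors leave implicit.
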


\subsubsection{Finitely generated monoids}

A \emph{set of generators of a monoid $A$} is a subset $\Gamma$ of $A$ such that every element $f\in A-\{0,1\}$ can be written as a product of elements of $\Gamma$. A \emph{set of generators of a multiplicative subset $S$ of $A$} is a subset $\Gamma$ of $S$ such that every element $f\in S-\{1\}$ can be written as a product of elements in $\Gamma$. A \emph{set of generators of an ideal $I$ of $A$} is a subset $\Gamma$ of $I$ such that every element of $I$ can be written as a product of an element of $\Gamma$ by an element of $A$. A monoid (resp.\ multiplicative subset resp.\ ideal) is \emph{finitely generated} if one can choose a finite set of generators.

\begin{lemma}\label{finitely_many_prime_ideals}
 Let $A$ be generated by a subset $\Gamma$. Then every prime ideal of $A$ is generated by a subset of $\Gamma$. In particular, if $\Gamma$ is finite, then every prime ideal of $A$ is finitely generated and $A$ has only finitely many prime ideals.
\end{lemma}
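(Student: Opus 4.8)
The plan is to reduce both assertions to a single structural fact: \emph{every prime ideal $\p$ of $A$ is generated by the subset $\Gamma\cap\p$ of $\Gamma$.} Granting this, the ``in particular'' clause is immediate, since finiteness of $\Gamma$ forces $\Gamma\cap\p$ to be finite (whence $\p$ is finitely generated), and since the assignment $\p\mapsto\Gamma\cap\p$ will turn out to be injective, it bounds the number of prime ideals by the number of subsets of $\Gamma$. So the heart of the matter is to show that each $\p$ is recovered from its trace $\Gamma\cap\p$ on the generating set.

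For the nontrivial inclusion I would take an arbitrary nonzero $f\in\p$. Since $1\notin\p$ we have $f\neq1$, so $f\in A-\{0,1\}$, and the generating property of $\Gamma$ lets me write $f=\gamma_1\cdots\gamma_n$ with each $\gamma_i\in\Gamma$. Because $\p$ is prime, a short induction on $n$ (using that $ab\in\p$ implies $a\in\p$ or $b\in\p$) shows that at least one factor, say $\gamma_{i_0}$, lies in $\p$, hence in $\Gamma\cap\p$. Writing $f=\gamma_{i_0}\cdot\prod_{j\neq i_0}\gamma_j$ exhibits $f$ as a product of an element of $\Gamma\cap\p$ with an element of $A$, so $f$ lies in the ideal generated by $\Gamma\cap\p$. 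The reverse inclusion is clear, since $\Gamma\cap\p\subset\p$ and $\p$ absorbs multiplication by $A$. The one element not covered by this argument is $0$, which belongs to every ideal by definition; I would dispose of it through the convention that $0$ lies in the ideal generated by any subset (equivalently, that the empty set generates $\{0\}$), which is precisely what is needed in the case where $\p=\{0\}$ is prime while $0\notin\Gamma$.

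To conclude, observe that the displayed fact realizes each prime $\p$ as the ideal generated by $\Gamma\cap\p$; consequently two primes with the same trace on $\Gamma$ generate the same ideal and therefore coincide. Thus $\p\mapsto\Gamma\cap\p$ is an injection from the set of prime ideals of $A$ into the power set of $\Gamma$. When $\Gamma$ is finite this power set is finite, so $A$ has only finitely many prime ideals, and each such ideal, being generated by the finite set $\Gamma\cap\p$, is finitely generated.

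I expect the only genuine subtlety to be the primality step --- that is, promoting the defining two-factor condition for a prime ideal to an $n$-factor statement about products of generators --- together with the bookkeeping around the zero element and the zero ideal. Everything else is formal, relying only on the definitions of generating set, ideal, and prime ideal recalled above.
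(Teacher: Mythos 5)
Your proof is correct and follows essentially the same route as the paper's: write any element of $\p$ as a product of generators, use primality to locate a factor in $\Gamma\cap\p$, conclude that $\p$ is generated by $\Gamma\cap\p$, and deduce finiteness from the injectivity of $\p\mapsto\Gamma\cap\p$. You are in fact more careful than the paper on the edge cases (the induction promoting the two-factor prime condition to $n$ factors, and the convention needed when $\p=\{0\}$ with $0\notin\Gamma$), which the paper's terse argument glosses over.
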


\begin{proof}
 Let $\p$ be a prime ideal and $f\in\p$. Then $f$ can be written as a product of elements in $\Gamma$. Since $\p$ is a prime ideal it must contain one of the factors of $f$. This shows that $\p$ is generated by $\Gamma\cap\p$. The second claim of the lemma follows from the first claim.
\end{proof}

Note that a finitely generated monoid has typically infinitely many ideals. For instance, the monoid $\Fun[T]=\{T^i\}_{i\geq0}\cup\{0\}$ contains for every $k\geq 0$ the ideal $\langle T^i\rangle=\{T^i\}_{i\geq k}\cup\{0\}$.

\begin{proposition}\label{U_S=D_f}
 Let $A$ be a finitely generated monoid. Then there is for every multiplicative subset $S$ of $A$ an element $f\in A$ such that $U_S=D_f$.
\end{proposition}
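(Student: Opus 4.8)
The plan is to reduce the statement to the finiteness of the set of prime ideals of $A$, which is furnished by Lemma \ref{finitely_many_prime_ideals}. Recall that $U_S$ is the set of prime ideals that do not meet $S$, whereas $D_f$ is the set of prime ideals that do not contain $f$; thus the complement of $U_S$ among the prime ideals of $A$ consists precisely of those primes that \emph{do} intersect $S$. Since $A$ is finitely generated, it has only finitely many prime ideals, so this complement is a finite set, say $V=\{\p_1,\dots,\p_n\}$.

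First I would choose, for each $i$, an element $s_i\in\p_i\cap S$, which exists precisely because $\p_i$ meets $S$, and then set
$$ f \ = \ s_1\cdots s_n, $$
interpreting the empty product as $1$ in case $V$ is empty. As $S$ is multiplicatively closed, the product $f$ again lies in $S$. I then verify $U_S=D_f$ by comparing the two defining conditions prime by prime. If $\p\in U_S$, then $\p\cap S=\emptyset$, and since $f\in S$ this forces $f\notin\p$, i.e.\ $\p\in D_f$. Conversely, if $\p\notin U_S$, then $\p=\p_i$ for some $i$, so $\p_i$ contains the factor $s_i$ and hence contains the whole product $f$, because an ideal absorbs multiplication; this gives $\p\notin D_f$. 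The two implications together yield the asserted equality.

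The only genuine obstacle is the finiteness of $V$: it is what allows me to package the data of all the ``bad'' primes into a single element $f$, and it is exactly here that the hypothesis that $A$ is finitely generated enters, through Lemma \ref{finitely_many_prime_ideals}. Everything else is formal, relying only on $S$ being multiplicative (so that $f\in S$) and on the $\p_i$ being prime ideals (so that divisibility by a single factor already forces membership). The degenerate situations are absorbed by the conventions above: when no prime meets $S$ we obtain $f=1$, and since $1\notin\p$ for every prime $\p$, the set $D_1$ is all prime ideals, matching $U_S$; and when $0\in S$ every prime meets $S$ (as $0$ lies in every ideal), so choosing each $s_i=0$ gives $f=0$ and $D_0=\emptyset=U_S$.
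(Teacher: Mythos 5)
Your proof is correct, but it takes a genuinely different route from the paper's. The paper first invokes Corollary \ref{maximal_ideal_in_U_S} to get a maximal element $\m$ of $U_S$ (a prime containing all other primes of $U_S$), then uses the first part of Lemma \ref{finitely_many_prime_ideals} --- every prime is generated by its intersection with the finite generating set $\Gamma$ --- to define $f$ as the product of the generators \emph{not} lying in $\m$; the equality $U_S=D_f$ then follows from the chain $\p\in D_f \Leftrightarrow \p\cap\Gamma\subset\m\cap\Gamma \Leftrightarrow \p\subset\m \Leftrightarrow \p\in U_S$. You, by contrast, use only the ``in particular'' clause of Lemma \ref{finitely_many_prime_ideals} (finiteness of $\spec A$) and build $f$ out of $S$ itself, taking one factor from $\p\cap S$ for each of the finitely many primes meeting $S$. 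Your construction is more elementary: it needs neither the maximal prime of $U_S$ nor the description of primes in terms of generators, only multiplicative closure of $S$ and ideal absorption, and it produces an $f$ lying in $S$, which makes the inclusion $U_S\subset D_f$ immediate. It also treats the degenerate case $U_S=\emptyset$ (i.e.\ $0\in S$) uniformly, whereas the paper's appeal to Corollary \ref{maximal_ideal_in_U_S} is only licensed when $U_S$ is nonempty, so that case is left implicit there. What the paper's construction buys in exchange is an $f$ expressed directly in the chosen generators $\Gamma$ and tied to the maximal prime $\m$, matching the identification $S^{-1}A=A_\m$ from Corollary \ref{maximal_ideal_in_U_S}. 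One cosmetic remark: the step ``$s_i\in\p_i$ forces $f\in\p_i$'' uses only that $\p_i$ is an ideal, not that it is prime --- primality enters your argument only through the definitions of $U_S$ and $D_f$ as sets of primes.
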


\begin{proof}
 Let $\Gamma$ be a finite set that generates $A$ and $S$ a multiplicative set. By Corollary \ref{maximal_ideal_in_U_S}, the set $U_S$ of prime ideals not intersecting $S$ contains a prime ideal $\m$ that contains all other prime ideals in $U_S$. By Lemma \ref{finitely_many_prime_ideals}, $\m$ is generated by $\Gamma\cap\m$. Let $f$ be the product of all elements in $\Gamma$ that are not contained in $\m$. Since every prime ideal $\p$ is generated by $\Gamma\cap\p$ by the previous lemma, $\p\in D_f$ if and only if $\p\cap\Gamma\ \subset\ \m\cap\Gamma$ or, simply, $\p\subset\m$. This is, as mentioned, the condition for $\p$ to be in $U_S$. Thus we have proven that $U_S=D_f$.
\end{proof}

The following is analogue to \cite[Lemma 2]{Deitmar06} and the same proof applies.

\begin{lemma}\label{monoid_of_finite_type_and_base_extension}
 A monoid $A$ is finitely generated if and only if $A_\Z$ is finitely generated as a ring.\qed
\end{lemma}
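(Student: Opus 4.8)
The plan is to exploit the explicit $\Z$-module structure of $A_\Z$. By construction $\Z[A]$ is the free $\Z$-module on the underlying set of $A$, and the defining ideal $(1\cdot 0_A)$ is generated by the single basis element corresponding to $0_A$. Since $0_A\cdot a=0_A$ in $A$, this ideal equals the free rank-one submodule $\Z\cdot 0_A$. Consequently $A_\Z$ is a free $\Z$-module with basis $\{[a]\mid a\in A-\{0_A\}\}$, where $[a]$ denotes the image of $a$, and multiplication is governed by $[a]\cdot[b]=[ab]$ under the convention $[0_A]=0$. I would record this structural observation first, since both implications rest on it.

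For the forward implication, suppose $\Gamma=\{g_1,\dots,g_n\}$ generates $A$ as a monoid. Then I claim $[g_1],\dots,[g_n]$ generate $A_\Z$ as a $\Z$-algebra. Indeed, an arbitrary element of $A_\Z$ is a $\Z$-linear combination of basis elements $[a]$ with $a\in A-\{0_A\}$; each such $a$ is either $1$, giving the unit of $A_\Z$, or a product $g_{i_1}\cdots g_{i_k}$ of generators, in which case $[a]=[g_{i_1}]\cdots[g_{i_k}]$. Hence every element of $A_\Z$ lies in the subring generated by the $[g_i]$, so $A_\Z$ is finitely generated as a ring.

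The reverse implication carries the real content. Assume $A_\Z$ is generated as a $\Z$-algebra by finitely many elements $x_1,\dots,x_m$. Writing each $x_j$ in the basis, collect into a finite set $\Gamma$ all elements $a\in A-\{0_A,1\}$ whose basis vector $[a]$ occurs with nonzero coefficient in some $x_j$. I would then show that $\Gamma$ generates $A$. Given any $a\in A-\{0_A,1\}$, the basis element $[a]$ is an integral polynomial in $x_1,\dots,x_m$; expanding and repeatedly applying $[b][b']=[bb']$ expresses $[a]$ as a $\Z$-linear combination of $[1]$ and of elements $[w]$, where each $w\neq 0_A$ is a product of elements of $\Gamma$. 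Now linear independence of the basis does the work: since $[a]$ is itself a basis vector distinct from $[1]$, comparing coordinates forces $a=w$ for one of these $w$, so $a$ is a product of elements of $\Gamma$. Thus $\Gamma$ is a finite generating set and $A$ is finitely generated.

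The main obstacle---really the only subtle point---is this final step: one must know that the monomials arising from products of the chosen generators land on honest basis vectors (or on $0$), so that the relation $[a]=c_0[1]+\sum_w c_w[w]$ can be read off coordinatewise. This is exactly what the multiplicative rule $[b][b']=[bb']$, combined with the freeness of $A_\Z$ over $\Z$, provides; without the identification $[0_A]=0$ the coordinate argument would fail, which is precisely why the base extension is defined modulo $(1\cdot 0_A)$.
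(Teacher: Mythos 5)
Your proof is correct. The paper gives no argument of its own for this lemma---it simply cites Deitmar's Lemma 2 and asserts that the same proof applies---and your argument is exactly that standard proof, correctly adapted to the pointed setting: the key observations that the ideal $(1\cdot 0_A)$ equals $\Z\cdot 0_A$, so that $A_\Z$ is a free $\Z$-module on $A-\{0_A\}$ with $[a][b]=[ab]$, and that coordinatewise comparison in this basis forces any $a\in A-\{0,1\}$ to coincide with a product of the finitely many chosen generators, are precisely what makes the argument go through with a zero element present.
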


\subsubsection{Examples}

Earlier, we already introduced the monoid $\Fun=\{0,1\}$ and the zero monoid $\{0\}$ with one element $0=1$. We call the monoid $\Fun[T]=\{T^i\}_{i\geq0}\cup\{0\}$ the \emph{polynomial ring over $\Fun$} because its base extension $\Fun[T]_\Z$ is isomorphic to $\Z[T]$ and because it plays in $\Mo$ the role of a free algebra on one generator over $\Fun$. The quotient monoid $\Quot\Fun[T]$ is the monoid $\{T^i\}_{i\in\Z}\cup\{0\}$, which we will denote by $\Fun[T,T^{-1}]$. Its base extension to $\Z$ is the ring $\Z[T,T^{-1}]$.

For every commutative semi-group $A$ with $1$, we can define a monoid $\Fun[A]=A\cup \{0\}$ that extends the multiplication of $A$ by $a\cdot 0=0$ for all $a\in \Fun[A]$. In particular, every abelian group $G$ gives rise to a monoid $\Fun[G]$, which defines an embedding of the category of abelian groups into $\Mo$.

If $A$ is a monoid, then $A^\times$ and $A^\int$ are commutative semi-groups with $1$, and thus define submonoids $\Fun[A^\times]$ respective $\Fun[A^\int]$ of $A$.

If $I$ is an ideal of a monoid $A$, then we define the quotient monoid $A/I$ as the set $(A-I)\cup\{0\}$ together with the multiplication $a\cdot b=ab$ if $a,b\in A-I$ such that $ab\notin I$ and $a\cdot b=0$ otherwise. In particular, $A-A^\times$ and $A-A^\int$ are ideals of $A$, and we have $\Fun[A^\times]\simeq A/(A-A^\times)$ and $\Fun[A^\int]\simeq A/(A-A^\int)$.

Finally, every ring $R$ defines a monoid by forgetting its addition.


\subsection{$A$-sets}

For semigroups $A$ that do not necessarily contain a zero or an one, the notion of an \emph{$A$-act} is defined and well-studied in literature. An $A$-act is a set together with an action by $A$. The $A$-acts play the role of modules over the semigroups $A$---not to be confused with the notion of an \emph{$A$-module} in case that $A$ is a group, which is a module over the group ring $\Z[A]$. However, from the viewpoint of $\Fun$-geometry, we require monoids $A$ to have a  zero and an one, and consequently a theory of modules over $A$ leads to a different kind of objects. To avoid confusion with the existing notions, we call the objects of our studies \emph{$A$-sets} and investigate them in detail in the following section.

\subsubsection{Definition and general properties}

For a pointed set $M$ with base point $\ast$, the set $\Hom(M,M)$ of base point preserving self-maps $M\to M$ together with composition is a (generally non-commutative) semi-group. It has a $0$, namely, the map sending all elements to the base point, and it has a $1$, namely, the identity. Let $A$ be a monoid. An \emph{$A$-set} is a pointed set $M$ together with a multiplicative map $A\to\Hom(M,M)$ that preserves $0$ and $1$.

In other words, an $A$-set is a pointed set $M$ together with an \emph{$A$-action}, i.e.\ a map $\theta: A\times M\to M$ satisfying the following properties for all $a,b\in A$ and $m\in M$ (we will write $a.m$ for $\theta(a,m)$):
$$ (ab).m=a.(b.m), \qquad a.\ast=\ast, \qquad 0.m=\ast \qquad\text{and}\qquad 1.m=m. $$

A \emph{morphism of $A$-sets} $M$ and $N$ is an \emph{$A$-equivariant map}, i.e.\ a map $f:M\to N$ such that $f(a.m)=a.f(m)$ for all $a\in A$ and $m\in M$. In particular, a morphism of $A$-sets sends the base point to the base point. We denote the category of $A$-sets by $A-\Mod$.

The trivial $A$-set $0=\{\ast\}$ is a zero object of $A-\Mod$, i.e.\ both an initial and a terminal object. Consequently, there is for all $A$-sets $M$ and $N$ a unique morphism $\ast$ in the homomorphism set $\Hom(M,N)$ that factors through $0$, namely, the morphism that sends all elements of $M$ to the base point of $N$. The $A$-action $a.f:m\mapsto f(a.m)$ makes $\Hom(M,N)$ into an $A$-set. The $A$-set $\Hom(M,N)$ is functorial in both $M$ and $N$, thus $\Hom(-,-)$ is a bifunctor from $A-\Mod$ into itself.

The image of a morphism $f:M\to N$ of $A$-sets is the subset $\im f=\{n\in N\mid \exists m\in M, f(m)=n\}$ together with the induced $A$-action. Since $\ast=f(\ast)$ and $a.n=a.f(m)=f(a.m)$ for $n=f(m)$, this is indeed an $A$-subset of $N$.

\begin{lemma}\label{mono-epi}
 A morphism of $A$-sets is a monomorphism (epimorphism, isomorphism) if and only if it is an injective (surjective, bijective) map.
\end{lemma}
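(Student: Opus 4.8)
The plan is to prove the three equivalences separately. In each case one implication is essentially formal and follows from the fact that morphisms of $A$-sets are maps of underlying pointed sets, while the converse requires exhibiting a suitable test object in $A-\Mod$. For the formal directions I would argue as follows: an $A$-equivariant map that is injective on underlying sets is left-cancellable, so injective implies monomorphism; dually a surjective equivariant map is right-cancellable, so surjective implies epimorphism; and if $f\colon M\to N$ is bijective, its set-theoretic inverse $f^{-1}$ is again $A$-equivariant, since for $n=f(m)$ one computes $f^{-1}(a.n)=f^{-1}(f(a.m))=a.m=a.f^{-1}(n)$, and it preserves the base point, hence $f^{-1}$ is a morphism and $f$ is an isomorphism. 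These steps are immediate and can be dispatched first.

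The first substantial step is monomorphism $\Rightarrow$ injective. Here I would use $A$ itself as a test object, viewed as a pointed set with base point $0$ and action by multiplication; this is an $A$-set, and for every $A$-set $M$ and every $m\in M$ there is a morphism $\phi_m\colon A\to M$ defined by $\phi_m(a)=a.m$, which is equivariant and satisfies $\phi_m(1)=m$. Given a monomorphism $f\colon M\to N$ with $f(m_1)=f(m_2)$, the relation $f(a.m_i)=a.f(m_i)$ shows $f\circ\phi_{m_1}=f\circ\phi_{m_2}$; cancelling $f$ gives $\phi_{m_1}=\phi_{m_2}$, and evaluating at $1$ forces $m_1=m_2$, so $f$ is injective.

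The second substantial step is epimorphism $\Rightarrow$ surjective, which I would prove by contraposition using the image $\im f$, an $A$-subset of $N$. If $f$ is not surjective, I would form the quotient $A$-set $N/\im f$ obtained by collapsing $\im f$ to the base point, whose action is well defined because $\im f$ is $A$-stable. The quotient map $q\colon N\to N/\im f$ and the zero morphism both compose with $f$ to the zero morphism, since $f$ lands in $\im f$, yet $q\neq 0$ because $N\setminus\im f$ contributes a non-basepoint element; hence $f$ is not an epimorphism. The isomorphism case then follows formally: an isomorphism is both a monomorphism and an epimorphism, hence injective and surjective by the previous steps, hence bijective, while bijective $\Rightarrow$ isomorphism was already handled. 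The only place that needs a genuine construction rather than a cancellation argument is the epimorphism case, so I expect the definition and well-definedness of the quotient $A$-set $N/\im f$ to be the main, though still routine, point.
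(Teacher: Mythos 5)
Your proof is correct. For the formal directions (injective $\Rightarrow$ monomorphism, surjective $\Rightarrow$ epimorphism, bijective $\Rightarrow$ isomorphism via equivariance of the set-theoretic inverse) and for monomorphism $\Rightarrow$ injective, your argument coincides with the paper's: the paper also tests against $A$ itself, viewed as an $A$-set, via the two morphisms $a\mapsto a.m$ and $a\mapsto a.m'$, and evaluates at $1$. Where you genuinely diverge is epimorphism $\Rightarrow$ surjective. The paper picks $n\in N-\im f$, observes that its $A^\times$-orbit avoids $\im f$, and compares the zero morphism with a map $g:N\to\Fun[A^\times]$ that is supposed to record this orbit (sending $a.n\mapsto a$ and everything else to $\ast$); you instead collapse $\im f$ to the base point, i.e.\ you construct the cokernel $N/\im f$ by hand and compare the quotient map $q$ with the zero morphism. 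Your route is the more robust one. It uses only that $\im f$ is an $A$-subset (established in the paper immediately before the lemma) and that collapsing an $A$-subset of $N$ yields a well-defined $A$-set; this is exactly the quotient the paper itself introduces a little later as the cokernel, so you are merely inlining that construction, with no circularity. By contrast, the paper's map $g$ is problematic as written: it is well defined only if $a\mapsto a.n$ is injective on $A^\times$, and it is $A$-equivariant only if no non-unit of $A$ carries a point of $N$ into the orbit $A^\times.n$. Both conditions can fail --- for instance with $A=\Fun[T]$ and $N=\{\ast,n\}$ where $T.n=n$, one has $\Fun[A^\times]=\{\ast,1\}$ and the prescribed $g$ satisfies $g(T.n)=1$ while $T.g(n)=\ast$, so it is not a morphism of $A$-sets. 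The quotient-based argument you give sidesteps these issues entirely, so it is not just an alternative to the paper's proof but the cleaner and safer version of it.
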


\begin{proof}
 Since a morphism of $A$-sets is defined as a map with additional properties, it is characterized by the image of each element. Thus it is clear that an injective morphism is a monomorphism and that a surjective map is an epimorphism. It is also clear that the inverse map of a bijective morphism between $A$-modules is $A$-equivariant. We proceed with the reverse implications.

 For the following arguments, we consider $A$ as an $A$-set by $a.b=ab$ for $a,b\in A$. Let $A^\times$ be the set of all invertible elements of $A$. We consider $\Fun[A^\times]=A^\times\cup\{\ast\}$ as an $A$-set by defining $a.b=ab$ if $a,b\in A^\times$ and $a.b=\ast$ for all other combinations of $a\in A$ and $b\in \Fun[A^\times]$ (see section \ref{A-set_examples} for more details).

 Let $f:M\to N$ be a monomorphism and let $m,m'\in M$ be elements that are mapped to the same element $f(m)=f(m')$. Consider the morphism $g,h: A\to M$ of $A$-sets that are defined by $g(a)=a.m$ and $h(a)=a.m'$. Then $f\circ g(a)=f(a.m)=a.f(m)=a.f(m')=f(a.m')=f\circ h(a)$. Since $f$ is a monomorphism, $g=h$ and $m=m'$. Thus $f$ is injective.

 Let $f:M\to N$ be an epimorphism. Assume that there is an element $n\in N-\im f$. Then $a.n\in N-\im f$ for every $a\in A^\times$, because if $a.n=f(m)$, then $n=a^{-1}a.n=a^{-1}.f(m)=f(a^{-1}.m)$ would be in the image of $f$. Define $g:N\to \Fun[A^\times]$ by $g(a.n)=a.n$ and $g(n')=\ast$ if $n'$ is not of the form $a.n$ for some $a\in A^\times$, and let $h:N\to \Fun[A^\times]$ be the trivial morphism sending every $n\in N$ to $\ast$. Then $g\circ f(m)=\ast=h\circ f(m)$ for all $m\in M$, but $g\neq h$, which is a contradiction. Thus $f$ must be surjective.

 An isomorphism is both a monomorphism and an epimorphism. By the preceding, an isomorphism must be injective and surjective, and consequently bijective.
\end{proof}

\begin{proposition}\label{ASetsAreCompleteAndCocomplete}
 The category $A-\Mod$ contains small limits and small colimits.
\end{proposition}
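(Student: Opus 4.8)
The plan is to imitate the strategy used for Proposition \ref{monoid-limits}: by the standard criterion \cite[Thm.\ 2.8.1]{Borceux} and its dual, a category has all small limits as soon as it has small products and equalizers, and all small colimits as soon as it has small coproducts and coequalizers. So it suffices to exhibit these four types of (co)limits explicitly, in each case equipping the underlying pointed-set (co)limit with a compatible $A$-action and verifying the universal property. Morally, $A$-sets behave like modules over $A$, so I expect products to be Cartesian products, coproducts to be wedge sums (the pointed analogue of direct sums), and equalizers and coequalizers to be computed on underlying pointed sets.

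For the limits I would argue as follows. Given a family $\{M_i\}_{i\in I}$ of $A$-sets, the product is the Cartesian product $\prod_{i\in I} M_i$ with base point $(\ast_i)_{i\in I}$ and the componentwise action $a.(m_i)=(a.m_i)$; the four action axioms are inherited componentwise, and the projections together with the universal property of the product of pointed sets show that $\prod M_i$ is the product in $A-\Mod$. For two morphisms $f,g\colon M\to N$, the equalizer is the subset $\eq(f,g)=\{m\in M\mid f(m)=g(m)\}$ with the action induced from $M$. This is an $A$-subset: it contains the base point since $f(\ast)=\ast=g(\ast)$, and if $f(m)=g(m)$ then $f(a.m)=a.f(m)=a.g(m)=g(a.m)$, so it is closed under the action. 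Because monomorphisms of $A$-sets are precisely the injective maps (Lemma \ref{mono-epi}), the inclusion $\eq(f,g)\hookrightarrow M$ satisfies the universal property of an equalizer.

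For the colimits, the coproduct of $\{M_i\}_{i\in I}$ is the wedge sum $\bigvee_{i\in I} M_i$, that is, the disjoint union of the $M_i$ with all base points identified to a single point $\ast$; the action is given on each wedge summand by the action of the corresponding $M_i$ (with $a.\ast=\ast$), and a family $\{f_i\colon M_i\to N\}$ assembles into the unique equivariant map $\bigvee M_i\to N$ restricting to $f_i$ on each summand. The coequalizer of $f,g\colon M\to N$ is the quotient $N/\!\approx$, where $\approx$ is the smallest equivalence relation on $N$ containing all pairs $(f(m),g(m))$ and compatible with the $A$-action, i.e.\ such that $n\approx n'$ implies $a.n\approx a.n'$ for all $a\in A$; the quotient then inherits a well-defined $A$-action with base point the class of $\ast$, and the canonical surjection $N\to N/\!\approx$ is the coequalizer.

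The hard part will be the coequalizer. Unlike the equalizer, whose underlying set is literally the equalizer of pointed sets, the naive equivalence relation generated by the pairs $(f(m),g(m))$ need not be $A$-stable, so the induced action on the quotient would be ill-defined. The crux is therefore to pass to the smallest \emph{congruence}: one builds $\approx$ by closing the relation generated by these pairs under reflexivity, symmetry, transitivity, and the operations $n\mapsto a.n$, and then checks that $n\approx n'$ forces $a.n\approx a.n'$, so that $a.[n]:=[a.n]$ is well-defined on $N/\!\approx$. Once this congruence is available, the remaining verifications—that $N\to N/\!\approx$ is equivariant and that any equivariant $h\colon N\to P$ with $hf=hg$ collapses $\approx$ and hence factors uniquely through $N/\!\approx$—are routine.
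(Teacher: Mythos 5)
Your proposal is correct and follows essentially the same route as the paper: the Borceux criterion reducing (co)limits to (co)products and (co)equalizers, Cartesian products with componentwise action, wedge sums as coproducts, equalizers as $A$-stable subsets, and coequalizers as quotients of $N$. The one claim you make about where the difficulty lies is false, though it does not break your argument: the equivalence relation generated by the pairs $(f(m),g(m))$ \emph{is} automatically stable under the $A$-action, precisely because $f$ and $g$ are $A$-equivariant --- if $n=f(m)$ and $n'=g(m)$, then $a.n=a.f(m)=f(a.m)$ and $a.n'=a.g(m)=g(a.m)$ form again a generating pair, and the reflexive-symmetric-transitive closure of an $A$-stable relation is $A$-stable. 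This is exactly the observation the paper uses to see that the action on the quotient is well defined; consequently your ``smallest congruence'' coincides with the naively generated equivalence relation, and the extra closure under the action that you present as the crux is superfluous.
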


\begin{proof}
 To show that $A-\Mod$ contains small (co)limits it is enough to show that it contains (co)products over arbitrary index sets and (co)equalizers, cf.\ \cite[Thm.\ 2.8.1]{Borceux}.

 The product of a family $\{M_i\}_{i\in I}$ of $A$-sets is the Cartesian product $\prod_{i\in I} M_i$ together with the diagonal action of $A$ and the projections $p_j: \prod M_i\to M_j$ surjective the $j$-th component, which are $A$-equivariant maps. The universal property of the product is easily verified.

 The coproduct of a family $\{M_i\}_{i\in I}$ of $A$-sets is the wedge product $\bigvee_{i\in I} M_i$, which is the quotient of the disjoint union of all $M_i$ by the equivalence relation that identifies the base points. The $A$-action on $\bigvee M_i$ is defined via the canonical inclusions $\iota_j: M_j\to \bigvee M_i$. Namely, $a.\iota_j(m)=\iota_j(a.m)$ for $a\in A$ and $m\in M_j$. It is easily verified that $\bigvee M_i$ satisfies the universal property of the coproduct.

 We proceed with the equalizer. Given two morphisms $f,g:M\to N$ of $A$-sets, the equalizer of $f$ and $g$ is $\eq(f,g)=\{m\in M\mid f(m)=g(m)\}$ together with the inclusion as a subset of $A$ and the restricted $A$-action. The subset $\eq(f,g)$ of $M$ is indeed an $A$-set since $f(\ast)=g(\ast)$ and $f(a.m)=a.f(m)=a.g(m)=g(a.m)$ if $f(m)=g(m)$. Since equalizers are monomorphisms and monomorphisms in $A-\Mod$ are injective by Lemma \ref{mono-epi}, it is clear that $\eq(f,g)$ satisfies the universal property of an equalizer.

 The coequalizer of $f$ and $g$ is the quotient $\coeq(f,g)=N/\sim$ by the equivalence relation generated by $n\sim n'$ if there is an $m\in M$ such that $n=f(m)$ and $n'=g(m)$, together with the quotient map $N\to \coeq(f,g)$ and the induced $A$-action. The $A$-action on $\coeq(f,g)$ is well-defined since $f(\ast)=g(\ast)$ and since for $n=f(m)$ and $n'=g(m)$, we have $a.n=a.f(m)=f(a.m)\sim g(a.m)=a.g(m)=a.n'$. Since coequalizers are epimorphisms and epimorphisms in $A-\Mod$ are surjective by Lemma \ref{mono-epi}, it is clear that $\coeq(f,g)$ satisfies the universal property of a coequalizer.
\end{proof}

The (co)kernel of a morphism $f:M\to N$ is defined as the (co)equalizer of the diagram
$$ \xymatrix{ M \ar@< 2pt>[r]^f \ar@<-2pt>[r]_{*} &  N}.  $$
This means that the kernel $\ker f$ of $f$ is the subset $f^{-1}(*)$ of $M$. The cokernel $\coker f$ is the quotient of $N$ by the equivalence relation defined as $n\sim n'$ if and only if $n=n'$ or $n,n'\in\im f$. We denote this quotient by $N/\im f$. This means that the quotient $N/I$ for any $A$-subset $I\subset N$ exists as it is the cokernel of the inclusion map.

A diagram $M_1\stackrel{f}{\to} M_2 \stackrel{g}{\to} M_3$ is said to be \emph{exact at $M_2$} if $\ker(g)=\im(f)$. A \emph{short exact sequence of $A$-sets} is a sequence of the form
$$ 0 \longrightarrow M_1 \longrightarrow M_2 \longrightarrow M_3 \longrightarrow 0 $$
that is exact at $M_1$, $M_2$ and $M_3$.

\subsubsection{Normal morphisms}

Recall that in a category with a zero object, a monomorphism is called \emph{normal} if it is a kernel and an epimorphism is called \emph{normal} if it is a cokernel.

\begin{lemma}\label{normal_mono_and_epi}
 All monomorphisms in $A-\Mod$ are normal, and an epimorphism in $A-\Mod$ is normal if and only if all its fibres contain at most one element except for the fibre of the base point.
\end{lemma}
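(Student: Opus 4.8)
The plan is to reduce everything to the explicit descriptions of kernels and cokernels recorded above, together with the identification of monomorphisms (resp.\ epimorphisms) with injective (resp.\ surjective) maps from Lemma \ref{mono-epi}.

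For the monomorphism part, let $i:M\to N$ be a monomorphism, hence injective. I would consider the cokernel map $g:N\to N/\im i$ and show that its kernel recovers $\im i$. Since $i(\ast)=\ast$ we have $\ast\in\im i$, so in the quotient $N/\im i$ the base point is exactly the class consisting of all elements of $\im i$; therefore $g^{-1}(\ast)=\im i=\ker g$. As $i$ restricts to a bijection $M\to\im i$, the morphism $i$ equals the composite of this bijection with the inclusion $\ker g\hookrightarrow N$, which is by definition the kernel of $g$. Precomposing a kernel with an isomorphism of sources again yields a kernel, so $i$ is normal.

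For the epimorphism part I would prove both implications. For the ``if'' direction, suppose $f:M\to N$ is surjective and every fibre except $f^{-1}(\ast)$ is a singleton. Set $I=\ker f=f^{-1}(\ast)$, an $A$-subset of $M$, and let $q:M\to M/I$ be the cokernel of the inclusion $I\hookrightarrow M$. The assignment $[m]\mapsto f(m)$ is well-defined (two elements are identified in $M/I$ only if both lie in $I=f^{-1}(\ast)$, where $f$ takes the value $\ast$), $A$-equivariant, and surjective since $f$ is; it is injective because distinct elements with equal non-base-point image are ruled out by the singleton-fibre hypothesis, while elements with image $\ast$ are collapsed in $M/I$. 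By Lemma \ref{mono-epi} this map is an isomorphism $M/I\to N$, so $f$ is isomorphic to the cokernel $q$ and is therefore normal. For the ``only if'' direction, suppose $f$ is normal, so $f$ is isomorphic to the cokernel $M\to M/\im h$ of some $h:P\to M$. In $M/\im h=(M-\im h)\cup\{\ast\}$ every non-base-point class is a single element of $M-\im h$, so the quotient map has singleton fibres away from the base point; transporting this through the isomorphism shows the same holds for $f$.

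The routine points are the well-definedness and $A$-equivariance checks, which follow at once from $f(\ast)=\ast$. The one place demanding care is the passage between the categorical notion of being a (co)kernel and the set-theoretic descriptions: in each direction I must verify that the isomorphism of the relevant underlying pointed $A$-sets genuinely transports the universal property, which rests on the fact that a kernel (resp.\ cokernel) composed with an isomorphism on the appropriate side is again a kernel (resp.\ cokernel).
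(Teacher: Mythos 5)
Your proposal is correct and follows essentially the same route as the paper's proof: for monomorphisms, exhibit $i$ as the kernel of the quotient map $N\to N/\im i$; for epimorphisms, identify normal epimorphisms with quotient maps $M\to M/\im h$ (whose non-base-point fibres are singletons) and, conversely, realize a map with the fibre condition as the cokernel of its own kernel $f^{-1}(\ast)\hookrightarrow M$. You simply spell out the bijectivity and transport-of-universal-property checks that the paper leaves implicit.
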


\begin{proof}
 Let $f:M\to N$ be a monomorphism. Then $\im f$ is an $A$-subset of $N$ and we can consider the quotient map $g:N\to N/\im f$. Then $f=\ker g$ and thus $f$ is normal.

 Let $f:M\to N$ be a normal epimorphism, i.e.\ there is a morphism $g:P\to M$ such that $f=\coker g$. This means that $N\simeq M/\im g$ and $f$ is the quotient map $M \to M/\im g$, which is of the form as described in the lemma. If the fibres of $f$ contain at most one element, expect for the fibre of the base point, then $f$ is the cokernel of its own kernel $f^{-1}(\ast)\hookrightarrow M$ and thus a normal epimorphism.
\end{proof}

\begin{proposition}\label{normal_lemma}
 Let $f:M\to N$ be a morphism of $A$-sets. The following conditions are equivalent.
 \begin{enumerate}
  \item\label{normal1} There is a (normal) monomorphism $g:M\to P$ and a normal epimorphism $h:P\to N$ such that $f=h\circ g$.
  \item\label{normal2} There is a normal epimorphism $h:M\to P$ and a (normal) monomorphism $g:P\to N$ such that $f=g\circ h$.
  \item\label{normal3} Each fibre of $f$ contains at most one element except for the fibre $f^{-1}(\ast)$ of the base point $\ast\in N$.
  \item\label{normal4} The canonical morphism $M/\ker f\to\im f$ is an isomorphism.
 \end{enumerate}
 If $f$ satisfies these equivalent conditions, we say that $f$ is \emph{normal}. The composition of normal morphisms is normal.
\end{proposition}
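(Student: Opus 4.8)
The plan is to take (iii) as the combinatorial core and prove every other condition equivalent to it. The routine implications I would dispatch first. For (iii) $\Leftrightarrow$ (iv), the canonical map $M/\ker f\to\im f$, $[m]\mapsto f(m)$, is a well-defined epimorphism for any $f$, and by Lemma \ref{mono-epi} it is an isomorphism precisely when it is injective; injectivity says that $f(m)=f(m')\neq\ast$ forces $m=m'$, which is exactly (iii). For (i) $\Rightarrow$ (iii) and (ii) $\Rightarrow$ (iii) I would read the fibres of $f$ off the factorization: for $n\neq\ast$ one has $f^{-1}(n)=g^{-1}(h^{-1}(n))$ (resp.\ $h^{-1}(g^{-1}(n))$), and since a monomorphism is injective and a normal epimorphism has at most one element in each nonbasepoint fibre (Lemmas \ref{mono-epi} and \ref{normal_mono_and_epi}), the composite again has at most one element over each $n\neq\ast$. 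For (iii) $\Rightarrow$ (ii) I would use the image factorization $M\twoheadrightarrow\im f\hookrightarrow N$: the inclusion is a monomorphism, and $M\to\im f$ is a normal epimorphism because its nonbasepoint fibres are exactly those of $f$, hence are singletons by (iii).

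The substantial implication is (iii) $\Rightarrow$ (i), where I must build an object $P$ with a monomorphism $g\colon M\to P$ and a normal epimorphism $h\colon P\to N$ satisfying $h\circ g=f$. The naive guesses fail: for $P=M\vee(N/\im f)$ there is in general no $A$-equivariant section of $N\to N/\im f$, and gluing $M$ to $N$ along $\im f$ creates two-element fibres over $\im f\setminus\{\ast\}$. My plan is instead to freely adjoin lifts of the part of $N$ outside the image. Let $F$ be the free $A$-set on the set $N\setminus\im f$ (all of whose elements are nonbasepoints), let $q=(f,\phi)\colon M\vee F\to N$ be induced by $f$ and by $\phi(a.e_n)=a.n$, and put $P=(M\vee F)/{\equiv}$, where $\equiv$ is the smallest $A$-set congruence identifying any two elements with the same nonbasepoint $q$-image. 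Then $h\colon P\to N$ induced by $q$ is surjective and has a single class over each $n\neq\ast$, hence is a normal epimorphism by Lemma \ref{normal_mono_and_epi}, while $h\circ g=f$ for the composite $g\colon M\hookrightarrow M\vee F\to P$.

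The crux, which I expect to be the genuine obstacle, is to verify that $g$ is a monomorphism, i.e.\ that $\equiv$ fuses no two distinct elements of $M$. Because $\equiv$ preserves $q$-images, two nonbasepoint elements of $M$ can be identified only if $f$ already equates them, whence (iii) forces equality; the delicate case is $\ker f$, which sits in the basepoint fibre. There $\equiv$ is generated by the forced consequences $a.u\equiv a.v$ of relations $u\equiv v$ with $q(u)=q(v)\neq\ast$ but $a.q(u)=\ast$, and I must exclude a chain of such consequences running through $F$ that links two distinct elements of $\ker f$. The natural attack is to produce an $\equiv$-invariant restricting to the identity on $M$; should no such invariant exist, this is precisely the signal that the construction must be refined or that an additional hypothesis on $A$ (for instance on its zero divisors) is needed, and it is the point I would scrutinize most carefully. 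By contrast the final assertion is immediate from (iii): if $g$ and $f$ are normal then, for $n\neq\ast$, $(f\circ g)^{-1}(n)=g^{-1}(f^{-1}(n))$, and since $f^{-1}(n)$ is empty or a single nonbasepoint whose $g$-fibre is again empty or a singleton, the composite is normal.
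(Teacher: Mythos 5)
Your treatment of \eqref{normal3}$\Leftrightarrow$\eqref{normal4}, of \eqref{normal1}$\Rightarrow$\eqref{normal3} and \eqref{normal2}$\Rightarrow$\eqref{normal3} by reading off fibres, of \eqref{normal3}$\Rightarrow$\eqref{normal2} via $M\twoheadrightarrow M/\ker f\simeq\im f\hookrightarrow N$, and of closure of normal morphisms under composition is correct and is essentially the paper's own argument.

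The gap you flagged at \eqref{normal3}$\Rightarrow$\eqref{normal1} is genuine, and it is worse than you fear: no refinement of your construction can close it, because the implication is false in general. Let $A=\Fun[a,c]/(ac)$, the free monoid on $a,c$ modulo the ideal generated by $ac$, so $A=\{0,1\}\cup\{a^i\}_{i\geq1}\cup\{c^j\}_{j\geq1}$ with $a^ic^j=0$. Take $M=N=A$ as free $A$-sets of rank one and $f(x)=ax$. Then $f$ satisfies \eqref{normal3}: the fibre of $a^{i+1}$ is $\{a^i\}$ and every other fibre over a nonbasepoint is empty. Now suppose $f=h\circ g$ with $g:M\to P$ a monomorphism and $h:P\to N$ a normal epimorphism. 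By Lemmas \ref{mono-epi} and \ref{normal_mono_and_epi}, $h$ is surjective with singleton fibres over nonbasepoints, so there is a unique $p\in P$ with $h(p)=1$; since $h(a.p)=a=h(g(1))$ and $h^{-1}(a)$ is a singleton, $a.p=g(1)$, whence
$$ \ast \ = \ 0.p \ = \ (ca).p \ = \ c.(a.p) \ = \ c.g(1) \ = \ g(c), $$
contradicting injectivity of $g$, as $c\neq 0$ in $M$. In your construction this failure materializes exactly where you predicted: $(1_M,\,a.e_{1_N})$ lies in $R$ (note $1_N\notin\im f$), and scaling this pair by $c$ forces $c=c.1_M\equiv(ca).e_{1_N}=\ast$, so your $g$ is not injective; no $\equiv$-invariant restricting to the identity on $M$ can exist.

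You have in fact uncovered an error in the paper itself. Its proof of \eqref{normal3}$\Rightarrow$\eqref{normal1} defines $g:M\to N\vee\ker f$ by $g(m)=m$ for $m\in\ker f$ and $g(m)=f(m)$ otherwise, and asserts that this is $A$-equivariant; in the example above $g(c.1)=c\neq\ast$ in the $\ker f$ summand, while $c.g(1)=c.a=\ast$ computed in $N$, so it is not. The correct statement is that \eqref{normal2}, \eqref{normal3} and \eqref{normal4} are equivalent and are implied by \eqref{normal1} (this weaker statement is all the rest of the paper uses, e.g.\ in Lemmas \ref{extension_of_normal_morphism_to_Z} and \ref{normal_and_localization}, which work with \eqref{normal3}). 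Your closing suggestion that a hypothesis on zero divisors is needed points in the right direction: the obstruction above hinges on a relation $ca=0$ with $a,c\neq 0$ in $A$, so \eqref{normal1} can only be recovered for a restricted class of monoids.
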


See Lemma \ref{extension_of_normal_morphism_to_Z} for another characterization of normal morphisms.

\begin{proof}
 In this proof, we will make frequent use of the characterization of (normal) monomorphisms, (normal) epimorphisms and isomorphisms as described in Lemmas \ref{mono-epi} and \ref{normal_mono_and_epi} without further reference. We begin with the equivalence of \eqref{normal3} and \eqref{normal4}. The canonical morphism $M/\ker f\to \im f$ is an isomorphism if and only if it is bijective. It is always surjective, and one sees at once that it is injective if and only if $f$ satisfies \eqref{normal3}.

 The following are some preparative observations. If $f$ is a monomorphism, then it always satisfies the different conditions of the proposition. If $f$ is an epimorphism with \eqref{normal3}, then obviously \eqref{normal1} and \eqref{normal2} hold for $f$. Finally note that morphisms with property \eqref{normal3} are closed under composition. In particular, the last statement of the proposition will follow from this once the equivalences are proven.

 Assume \eqref{normal1}, i.e.\ $f=h\circ g$ for a monomorphism $g$ and an epimorphism $h$. Then both $g$ and $h$ satisfy \eqref{normal1} for trivial reasons, and they satisfy \eqref{normal3} by the preceding considerations. As $f=g\circ h$, it does so as well. Similarly, \eqref{normal2} implies \eqref{normal3}.

 Assume \eqref{normal3}. Define $g:M\to N\vee\ker f$ by $g(m)=m\in\ker f$ if $m\in\ker f$ and $g(m)=f(n)\in N$ if $m\notin\ker f$. This is an injective $A$-equivariant map, and thus a (normal) monomorphism. Define $h$ as the canonical map $h:N\vee\ker f\to (N\vee\ker f)/\ker f=N$. This is a normal epimorphism since $f$ satisfies \eqref{normal3}. Thus $f=h\circ g$ satisfies \eqref{normal1}.

 Similarly, $f$ has a factorization into a normal epimorphism $h:M\to M/\ker f$ followed by the obvious morphism $g:M/\ker f\to N$. Since $f$ satisfies \eqref{normal3}, $g$ is injective and thus a (normal) monomorphism. Hence $f=g\circ h$ satisfies \eqref{normal2}.
\end{proof}

\begin{remark}\label{remark_abelian_category}
 As we have seen, the category $A-\Mod$ satisfies many properties of an abelian category: it has finite limits and colimits and thus in particular products and coproducts, kernels and cokernels, pullbacks and pushouts. Monomorphisms are normal, and a morphism is an isomorphism if and only if it is both a monomorphism and an epimorphism.

 However, the following facts show that $A-\Mod$ is not an abelian category: in general, epimorphisms are \emph{not} normal, the canonical morphism $M/\ker f\to\im f$ is \emph{not} an isomorphism, the canonical morphism $M\vee N\to M\times N$ is \emph{not} an isomorphism and the morphism set $\Hom(M,N)$ does \emph{not} have an intrinsic structure of an abelian group (where $M$ and $N$ are any $A$-sets and $f:M\to N$ is any morphism).

 These properties and problems will be inherited by the categories of (quasi-)coherent sheaves on $\Mo$-schemes resp.\ $\Fun$-schemes, as we will see later. They allow us to carry over methods from algebraic geometry to a far extend, but we have to treat certain points with care. In particular, we will meet the class of normal morphisms again, when we define $K$-theory in  section \ref{subsection:definition_of_k-theory}.
\end{remark}

\begin{definition} We call a short exact sequence of $A$-sets $$ 0 \longrightarrow M_1 \longrightarrow M_2 \longrightarrow M_3 \longrightarrow 0 $$ \emph{admissible} if and only if all morphisms in the sequence are normal.
\end{definition}

\subsubsection{Tensor products}
\label{tensor_products}

Let $M$ and $N$ be $A$-sets. We will define the \emph{tensor product $M\tensor_A N$} as a quotient of the coproduct $\bigvee_{M\times N} A$. Note that $\bigvee_{M\times N} A$ is equal to $(M\times N\times A)/\sim_\vee$ as a pointed set where $(m,n,a)\sim_\vee (m',n',a')$ if and only if $a=\ast =a'$, and $A$ acts only on the last factor.

Let $\sim$ be the equivalence relation on $\bigvee_{M\times N} A$ that is generated by relations of the form $(b.m,n,a)\sim(m,b.n,a)\sim(m,n,ba)$ where $a,b\in A$, $m\in M$ and $n\in N$. This equivalence relation is compatible with $\sim_\vee$ and with the $A$-action on $\bigvee_{M\times N}A$, and we can define the \emph{tensor product $M\otimes_AN$} as the quotient $\bigvee_{M\times N}A/\sim$. We write $m\otimes n$ for the element $(m,n,1)\in M\otimes_A N$. Since $(m,n,a)\sim(a.m,n,1)$, we can write every element of $M\otimes_AN$ in this form. The $A$-action on $M\otimes_AN$ looks like $a.(m\otimes n)=(a.m)\otimes n$, which is the same as $m\otimes(a.n)$.

An alternative description of the tensor product is given by the following. The map $p: M\times N\to M\tensor_A N$ that maps $(m,n)$ to $m\otimes n$ is surjective and \emph{$A$-biequivariant}, i.e.\ $f(a.m,n)=a.f(m,n)=f(m,a.n)$ for all $a\in A$, $m\in M$ and $n\in N$. Let $\sim_\times$ be the equivalence relation on $M\times N$ that is generated by $(a.m,n)\sim_\times(m,a.n)$ for $a\in A$, $m\in M$ and $n\in N$. Then the above map $M\times N\to M\tensor_A N$ induces a bijection $M\times N/\sim_\times\stackrel\sim\longrightarrow M\tensor_A N$ of pointed sets.

From this description one verifies easily the universal property of the tensor product as formulated as below.

\begin{lemma}
 For every $A$-biequivariant map $f:M\times N\to P$, there is a unique $A$-equivariant map $f':M\otimes_A N\to P$ such that the diagram
 $$ \xymatrix{M\times N\ar[d]_p\ar[rr]^{f} && P\\M\otimes_AN\ar[urr]_{f'}} $$
 commutes. Given an $A$-set $M$, the functor $M\otimes_A (-)$ is left-adjoint to $\Hom(M,-)$. \qed
\end{lemma}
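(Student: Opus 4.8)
The plan is to treat the two assertions separately, deriving the universal property first and then obtaining the adjunction as a formal consequence. For the universal property I would work with the alternative description of $M\otimes_A N$ established just above, namely the bijection $(M\times N)/\!\sim_\times\ \stackrel\sim\longrightarrow M\otimes_A N$ induced by $p$. Given an $A$-biequivariant map $f:M\times N\to P$, the generating relation $(a.m,n)\sim_\times(m,a.n)$ is respected by $f$, since $f(a.m,n)=a.f(m,n)=f(m,a.n)$; hence $f$ is constant on $\sim_\times$-classes and descends to a map of pointed sets $f':M\otimes_A N\to P$ with $f'\circ p=f$, i.e.\ $f'(m\otimes n)=f(m,n)$. (That $f$ is base-point preserving follows from $f(\ast,n)=f(0.m,n)=0.f(m,n)=\ast$, so this is compatible with the pointed quotient.) Uniqueness is immediate because $p$ is surjective, and $A$-equivariance of $f'$ follows from
$$ f'\bigl(a.(m\otimes n)\bigr)=f'\bigl((a.m)\otimes n\bigr)=f(a.m,n)=a.f(m,n)=a.f'(m\otimes n), $$
together with the fact that every element of $M\otimes_A N$ has the form $m\otimes n$.

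For the adjunction $M\otimes_A(-)\dashv\Hom(M,-)$, I would exhibit a bijection $\Hom(M\otimes_A N,P)\cong\Hom(N,\Hom(M,P))$, natural in the $A$-sets $N$ and $P$, recalling that $\Hom(M,P)$ carries the $A$-action $(a.g)(m)=g(a.m)$ introduced earlier. To an $A$-equivariant $\phi:M\otimes_A N\to P$ I would associate $\psi:N\to\Hom(M,P)$ with $\psi(n)=\bigl(m\mapsto\phi(m\otimes n)\bigr)$. Using $(a.m)\otimes n=a.(m\otimes n)$ and the equivariance of $\phi$, each $\psi(n)$ is itself $A$-equivariant, so it genuinely lies in $\Hom(M,P)$; and using in addition $m\otimes(a.n)=a.(m\otimes n)$, the map $\psi$ is $A$-equivariant. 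Conversely, from an $A$-equivariant $\psi:N\to\Hom(M,P)$ the assignment $(m,n)\mapsto\psi(n)(m)$ is $A$-biequivariant—the identity $f(a.m,n)=a.f(m,n)$ is the equivariance of the single map $\psi(n)$, while $f(m,a.n)=a.f(m,n)$ combines the equivariance of $\psi$ (to rewrite $\psi(a.n)=a.\psi(n)$) with the definition of the action on $\Hom(M,P)$—so the universal property just proven produces a unique $A$-equivariant $\phi:M\otimes_A N\to P$.

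These two assignments are mutually inverse by construction: the round trips satisfy $\phi'(m\otimes n)=\psi(n)(m)=\phi(m\otimes n)$ and $\psi'(n)(m)=\phi(m\otimes n)=\psi(n)(m)$, which pin down $\phi'=\phi$ and $\psi'=\psi$ on the generating elements. This yields the claimed bijection, and naturality in $N$ and $P$ is a routine diagram chase against precomposition and postcomposition.

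The statement is essentially formal, so I do not anticipate a genuine obstacle; the only point requiring care is bookkeeping the two $A$-actions simultaneously—the action on $\Hom(M,P)$ by precomposition and the action on $M\otimes_A N$—and keeping straight that, because $A$ is commutative, both $(a.m)\otimes n$ and $m\otimes(a.n)$ represent $a.(m\otimes n)$. This symmetry is exactly what lets biequivariance translate cleanly into equivariance on each side of the adjunction.
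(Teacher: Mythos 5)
Your proof is correct and takes exactly the route the paper intends: the lemma is stated there with no written proof, since the preceding text notes that the universal property is ``verified easily'' from the bijection $M\times N/\sim_\times\ \stackrel\sim\longrightarrow M\otimes_A N$, which is precisely the description you descend along. Your derivation of the adjunction via the explicit bijection $\Hom(M\otimes_AN,P)\cong\Hom(N,\Hom(M,P))$ is the standard formal argument and is carried out correctly, including the careful bookkeeping of the action $(a.g)(m)=g(a.m)$ on $\Hom(M,P)$.
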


Let $M$ and $N$ be $A$-sets. We denote by $M\wedge N=M \times N / (M \vee N)$ the \emph{smash product} of pointed sets. This is an $A$-set via the action defined by $a.(m,n)=(a.m,a.n)$ if both $a.m\neq\ast$ and $a.n\neq\ast$, and $a.(m,n)=\ast$ otherwise (where $a\in A$, $m\in M-\{\ast\}$ and $n\in N-\{\ast\}$). In the special case $A=\Fun$, we have $M\otimes_{\Fun}N\simeq M\wedge N$ as $A$-sets. For general $A$, however, there exists only an $A$-biequivariant map $M\wedge N\to M\otimes_AN$.

Let $f:A\to B$ be a morphism of monoids, $M$ an $A$-set and $N$ a $B$-set. Then a map $g:M\to N$ of pointed sets is said to be \emph{compatible with $f$} if the diagram
$$ \xymatrix{A\times M  \ar[rr]^{\theta_M}\ar[d]_{(f,g)}&&M\ar[d]^g\\ B\times N\ar[rr]^{\theta_N}&&N} $$
commutes. We can consider $B$ as an $A$-set by defining $a.b=f(a)b$ for $a\in A$ and $b\in B$. In this case, the $A$-set $M\otimes_AB$ inherits the structure of a $B$-set by defining $b.(m,c)=(m,bc)$ for all $b,c\in B$ and $m\in M$. This extends naturally to a functor
$$ -\otimes_AB:\ A-\Mod \quad \longrightarrow \quad B-\Mod, $$
which we call the \emph{base extension functor from $A$ to $B$}. This functor has a right adjoint, namely, every $B$-set $N$ can be considered as an $A$-set by letting $A$ act on $N$ via $f$. With this at hand, we see that a map $g:M\to N$ is compatible with $f:A\to B$ if and only if it is $A$-equivariant.

Let $A\to B$ and $A\to C$ be monoid morphisms. Then both $B$ and $C$ are $A$-sets. We can define a monoid structure on the $A$-set $B\otimes_AC$ by $(b\otimes c)\cdot (b'\otimes c')=(bb')\otimes(cc')$. The zero of $B\otimes_AC$ is $0\otimes 0$ and its one is $1\otimes 1$. Together with the canonical morphism $B\to B\otimes_AC$ sending $b$ to $b\otimes 1$ and $C\to B\otimes_AC$ sending $c$ to $1\otimes c$, the monoid $B\otimes_AC$ is the pushout of the diagram $B\leftarrow A\to C$ in the category $\Mo$. In particular, if $A=\Fun$, then $B\otimes_AC$ equals the coproduct $B\wedge C$.

\subsubsection{Base extensions to $\Z$}

If $M$ is an $A$-set, we denote by $M_\Z$ the free abelian group on the generators $M-\{*\}$. It has a natural $A_\Z$-module structure by linear extension of the $A$-action on $M$. This extends to a functor
$$-\otimes_AA_\Z: \ A-\mathcal{M}od \quad \longrightarrow \quad A_\Z-\mathcal{M}od, $$
which we call the \emph{base extension functor from $A$ to $A_\Z$}. More generally, if $A$ is a monoid, $B$ is a ring and $f:A\to B$ is a multiplicative map, then there exists a unique extension of $f$ to a ring homomorphism $f_\Z:A_\Z\to B$. Let $M$ be an $A$-set, then we define $M\otimes_AB$ to be the $B$-module $M_\Z\otimes_{A_\Z}B$. This defines the base extension functor $-\otimes_AB: A-\Mod\to B-\Mod$. In the special case $A=\Fun$ and $B=A_\Z=\Z$, we obtain the \emph{base extension functor from $\Fun$ to $\Z$}.

We collect some basic properties about base extensions that are easy to prove.
\begin{lemma}\label{easy_base_extension_properties}
Let $A$ be a monoid and let $f:M\to N$ be a morphism of $A$-sets.
\begin{enumerate}
 \item\label{base1} We have $(\coker f)_\Z\simeq\coker f_\Z$ and $(\im f)_\Z\simeq\im f_\Z$.
 \item\label{base2} We have $(M\otimes_AN)_\Z\simeq M_\Z\otimes_{A_\Z}N_\Z$ and $(M\vee N)_\Z\simeq M_\Z\oplus N_\Z$.
 \item\label{base3} If $A\to B$ is a morphism of monoids, then $(M\otimes_AB)_\Z\simeq M_\Z\otimes_{A_\Z}B_\Z$ as $B_\Z$-modules. If $N$ is a $B$-set and $M\to N$ is a morphism of pointed sets that is compatible with $A\to B$, then $M_\Z\to N_\Z$ is compatible with the ring homomorphism $A_\Z\to B_\Z$.
\item\label{base4} A morphism $f:M\to N$ of $A$-sets is injective (surjective) if and only if $f_\Z: M_\Z\to N_\Z$ is injective (surjective).\qed
\end{enumerate}
\end{lemma}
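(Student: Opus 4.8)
The throughline of all four parts is the explicit description of $(-)_\Z$: for an $A$-set $M$ the group $M_\Z$ is free abelian on the non-basepoint elements $M\setminus\{\ast\}$, the base point is sent to $0$, and the $A$-action is extended $\Z$-linearly (so any $a$ with $a.m=\ast$, and in particular $0_A$, acts as $0$ on the corresponding generator, which is exactly consistent with the relation $1\cdot 0_A$ imposed in $A_\Z$). Equivalently, $(-)_\Z$ is left adjoint to the restriction functor $A_\Z-\Mod\to A-\Mod$ that regards an $A_\Z$-module as an $A$-set with base point $0$; in particular it preserves all colimits. The plan is to prove \eqref{base1} and \eqref{base4} by exploiting freeness directly, to get the wedge in \eqref{base2} from colimit-preservation and the tensor product in \eqref{base2} from an explicit pair of inverse maps, and to deduce \eqref{base3} from \eqref{base2}.

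For \eqref{base1} I would first record that, as pointed sets, $\im f=\{f(m)\mid m\in M\}$ and $\coker f=(N\setminus\im f)\cup\{\ast\}$, so $(\im f)_\Z$ is free on $\im f\setminus\{\ast\}$ and $(\coker f)_\Z$ is free on $N\setminus\im f$. On the other hand $f_\Z$ sends the basis element $m$ to $f(m)$ (read as $0$ when $f(m)=\ast$), so $\im f_\Z$ is the subgroup of the free group $N_\Z$ generated by the subset $\im f\setminus\{\ast\}$ of its basis. Since a subgroup of a free abelian group generated by part of a basis is free on that subset, and the quotient is free on the complementary subset, the induced maps give $\im f_\Z\simeq(\im f)_\Z$ and $\coker f_\Z=N_\Z/\im f_\Z\simeq(\coker f)_\Z$ at once. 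Part \eqref{base4} is the same bookkeeping: if $f$ is injective it carries the basis $M\setminus\{\ast\}$ injectively into the basis $N\setminus\{\ast\}$ (no non-basepoint element maps to $\ast$), so $f_\Z$ is injective, while a failure of injectivity of $f$ exhibits a nonzero element $m-m'$ or $m$ in $\ker f_\Z$; and $f_\Z$ is surjective iff $\im f_\Z=N_\Z$, which, as $\im f_\Z$ is generated by the basis-subset $\im f\setminus\{\ast\}$, holds iff $\im f\setminus\{\ast\}=N\setminus\{\ast\}$, i.e.\ iff $f$ is surjective.

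For \eqref{base2} the wedge is immediate: $M\vee N$ has non-basepoint elements $(M\setminus\{\ast\})\sqcup(N\setminus\{\ast\})$, so $(M\vee N)_\Z$ is free on this union, which is $M_\Z\oplus N_\Z$ (abstractly, $(-)_\Z$ preserves the coproduct and coproducts of $A_\Z$-modules are direct sums). For the tensor product I would write down the mutually inverse $A_\Z$-homomorphisms: the map $M_\Z\otimes_{A_\Z}N_\Z\to(M\otimes_A N)_\Z$ sending $m\otimes n$ to the class of $m\otimes n$ (or to $0$ if $m\otimes n=\ast$), and the map $(M\otimes_A N)_\Z\to M_\Z\otimes_{A_\Z}N_\Z$ sending the basis element $m\otimes n$ to $m\otimes n$. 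Once well-definedness is settled these are visibly inverse on generators. Part \eqref{base3} then follows: taking $N=B$ (viewed as an $A$-set) in \eqref{base2} already gives $(M\otimes_A B)_\Z\simeq M_\Z\otimes_{A_\Z}B_\Z$, and one checks the isomorphism intertwines the evident $B_\Z$-actions, so it is $B_\Z$-linear; the compatibility assertion is formal, since $g(a.m)=f(a).g(m)$ extends $\Z$-linearly to $g_\Z(a\cdot x)=f_\Z(a)\cdot g_\Z(x)$.

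I expect the only genuinely delicate point to be the well-definedness of the two comparison maps in \eqref{base2}: one must confirm that the relation $(b.m)\otimes n=m\otimes(b.n)$ defining $M\otimes_A N$ corresponds precisely to the $A_\Z$-bilinearity relation $(bm)\otimes n=m\otimes(bn)$, and that every left-hand expression collapsing to the base point (the classes $\ast\otimes n$, $m\otimes\ast$, and $0_A.m\otimes n$) maps to $0$ on the module side — which it does exactly because $0_A=0$ in $A_\Z=\Z[A]/(1\cdot 0_A)$. Everything else reduces to the single observation that $M_\Z$ is free abelian on $M\setminus\{\ast\}$, together with the elementary behaviour of free abelian groups under generation by, and quotients by, subsets of a basis.
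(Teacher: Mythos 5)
Your proof is correct. The paper in fact offers no argument for this lemma (it is stated with a \qed as a collection of facts "easy to prove"), and your definition-unwinding argument --- reducing everything to $M_\Z$ being free abelian on $M\setminus\{\ast\}$, with subgroups/quotients by subsets of a basis handling \eqref{base1} and \eqref{base4}, and the explicit pair of comparison maps (whose well-definedness you correctly identify and settle as the only delicate point) handling the tensor product in \eqref{base2} and hence \eqref{base3} --- is exactly the intended one.
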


\begin{lemma} \label{extension_of_normal_morphism_to_Z}
 Let $f:M\to N$ be a morphism of $A$-sets. The canonical inclusion $\tau: (\ker f)_\Z\to\ker f_\Z$ is an isomorphism if and only if $f$ is normal.
\end{lemma}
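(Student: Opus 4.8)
The plan is to exploit that both $M_\Z$ and $N_\Z$ are free abelian groups with distinguished bases $M-\{\ast\}$ and $N-\{\ast\}$, so that $f_\Z$ is completely determined on basis elements by $f_\Z(m)=f(m)$, read as $0$ whenever $f(m)=\ast$. First I would record that $\tau$ is always injective: the generators $(\ker f)-\{\ast\}$ of $(\ker f)_\Z$ form a subset of the basis $M-\{\ast\}$ of $M_\Z$, each such generator is killed by $f_\Z$ because $f(m)=\ast$, and restricting a basis identifies $(\ker f)_\Z$ with the subgroup of $M_\Z$ generated by $(\ker f)-\{\ast\}$, which sits inside $\ker f_\Z$. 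Hence the whole statement reduces to deciding when $\tau$ is surjective, i.e.\ when $\ker f_\Z$ is generated by the basis elements lying in $\ker f$.

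For surjectivity I would analyse $\ker f_\Z$ via the fibre decomposition of $f$. Writing an arbitrary element of $M_\Z$ as a finite sum $x=\sum_{m\in M-\{\ast\}} c_m\,m$ and grouping the summands according to the value $f(m)\in N$, one obtains
$$ f_\Z(x) \;=\; \sum_{n\in N-\{\ast\}}\Bigl(\,\sum_{m\in f^{-1}(n)} c_m\Bigr)\, n, $$
since the terms with $f(m)=\ast$ contribute $0$. Because $N-\{\ast\}$ is a basis of $N_\Z$, this vanishes if and only if $\sum_{m\in f^{-1}(n)} c_m=0$ for every non-basepoint $n$. On the other hand $x$ lies in the image of $\tau$ exactly when $c_m=0$ for every $m$ with $f(m)\neq\ast$.

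The key step is then to compare these two conditions fibre by fibre. If $f$ is normal, then by condition \eqref{normal3} of Proposition \ref{normal_lemma} each fibre $f^{-1}(n)$ with $n\neq\ast$ has at most one element, so the vanishing of its coefficient sum forces the single coefficient to be $0$; hence every $x\in\ker f_\Z$ already lies in $(\ker f)_\Z$ and $\tau$ is surjective. Conversely, if $f$ is not normal, Proposition \ref{normal_lemma} yields a non-basepoint $n$ with two distinct preimages $m_1\neq m_2$; then $m_1-m_2\in\ker f_\Z$ while $c_{m_1}=1\neq 0$ shows $m_1-m_2\notin(\ker f)_\Z$, so $\tau$ fails to be surjective. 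Combining the two directions gives the equivalence.

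I do not expect a genuine obstacle: the argument is elementary linear algebra over the explicit bases of $M_\Z$ and $N_\Z$, and the only point requiring a little care is to compute $\ker f_\Z$ correctly from the fibre decomposition using the linear independence of $N-\{\ast\}$. The substantive input is simply the translation of \emph{each non-basepoint fibre is a singleton} into the language of normality furnished by Proposition \ref{normal_lemma}.
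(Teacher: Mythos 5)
Your proof is correct and follows essentially the same route as the paper: reduce to surjectivity of $\tau$, then observe that surjectivity is equivalent to $M-\ker f$ being mapped injectively into $N-\{\ast\}$, which is exactly condition \eqref{normal3} of Proposition \ref{normal_lemma}. Your fibre-by-fibre coefficient computation simply makes explicit the step the paper states without detail, so there is nothing to change.
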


\begin{proof}
 We have to prove that $\tau$ is surjective if and only if $f$ is normal. The surjectivity of $\tau$ is equivalent to the fact that the kernel of $f_\Z:M_\Z\to N_\Z$ is generated by elements $1\cdot m$, where $m\in\ker f$. This, in turn, is the case if and only if the set $M-\ker f$, which contains the basis elements of $M_\Z$ that are not contained in the kernel of $f_\Z$, is mapped injectively to the set $N-\{\ast\}$ of basis elements of $N_\Z$. This is equivalent saying that  $f$ is normal.
\end{proof}

\begin{remark}
 The circumstance that in general kernels do not commute with base extension to $\Z$ makes it necessary to perform certain constructions with care. For instance, the base extension of a short exact sequence
 $$ 0 \longrightarrow M_1 \longrightarrow M_2 \longrightarrow M_3 \longrightarrow 0 $$
 of $A$-sets to $\Z$ is always exact at $M_1\otimes_\Fun\Z$ and $M_3\otimes_\Fun\Z$, but it is exact at $M_2\otimes_\Fun\Z$ if and only if the epimorphism $M_2\to M_3$ is normal. Thus, in contrast to the base extension of rings, $(-)\otimes_\Fun\Z$ is right exact only for normal morphisms. The base extension is neither left exact, which means that the extension $\Z$ over $\Fun$ fails to be flat. This contrasts the intuition that $\Fun$ should behave like a field, but restricting to the class of normal morphisms fixes this defect.
\end{remark}

\subsubsection{Localization}

Let $S\subset A$ be a multiplicative set and $M$ be an $A$-set. We define the \emph{localization of $M$ at $S$} as the quotient $S^{-1}M=S\times M/\sim$ where the equivalence relation $\sim$ is defined by $(s,m)\sim(s',m')$ if and only if there is a $t\in S$ such that $tsm'=ts'm$. We write $\frac ms$ for elements $(s,m)$ of $S^{-1}M$. There is a canonical map $M\to S^{-1}M$ of pointed sets that sends $m$ to $\frac m1$. The set $S^{-1}M$ has the base point $\frac \ast 1$ and is an $S^{-1}A$-set by defining $(\frac as).(\frac mt)=\frac{a.m}{st}$. This extends naturally to a functor $S^{-1}: A-\Mod \to S^{-1}A-\Mod $.

If $S=\{f^n\}_{n\geq0}$ for some $f\in A$, then we define $M_f=S^{-1}M$. If $S=A-\p$ for a prime ideal $\p$ of $A$, then we define $M_\p=S^{-1}M$. The following statement is analogous to the case of modules over a ring.

\begin{lemma} \label{localization_of_A_and_M}
 Let $S\subset A$ be a multiplicative subset and $M$ be an $A$-set. Then $S^{-1}M\cong S^{-1}A\otimes_A M$ as $A$-sets.
\end{lemma}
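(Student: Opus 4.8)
The plan is to exhibit mutually inverse $A$-equivariant maps between the two sides, imitating the argument for modules over a ring but taking care that the localization relation on $S^{-1}M$ carries an auxiliary element $t\in S$ that must be reproduced inside the tensor product.

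First I would construct a map $\phi\colon S^{-1}A\otimes_A M\to S^{-1}M$. The assignment $\big(\frac as,m\big)\mapsto \frac{a.m}s$ defines a map $S^{-1}A\times M\to S^{-1}M$ of pointed sets, and it is $A$-biequivariant: for $b\in A$ the element $b.\frac as=\frac{ba}s$ maps to $\frac{(ba).m}s=\frac{b.(a.m)}s$, while $\frac as$ paired with $b.m$ maps to $\frac{a.(b.m)}s=\frac{(ab).m}s$, and these agree because $A$ is commutative. By the universal property of the tensor product (the lemma in Section \ref{tensor_products}) this factors through a unique $A$-equivariant map $\phi$ with $\phi\big(\frac as\otimes m\big)=\frac{a.m}s$.

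In the other direction I would set $\psi\colon S^{-1}M\to S^{-1}A\otimes_A M$, $\frac ms\mapsto \frac1s\otimes m$. The computational engine is that the $A$-action can be moved across the tensor sign, $\frac as\otimes m=\big(a.\frac1s\big)\otimes m=\frac1s\otimes(a.m)$, together with the rewriting $\frac1s\otimes m=\frac1{ss'}\otimes(s'.m)$ for any $s'\in S$, which holds because $\frac{s'}{ss'}=\frac1s$ in $S^{-1}A$. I expect the main obstacle to be well-definedness of $\psi$: if $\frac ms=\frac{m'}{s'}$, the hypothesis only supplies a $t\in S$ with $(ts).m'=(ts').m$, so this $t$ must be inserted into the tensor. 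I would do so by writing $\frac1{ss'}=\frac t{tss'}$ in $S^{-1}A$ and computing
$$\frac1s\otimes m=\frac1{ss'}\otimes(s'.m)=\frac t{tss'}\otimes(s'.m)=\frac1{tss'}\otimes\big((ts').m\big),$$
and symmetrically $\frac1{s'}\otimes m'=\frac1{tss'}\otimes\big((ts).m'\big)$; the relation $(ts').m=(ts).m'$ then yields $\psi\big(\frac ms\big)=\psi\big(\frac{m'}{s'}\big)$.

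Finally I would verify that $\phi$ and $\psi$ are mutually inverse, which is immediate: $\phi\psi\big(\frac ms\big)=\frac{1.m}s=\frac ms$, and $\psi\phi\big(\frac as\otimes m\big)=\frac1s\otimes(a.m)=\frac as\otimes m$ by the move-across identity. Since $\phi$ is $A$-equivariant and bijective, Lemma \ref{mono-epi} makes it the desired isomorphism (and in fact both maps are even $S^{-1}A$-equivariant). A more abstract alternative would note that both $S^{-1}(-)$ and $-\otimes_A S^{-1}A$ are left adjoint to the restriction functor $S^{-1}A-\Mod\to A-\Mod$ along $A\to S^{-1}A$, so they are canonically isomorphic by uniqueness of adjoints; but checking that the explicit localization realizes this adjunction amounts to the same computation, so I would prefer the direct construction above.
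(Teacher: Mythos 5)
Your proposal is correct and takes essentially the same route as the paper's proof: the same pair of explicit maps $\frac ms\mapsto\frac 1s\otimes m$ and $\frac as\otimes m\mapsto\frac{a.m}s$, with the same ``insert $t$'' manipulation to get well-definedness from the localization relation, and the same immediate verification that they are mutually inverse $A$-equivariant bijections. The only difference is cosmetic: you build the map $S^{-1}A\otimes_AM\to S^{-1}M$ via the universal property of the tensor product (the paper checks the tensor relations by hand), but this still requires the one-line check, which the paper performs and you elide, that $(\frac as,m)\mapsto\frac{a.m}s$ is independent of the representative of $\frac as$ --- i.e.\ if $tas'=ta's$ for some $t\in S$ then $\frac{a.m}s=\frac{ts'a.m}{ts's}=\frac{tsa'.m}{tss'}=\frac{a'.m}{s'}$ in $S^{-1}M$.
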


\begin{proof}
 We verify that the maps
 $$ \begin{array}{cccc} \varphi: & S^{-1}M & \longrightarrow & S^{-1}A\otimes_A M \\ & \frac ms & \longmapsto & \frac 1s \otimes m \end{array} $$
 and
 $$ \begin{array}{cccc} \psi: & S^{-1}A\otimes_A M & \longrightarrow & S^{-1}M \\ & \frac as\otimes m& \longmapsto     & \frac {a.m}s \end{array} $$
 are well-defined. If $\frac ms=\frac{m'}{s'}$ in $S^{-1}M$, then there is a $t\in S$ such that $ts.m'=ts'.m$. Thus
 $$ \frac 1s\otimes m=\frac{ts'}{ts's}\otimes m=\frac{1}{tss'}\otimes ts'.m = \frac{1}{tss'}\otimes ts.m' = \frac{ts}{tss'}\otimes m' = \frac 1{s'}\otimes m' $$
 what shows that $\varphi$ is well-defined. To show that $\psi$ is well-defined, we first note that both $b.\frac as\otimes m$ and $\frac as \otimes b.m$ are mapped to $\frac {ab.m}s$ for $a,b\in A$, $s\in S$ and $m\in M$ which shows that $\psi$ is well-defined on the equivalence relation in the definition of the tensor product. To show that $\psi$ is also well-defined on the equivalence relation of the localization $S^{-1}A$, let $\frac as\otimes m= \frac{a'}{s'}\otimes m$ in $S^{-1}A\otimes_A M$, i.e.\ there is a $t\in S$ such that $ts'a=tsa'$. This implies
 $$ \frac{a'.m}{s'} = \frac{ta'.m}{ts'} = \frac{ta.m}{ts} = \frac{a.m}{s} $$
 which shows that $\psi$ is well-defined. It is now obvious that $\varphi$ and $\psi$ are mutually inverse morphisms of $A$-sets.
\end{proof}

\begin{lemma}\label{localization_and_base_extension_of_A-sets}
 Let $S\subset A$ be a multiplicative subset and $M$ be an $A$-set.
 \begin{enumerate}
  \item\label{locbase1} Let $f:A\to B$ be a morphism of monoids and $T=f(S)$, which is a multiplicative subset of $B$. Then $T^{-1}(M\otimes_AB)\simeq S^{-1}M\otimes_{S^{-1}A}T^{-1}B$.
  \item\label{locbase2}  There is an isomorphism $S^{-1}(M_\Z)\simeq (S^{-1}M)_\Z$.
 \end{enumerate}
\end{lemma}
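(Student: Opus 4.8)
The plan is to reduce both assertions to statements about rings and tensor products that are already available, the central tool being Lemma \ref{localization_of_A_and_M}, which identifies any localization $S^{-1}M$ with the base extension $S^{-1}A\otimes_A M$. Combined with the associativity and base-change isomorphisms for the tensor product of $A$-sets, this lets me compute both sides of each isomorphism symbolically rather than by hand.

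For part (i) I would show that both sides are canonically isomorphic to $M\otimes_A T^{-1}B$. Since $f$ maps $S$ into $T=f(S)$, it induces a morphism $S^{-1}A\to T^{-1}B$, and the square relating $A\to B$, $A\to S^{-1}A$, $B\to T^{-1}B$ and $S^{-1}A\to T^{-1}B$ commutes; hence the two $A$-set structures on $T^{-1}B$, one obtained through $B$ and one through $S^{-1}A$, coincide. On the left-hand side, $M\otimes_A B$ is a $B$-set, so Lemma \ref{localization_of_A_and_M} applied to the monoid $B$ gives $T^{-1}(M\otimes_A B)\cong T^{-1}B\otimes_B(M\otimes_A B)$, and associativity together with $B\otimes_B T^{-1}B\cong T^{-1}B$ collapses this to $M\otimes_A T^{-1}B$. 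On the right-hand side, Lemma \ref{localization_of_A_and_M} gives $S^{-1}M\cong M\otimes_A S^{-1}A$, so
\[ S^{-1}M\otimes_{S^{-1}A}T^{-1}B \;\cong\; (M\otimes_A S^{-1}A)\otimes_{S^{-1}A}T^{-1}B \;\cong\; M\otimes_A T^{-1}B \]
again by associativity. Comparing the two computations gives the claim.

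For part (ii) I would chain base-extension identities. Using Lemma \ref{localization_of_A_and_M} and then the isomorphism $(M\otimes_A N)_\Z\cong M_\Z\otimes_{A_\Z}N_\Z$ from Lemma \ref{easy_base_extension_properties}, one obtains
\[ (S^{-1}M)_\Z \;\cong\; (S^{-1}A\otimes_A M)_\Z \;\cong\; (S^{-1}A)_\Z\otimes_{A_\Z}M_\Z. \]
By Lemma \ref{localization_and_base_extension} the ring $(S^{-1}A)_\Z$ is canonically $S^{-1}(A_\Z)$, the localization of $A_\Z$ at the image of $S$, and for modules over a ring localization agrees with tensoring by the localized ring, so $S^{-1}(A_\Z)\otimes_{A_\Z}M_\Z\cong S^{-1}(M_\Z)$. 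Stringing these isomorphisms together yields $(S^{-1}M)_\Z\cong S^{-1}(M_\Z)$ as $(S^{-1}A)_\Z$-modules.

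The main obstacle is that the associativity and base-change isomorphisms for tensor products of $A$-sets are used freely above but are not recorded explicitly in the preceding material. I would either derive them once from the universal property of the tensor product (the adjunction $M\otimes_A(-)\dashv\Hom(M,-)$ stated earlier), or, matching the explicit style of the surrounding proofs, exhibit the mutually inverse $A$-equivariant maps determined by $\frac{m\otimes b}{t}\mapsto\frac m1\otimes\frac bt$ and verify that they are well defined. Checking well-definedness simultaneously against the tensor-product relation and the two localization relations is the only genuinely fiddly part; everything else is formal.
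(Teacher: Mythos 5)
Your proof is correct, but it takes a genuinely different route from the paper's. The paper's own proof is two lines of reduction to direct computation: part (i) is declared ``analogous to the case of rings'', meaning one writes down mutually inverse maps on fractions and checks well-definedness by hand, and part (ii) is proved ``the same as the proof of Lemma \ref{localization_and_base_extension}'', i.e.\ by the common-denominator rewriting $\sum_i m_i\frac{a_i}{s_i}=\frac{\sum_i m_ia_i'}{s}$. You instead treat localization throughout as base extension via Lemma \ref{localization_of_A_and_M} and push everything through formal tensor-product identities; in particular, for (ii) you use Lemma \ref{localization_and_base_extension} and Lemma \ref{easy_base_extension_properties} as \emph{inputs} rather than repeating the common-denominator argument, and for (i) you reduce both sides to $M\otimes_AT^{-1}B$ after observing that the two $A$-structures on $T^{-1}B$ (through $B$ and through $S^{-1}A$) coincide. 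What your approach buys: no element-chasing, and the $S^{-1}A$- resp.\ $T^{-1}B$- resp.\ $(S^{-1}A)_\Z$-equivariance of every isomorphism comes for free from functoriality, whereas in the hands-on approach it must be checked. What it costs is exactly what you flagged: the transitivity of base change for $A$-sets, $(M\otimes_AB)\otimes_BC\simeq M\otimes_AC$, is nowhere recorded in the paper and must be supplied. Your proposed fix is sound and cheap: the paper states that $-\otimes_AB$ is left adjoint to restriction of scalars along $A\to B$, so both $(-\otimes_AB)\otimes_BC$ and $-\otimes_AC$ are left adjoint to restriction along $A\to C$, and uniqueness of left adjoints gives the natural isomorphism. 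With that one supplement, your argument is complete.
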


\begin{proof}
 The proof of \eqref{locbase1} is analogous to the case of rings. The proof of \eqref{locbase2} is the same as the proof of Lemma \ref{localization_and_base_extension}.
\end{proof}

\begin{proposition}\label{prop_localizations_and_limits}
 Localizations of $A$-sets commute with finite limits and small colimits.
\end{proposition}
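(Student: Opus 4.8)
The plan is to treat the two halves of the statement by entirely different mechanisms, exploiting that localization is simultaneously a base extension (hence a left adjoint) and a sufficiently ``exact'' operation on underlying pointed sets.

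For the small colimits I would first invoke Lemma \ref{localization_of_A_and_M} to identify the localization functor $S^{-1}(-)$ with the base extension $(-)\otimes_A S^{-1}A\colon A\text{-}\Mod \to S^{-1}A\text{-}\Mod$ along the canonical morphism $A\to S^{-1}A$. As recorded in Section \ref{tensor_products}, this base extension functor admits a right adjoint, namely restriction of scalars along $A\to S^{-1}A$. Since a left adjoint preserves all colimits that exist in its source, and $A\text{-}\Mod$ possesses all small colimits by Proposition \ref{ASetsAreCompleteAndCocomplete}, it follows at once that $S^{-1}(\colim D)\cong\colim\, S^{-1}D$ for every small diagram $D$ in $A\text{-}\Mod$, the colimit on the right being formed in $S^{-1}A\text{-}\Mod$. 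No calculation is needed here.

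For the finite limits I would reduce, as in the proof of Proposition \ref{ASetsAreCompleteAndCocomplete}, to the two generating cases of finite products and equalizers, since a functor preserving both preserves every finite limit. For a finite family $\{M_i\}_{i\in I}$ I would show the canonical comparison map $S^{-1}(\prod_{i} M_i)\to \prod_i S^{-1}M_i$, sending $\frac{(m_i)}{s}$ to $(\frac{m_i}{s})_i$, is bijective: surjectivity uses the common denominator $s=\prod_{i} s_i$ to rewrite any tuple $(\frac{m_i}{s_i})_i$ with a single denominator, and injectivity uses $t=\prod_i t_i$ to merge the finitely many witnesses of the localization relation into one. For the equalizer of $f,g\colon M\to N$ I would prove that the inclusion of $S^{-1}\eq(f,g)$ into $\eq(S^{-1}f,S^{-1}g)$ is onto by clearing denominators: if $\frac{f(m)}{s}=\frac{g(m)}{s}$ then some $t\in S$ satisfies $f((ts).m)=g((ts).m)$ by $A$-equivariance, so $(ts).m\in\eq(f,g)$ while $\frac ms=\frac{(ts).m}{(ts)s}$; injectivity of the comparison follows because $\eq(f,g)$ is an $A$-subset of $M$, so any localization relation between its elements already holds inside $\eq(f,g)$.

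The main obstacle is the finite-product step, and it is exactly the place where finiteness is indispensable: the common-denominator and common-witness tricks break down for infinite index sets, which is precisely why only \emph{finite} limits are preserved (already over a ring $S^{-1}$ fails to commute with infinite products). A secondary point to keep straight is the change of base category: one must observe that limits in $S^{-1}A\text{-}\Mod$ are computed on underlying pointed sets exactly as in $A\text{-}\Mod$, so that the comparison maps above are genuinely the canonical ones and the word ``commutes'' is unambiguous.
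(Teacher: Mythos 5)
Your proof is correct, and it splits into one half that matches the paper and one half that takes a genuinely different route. For finite limits you do exactly what the paper does: reduce to finite products and equalizers, prove bijectivity of the comparison map $S^{-1}\prod M_i\to\prod S^{-1}M_i$ via the common-denominator and common-witness tricks, and handle equalizers by clearing denominators (your version is in fact more careful than the paper's rather terse chain of equalities, since you separately justify injectivity of $S^{-1}\eq(f,g)\to S^{-1}M$ using that $\eq(f,g)$ is an $A$-subset). For small colimits, however, the paper argues concretely: it reduces to coproducts and coequalizers, observes $S^{-1}\bigvee M_i\simeq\bigvee S^{-1}M_i$ directly, and identifies $S^{-1}\coeq(f,g)$ with $\coeq(S^{-1}f,S^{-1}g)$ by unwinding both equivalence relations. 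You instead invoke Lemma \ref{localization_of_A_and_M} to identify $S^{-1}(-)$ with the base extension $(-)\otimes_A S^{-1}A$, which by the discussion in Section \ref{tensor_products} is left adjoint to restriction of scalars, so it preserves all small colimits with no computation. This is a legitimate and cleaner argument; the only point you should make explicit is that the isomorphism $S^{-1}M\cong S^{-1}A\otimes_AM$ of Lemma \ref{localization_of_A_and_M} is natural in $M$ and $S^{-1}A$-equivariant (both are immediate from the explicit maps $\varphi,\psi$ in its proof, but the lemma as stated only says ``as $A$-sets''). The trade-off: your adjunction argument is shorter and automatically covers all small colimits, while the paper's hands-on computation yields explicit descriptions of the localized coproducts and coequalizers, which is the kind of concrete information later used when working with quasi-coherent sheaves. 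Your closing remarks --- that finiteness is genuinely needed for products, and that (co)limits in $S^{-1}A$-$\Mod$ are computed on underlying pointed sets just as in $A$-$\Mod$, so the comparison maps are unambiguous --- are both correct and worth stating.
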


\begin{proof}
 Since finite colimits are equalizers of finite products and small colimits are coequalizers of small coproducts, it suffices to show that localizations commute with finite products, equalizers, small coproducts and coequalizers. Fix a multiplicative subset $S$ of $A$.

 Let $\{M_i\}$ be a finite collection of $A$-sets. Define $\Phi:S^{-1}\prod M_i\to \prod S^{-1}M_i$ by $\Phi(\frac{(m_i)}{s})=\bigr(\frac{m_i}s\bigl)$. It is easily verified that $\Phi$ is a morphism of $S^{-1}A$-sets and that $\Phi$ is injective. Surjectivity follows from the equation $\big(\frac{m_i}{s_i}\bigl)  = \bigl(\frac{s_i'm_i}{s}\bigr)$ in $\prod S^{-1}M_i$ where $s_i'=\prod_{j\neq i}s_j$ and $s=\prod s_i$ is the product over all $s_i$.

 Let $f,g:M\to N$ be two morphisms of $A$-sets. Then
 \begin{multline*}
   S^{-1}\eq(f,g) \ = \ \biggl\{\frac ms\in\S^{-1}M \biggl| f(m)=g(m)\biggr\} \\ = \ \biggl\{\frac ms\in\S^{-1}M \biggl| S^{-1}f(\frac ms)=S^{-1}g(\frac ms)\biggl\} \ = \  \eq(S^{-1}f,S^{-1}g)
 \end{multline*}

 Let $\{M_i\}$ be a family of $A$-set indexed by an arbitrary set. Then it is obvious that $S^{-1}\bigvee M_i\simeq \bigvee S^{-1}M_i$.

 Let $f,g:M\to N$ be two morphisms of $A$-sets. Then $S^{-1}\coeq(f,g)$ is by definition the quotient of $S\times N$ by the equivalence relation $\sim$ generated by $(s,n)\sim (s',n')$ if there is a $t\in S$ such that $ts.n'=ts'.n$ and $(s,n)\sim (s,n')$ if there is an $m\in M$ such that $n=f(m)$ and $n'=g(m)$. This is the same as $\coeq(S^{-1}f,S^{-1}g)$.
\end{proof}

\begin{lemma}\label{normal_and_localization}
 Let $S\subset A$ be a multiplicative subset. If $f:M\to N$ is a normal morphism of $A$-sets, then $S^{-1}f:S^{-1}M\to S^{-1}N$ is a normal morphism of $S^{-1}A$-sets.
\end{lemma}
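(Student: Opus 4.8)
The plan is to verify normality of $S^{-1}f$ directly through the fibre characterization of Proposition \ref{normal_lemma}. By condition \eqref{normal3} it suffices to show that every fibre of $S^{-1}f$ over a non-base-point of $S^{-1}N$ is a singleton, and this is the cleanest route since $S^{-1}f$ is given explicitly by $\frac{m}{s}\mapsto\frac{f(m)}{s}$.

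Concretely, I would take two elements $\frac{m}{s}$ and $\frac{m'}{s'}$ of $S^{-1}M$ with $\frac{f(m)}{s}=\frac{f(m')}{s'}$ in $S^{-1}N$, assuming this common value is not the base point $\frac{\ast}{1}$, and show $\frac{m}{s}=\frac{m'}{s'}$. Unwinding the localization relation yields a $u\in S$ with $(us').f(m)=(us).f(m')$, which by $A$-equivariance of $f$ reads $f\bigl((us').m\bigr)=f\bigl((us).m'\bigr)$. The decisive observation is that this common value is \emph{not} the base point of $N$: by the definition of the equivalence in $S^{-1}N$, the hypothesis $\frac{f(m)}{s}\neq\frac{\ast}{1}$ says exactly that $v.f(m)\neq\ast$ for every $v\in S$, and in particular for $v=us'$. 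Hence normality of $f$ (its fibre over this non-base-point is a singleton) forces $(us').m=(us).m'$, which is precisely the relation witnessing $\frac{m}{s}=\frac{m'}{s'}$ in $S^{-1}M$. This establishes \eqref{normal3} for $S^{-1}f$.

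I expect the only point demanding care to be the bookkeeping of the base point: one must confirm that the common image really is nontrivial before invoking normality of $f$, and this reduces to the elementary fact that $\frac{f(m)}{s}$ being nonzero in the localization forces $v.f(m)\neq\ast$ for all $v\in S$. Everything else is a routine manipulation of the localization equivalence relation and the $A$-action axioms.

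For completeness I would mention two alternative arguments. First, a categorical one: by Proposition \ref{prop_localizations_and_limits} localization commutes with finite limits and small colimits, hence with the formation of $\ker f$ and $\im f$ and with the canonical map $M/\ker f\to\im f$ of condition \eqref{normal4}; since $f$ normal makes this map an isomorphism and localization preserves isomorphisms, the analogous map for $S^{-1}f$ is an isomorphism, so $S^{-1}f$ is normal. Second, one could descend to $\Z$ by combining the criterion of Lemma \ref{extension_of_normal_morphism_to_Z} with the compatibility of localization and base extension in Lemma \ref{localization_and_base_extension_of_A-sets}. I would present the direct fibre argument, as it is the most transparent and entirely self-contained.
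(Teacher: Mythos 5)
Your direct fibre argument is correct and is essentially the paper's own proof: the paper likewise unwinds the localization relation to $f(ts'.m)=f(ts.m')$ and invokes condition (iii) of Proposition \ref{normal_lemma}, merely organizing the argument as a case split (either $ts'.m=ts.m'$, or both images equal $\ast$) rather than assuming up front that the common image is not the base point. The bookkeeping point you isolate --- that $\frac{f(m)}{s}\neq\ast$ in $S^{-1}N$ forces $v.f(m)\neq\ast$ for all $v\in S$ --- corresponds in the paper to multiplying the equation $f(ts.m')=f(ts'.m)=\ast$ by $\frac{1}{tss'}$ to conclude that both localized images are already $\ast$.
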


\begin{proof}
 Let $\frac ms, \frac{m'}{s'}\in S^{-1}M$ be elements such that $S^{-1}f(\frac ms)=S^{-1}f(\frac{m'}{s'})$. Then, by definition of $S^{-1}f$, $\frac{f(m)}s=\frac{f(m')}{s'}$, which means that there is a $t\in S$ such that $f(ts.m')=ts.f(m')=ts'.f(m)=f(ts'.m)$. Since $f$ is normal, either $ts.m'=ts'.m$, which means that $\frac ms=\frac{m'}{s'}$, or $f(ts.m')=f(ts'.m)=\ast$. In the latter case, we can multiply the equation by $\frac1{tss'}$ and see that already $S^{-1}f(\frac ms)=S^{-1}f(\frac{m'}{s'})=\ast$. Thus $S^{-1}f$ is normal.
\end{proof}

\subsubsection{Projective $A$-sets}
It is well known that projective $A$-acts are disjoint union of $A$-acts of the form $eA$ where $e^2=e$, see \cite{K72}. We prove that the corresponding statement is also true for $A$-sets.

Let $S$ be a subset of an $A$-set $M$. The $A$-set $M$ is said to be \emph{free on $S$} if it satisfies the following universal property: for every $A$-set $N$ and every map $f:S\to N$ there is an $A$-equivariant map $F:M\to N$ such that $F(s)=f(s)$ for all $s\in S$. An $A$-set $M$ is said to be \emph{free} if there is a subset $S$ of $M$ such that $M$ is free on $S$.

One verifies immediately that for $S=\{s_i\}$, the $A$-set $\bigvee As_i$ is free on $S$. The universal property implies that $\bigvee As_i$ is the unique free $A$-set on $S$ up to unique isomorphism. The \emph{rank of a free $A$-set $M$} is the cardinality of $S$. In particular, the trivial $A$-set $\{\ast\}$ is a free $A$-set of rank $0$ on $S=\emptyset$.

By Lemma \ref{easy_base_extension_properties} \eqref{base2}, the base extension of a free $A$-set is a free $A_\Z$-module.

Recall that an object $P$ of a category is \emph{projective} if every morphism $P\to N$ factors through every epimorphism $M\to N$. Since $A-\Mod$ has the notion of exact sequences, an $A$-set $P$ is projective if and only if $\Hom(P,-)$ is exact. The universal property of a free $A$-set implies that every free $A$-set is projective. Another characterization of projective $A$-sets is the following.

\begin{lemma} \label{lemma:ProjectiveASetsSplitsIntoFree}
An $A$-set $P$ is projective if and only if there is a splitting epimorphism from some free $A$-set to $P$. An $A$-set $P=\bigvee_{i\in I}P_i$ is projective if and only if each $P_i$ is projective.
\end{lemma}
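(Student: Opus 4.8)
The plan is to reduce everything to two ingredients: the fact, already noted above, that every free $A$-set is projective, together with the elementary observation that a retract of a projective $A$-set is again projective. I would first record this observation: if $Q$ is a retract of a projective $A$-set $P$, say $\iota\colon Q\to P$ and $r\colon P\to Q$ with $r\circ\iota=\id_Q$, then $Q$ is projective. Indeed, given a morphism $g\colon Q\to N$ and an epimorphism $h\colon M\to N$, the composite $g\circ r\colon P\to N$ lifts through $h$ to some $\tilde g\colon P\to M$ because $P$ is projective, and then $\tilde g\circ\iota$ is the required lift of $g$, since $h\circ\tilde g\circ\iota=g\circ r\circ\iota=g$.

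For the first assertion, the implication from a splitting epimorphism to projectivity is then immediate: a splitting epimorphism $\pi\colon F\to P$ from a free $A$-set exhibits $P$ as a retract of $F$ (the section plays the role of $\iota$ and $\pi$ that of $r$), and $F$ is projective, so $P$ is projective by the observation. Conversely, given a projective $P$, I would construct the canonical free cover $F=\bigvee_{p\in P-\{\ast\}}A$ together with the morphism $\pi\colon F\to P$ determined by the universal property of the free $A$-set applied to the inclusion $P-\{\ast\}\hookrightarrow P$. This $\pi$ is surjective, hence an epimorphism by Lemma \ref{mono-epi}, and since $P$ is projective the identity $\id_P$ factors through $\pi$, producing a section; thus $\pi$ is a splitting epimorphism from a free $A$-set.

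For the second assertion, let $P=\bigvee_{i\in I}P_i$ with canonical inclusions $\iota_i\colon P_i\to P$. If each $P_i$ is projective, I would prove $P$ projective directly: given $g\colon P\to N$ and an epimorphism $h\colon M\to N$, lift each composite $g\circ\iota_i\colon P_i\to N$ through $h$ to a morphism $\tilde g_i\colon P_i\to M$, and assemble the $\tilde g_i$ into a single morphism $\tilde g\colon P\to M$ by the universal property of the coproduct (Proposition \ref{ASetsAreCompleteAndCocomplete}). Since $h\circ\tilde g$ and $g$ agree after precomposition with every $\iota_i$, they coincide, so $\tilde g$ lifts $g$. Conversely, if $P$ is projective, each $P_i$ is a retract of $P$: besides $\iota_i$, there is the collapse morphism $r_i\colon P\to P_i$ that is the identity on $P_i$ and sends every other wedge summand to the base point; it is $A$-equivariant because $A$ fixes $\ast$, and $r_i\circ\iota_i=\id_{P_i}$. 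The retract observation then shows that each $P_i$ is projective.

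The arguments are routine once the retract principle is in place; the only points requiring a moment of care are the $A$-equivariance of the collapse maps $r_i$ and the verification that the canonical map $F\to P$ is surjective, both of which are straightforward. The genuine conceptual content is entirely contained in the retract principle and in the existence of a free $A$-set surjecting onto any given $A$-set.
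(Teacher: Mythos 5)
Your proof is correct and takes essentially the same approach as the paper: the paper's forward direction is your canonical free cover together with a section supplied by projectivity, and its converse direction is precisely your retract principle written out concretely for a free cover rather than stated as an abstract observation. The only difference is packaging---you isolate the retract principle and spell out the second assertion (the collapse maps $r_i$ and the assembly of lifts via the universal property of the coproduct), details the paper dismisses as following ``easily from this characterization.''
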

\begin{proof}
 Let $P$ be a projective $A$-set and $S=\{s_i\}\subset P$ be a set of generators. Then the canonical $A$-equivariant map $g:\bigvee As_i\to P$ is surjective and thus an epimorphism. Since $P$ is projective, there is a section $f:P\to \bigvee As_i$ of $g$.

 Conversely, if $g:\bigvee As_i\to P$ is an epimorphism with section $f:P\to \bigvee As_i$, we show that $P$ is projective. Given an epimorphism $j:M\to N$ and a morphism $k: P\to N$, we obtain the morphism $k\circ g:\bigvee As_i \to N.$ Since $\bigvee As_i$ is projective, $k\circ g$ can be lifted to $h: \bigvee As_i \to M$ such that $k\circ g = j\circ h$. Since $k = k\circ g \circ f= j\circ h \circ f$, the composition $h\circ f$ is the sought lifting of $j$.

 The last statement of the lemma follows easily from this characterization of projective $A$-sets.
 \end{proof}

\begin{proposition}\label{prop:DescriptionOfProjectiveAsets}
 Every projective $A$-set $P$ is of the form $\bigvee_{i\in I}e_iA$ where $e_i^2=e_i$ are idempotents in $A$.
\end{proposition}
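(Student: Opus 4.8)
The plan is to realise $P$ as a retract of a free $A$-set and then to read off the desired decomposition from the associated idempotent endomorphism. By Lemma \ref{lemma:ProjectiveASetsSplitsIntoFree} there is a free $A$-set $F=\bigvee_{i\in I}As_i$ together with an epimorphism $g\colon F\to P$ admitting a section $f\colon P\to F$, so that $g\circ f=\id_P$. I would then set $e=f\circ g\colon F\to F$. This is an $A$-equivariant endomorphism, and it is idempotent since $e\circ e=f\circ(g\circ f)\circ g=f\circ g=e$. Because $g$ is a split epimorphism, $f$ identifies $P$ with $\im f=\im e$, so it suffices to show that $\im e\cong\bigvee_{j}e_jA$ for suitable idempotents $e_j\in A$.

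First I would record the structure of $F$. Each summand $As_i$ is a copy of $A$ with $0\cdot s_i=\ast$, so every element of $F$ other than the base point lies in exactly one summand and is uniquely of the form $bs_i$ with $b\in A-\{0\}$. Hence for each generator $s_i$ either $e(s_i)=\ast$, or $e(s_i)=b_is_{\sigma(i)}$ for a unique coefficient $b_i\in A-\{0\}$ and a unique index $\sigma(i)\in I$. By $A$-equivariance this determines $e$ completely, via $e(as_i)=ab_is_{\sigma(i)}$ for all $a\in A$ (using that $A$ is commutative).

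The heart of the argument, and the step I expect to require the most care, is to exploit the relation $e^2=e$ on the generators and keep the index bookkeeping honest. Computing $e^2(s_i)=b_i\,e(s_{\sigma(i)})$ and comparing with $e(s_i)=b_is_{\sigma(i)}$, the uniqueness of the representation $bs_j$ in a free $A$-set forces, whenever $e(s_i)\neq\ast$, both $\sigma(\sigma(i))=\sigma(i)$ and $b_ib_{\sigma(i)}=b_i$. The first equality says that $\sigma$ sends its domain into the fixed set $K=\{j\mid e(s_j)\neq\ast,\ \sigma(j)=j\}$; for such $j$ the relation with $i=j$ gives $b_j^2=b_j$, so $e_j:=b_j$ is an idempotent of $A$. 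The second equality, applied with $j=\sigma(i)$, gives $b_i=b_ie_j$, i.e.\ $b_i\in e_jA$. Once these two facts are extracted, the remaining assembly is routine; the only real subtlety is that an object $eA$ is a retract but generally not a wedge summand of $A$, so one must argue via the image of $e$ rather than hoping $F$ itself splits.

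Finally I would read off the image. Every nonzero value $e(as_i)=ab_is_{\sigma(i)}$ lands in the summand indexed by $j=\sigma(i)\in K$ and has coefficient $ab_i\in e_jA$; conversely, for $j\in K$ the values $e(as_j)=ae_js_j$ realise every element $e_jA$ in that summand. Thus $\im e$ meets the $j$-th summand in exactly $e_jAs_j\cong e_jA$ for $j\in K$ and is trivial on the remaining summands. Since distinct summands of $F$ share only the base point, this yields $\im e=\bigvee_{j\in K}e_jAs_j\cong\bigvee_{j\in K}e_jA$, and combining with $P\cong\im e$ completes the proof.
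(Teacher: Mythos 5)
Your proof is correct and takes essentially the same route as the paper's: both realize $P$ as a retract of a free $A$-set via Lemma \ref{lemma:ProjectiveASetsSplitsIntoFree} and extract the idempotents as the diagonal coefficients of the resulting idempotent self-map of the free $A$-set. The only difference is bookkeeping: the paper identifies $P$ with an $A$-subset of $\bigvee_{i}A.x_i$ on which the retraction restricts to the identity, which forces the index map you call $\sigma$ to fix every summand meeting $P$, whereas you keep $P$ abstract, carry $\sigma$ along, and cut down to its fixed set $K$ at the end.
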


\begin{proof}
 By Lemma \ref{lemma:ProjectiveASetsSplitsIntoFree}, we can assume that $P$ is an $A$-subset of some free $A$-set $\bigvee_{i\in I}A.x_i$ and that there is a morphism $f:\bigvee_{i\in I}A.x_i \to P$ that is the identity on $P$. Clearly, $P=\bigvee_{i\in I}(A.x_i\cap P)$. Let $i\in I$ such that $ A.x_i \cap P \neq \emptyset$.  Since  the composition
 $$ P \xrightarrow \vee_i Ax_i \xrightarrow{f}  P$$
 is the identity on $P$, we have for any $ a.x_i \in A.x_i \cap P$ that $f(a.x_i) = b.x_j$ implies $a.x_i = b.x_j$. In particular, this shows that $i=j$.

 For any $i\in I$ such that $A.x_i \cap P \neq \emptyset$, let $e_i\in A$ be an element such that $f(x_i)=e_i.x_i$. Since this map composed with the inclusion of $P \to \bigvee_{i\in I}A.x_i $ is identity, we have that $e_i.x_i=f(e_i.x_i)=e_i.f(x_i)=e_i^2.x_i$, and thus $e_i^2=e_i$.
\end{proof}

\begin{corollary}\label{coro:LocalizationAndBaseExtensionOfProjectivesAreProjectives} Let $P$ be a projective $A$-set and let $f:A\to B$ be a morphism of monoids.
\begin{enumerate}\item[1.] $P\otimes_A B$ is a projective $B$-set. In particular $S^{-1}P$ is a projective $S^{-1}A$-set if $S$ is a multiplicative subset of $A$.
\item[2.] $P_\Z$ is a projective $A_\Z$-module.
\end{enumerate}
\end{corollary}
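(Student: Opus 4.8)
The plan is to reduce both assertions to a single mechanism: projectivity is detected by being a retract of a free object (Lemma \ref{lemma:ProjectiveASetsSplitsIntoFree}), and each of the two base extension functors sends free objects to free objects while preserving splittings because it is a functor. So the first step is to invoke Lemma \ref{lemma:ProjectiveASetsSplitsIntoFree} to fix a free $A$-set $F=\bigvee_{i\in I}As_i$ together with a splitting epimorphism $g:F\to P$ and a section $s:P\to F$, so that $g\circ s=\id_P$ with $F$ free.

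For part~1 I would apply the base extension functor $-\otimes_A B$. This functor has a right adjoint (restriction of scalars along $f$, as recorded in the subsection on tensor products) and therefore commutes with colimits, in particular with the coproduct defining $F$; hence $F\otimes_A B\cong\bigvee_{i\in I}(As_i\otimes_A B)\cong\bigvee_{i\in I}B$ is a free $B$-set. Since $-\otimes_A B$ is a functor it carries the identity $g\circ s=\id_P$ to $(g\otimes\id_B)\circ(s\otimes\id_B)=\id_{P\otimes_A B}$, exhibiting $P\otimes_A B$ as a retract of the free $B$-set $F\otimes_A B$. By Lemma \ref{lemma:ProjectiveASetsSplitsIntoFree} applied over $B$, this shows $P\otimes_A B$ is projective. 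The localization statement is then immediate from the isomorphism $S^{-1}P\cong S^{-1}A\otimes_A P$ of Lemma \ref{localization_of_A_and_M}, taking $B=S^{-1}A$ and $f$ the canonical morphism $A\to S^{-1}A$.

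For part~2 I would run the identical argument through the base extension functor $-\otimes_A A_\Z$ to $\Z$. Here freeness is preserved by Lemma \ref{easy_base_extension_properties}\eqref{base2}, which guarantees that $F_\Z$ is a free $A_\Z$-module, and functoriality again gives $g_\Z\circ s_\Z=\id_{P_\Z}$, so $P_\Z$ is a direct summand of the free $A_\Z$-module $F_\Z$. Since a direct summand of a free module is projective, $P_\Z$ is a projective $A_\Z$-module.

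I do not anticipate a real obstacle: the only points requiring attention are that each base extension functor genuinely takes free objects to free objects (for part~1 this rests on $-\otimes_A B$ preserving coproducts, for part~2 it is exactly Lemma \ref{easy_base_extension_properties}\eqref{base2}), and that, being functors, they preserve the relation $g\circ s=\id$. As an alternative one could bypass Lemma \ref{lemma:ProjectiveASetsSplitsIntoFree} and argue from the explicit form $P=\bigvee_{i}e_iA$ of Proposition \ref{prop:DescriptionOfProjectiveAsets}, noting that $f(e_i)$ is idempotent in $B$ and that $e_iA\otimes_A B\cong f(e_i)B$, so that $P\otimes_A B\cong\bigvee_i f(e_i)B$ is again of the projective form described there; but the retract argument is cleaner and treats both statements uniformly.
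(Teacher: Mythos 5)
Your proof is correct, but your main argument takes a genuinely different route from the paper's. The paper first invokes Lemma \ref{lemma:ProjectiveASetsSplitsIntoFree} \emph{and} Proposition \ref{prop:DescriptionOfProjectiveAsets} to reduce to the case $P=eA$ with $e^2=e$, and then settles both claims by direct computation: $eA\otimes_AB=f(e)B$ with $f(e)$ idempotent in $B$, and $(eA)_\Z=e(A_\Z)$ with $e$ idempotent in $A_\Z$ --- which is precisely the alternative you sketch in your closing paragraph. Your primary argument instead bypasses the structure theorem for projective $A$-sets entirely: it uses only the retract characterization of Lemma \ref{lemma:ProjectiveASetsSplitsIntoFree}, the fact that each base extension functor takes free objects to free objects (for $-\otimes_AB$ because it has a right adjoint and so preserves the coproduct $\bigvee_i As_i$, with $A\otimes_AB\cong B$; for $(-)_\Z$ by Lemma \ref{easy_base_extension_properties}\,(ii)), and functoriality to transport the identity $g\circ s=\id_P$. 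Both routes are complete; the steps you flag as needing care (preservation of frees, preservation of splittings) are indeed the only ones, and both are justified by results the paper provides, including Lemma \ref{localization_of_A_and_M} for the localization statement. What your approach buys is uniformity and generality --- one argument covers both parts and would apply to any functorial base change preserving free objects --- whereas the paper's computation buys an explicit description of the resulting projective objects, namely $\bigvee_i f(e_i)B$ and $\bigoplus_i e_i(A_\Z)$, which is occasionally useful later when projectives need to be manipulated concretely.
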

\begin{proof} By Lemma \ref{lemma:ProjectiveASetsSplitsIntoFree} and Proposition \ref{prop:DescriptionOfProjectiveAsets}, we only need to consider the case that $P=eA$ for some idempotent $e$ in $A$. Direct computation shows that $eA\otimes_AB=f(e)B$, where $f(e)$ is an idempotent in $B$, and that $(eA)_\Z=e(A_\Z)$, where $e$ is regarded as an element in $A_\Z$ which is again idempotent.
\end{proof}

\begin{proposition}\label{prop:SplittingOfAdmissibleShortExactSequences} An admissible short exact sequence of $A$-sets
$$0\longrightarrow M \stackrel{i}{\longrightarrow} N \stackrel{j}{\longrightarrow} P \longrightarrow 0$$ is splitting exact if $P$ is projective. In other words, the sequence is isomorphic to the canonical short exact sequence
$$0\longrightarrow M \stackrel{i}{\longrightarrow} M\vee P \stackrel{j}{\longrightarrow} P \longrightarrow 0.$$
\end{proposition}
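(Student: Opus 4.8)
The plan is to use the projectivity of $P$ to produce a section of $j$, and then to use the normality of $j$ (which is part of admissibility) to show that this section, together with $i$, identifies $N$ with the wedge $M\vee P$. Since $P$ is projective and $j:N\to P$ is an epimorphism, the identity morphism $\id_P:P\to P$ factors through $j$; that is, there is a morphism $s:P\to N$ of $A$-sets with $j\circ s=\id_P$. This $s$ is the splitting we will use.

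Next I would define a morphism $\phi:M\vee P\to N$ by $\phi\circ\iota_M=i$ and $\phi\circ\iota_P=s$, where $\iota_M$ and $\iota_P$ are the canonical inclusions into the wedge. This is well-defined since both $i$ and $s$ preserve the base point, and it is $A$-equivariant because $i$ and $s$ are and because the $A$-action on $M\vee P$ is induced from the inclusions. It then remains to check that $\phi$ is bijective, for then it is an isomorphism by Lemma \ref{mono-epi}. For surjectivity, take $n\in N$: if $j(n)=\ast$ then $n\in\ker j=\im i$ by exactness at $N$, so $n$ lies in the image of $\phi\circ\iota_M$; if $j(n)=p\neq\ast$, then since $j$ is a normal epimorphism its fiber over $p$ is a single element by Lemma \ref{normal_mono_and_epi}, and as $j(s(p))=p$ this forces $n=s(p)$, which lies in the image of $\phi\circ\iota_P$. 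For injectivity, suppose $\phi(x)=\phi(y)$; applying $j$ and distinguishing whether the arguments come from $M$ or from $P$, one reduces every case to the injectivity of $i$ (which holds by exactness at $M$) and to $j\circ s=\id_P$, using again that $\im i=\ker j$.

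Finally I would verify that $\phi$ is a morphism of short exact sequences, i.e.\ that $\phi\circ\iota_M=i$ (true by construction) and that $j\circ\phi$ equals the canonical projection $M\vee P\to P$; the latter holds because $j\circ\phi\circ\iota_M=j\circ i=\ast$ and $j\circ\phi\circ\iota_P=j\circ s=\id_P$. This exhibits the given sequence as isomorphic to the canonical split sequence, as required.

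The main obstacle is the surjectivity of $\phi$, and it is precisely here that the admissibility hypothesis is indispensable: the section $s$ meets only one point of each fiber of $j$, so without the normality of $j$---which guarantees that every fiber away from the base point is a singleton---elements of $N$ lying over $P-\{\ast\}$ could be missed, and $\phi$ would fail to be surjective. Thus projectivity of $P$ and normality of $j$ play complementary roles: projectivity yields the section, and normality guarantees that the section, together with $i$, accounts for \emph{all} of $N$.
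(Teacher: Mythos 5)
Your proof is correct and takes essentially the same approach as the paper: projectivity of $P$ yields a section $s$ of $j$, and the paper likewise forms the map $i\vee s: M\vee P\to N$ and asserts it is an isomorphism---you simply supply the bijectivity check that the paper leaves as ``one checks.'' One small correction: the injectivity of $i$ does not follow from exactness at $M$ alone, since for $A$-sets a trivial kernel does not imply injectivity; it follows from the trivial kernel together with the normality of $i$, which is part of the admissibility hypothesis (Proposition \ref{normal_lemma}).
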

\begin{proof} Let $s:P \to N$ be the section of $j$. Then we have a morphism $M\vee P\stackrel{i\vee s}\longrightarrow N$. One checks that $i\vee s$ is an isomorphism and it gives the isomorphism of the two admissible short exact sequences.
\end{proof}

\subsubsection{Finitely generated  $A$-sets}

 An $A$-set $M$ is called \emph{finitely generated} if there exist finitely many elements $m_1, \cdots m_t$ such that $M$ is the union $$M=Am_1\cup\cdots\cup Am_t.$$

\begin{lemma}\label{lemma:MIsFGIffMZIsFG}
 $M$ is a finitely generated $A$-set if and only if $M_\mathbb{Z}$ is a finitely generated $A_\Z$-module.
\end{lemma}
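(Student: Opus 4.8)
The plan is to prove both implications directly, exploiting the fact that $M_\Z$ is \emph{free} abelian on the pointed set $M-\{\ast\}$ and that the $A$-action on $M$ extends linearly, so that each $a\in A$ carries a basis element either to another basis element or to $0$ (the image of $\ast$). This ``monomial'' behaviour of the action is what makes the statement go through.

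For the forward implication, suppose $M=Am_1\cup\cdots\cup Am_t$ as an $A$-set. I claim the same elements $m_1,\dots,m_t$ generate $M_\Z$ as an $A_\Z$-module. Indeed, every basis element $m\in M-\{\ast\}$ equals $a.m_i$ for some $a\in A$ and some $i$; viewing $a$ as an element of $A_\Z$, this reads $m=a\cdot m_i$ in $M_\Z$. Hence $M_\Z=A_\Z m_1+\cdots+A_\Z m_t$, which is finitely generated.

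For the reverse implication, suppose $M_\Z$ is generated as an $A_\Z$-module by $x_1,\dots,x_n$. Writing each $x_k$ as a finite $\Z$-linear combination of basis elements, I collect all the (finitely many) basis elements that occur into a list $m_1,\dots,m_t\in M-\{\ast\}$ and claim $M=Am_1\cup\cdots\cup Am_t$. The base point lies in each $Am_i$ since $0.m_i=\ast$, so it suffices to treat an arbitrary basis element $m\in M-\{\ast\}$. Expanding $m=\sum_k\alpha_k x_k$ with $\alpha_k\in A_\Z$, then expanding each $\alpha_k$ into a $\Z$-combination of elements of $A$ and each $x_k$ into its $\Z$-combination of the $m_i$, one obtains an identity in $M_\Z$ of the form
$$ m \ = \ \sum_\nu n_{\nu}\,(a_\nu.m_{i_\nu}), $$
where each $a_\nu\in A$ and each $a_\nu.m_{i_\nu}$ is a basis element or $0$. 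Since $M_\Z$ is free abelian on $M-\{\ast\}$, comparing the coefficient of the basis element $m$ on both sides forces $m=a_\nu.m_{i_\nu}$ for at least one index $\nu$, whence $m\in Am_{i_\nu}$.

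The routine bookkeeping of these expansions aside, the one point that genuinely requires the special structure of $A$-sets is this final coefficient comparison: it is precisely the freeness of $M_\Z$ together with the fact that the $A$-action sends basis elements to basis elements or to zero, never to genuine sums, that lets us pass from a $\Z[A]$-linear expression for $m$ back to a single honest relation $m=a.m_i$ inside $M$. I expect this step to be the crux; the remainder is formal.
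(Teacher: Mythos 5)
Your proof is correct and follows essentially the same route as the paper's: both directions are handled identically, and in the converse direction the paper likewise collects the finitely many basis elements occurring in the $\Z$-expansions of the $A_\Z$-module generators and shows that they generate $M$ as an $A$-set. The only cosmetic difference is that the paper packages your final coefficient comparison as the statement that the $A$-subset $N$ generated by these elements satisfies $N_\Z=M_\Z$ and hence $N=M$, whereas you unwind that step explicitly using the freeness of $M_\Z$ on $M-\{\ast\}$ and the fact that elements of $A$ carry basis elements to basis elements or to zero.
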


\begin{proof}
Clearly $M_\mathbb{Z}$ is a finitely generated $A_\Z$-module if $M$ is a finitely generated $A$-set. Conversely, assume that $M_\mathbb{Z}$ is generated by $\{m_1, \cdots, m_n\}$ as an $A_\Z$-module. Since $M_\Z$ is a free abelian group on $M-\{*\}$, we can write each $m_i$ uniquely as $$m_i = \sum\limits_{j=1}^{r_i} n_{ij} m_{ij}$$ for some $ n_{ij}\in \mathbb{Z}$ and $m_{ij} \in M$, where $r_i$ is some integer depending on $i$.  Let $N$ be the $A$-subset generated by $\{m_{ij}\big\vert i=1,\cdots, n \mbox{ and } j =1,\cdots r_i\}$. Since $N_\Z$ equals $M_\Z$, we see that $N=M$ which shows that $M$ is finitely generated.
\end{proof}

An $A$-set $M$ is called \emph{Noetherian} if all the $A$-subsets of $M$ are  finitely generated. It follows immediately that if $M$ is a Noetherian $A$-set, then the $A$-subsets and quotients of $M$ are also Noetherian.

A monoid $A$ is called \emph{Noetherian} if and only it is finitely generated. It is well known (see \cite[Theorem 5.1]{Gilmer}) that the ideals of a Noetherian monoid $A$ are finitely generated. So a Noetherian monoid $A$ is also Noetherian as an $A$-set. But the converse is not true. For example, let $\Fun[G]$ denote the monoid associated to a group $G$ which is not finitely generated, then $\Fun[G]$ is not Noetherian as a monoid but it is obviously Noetherian as an $\Fun[G]$-set.

\begin{proposition}\label{prop:FGIffNoet}
 Let $A$ be a Noetherian monoid and $M$ be an $A$-module. Then $M$ is a finitely generated $A$-set if and only if $M$ is Noetherian.
\end{proposition}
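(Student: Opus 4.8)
The plan is to prove the equivalence by showing both directions, relying on the bridge to rings provided by Lemma \ref{lemma:MIsFGIffMZIsFG} and the hypothesis that $A$ is Noetherian (hence finitely generated).

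First I would dispose of the easy direction. If $M$ is Noetherian, then by definition every $A$-subset of $M$ is finitely generated; in particular $M$ itself is an $A$-subset of $M$, so $M$ is finitely generated. This requires no hypothesis on $A$.

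For the converse, suppose $A$ is Noetherian and $M$ is a finitely generated $A$-set. The goal is to show every $A$-subset $N \subset M$ is finitely generated. The natural strategy is to pass to the base extension to $\Z$ and use classical commutative algebra. By Lemma \ref{monoid_of_finite_type_and_base_extension}, since $A$ is finitely generated as a monoid, $A_\Z$ is a finitely generated $\Z$-algebra, hence a Noetherian ring by the Hilbert basis theorem. By Lemma \ref{lemma:MIsFGIffMZIsFG}, the finite generation of $M$ as an $A$-set gives that $M_\Z$ is a finitely generated $A_\Z$-module. Over a Noetherian ring, a finitely generated module is Noetherian, so every submodule of $M_\Z$ is finitely generated. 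Now given an $A$-subset $N \subset M$, the inclusion is injective, so by Lemma \ref{easy_base_extension_properties}\eqref{base4} (or directly from the construction of $(-)_\Z$ as a free abelian group) the induced map $N_\Z \to M_\Z$ is an injection, realizing $N_\Z$ as an $A_\Z$-submodule of $M_\Z$. Hence $N_\Z$ is a finitely generated $A_\Z$-module. Applying Lemma \ref{lemma:MIsFGIffMZIsFG} once more, $N$ is a finitely generated $A$-set. Since $N$ was arbitrary, $M$ is Noetherian.

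The step I expect to require the most care is verifying that an $A$-subset $N$ of $M$ really does give rise to an $A_\Z$-\emph{submodule} $N_\Z$ of $M_\Z$, rather than merely a quotient or an abstract finitely generated module with a map to $M_\Z$. Since $M_\Z$ is by definition the free abelian group on $M - \{*\}$ and $N_\Z$ the free abelian group on $N - \{*\}$, the inclusion $N - \{*\} \hookrightarrow M - \{*\}$ identifies $N_\Z$ with the subgroup of $M_\Z$ spanned by the basis elements lying in $N$, and this identification is $A_\Z$-linear; so the submodule claim is clean, and no normality hypothesis is needed here because we only use injectivity, which always base-extends well by Lemma \ref{easy_base_extension_properties}\eqref{base4}. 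The only genuine external input is the Hilbert basis theorem applied to $A_\Z$.
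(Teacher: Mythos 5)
Your proof is correct and follows essentially the same route as the paper's: both directions are handled identically, with the key step being the passage to $M_\Z$ via Lemma \ref{lemma:MIsFGIffMZIsFG}, the observation that $A_\Z$ is Noetherian because $A$ is finitely generated, and the identification of $N_\Z$ as an $A_\Z$-submodule of $M_\Z$. Your extra care in justifying that $N_\Z$ really is a submodule (using that $(-)_\Z$ preserves injections, so no normality issue arises) only makes explicit what the paper asserts without comment.
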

\begin{proof} The if part is easy. To see the other implication, let $N$ be an $A$-subset of $M$, then $N_\Z$ is an $A_\Z$-submodule of $M_\Z$. $M_\Z$ is finitely generated as an $A_\Z$-module by Lemma \ref{lemma:MIsFGIffMZIsFG}. Since $A$ is finitely generated, $A_\Z$ is a Noetherian ring. So $M_\Z$ is Noetherian which implies that $N_\Z$ is also finitely generated as an $A_\Z$-module. By Lemma \ref{lemma:MIsFGIffMZIsFG}, this proves that $N$ is finitely generated as an $A$-set.
\end{proof}

\subsubsection{Examples}\label{A-set_examples}

The constructions of the previous sections provide already a variety of examples. Given a monoid $A$, there are the trivial $A$-set $0=\{\ast\}$, the product $A^n=\prod_{i=1}^n A$, the coproduct $A^{\vee n}=\bigvee_{i+1}^n A$, the tensor product $A^{\otimes n}=\bigotimes_{i=1}^n A$ and the smash product $A^{\wedge n}=\bigwedge_{i=1}^n A$. If $f:A\to B$ is a morphism of monoids, then $B$ is an $A$-set by defining $a.b=f(a)b$ for $a\in A$ and $b\in B$.

Any ideal $I$ of $A$ is an $A$-set since $ab\in I$ for every $a\in A$ and $b\in I$. Consequently, the quotient $A/I$ is also an $A$-set. In particular, $\Fun[A^\times]=A/(A-A^\times)$ and $\Fun[A^\int]=A/(A-A^\int)$ are $A$-sets.

The category of $\Fun$-sets is nothing else than the category of pointed sets together with morphisms that respect the base point.

More generally, let $A=\Fun[G]=G\amalg\{0\}$, where $G$ is a group and let $M$ be an $A$-set. Then the action of $A$ on $M$ restricts to an action of $G$ on $M$ and we see that $M$ decomposes into disjoint $G$ orbits. Let $\cC$ be the category whose objects are sets and whose morphism sets $\Hom(M,N)$ are the sets of $M\times N$-matrices (with $M$ and $N$ possibly being infinite) with coefficients in $A$, such that each row has at most one entry that differs from $0$. Define a functor $\cF:\cC\to A-\Mod$ by sending a set $S$ to $\bigvee_{s\in S} A.s$ and a morphism $f:S\to T$ to the morphism $\cF(f):\cF(S)\to\cF(T)$ of $A$-sets, which is defined by sending an element $a.s$ to $ag.t$ if there is  a non-trivial entry $g$ in the row corresponding to $s$ and the column corresponding to $t$, and to $\ast$ otherwise. Then $\cF$ is an equivalence of categories. Note that a morphism $f$ is normal if and only if the corresponding matrix has at most one non-trivial entry in each row. This builds a bridge to Haran's viewpoint on $\Fun$-geometry (\cite{Haran07}): the so-called \emph{$\F$-ring} corresponding to the monoid $A=\Fun[G]$ is the category of finitely generated $A$-sets together with all normal morphisms between them.

Let $A$ be the ``polynomial ring'' $\Fun[T]=\{T^i\}_{i\geq 0}\cup\{0\}$. An $\Fun[T]$-set $M$ is characterized by the base point preserving map $T:M\to M$ that sends $m$ to $T.m$. On the other hand, every base point preserving map $T:M\to M$ of a pointed set defines an action of $\Fun[T]$ on $M$ and gives $M$ the structure of an $\Fun[T]$-set. We describe certain $\Fun[T]$-sets in more detail.

The prime ideals of $\Fun[T]$ are $(0)=\{0\}$ and $(T)=\{T^i\}_{i\geq1}\cup\{0\}$. All other ideals are of the form $(T^k)=\{T^i\}_{i\geq k}\cup\{0\}$ for some $k\geq 0$. All the ideals $(T^k)$ are isomorphic to $\Fun[T]$ as an $\Fun[T]$-set. But the quotient $\Fun[T]/(T^k)$ is an $\Fun[T]$-set with $k+1$ elements $\{\ast,T^0,T^1,\ldots,T^{k-1}\}$. The base point preserving map $T:\Fun[T]/(T^k)\to \Fun[T]/(T^k)$ is given by $T.T^i=T^{i+1}$ for $0\leq i<k-1$ and $T.T^{k-1}=\ast$. The base extension of $\Fun[T]/(T^k)$ is $\Z[T]/(T^k)$.

Another family of $\Fun[T]$-sets are the pointed sets $\{\ast,T^0,\dotsc,T^{k-1}\}$ with $T.T^i=T^{i+1}$ for $0\leq i<k-1$ and $T.T^{k-1}=T^l$ for some $l\in\{0,\dotsc, k-1\}$. Its base extension to $\Z$ is $\Z[T]/(T^k-T^l)$.

More generally, every finitely generated $\Fun[T]$-set can be described as follows. Let $M$ be generated by $\{m_0=\ast,m_1,\dotsc,m_r\}$. Then we have for every $i\in\{1,\dotsc,r\}$ either that $\{T^j.m_i\}_{j\geq0}$ is a infinite set that is disjoint with $M_{< i}=\bigcup_{l=0}^{i-1} \{T^j.m_l\}_{j\geq0}$, or that there is a relation $T^{k_i}.m_i = T^{k'_i}.m_{l(i)}$ for some $T^{k'_i}.m_{l(i)}\in M_{<i}\cup\{T^0.m_i,\dotsc,T^{k_i-1}.m_i\}$. These relations describe $M$ completely. The base extension of $M$ to $\Z$ is isomorphic to $\Z[T_1]\oplus\dotsb\oplus\Z[T_r]/\langle T_i^{k_i}-T_{l(i)}^{k'_i}\rangle$ (with the convention $T_0=0$).

Another interesting $\Fun[T]$-set is the monoid $\Fun[T,T^{-1}]$ with the $\Fun[T]$-set structure given by the canonical inclusion $\Fun[T]\to\Fun[T,T^{-1}]$. This $\Fun[T]$-set is not finitely generated, but it is an injective object in $\Fun[T]-\Mod$. So are the finite coproducts $\Fun[T,T^{-1}]^{\vee n}$.


\section{The geometry of monoids}\label{geometry_of_monoids}

After recalling the notion of $\Mo$-schemes as defined by Connes and Consani (\cite{CC09}), following the ideas of Kato (\cite{Kato94}) and Deitmar (\cite{Deitmar05}), we develop a theory of $\cO_X$-modules and quasi-coherent sheaves for $\Mo$-schemes based on our notion of $A$-sets from the previous section. One may regard this section as a continuation of \cite{Deitmar05}.

 The theories of $\cO_X$-modules for $\Mo$-schemes and for usual schemes are analogous to a large extend. We forgo proofs when they are in complete analogy to usual scheme theory. At points where the theory differs, we will provide detailed explanation.

\subsection{$\Mo$-schemes}

In this subsection,  we recall the theory of $\Mo$-schemes from  cf.\ \cite{CC09} and \cite{Deitmar05}.

\subsubsection{Definition and general properties}

A \emph{monoidal space} is a pair ($X$, $\mathcal{O}_X$) consisting of a topological space $X$ and a sheaf of monoids $\mathcal{O}_X$, called the \emph{structure sheaf}. If ($X$, $\mathcal{O}_X$) is a monoidal space and $f: X \to Y$ is a continuous map of topological spaces, $f_*\mathcal{O}_X$ is a sheaf of monoids on $Y$. A \emph{morphism of monoidal spaces} $(X, \mathcal{O}_X)\to (Y, \mathcal{O}_Y)$ is a pair $(f, f^\#)$ consisting of a continuous map $f: X\to Y$ of topological spaces and a morphism of sheaves $f^\#: \mathcal{O}_Y \to f_* \mathcal{O}_{X}$.

A morphism $f:A\to B$ of monoids is \emph{local} if $f^{-1}(B-B^\times)=A-A^\times$. The \emph{stalk $\cO_{X,x}$ of $\cO_X$ at $x$}, i.e.\ the colimit $\colim \cO_X(U)$ over all open neighborhoods $U$ of $x$, always exists by Proposition \ref{monoid-limits}. We say that a morphism $(f, f^\#): (X,\cO_X)\to (Y,\cO_Y)$ of monoidal spaces is \emph{local} if for all $x\in X$, the morphism $f_x^\#: \cO_{Y,f(x)}\to\cO_{X,x}$ between stalks is local.

Let $A$ be a monoid. Recall from section \ref{localization_of_monoids} that $D_f$ is the set of all prime ideals of $A$ that do not contain $f\in A$. The \emph{spectrum of a monoid $A$} is the set $\spec A$ of all prime ideals of $A$ endowed with the topology generated by $\{D_f\}_{f\in A}$. By Lemma \ref{D_f-topology}, the family $\{D_f\}$ forms a basis for this topology. The structure sheaf $\cO_{\spec A}$ is defined by $\cO_{\spec A}(D_f)=A_f$ for all $f\in A$.

An \emph{affine $\Mo$-scheme} is a monoidal space that is isomorphic to the monoidal space ($\spec A$, $\cO_{\spec A }$) for some monoid $A$.  An \emph{$\Mo$-scheme} is a monoidal space that admits an affine cover, i.e.\ an open cover by affine $\Mo$-schemes. A \emph{morphism of $\Mo$-schemes} is a local morphism of monoidal spaces.

Let $f:A\to B$ be a morphism of monoids. The inverse image of a prime ideal of $B$ is a prime ideal of $A$. As in the case of usual schemes, this yields a continuous map $\varphi:\spec B\to\spec A$ and a morphism $\varphi^\#:\cO_{\spec A}\to\cO_{\spec B}$ of structure sheaves such that the pair $(\varphi,\varphi^\#)$ is a local morphism of monoidal spaces. Conversely, taking global section of $\varphi$ gives back $f$. More precisely, $\Mo$ is dual to the category of affine $\Mo$-schemes.

Let $\cB$ be the set of all affine open subsets of $X$. Then, by Lemma \ref{D_f-topology}, $\cB$ forms a basis of the topology of $X$. As in usual scheme theory, we have that for $X=\Spec A$ and a point $x=\p$ of $X$,   $\cO_{X,x}\simeq A_\p$.

\begin{proposition}
 The category of $\Mo$-schemes contains finite limits.
\end{proposition}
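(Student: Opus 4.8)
The plan is to prove that finite limits exist in the category of $\Mo$-schemes by the standard strategy from algebraic geometry: it suffices to construct a terminal object and fibre products, since every finite limit can be built from these. The terminal object is immediate---the affine $\Mo$-scheme $\Spec\Fun$ represents the terminal object, because $\Fun$ is the initial object of $\Mo$ (established earlier in the excerpt) and $\Mo$ is dual to the category of affine $\Mo$-schemes, so the contravariance turns the initial monoid into the terminal scheme. Thus the real content is the existence of fibre products $X\times_Z Y$ for a diagram $X\to Z\leftarrow Y$ of $\Mo$-schemes.

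**First I would** handle the affine case. Given $\Spec B\to\Spec A\leftarrow\Spec C$ arising from monoid morphisms $A\to B$ and $A\to C$, I would set the fibre product to be $\Spec(B\otimes_A C)$, where $B\otimes_A C$ is the pushout of $B\leftarrow A\to C$ in $\Mo$ constructed in section \ref{tensor_products} (the tensor product carries a monoid structure with $(b\otimes c)\cdot(b'\otimes c')=(bb')\otimes(cc')$). The universal property follows formally: since $\Mo$ is dual to affine $\Mo$-schemes and $B\otimes_A C$ is the pushout in $\Mo$, its spectrum is the pullback in the category of affine $\Mo$-schemes, and I would check that it remains a pullback in the larger category of all $\Mo$-schemes by using that morphisms from an arbitrary $\Mo$-scheme into an affine one are determined by the induced maps on global sections (the adjunction between global sections and $\Spec$, exactly as in ordinary scheme theory).

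**The hard part will be** globalizing: patching the affine fibre products to produce a fibre product of general $\Mo$-schemes. Here I would follow the classical gluing argument verbatim. Cover $X$, $Y$, and $Z$ by affine opens compatibly, form the affine fibre products over the affine pieces, and then show these glue along their overlaps. The key technical input is that fibre products are compatible with passage to open subschemes: if $U\subseteq X$ is open, then $U\times_Z Y$ is the open subscheme of $X\times_Z Y$ lying over $U$ under the projection. I would verify this first for basic affine opens $D_f$ (using that localization $B\otimes_A C\to (B\otimes_A C)_f$ realizes the relevant open immersion, and that localizations are compatible with the tensor product via Lemma \ref{localization_and_base_extension}-type identities and the localization results of section \ref{localization_of_monoids}), and then assemble the global object by the standard sheaf-theoretic gluing lemma for $\Mo$-schemes.

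**The main obstacle** I anticipate is purely bookkeeping rather than conceptual: one must check that the topological space, the structure sheaf, and the local morphism structure all glue coherently, and in particular that the universal property descends from the affine cover to the glued object. Since the excerpt explicitly notes that the theory of $\Mo$-schemes runs parallel to ordinary scheme theory, with the appropriate analogues of localization, stalks ($\cO_{X,\p}\simeq A_\p$), and the duality between $\Mo$ and affine $\Mo$-schemes all in place, I expect the gluing to proceed in complete analogy with \cite[Theorem II.3.3]{Hartshorne} or the corresponding argument in EGA, and so I would present the affine construction in detail and then assert that the gluing follows as in the classical case.
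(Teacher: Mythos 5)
Your proposal is correct and follows essentially the same route as the paper: the paper's proof simply reduces the statement to the existence of finite fibre products and cites Deitmar's Proposition 3.1, whose proof is precisely the construction you outline---the affine case via the monoid tensor product $B\otimes_AC$ (the pushout in $\Mo$) followed by the classical gluing argument. The only point worth noting is that the reduction to fibre products also requires a terminal object, which the paper records separately ($\spec\Fun$, in its examples section) and which you handle explicitly.
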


\begin{proof}
 It is enough to prove that finite fiber product of $\Mo$-schemes exists, which is proven in \cite[Proposition 3.1]{Deitmar05}.
\end{proof}

An $\Mo$-scheme is \emph{integral} if it can be covered by affine schemes that are isomorphic to the spectrum of an integral monoid. If an $\Mo$-scheme is integral, then every affine open subset is the spectrum of an integral monoid.

\subsubsection{Base extension to $\Z$}

The {\it base extension of $\Spec A$ to $\Z$} is $\Spec A_\Z$. Note that this extends to a functor $-\otimes_\Fun\Z$ from the category of affine $\Mo$-schemes to the category of affine schemes. Let $X$ be an $\Mo$-scheme. The intersection of two affine open subschemes of $X$ is an affine open subscheme by the definition of the topology of an affine $\Mo$-scheme and Lemma \ref{D_f-topology}.

Consider an affine open cover that is closed under intersections. Together with the inclusions of subsets, this defines a directed system, and $X$ is the colimit over this directed system. We define $X_\Z$ as the colimit over the base extension of the directed system to $\Z$. One can show that $X_\Z$ does not depend on the choice of cover. The association $X\mapsto X_\Z$ extends to a functor $-\otimes_\Fun\Z$ from the category of $\Mo$-scheme to the category of schemes, see \cite{Deitmar05} for details.

Since an open inclusion $\iota:V\hookrightarrow U$ of affine $\Mo$-schemes $U=\spec A$ and $V=\spec B$ means that $B$ is the localization of $A$ at some multiplicative subset $S\subset A$, the base extension $\iota_\Z: V_\Z\to U_\Z$ is induced by the localization $A_\Z\to S^{-1}A_\Z$ (cf.\ Lemma \ref{localization_and_base_extension}) and thus injective. Since the base extension $X_\Z$ of an $\Mo$-scheme is defined as the colimit over a system of inclusions, all the canonical morphisms $U_\Z\to X_\Z$ are injective where $U_\Z=\Spec A_\Z$ is the base extension of an affine open subset  $U=\spec A$ of $X$. If $U$ is any open subset of $X$, then we define $U_\Z$ as the union of all base extensions of affine open subsets of $U$ inside $X_\Z$.

This association comes indeed from a continuous map $\beta: X_\Z\to X$. Let $x$ be a point of $X_\Z$ and $U_\Z=\Spec A_\Z$ an affine open neighborhood that is the base extension of an affine open subset $U=\spec A$ of $X$. Then $x=\p$ is a prime ideal of $A_\Z$, and it is immediately verified that $\q=\p\cap A$ is a prime ideal of $A\subset A_\Z$. This prime ideal $\q$ of $A$ defines a point $y$ of $U\subset X$. We define $\beta(x)=y$. To verify that this is independent of the choice of $U$, we let $V_\Z$ be another affine neighborhood of $x$, which we can assume to be a subset of $U_\Z$ by replacing $V$ with $V\cap U$. Then $V=\spec S^{-1}A$ for some multiplicative subset $S$ of $A$. Let $f:A\to S^{-1}A$ be the canonical map. The independence of $\beta(x)$ from the choice of $U$ follows from the equality
\begin{multline*}
 f_\Z^{-1}(\p)\cap A \ = \ \{a\in A\mid f_\Z(a)\in\p\} \\ = \ \{a\in A\mid f(a)\in\p\cap S^{-1}A\} \ = \ f^{-1}(\p\cap S^{-1}A).
\end{multline*}
for any prime ideal $\p$ of $S^{-1}A$.

\begin{theorem}
  The map $\beta: X_\Z\to X$ is continuous and the inverse image of an open subset $U$ of $X$ is $U_\Z$. The map $\beta$ is functorial in $X$.
\end{theorem}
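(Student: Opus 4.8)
The plan is to reduce everything to the affine charts, where $\beta$ is the map on spectra induced by the canonical monoid morphism $A\to A_\Z$, and then to glue. First I would observe that on an affine chart $W_\Z=\Spec B_\Z$ arising from $W=\spec B$, the point-wise definition $\beta(\p)=\p\cap B$ agrees with the continuous map $\Spec B_\Z\to\spec B$ induced by the monoid morphism $B\to B_\Z$ (sending $b$ to the corresponding element of the semi-group ring). The independence of $\beta$ from the chosen chart has already been verified above, so these affine restrictions agree on overlaps and $\beta$ is a genuine global map. The heart of the argument is the computation of preimages of basic opens: for $g\in B$ the basic open $D_g\subset W$ equals $\spec B_g$, and I would compute
$$ \beta|_{W_\Z}^{-1}(D_g)\ =\ \{\p\in\Spec B_\Z\mid g\notin\p\}\ =\ D(g), $$
the standard basic open of $\Spec B_\Z$. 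By Lemma \ref{localization_and_base_extension} we have $(B_g)_\Z\simeq(B_\Z)_g$, so $D(g)=\Spec(B_\Z)_g=(D_g)_\Z$ inside $W_\Z$; that is, $\beta|_{W_\Z}^{-1}(D_g)=(D_g)_\Z$.

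For the preimage formula on an arbitrary open $U\subseteq X$, I would first treat the affine case $U_i=\spec A$. The inclusion $(U_i)_\Z\subseteq\beta^{-1}(U_i)$ is immediate from the construction of $\beta$. For the reverse, take $x\in X_\Z$ with $\beta(x)\in U_i$; choosing any affine chart $W_\Z\ni x$, the point $\beta(x)$ lies in $U_i\cap W$, which is again affine since intersections of affine opens are affine (by Lemma \ref{D_f-topology}), hence contains a basic neighborhood $D_g\subseteq U_i\cap W$ of $\beta(x)$. The computation above gives $x\in\beta|_{W_\Z}^{-1}(D_g)=(D_g)_\Z\subseteq(U_i)_\Z$, where the last inclusion uses that base extension respects inclusions of opens. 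This yields $\beta^{-1}(U_i)=(U_i)_\Z$. For a general open $U$, writing $U=\bigcup_i U_i$ as a union of affine opens and combining $U_\Z=\bigcup_i(U_i)_\Z$ with $\beta^{-1}(\bigcup_i U_i)=\bigcup_i\beta^{-1}(U_i)$ gives $\beta^{-1}(U)=U_\Z$. Since each $(U_i)_\Z$ is open in $X_\Z$, so is $U_\Z$; hence the preimage of every open is open, which is exactly the continuity of $\beta$.

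For functoriality, given a morphism $\phi:X\to Y$ of $\Mo$-schemes with base extension $\phi_\Z=\phi\otimes_\Fun\Z$, I would check $\beta_Y\circ\phi_\Z=\phi\circ\beta_X$ chart by chart. On affine charts $\phi$ is $\spec f$ for a monoid morphism $f:A\to B$, and $\phi_\Z=\Spec f_\Z$ for the ring map $f_\Z:A_\Z\to B_\Z$ extending $f$. For $\p\in\Spec B_\Z$ both composites send $\p$ to $\{a\in A\mid f(a)\in\p\}$: indeed $\beta_Y(\phi_\Z(\p))=f_\Z^{-1}(\p)\cap A$ while $\phi(\beta_X(\p))=f^{-1}(\p\cap B)$, and these coincide because $f_\Z$ restricts to $f$ on $A$ and $f(a)\in B$ automatically. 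Equivalently, this is the $\Spec$ of the commuting square formed by the canonical maps $A\to A_\Z$, $B\to B_\Z$ and $f$, $f_\Z$, which holds by naturality of $(-)_\Z$. Since the affine restrictions of $\phi$ and $\phi_\Z$ are compatible with the gluing defining the functor $-\otimes_\Fun\Z$, the naturality square commutes on each chart and therefore globally.

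The step requiring the most care is the affine preimage identity $\beta^{-1}(U_i)=(U_i)_\Z$, in particular ruling out that points of $X_\Z$ coming from other charts could map into $U_i$ without already lying in the base-extended chart $(U_i)_\Z$. This is precisely where I must combine the basic-open computation, the affineness of intersections of affine opens, and the compatibility of base extension with open inclusions. Once this identity is established, continuity is a formal consequence and functoriality reduces to the elementary identity $f_\Z^{-1}(\p)\cap A=f^{-1}(\p\cap B)$.
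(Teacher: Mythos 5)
Your proof is correct, and its global skeleton matches the paper's: reduce the identity $\beta^{-1}(U)=U_\Z$ to affine opens, settle that case using the compatibility of localization with base extension, and verify functoriality chart by chart via the identity $f_\Z^{-1}(\p)\cap A=f^{-1}(\p\cap B)$, which is exactly the commuting square the paper invokes. The difference is in how the reverse inclusion $\beta^{-1}(U)\subseteq U_\Z$ is established for affine $U$. The paper argues by contraposition: for $x\notin U_\Z$ lying in a chart $V_\Z$, it writes the intersection $V\cap U$ as $\spec S^{-1}A$ and applies Lemma \ref{prime_ideal_in_U_S} to conclude $\beta(x)\notin V\cap U$. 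You argue directly: you squeeze a basic open $D_g\subseteq U\cap W$ around $\beta(x)$ and compute $\beta|_{W_\Z}^{-1}(D_g)=D(g)=(D_g)_\Z$ using Lemma \ref{localization_and_base_extension}. Your route buys something real: it needs only that the sets $D_g$ form a basis of the topology of $W$ (Lemma \ref{D_f-topology}), whereas the paper's argument genuinely leans on the assertion that an intersection of two affine opens is again affine, i.e.\ of the form $\spec S^{-1}A$ --- a claim that requires care for non-separated $\Mo$-schemes (e.g.\ an affine plane with doubled origin). Ironically, you cite that same assertion when locating $D_g$, but for you it is superfluous: since $U\cap W$ is open in $W=\spec B$ and the $D_g$ form a basis of $\spec B$, the basic neighborhood exists regardless of whether $U\cap W$ is affine, so you should simply drop that parenthetical. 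Your functoriality argument, including the implicit shrinking of charts so that $\phi$ restricts to $\spec f$ on affines, is the same as (and slightly more explicit than) the paper's.
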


\begin{proof}
 To show that $\beta$ is continuous, it is enough to show that $\beta^{-1}(U)=U_\Z$ for open subsets $U$ of $X$. By the definition of $U_\Z$, it is enough to verify this for affine open subsets $U=\spec A$.

 If $x$ is in $U_\Z$, then, by definition of $\beta$, the image $\beta(x)$ is in $U$. If $x$ is not in $U_\Z$, but in another affine neighbourhood $V_\Z$, then we have to show that $\beta(x)\notin V\cap U$. If $V=\spec A$, then $x=\p$ is a prime ideal of $A$ and $V\cap U=\spec S^{-1}A$ for some multiplicative subset $S$ of $A$ (cf.\ Lemma \ref{D_f-topology}). That $x\notin (V\cap U)_\Z$ means that $p\cap S$ is not empty. Since $S\subset A$, this implies that $(\p\cap A) \cap S$ is not empty, and thus $\beta(\p)\notin S^{-1}A$ (cf.\ Lemma \ref{prime_ideal_in_U_S}).

 The functoriality of $\beta: X_\Z\to X$ follows from the local definition of $\beta$ and the commutativity of the diagram
 $$\xymatrix{A\ar[rr]^{f}\arincl{[d]}&&B\arincl{[d]}\\ A_\Z\ar[rr]^{f_\Z}&& B_\Z} $$
 for any morphism $f:A\to B$ of monoids.
\end{proof}

The theorem yields the following consequence, which follows from a general property of continuous maps.

\begin{corollary}
 Let $\{U_i\}$ be a family of open subsets of $X$. Then $(U_i\cap U_j)_\Z=U_{i,\Z}\cap U_{j,\Z}$ and $(\bigcup U_i)_\Z=\bigcup U_{i,\Z}$.\qed
\end{corollary}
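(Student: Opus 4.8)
The plan is to read off both equalities directly from the preceding theorem, which identifies the base extension of any open subset $U\subseteq X$ with the preimage $\beta^{-1}(U)$ under the continuous map $\beta\colon X_\Z\to X$. The only ingredient beyond the theorem is the purely set-theoretic fact that, for \emph{any} map of sets $\beta$, the preimage operation $\beta^{-1}$ preserves finite intersections and arbitrary unions; this is what is meant by the remark that the corollary follows from a general property of continuous maps.

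Concretely, I would first note that $U_i\cap U_j$ and $\bigcup_i U_i$ are again open subsets of $X$, so that the theorem applies to them and their base extensions are defined as the corresponding preimages. Then I would compute
$$(U_i\cap U_j)_\Z \;=\; \beta^{-1}(U_i\cap U_j) \;=\; \beta^{-1}(U_i)\cap\beta^{-1}(U_j)\;=\;U_{i,\Z}\cap U_{j,\Z}$$
for the intersection, and
$$\Bigl(\bigcup_i U_i\Bigr)_\Z \;=\; \beta^{-1}\Bigl(\bigcup_i U_i\Bigr)\;=\;\bigcup_i\beta^{-1}(U_i)\;=\;\bigcup_i U_{i,\Z}$$
for the union, where in each chain the outer equalities are the content of the theorem and the middle equality is the elementary compatibility of preimages with intersections resp.\ unions.

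Since all the real content---continuity of $\beta$ together with the identity $\beta^{-1}(U)=U_\Z$---has already been established in the theorem, there is no genuine obstacle here: the argument is a formal consequence. The only point that deserves a moment's attention is to confirm that the notation $U_\Z$ for a general (not necessarily affine) open subset $U$, which was defined as the union of the base extensions of the affine opens contained in $U$, indeed agrees with $\beta^{-1}(U)$; but this agreement is exactly the assertion of the theorem, so nothing further need be verified.
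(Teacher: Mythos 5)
Your proof is correct and is exactly the argument the paper intends: the paper gives no written proof beyond the remark that the corollary ``follows from a general property of continuous maps,'' and you have simply spelled out that property, namely that the identity $U_\Z=\beta^{-1}(U)$ from the theorem reduces both equalities to the compatibility of preimages with intersections and unions. Nothing further is needed.
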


An $\Mo$-scheme $X$ is \emph{separated} if $X_\Z$ is a separated scheme.

\begin{remark}
 We use this indirect definition because the usual definition that the diagonal $\Delta(X)$ is closed in $X\times X$ does not produce a good notion of separatedness for $\Mo$-schemes: Both the projective line and the affine double line over $\Fun$ can be covered by two affine lines that intersect in a multiplicative group scheme. As explained in section \ref{Examples_of_Mo-schemes} in case of the projective line, this topological space consists of three points: two closed points and one generic point. This determines the topology completely.

 However, we conjecture that the following condition on $X$ is equivalent to separatedness: for all points $x$ and $y$ in $X$ and all common generalizations $z$ of $x$ and $y$, we have that $x=y$ if the images of the maps $\cO_{X,x}\to\cO_{X,z}$ and $\cO_{X,x}\to\cO_{X,z}$ are equal.
\end{remark}

\subsubsection{$\Mo$-schemes of finite type}

An $\Mo$-scheme is \emph{locally of finite type} if it can be covered by affine schemes that are isomorphic to the spectrum of a finitely generated monoid. This property is local, i.e.\ an $\Mo$-scheme is locally of finite type if and only if every affine open subscheme is isomorphic to a finitely generated monoid. An $\Mo$-scheme is \emph{of finite type} or \emph{Noetherian} if it is locally of finite type and quasi-compact.

The following is \cite[Lemma 2]{Deitmar06}.

\begin{lemma}
 An $\Mo$-scheme $X$ is of finite type if and only if $X_\Z$ is a scheme of finite type.\qed
\end{lemma}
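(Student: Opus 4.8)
The plan is to reduce the statement to its ring-theoretic analogue using the base extension functor $-\otimes_\Fun\Z$, exploiting the dictionary established in the previous sections. Recall that an $\Mo$-scheme $X$ is of finite type if it is locally of finite type and quasi-compact, and that $X_\Z$ is constructed as the colimit of the base extensions $U_\Z=\Spec A_\Z$ of an affine open cover closed under intersections. The essential bridge is Lemma \ref{monoid_of_finite_type_and_base_extension}, which says a monoid $A$ is finitely generated if and only if $A_\Z$ is finitely generated as a ring; this translates the affine-local finiteness condition on $X$ directly into the corresponding condition on $X_\Z$.

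First I would treat the forward direction. Suppose $X$ is of finite type, so it admits a finite affine open cover $\{U_i=\spec A_i\}$ with each $A_i$ finitely generated (quasi-compactness lets us take the cover finite, and locally-of-finite-type gives the finite generation). By Lemma \ref{monoid_of_finite_type_and_base_extension}, each $(A_i)_\Z$ is a finitely generated ring, so each $U_{i,\Z}=\Spec (A_i)_\Z$ is an affine scheme of finite type over $\Z$. Since $X_\Z=\bigcup U_{i,\Z}$ is covered by finitely many affine opens each of finite type over $\Z$, the scheme $X_\Z$ is of finite type. For the converse, suppose $X_\Z$ is a scheme of finite type over $\Z$. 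Then $X_\Z$ is quasi-compact, and since $\beta\colon X_\Z\to X$ is a continuous surjection (by the Theorem on $\beta$ preceding this lemma, $\beta^{-1}(U)=U_\Z$ and $\beta$ is onto because $\q=\p\cap A$ realizes every prime), quasi-compactness descends to make $X$ quasi-compact. To see $X$ is locally of finite type, take any affine open $U=\spec A\subset X$; then $U_\Z=\Spec A_\Z$ is an open subscheme of the finite-type scheme $X_\Z$, hence is itself of finite type over $\Z$, so $A_\Z$ is a finitely generated ring, and Lemma \ref{monoid_of_finite_type_and_base_extension} forces $A$ to be finitely generated.

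The main obstacle I anticipate is purely bookkeeping rather than conceptual: one must be careful that the finiteness properties used are genuinely affine-local in both worlds and that the cover realizing ``of finite type'' for $X$ produces, after base extension, a cover realizing ``of finite type'' for $X_\Z$ — and conversely that an arbitrary affine open of $X$ base-extends to an open of finite type. The latter subtlety (that openness of $U_\Z$ in $X_\Z$ is guaranteed) is exactly what the preceding Theorem provides via $\beta^{-1}(U)=U_\Z$, and the compatibility of intersections is handled by the Corollary giving $(U_i\cap U_j)_\Z=U_{i,\Z}\cap U_{j,\Z}$. Given these tools, the argument is a routine transfer along $-\otimes_\Fun\Z$; indeed this is precisely why the statement is cited as \cite[Lemma 2]{Deitmar06} with the remark that ``the same proof applies.''
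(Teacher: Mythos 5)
Your overall strategy is correct and is precisely the argument the paper delegates to the literature: the paper offers no proof of this lemma (it is quoted as \cite[Lemma 2]{Deitmar06}, and the monoid-level Lemma \ref{monoid_of_finite_type_and_base_extension} is likewise left unproven), and your reduction --- base-extending a finite affine cover by finitely generated monoids in one direction, and applying Lemma \ref{monoid_of_finite_type_and_base_extension} to each affine open in the other --- is the intended transfer along $-\otimes_\Fun\Z$. The forward direction is complete, and in the converse your treatment of local finite generation is fine (the key point being that $U_\Z=\Spec A_\Z$ is affine, hence quasi-compact, so being an open subscheme of a locally-of-finite-type scheme does make it of finite type, whence $A_\Z$ is a finitely generated ring).

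The one genuine gap is the surjectivity of $\beta\colon X_\Z\to X$, on which your quasi-compactness argument rests. The paper's theorem on $\beta$ asserts only continuity, $\beta^{-1}(U)=U_\Z$, and functoriality; your parenthetical ``$\q=\p\cap A$ realizes every prime'' is exactly the statement that needs proof, not a reason for it. It is true, and can be proved as follows: given a prime $\q$ of a monoid $A$, put $G=(A_\q)^\times$ and consider the monoid morphism $f\colon A\to A_\q/\q A_\q\simeq\Fun[G]$. One checks that $f(a)=0$ if and only if $a\in\q$ (using that $\q$ is prime), and that $f(a)$ lies in $G$, i.e.\ is a unit, otherwise. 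Base extension gives a ring homomorphism $f_\Z\colon A_\Z\to\Z[G]$; since $\Z[G]$ is a nonzero ring it has a prime ideal $P$, and $\p=f_\Z^{-1}(P)$ is a prime ideal of $A_\Z$ with $\p\cap A=\q$, because elements of $\q$ map to $0\in P$ while elements of $A-\q$ map to units of $\Z[G]$ and so avoid $P$. This proves surjectivity of $\beta$ on each affine chart, hence globally, and with this inserted your proof goes through. Note that this ingredient cannot be sidestepped: even if one instead passes an affine cover $\{U_i\}$ of $X$ to the cover $\{U_{i,\Z}\}$ of $X_\Z$ and extracts a finite subcover there, concluding that the corresponding $U_i$ cover $X$ requires exactly this lying-over fact for monoid primes.
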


Since an $\Mo$-scheme of finite type is covered by finitely many affine open subschemes, Lemma \ref{finitely_many_prime_ideals} implies the following fact.

\begin{lemma}
 An $\Mo$-scheme of finite type consists of finitely many points.\qed
\end{lemma}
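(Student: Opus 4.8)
The plan is to reduce the statement to the affine case and then invoke Lemma \ref{finitely_many_prime_ideals}. First I would unwind the definition: an $\Mo$-scheme $X$ of finite type is by definition both \emph{locally of finite type} and \emph{quasi-compact}. Being locally of finite type means $X$ admits an affine open cover by spectra $\spec A_i$ of finitely generated monoids $A_i$, and quasi-compactness then lets me extract a finite subcover $X=U_1\cup\cdots\cup U_n$ with each $U_k\cong\spec A_k$ and $A_k$ finitely generated. This is exactly the reduction announced in the sentence preceding the lemma, so no new input is needed here beyond the two halves of the definition.

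Next I would treat each affine piece separately. The points of $\spec A_k$ are by definition the prime ideals of $A_k$, so counting points of $U_k$ amounts to counting prime ideals of $A_k$. Since $A_k$ is generated by a finite set $\Gamma_k$, Lemma \ref{finitely_many_prime_ideals} applies directly and shows that $A_k$ has only finitely many prime ideals; hence each $U_k$ consists of finitely many points. Consequently $X=U_1\cup\cdots\cup U_n$ is a finite union of finite sets and is therefore finite, which is the assertion.

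There is essentially no obstacle in this argument; the only point deserving care is the extraction of the finite affine cover, which rests on combining quasi-compactness with the fact, established just above, that being locally of finite type is a local property, so that every member of an affine cover is already the spectrum of a finitely generated monoid. It is worth emphasizing \emph{why} the finiteness must be obtained this way rather than through the base-extension comparison: although $X$ is of finite type exactly when $X_\Z$ is, a scheme of finite type over $\Z$ need not have finitely many points (witness $\A^1_\Z$). The finiteness of the point set is thus a genuinely $\Fun$-theoretic phenomenon, coming from the combinatorial bound on prime ideals of a finitely generated monoid in Lemma \ref{finitely_many_prime_ideals}, and not from the comparison with ordinary schemes.
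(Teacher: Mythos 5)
Your proof is correct and follows exactly the paper's argument: combine quasi-compactness with the locally-of-finite-type hypothesis to get a finite cover by spectra of finitely generated monoids, then apply Lemma \ref{finitely_many_prime_ideals} to each piece. The paper compresses this into a single sentence with a \qed, so your write-up is just a more detailed version of the same reduction.
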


The stalks of an $\Mo$-scheme that is locally of finite type have a particularly simple form in contrast to the theory of schemes.

\begin{proposition}
 Let $X$ be an $\Mo$-scheme that is locally of finite type. Every point $x\in X$ has an open neighborhood $U$ such that $\cO_{X,x}\simeq\cO_X(U)$.
\end{proposition}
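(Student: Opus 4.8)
The plan is to reduce immediately to the affine case and then to combine Proposition \ref{U_S=D_f} with Corollary \ref{maximal_ideal_in_U_S}. Since $X$ is locally of finite type, the point $x$ admits an affine open neighborhood $\spec A$ with $A$ a finitely generated monoid; as it suffices to produce a single neighborhood $U$ with the desired property, I may assume $X=\spec A$ from the start. Writing $x=\p$ for the corresponding prime ideal and $S=A-\p$, the identification $\cO_{X,x}\simeq A_\p=S^{-1}A$ recalled in the text reduces the problem to exhibiting an $f\in A$ with $f\notin\p$ and $A_f\simeq S^{-1}A$.

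Next I would produce the element $f$. Because $A$ is finitely generated, Proposition \ref{U_S=D_f} supplies an $f\in A$ with $U_S=D_f$, and by construction $D_f=U_{S_f}$ for $S_f=\{f^i\}_{i\geq 0}$, so that $U_S=U_{S_f}$. Since $\p$ is a prime ideal disjoint from $S=A-\p$, we have $\p\in U_S=D_f$, i.e.\ $f\notin\p$; thus $U:=D_f$ is an open neighborhood of $x$, and $\cO_X(U)=A_f=S_f^{-1}A$ by the definition of the structure sheaf.

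Finally I would identify the two localizations. The set $U_S=U_{S_f}$ is nonempty, as it contains $\p$, so Corollary \ref{maximal_ideal_in_U_S}, applied once to $S$ and once to $S_f$, yields in each case a maximal prime of this set, call it $\m$, together with the equalities $S^{-1}A=A_\m=S_f^{-1}A$. Chaining these isomorphisms gives $\cO_{X,x}\simeq A_\p=S^{-1}A=A_f=\cO_X(U)$, as required. I expect the only genuinely delicate point to be the observation that the maximal prime furnished by Corollary \ref{maximal_ideal_in_U_S} is intrinsic to the common set of primes $U_S=U_{S_f}$ and does not depend on the particular multiplicative set used to cut it out, so that the two applications of the corollary produce the same monoid $A_\m$ (and in fact $\m=\p$); everything else is bookkeeping with the definitions of localization and of the basic open sets $D_f$.
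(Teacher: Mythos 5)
Your proof is correct and follows essentially the same route as the paper's: reduce to an affine chart $\spec A$ with $A$ finitely generated, invoke Proposition \ref{U_S=D_f} to write $U_S=D_f$ for $S=A-\p$, and use Corollary \ref{maximal_ideal_in_U_S} to identify both localizations with $A_\p$. Your closing observation---that the maximal prime of Corollary \ref{maximal_ideal_in_U_S} is intrinsic to the set $U_S=U_{S_f}$, so that $S^{-1}A=A_\p=A_f$---is exactly the step the paper leaves implicit in its final ``thus $\cO_{X,x}\simeq\cO_X(D_f)$,'' so your write-up is, if anything, slightly more complete.
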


\begin{proof}
 Let $x\in X$. Then there is an affine open neighborhood $V$ of $x$, i.e.\ $V\simeq\spec A$ for a finitely generated monoid $A$. This means that $\p=x$ is a prime ideal of $A$. By the definition of a prime ideal, $S=A-\p$ is a multiplicative set, and by Corollary \ref{maximal_ideal_in_U_S}, $U_S$ has a unique maximal element, namely, $\p$. This means that $U_S$ is contained in all sets of the form $D_f$ with $f\notin\p$. By Proposition \ref{U_S=D_f} there is indeed an $f\in A$ such that $U_S=D_f$, and thus $\cO_{X,x}\simeq\cO_X(D_f)$.
\end{proof}

The previous proposition together with the fact that $\m_A=A-A^\times$ is the unique maximal ideal of $A$ implies the following statement, which marks a major simplification to usual scheme theory.

\begin{corollary}
 Let $X$ be locally of finite type and let $\cB$ be the set of all open affine subsets of $X$. The association $x\mapsto \bigcap U$ where $U$ runs through all open neighborhood of $x$ in $X$ defines a bijection $X\to\cB$. Its inverse map sends an affine open subset $U=\spec A$ of $X$ to the maximal ideal $\m_A$ of $A$, which is a point of $U\subset X$.\qed
\end{corollary}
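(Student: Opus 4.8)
The plan is to exhibit the inverse map explicitly and to show that every point of $X$ has a \emph{smallest} open neighborhood, which will automatically be affine. First I would fix $x\in X$ and an affine open neighborhood $V=\spec A$ of $x$ with $A$ finitely generated, so that $x$ corresponds to a prime ideal $\p$ of $A$. Setting $S=A-\p$, the proof of the preceding proposition produces, via Proposition~\ref{U_S=D_f}, an element $f\in A$ with $U_S=D_f$ and $\cO_{X,x}\simeq\cO_X(D_f)=A_\p$. The key observation is that $D_f=U_S$ consists of exactly those primes contained in $\p$, hence is the smallest open subset of $V$ containing $\p$: every basic open $D_g$ through $\p$ has $g\in S$, and $U_S=\bigcap_{g\in S}D_g$.

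Next I would promote this to a statement about neighborhoods in all of $X$. For any open $W\ni x$, the intersection $W\cap V$ is an open neighborhood of $\p$ inside $V$ and therefore contains $D_f$; consequently $D_f\subseteq W$ for every such $W$, while $D_f$ is itself an open neighborhood of $x$. This yields $\bigcap_{U\ni x}U=D_f=\spec A_\p$, an affine open subset, so the association $x\mapsto\bigcap_{U\ni x}U$ indeed lands in $\cB$. By Corollary~\ref{maximal_ideal_in_U_S} the maximal element $\p$ of $U_S$ corresponds, under the order-preserving bijection of Lemma~\ref{prime_ideal_in_U_S}, to the maximal ideal of $A_\p$; that is, $x$ is carried to the unique maximal prime $\m_{A_\p}=A_\p-A_\p^\times$ of the monoid $A_\p$. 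This pins down the candidate inverse: send $U=\spec A\in\cB$ to its maximal ideal $\m_A$.

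It then remains to check that the two maps are mutually inverse, which I would do using that $\m_A=A-A^\times$ is the unique maximal prime of a monoid $A$. For $U=\spec A\in\cB$, the point $\m_A$ has $A-\m_A=A^\times\subseteq A^\times$, so $S^{-1}A=A$ and $U_S=\spec A$; thus $U$ is its own smallest neighborhood and $\bigcap_{W\ni\m_A}W=U$, giving one composite, while the other composite is precisely the identification of $\p$ with $\m_{A_\p}$ from the previous paragraph. One must also confirm that $\psi$ is well defined on subsets: since $\Mo$ is dual to affine $\Mo$-schemes, the monoid $A\simeq\cO_X(U)$, and hence $\m_A$, is determined by $U$ alone. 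The one point requiring care---and the main obstacle---is the passage from ``smallest neighborhood inside the chart $V$'' to ``smallest neighborhood in $X$'', together with the resulting independence of $\bigcap_{U\ni x}U$ from the chosen chart $V$; both reduce to the elementary fact that intersecting with a fixed affine open preserves the smallest-neighborhood property, so no genuine difficulty arises beyond this bookkeeping.
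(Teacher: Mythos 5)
Your proof is correct and follows essentially the same route as the paper: the corollary is stated there as an immediate consequence of the preceding proposition (every point has a smallest, affine, neighborhood with $\cO_{X,x}\simeq\cO_X(U)$) together with the uniqueness of the maximal ideal $\m_A=A-A^\times$, and your argument simply unpacks that deduction using the same ingredients (Proposition \ref{U_S=D_f}, Corollary \ref{maximal_ideal_in_U_S}, Lemma \ref{prime_ideal_in_U_S}). The extra care you take in passing from the chart $V$ to all of $X$ and in verifying the two composites is exactly the bookkeeping the paper leaves to the reader.
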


\subsubsection{Examples}
\label{Examples_of_Mo-schemes}

The spectrum of $\Fun=\{0,1\}$ consists of precisely one point, namely, the unique prime ideal $\{0\}$ of $\Fun$. The stalk at $\{0\}$ is equal to $\Fun$. The base extension to $\Z$ is $\Spec \Z$. The $\Mo$-scheme $\spec \Fun$ is a terminal object in the category of $\Mo$-schemes.

More generally, let $G$ be a group and $A=\Fun[G]=\{0\}\amalg G$. Then $\spec A$ consists of the unique prime ideal $\{0\}$ of $A$ and the stalk at $\{0\}$ is $A$. The base extension to $\Z$ is $\Spec A_\Z=\Spec\Z[G]$.

In particular, if $G$ is a free abelian group on $n$ generators $T_1,\dots,T_n$, then $A=\Fun[T_1^{\pm 1},\dotsc,T_n^{\pm 1}]$ and $A_\Z = \Z[T_1^{\pm1},\dotsc,T_n^{\pm1}]$ and thus $(\spec A)_\Z\simeq\Gm^n$. This justifies denoting $\spec A$ by $\G_{m,\Fun}^n$.

Let $A=\Fun[T_1,\dotsc,T_n]$ be the free monoid on $n$ generators $T_1,\dots,T_n$. Then $A_\Z = \Z[T_1,\dotsc,T_n]$ and thus $(\spec A)_\Z\simeq\A^n$. This justifies to denote $\spec \Fun[T_1,\dotsc,T_n]$ by $\A_\Fun^n$ and call it the \emph{$n$-dimensional affine space over $\Fun$}. The prime ideals of $A$ are of the form $\p_I=\bigcup_{i\in I}T_iA$ where $I$ is a subset of $\{1,\dotsc,n\}$ and $T_iA=\{T_ia\mid a\in A\}$. The stalk of the structure sheaf at $\p_I$ is the localization of $A$ at the multiplicative set $S$ that contains all products of elements $T_j$ where $j\notin I$.

Let $U_1=\spec\Fun[T_1]$, $U_2=\spec\Fun[T_2]$ and $U=\spec\Fun[T^{\pm1}]$, i.e.\ $U_1\simeq\A^1_\Fun\simeq U_2$ and $U\simeq\G_{m,\Fun}$. The monoid morphisms $\Fun[T_1]\to\Fun[T^{\pm1}]$ defined by $T_1\mapsto T$ and $\Fun[T_2]\to\Fun[T^{\pm1}]$ defined by $T_2\mapsto T^{-1}$ induce morphisms $U\to U_1$ and $U\to U_2$ of affine schemes. These morphisms are open inclusions that send the unique point of $U$ to the generic points of $U_1$ and $U_2$, respectively. The colimit over these three affine $\Mo$-schemes together with these two morphisms defines an $\Mo$-scheme with one generic point $\eta$ and two closed points $\p_1$ and $\p_2$. The stalk at $\eta$ is isomorphic to $\Fun[T^{\pm1}]$ and the stalk at $\p_i$ is isomorphic to $\Fun[T_i]$ for $i=1,2$. The base extension to $\Z$ is isomorphic to the projective line $\P^1$. This justifies to denote this $\Mo$-scheme by $\P^1_\Fun$ and call it the \emph{projective line over $\Fun$}. Similarly one defines the \emph{$n$-dimensional projective space $\P_\Fun^n$ over $\Fun$}.

This sort of construction generalizes to all toric varieties. In \cite{Deitmar07}, Deitmar proves that the class of separated, connected, integral schemes of finite type that are base extensions of $\Mo$-schemes to $\Z$ is the class of toric varieties (also cf.\ \cite{LL09b}).


\subsection{$\cO_X$-modules}

In this section we set up the theory of $\cO_X$-modules. Like the theory of $A$-sets was similar to the theory of modules over a ring, the theory of $\cO_X$-modules is similar to the usual theory for schemes. A difference is marked, again, by the special class of normal morphisms.

\subsubsection{Definition and general properties}

Let $X$ be an $\Mo$-scheme and $\cO$ be a sheaf of monoids on $X$. An \emph{$\cO$-module} is a sheaf $\cM$ of pointed sets where $\cM(U)$ is an $\cO(U)$-set for all opens $U$ of $X$ such that the restriction maps $\cM(V)\to\cM(U)$ are compatible with $\cO(V)\to\cO(U)$ for all opens $V\subset U$. A \emph{morphism of $\cO$-modules $
\varphi:\cM\to\cN$} is a morphism of sheaves such that for all open subsets $U\subset X$, the map $\cM(U)\to\cN(U)$ is a morphism of $\cO(U)$-sets. We denote the category of
$\cO$-modules by $\cO-\Mod$.

In particular, if $\cO=\cO_X$ is the structure sheaf of $X$, then we have defined the notion of an $\cO_X$-module.

The $\cO_X$-module $\0$ that associates to every open subset $U$ of $X$ the trivial $\cO_X(U)$-set $\{\ast\}$ is an initial and a terminal object in $\cO_X-\Mod$. Consequently, there is for all $\cO_X$-modules $\cM$ and $\cN$ a unique morphism $0:\cM\to\cN$ from $\cM$ to $\cN$ that factors through $\0$, which makes $\Hom(\cM,\cN)$ a pointed set, or $\Fun$-set. The association $\Hom(\cM,\cN)(U)=\Hom_{\cO\vert_U}(\cM\vert_U,\cN\vert_U)$ for open subsets $U$ of $X$ gives $\Hom(\cM,\cN)$ the structure of an $\cO_X$-module. This is functorial in both $\cM$ and $\cN$, thus $\Hom(-,-)$ is a bifunctor from $\cO_X-\Mod$ into itself.

The stalk $\cM_x$ of an $\cO_X$-module $\cM$ at $x\in X$ is naturally an $\cO_{X,x}$-set. Taking stalks is functorial in $\cM$, i.e.\ a morphism $f:\cM\to\cN$ of $\cO_X$-modules yields morphisms $f_x:\cM_x\to\cN_x$ of $\cO_{X,x}$-sets for every $x\in X$. Conversely, $f$ is determined by all morphisms $f_x$ between the stalks. We will frequently use the fact that, given a \emph{presheaf of $\cO_X$-modules}, i.e.\ a presheaf that satisfies all properties of an $\cO_X$-module except for the sheaf axiom, then its sheafification is naturally an $\cO_X$-module. The image of a morphism $f:\cM\to \cN$ of $\cO_X$-modules is the sheafification of the presheaf $\im f$ that associates to an open $U$ of $X$ the $\cO_X(U)$-set $f(\cM(U))$.

As in usual scheme theory,  we define the (co)kernel of a morphism $f:\cM\to\cN$ as the (co)equalizer of $f$ and $0:\cM\to\cN$. If $\cN\subset\cM$ is a sub-$\cO_X$-module, the quotient $\cO_X$-module $\cM/\cN$ is the cokernel of the inclusion morphism $\cN\hookrightarrow\cM$. A diagram $\cM_1\stackrel f\longrightarrow \cM_2\stackrel g\longrightarrow \cM_3$ is exact at $\cM_2$ if $\ker(g)=\im(f)$. A short exact sequence of $\cO_X$-modules is a sequence
$$ \0 \longrightarrow \cM_1 \longrightarrow \cM_2 \longrightarrow \cM_3 \longrightarrow \0 $$
that is exact at $\cM_1$, $\cM_2$ and $\cM_3$.

\subsubsection{Normal morphisms}

As in the case of $A$-sets where $A$ is a monoid, the category of $\cO_X$-modules contains epimorphisms that are not normal, i.e.\ not a cokernel. This leads to the following definition.

A morphism $f:\cM\to\cN$ of $\cO_X$-modules is \emph{normal} if $f_x:\cM_x\to\cN_x$ is normal for all $x\in X$.  The following is derived by employing Proposition \ref{normal_lemma} to the definition
.
\begin{proposition}
 \label{prop:NmMrphsmAreClosedUnderComposition}
 Normal morphisms of $\cO_X$-modules are closed under compositions.\qed
\end{proposition}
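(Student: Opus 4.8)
The plan is to reduce the statement about $\cO_X$-modules directly to the already-established statement about $A$-sets. The key observation is that normality of a morphism of $\cO_X$-modules is defined pointwise: $f:\cM\to\cN$ is normal precisely when each stalk map $f_x:\cM_x\to\cN_x$ is a normal morphism of $\cO_{X,x}$-sets, in the sense of Proposition \ref{normal_lemma}. Since composition of morphisms of $\cO_X$-modules is computed stalkwise---that is, $(g\circ f)_x = g_x\circ f_x$ for every $x\in X$---the whole question collapses to the corresponding fact one level down.

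First I would fix two composable normal morphisms $f:\cM\to\cN$ and $g:\cN\to\cP$ of $\cO_X$-modules, and an arbitrary point $x\in X$. By the definition of normality, $f_x:\cM_x\to\cN_x$ and $g_x:\cN_x\to\cP_x$ are normal morphisms of $\cO_{X,x}$-sets. The functoriality of taking stalks, noted in the subsection on $\cO_X$-modules, gives the identity $(g\circ f)_x = g_x\circ f_x$. Now the last sentence of Proposition \ref{normal_lemma} asserts exactly that the composition of two normal morphisms of $A$-sets (here with $A=\cO_{X,x}$) is again normal. Hence $g_x\circ f_x$ is a normal morphism of $\cO_{X,x}$-sets, i.e.\ $(g\circ f)_x$ is normal.

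Since $x\in X$ was arbitrary, every stalk of $g\circ f$ is normal, which is by definition what it means for $g\circ f:\cM\to\cP$ to be a normal morphism of $\cO_X$-modules. This completes the argument.

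I do not expect any genuine obstacle here: the entire content has been front-loaded into Proposition \ref{normal_lemma} (closure of normal $A$-set morphisms under composition) and into the pointwise nature of the definition. The only point requiring even a moment's care is verifying that stalks of a composite are the composites of stalks, but this is immediate from the construction of stalk maps as colimits over neighborhoods, and the paper has already recorded that stalk-taking is functorial in $\cM$. Thus the proof is genuinely a one-line reduction, and the ``\,\qed'' in the statement signals that the authors regard it as such.
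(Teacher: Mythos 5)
Your proof is correct and coincides with the paper's own argument: the paper derives this proposition exactly by ``employing Proposition \ref{normal_lemma} to the definition,'' i.e.\ by the same stalkwise reduction (normality is defined on stalks, stalks are functorial, and the last statement of Proposition \ref{normal_lemma} closes normal $A$-set morphisms under composition). Nothing is missing; the \qed in the statement reflects precisely this one-line reduction.
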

\begin{definition}
 We call a short exact sequence of $\cO_X$-modules
 $$ \0 \longrightarrow \cM_1 \longrightarrow \cM_2 \longrightarrow \cM_3 \longrightarrow \0 $$
 \emph{admissible} if and only if all morphisms in the sequence are normal.
\end{definition}

\subsubsection{Tensor products}\label{sec:TensorProductOnMoSch}

Given two $\cO_X$-modules $\cM$ and $\cN$, their \emph{tensor product} $\cM\otimes_{\cO_X}\cN$ is the sheafification of the presheaf of $\cO_X$-modules sending $U$ to
$\cM(U)\otimes_{\cO_X(U)}\cN(U)$. It is functorial in both $\cM$ and $\cN$. It satisfies the universal property of the tensor product, and the functor $\cM\otimes_{\cO_X}(-)$ is left-adjoint to $\Hom(\cM,-)$.

We define the \emph{smash product} $\cM\wedge\cN$ of $\cM$ and $\cN$ as the sheafification of the presheaf of $\cO_X$-modules sending $U$ to $\cM(U)\wedge\cN(U)$.

Given a morphism $f:Y\to X$ of $\Mo$-schemes and an $\cO_Y$-module $\cN$, then the \emph{direct image sheaf $f_\ast\cN$}, which maps open subsets $U$ of $X$ to $
\cN(f^{-1}(U))$, carries naturally the structure of an $\cO_X$-module. If $\cM$ is an $\cO_X$-module, then we define $f^{-1}\cM$ as the sheafification of the presheaf on $Y$ sending open subsets $U$ to $\colim\cM(V)$ where the colimit is taken over all open subsets $V$ of $X$ that contain $f(U)$. By Proposition \ref{monoid-limits}, the colimit of a directed diagram of monoids exists. Thus $f^{-1}\cO_X$ is not merely a sheaf, but a sheaf of monoids on $Y$, and for any other $\cO_X$-module $\cM$, the sheaf $f^{-1}\cM$ is an $f^{-1}\cO_X$-module. There is a canonical morphism $f^{-1}\cO_X\to\cO_Y$ such that for all open subsets $U$ of $Y$, the map $\cO_X(U)\to\cO_Y(U)$ is a monoid morphism. Thus we can regard $\cO_Y$ as $f^{-1}\cO_X$-module. We define the \emph{inverse image sheaf $f^\ast\cM$}  as the sheafification of the presheaf of $\cO_Y$-modules $f^{-1}\cM\otimes_{f^{-1}\cO_X}\cO_Y$. All these constructions $f_\ast$, $f^{-1}$ and $f^\ast$ are functorial. We call $f^\ast:\cO_X-\Mod\longrightarrow\cO_Y-\Mod$ also the \emph{base change functor from $X$ to $Y$ (along $f$)}.

\subsubsection{Base extension to $\Z$}

Formally similar to the base change functor $f^\ast$ from the previous section, we define the base extension to $\Z$ as the base extension functor $\beta^\ast$ along $\beta:X_\Z\to X$. We make this precise.

The sheaf $f^{-1}\cM$ is defined for any sheaf $\cM$ on any topological space $X$ and for any continuous map $f:Y\to X$ of topological spaces. If $X$ is an $\Mo$-scheme and $\cM$ an $\cO_X$-module, then $f^{-1}\cO_X$ is a sheaf of monoids on $Y$ and $f^{-1}\cM$ is an $f^{-1}\cO_X$-module on $Y$. In the case of our interest where $f=\beta:X_\Z\to X$, we define the \emph{base extension $\cM_\Z$ of $\cM$ to $\Z$} as the tensor product $\beta^{-1}\cM\otimes_{\beta^{-1}\cO_X}\cO_{X_\Z}$, which is the sheafification of the presheaf on $X_\Z$ that sends an open $U$ to $\beta^{-1}\cM(U)\otimes_{\beta^{-1}\cO_X(U)} \cO_{X_\Z}(U)$. This is functorial in $\cM$ and defines the \emph{base extension functor $-\otimes_{\cO_X}\cO_{X_\Z}:\cO_X-\Mod\to \cO_{X_\Z}-\Mod$} where $\cO_{X_\Z}$ is the structure sheaf of $X_\Z$.

Note that for any open set $U$ of $X$ and for any $\cO_X$-module $\cM$, we have
\begin{multline*} \cM_\Z(U_\Z) \ \simeq \ \colim \cM(V)\otimes_{\colim\cO_X(V)}\cO_{X_\Z}(U_\Z) \\ \simeq \ \cM(U)\otimes_{\cO_X(U)}\cO_X(U)_\Z \ \simeq \ \cM(U)_\Z  \end{multline*}
where the colimit is taken over the system of all open subsets $V$ of $X$ such that $U=\beta(U_\Z)\subset V$, which has $U$ as initial object. In particular, we have $\cO_{X_\Z}\simeq(\cO_X)_\Z$.

Similarly as for $A$-sets, the following properties are easy to prove.

\begin{lemma} \label{base_extension_properties_for_sheaves}
Let $X$ be an $\Mo$-scheme with structure sheaf $\cO_X$ and let $f:\cM\to \cN$ be a morphism of $\cO_X$-modules.
\begin{enumerate}
 \item We have $(\coker f)_\Z\simeq\coker f_\Z$ and $(\im f)_\Z\simeq\im f_\Z$.
 \item We have $(\cM\otimes_{\cO_X}\cN)_\Z\simeq \cM_\Z\otimes_{\cO_{X_\Z}}\cN_\Z$ and $(\cM\oplus \cN)_\Z\simeq \cM_\Z\oplus \cN_\Z$.
 \item If $Y\to X$ is a morphism of $\Mo$-schemes, then $(\cM\otimes_{\cO_X}\cO_Y)_\Z\simeq \cM_\Z\otimes_{\cO_{X_\Z}}\cO_{Y_\Z}$ as $\cO_{Y_\Z}$-modules.
 \item A morphism $f:\cM\to \cN$ of $\cO_X$-modules is a monomorphism (epimorphism) if and only if $f_\Z: M_\Z\to N_\Z$ is a monomorphism (epimorphism).\qed
\end{enumerate}
\end{lemma}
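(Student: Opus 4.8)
The plan is to reduce every assertion to its affine counterpart, Lemma~\ref{easy_base_extension_properties}, by passing to stalks on $X_\Z$. The computation underlying everything is the stalk formula: for a point $y\in X_\Z$ with image $x=\beta(y)$, the stalk of $\cM_\Z=\beta^{-1}\cM\otimes_{\beta^{-1}\cO_X}\cO_{X_\Z}$ is
$$ (\cM_\Z)_y \;\simeq\; \cM_x\otimes_{\cO_{X,x}}\cO_{X_\Z,y} \;\simeq\; (\cM_x)_\Z\otimes_{(\cO_{X,x})_\Z}\cO_{X_\Z,y}, $$
where the first isomorphism uses that $\beta^{-1}(-)$ and $\otimes$ commute with stalks (both are filtered colimits), and the second is the definition of base extension of the $\cO_{X,x}$-set $\cM_x$ along $\cO_{X,x}\to\cO_{X_\Z,y}$. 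First I would record the auxiliary fact that $\cO_{X_\Z,y}$ is a \emph{localization} of $(\cO_{X,x})_\Z$: choosing an affine neighbourhood $\spec A$ of $x$ with $x=\q$ and $y$ the prime $\p$ of $A_\Z$ (so $\p\cap A=\q$ as $\beta(y)=x$), Lemma~\ref{localization_and_base_extension} identifies $(\cO_{X,x})_\Z\simeq(A-\q)^{-1}A_\Z$, and since $\p$ is disjoint from $A-\q$ the ring $\cO_{X_\Z,y}=(A_\Z)_\p$ is a further localization of it; in particular $\cO_{X_\Z,y}$ is \emph{flat} over $(\cO_{X,x})_\Z$.

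With this in hand, parts (i)--(iii) follow by checking the natural comparison maps on stalks. For (i) one has $(\coker f)_x=\coker f_x$ and $(\im f)_x=\im f_x$; applying $(-)_\Z$ and then $-\otimes_{(\cO_{X,x})_\Z}\cO_{X_\Z,y}$ reproduces $\coker (f_\Z)_y$ and $\im (f_\Z)_y$ by Lemma~\ref{easy_base_extension_properties}(i), where the cokernel case needs only right-exactness of the tensor product, while the image case invokes the flatness established above so that tensoring preserves images. Part (ii) is the same game: tensor products and coproducts commute with stalks, so the claim reduces on stalks to Lemma~\ref{easy_base_extension_properties}(ii) together with the standard base-change identities $(P\otimes_C Q)\otimes_C R\simeq (P\otimes_C R)\otimes_R(Q\otimes_C R)$ and $(P\oplus Q)\otimes_C R\simeq (P\otimes_C R)\oplus(Q\otimes_C R)$. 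Part (iii) uses the functoriality of $\beta$, i.e.\ $\beta_X\circ g_\Z=g\circ\beta_Y$: writing $b=\beta_Y(y')$ and $a=g(b)=\beta_X(g_\Z(y'))$ for $y'\in Y_\Z$, both sides acquire the stalk $\cM_a\otimes_{\cO_{X,a}}\cO_{Y_\Z,y'}$ after the stalk formula and associativity of $\otimes$, which matches them.

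The genuinely delicate point is (iv): the affine opens $U_\Z$ do \emph{not} form a basis of $X_\Z$, and the localizations $\cO_{X_\Z,y}$ are only flat, not faithfully flat, over $(\cO_{X,x})_\Z$, so one cannot naively descend injectivity or surjectivity from stalks. I would treat the two implications asymmetrically. The forward directions are clean on stalks: if $f$ is a monomorphism (resp.\ epimorphism) then each $f_x$ is injective (resp.\ surjective), hence so is each $(f_x)_\Z$ by Lemma~\ref{easy_base_extension_properties}(iv), and tensoring with the flat (resp.\ arbitrary) ring $\cO_{X_\Z,y}$ preserves injectivity (resp.\ surjectivity), so $f_\Z$ is a monomorphism (resp.\ epimorphism). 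For the reverse implication for monomorphisms I would argue with sections: a sheaf monomorphism is injective on sections, so $f_\Z$ mono gives $\cM(U)_\Z\simeq\cM_\Z(U_\Z)\to\cN_\Z(U_\Z)\simeq\cN(U)_\Z$ injective for each affine $U$, whence $f(U)$ is injective by Lemma~\ref{easy_base_extension_properties}(iv); as affine opens form a basis of $X$ and filtered colimits preserve injections, every $f_x$ is injective and $f$ is a monomorphism. For the reverse implication for epimorphisms I would pass to cokernels: $f$ (resp.\ $f_\Z$) is an epimorphism iff $\coker f=\0$ (resp.\ $\coker f_\Z=\0$), and by part~(i) $\coker f_\Z\simeq(\coker f)_\Z$; finally, writing $\cP=\coker f$, the identity $\cP_\Z(U_\Z)\simeq\cP(U)_\Z$ and the fact that an $A$-set $M$ with $M_\Z=0$ is trivial show that $\cP_\Z=\0$ forces $\cP(U)=\0$ on the basis of affine opens, hence $\cP=\0$. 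This closes part~(iv) and completes the proof.
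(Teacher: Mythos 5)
Your proof is correct and carries out precisely the reduction to Lemma \ref{easy_base_extension_properties} that the paper leaves implicit (the paper offers no argument beyond the remark that the properties are ``easy to prove'' similarly as for $A$-sets). Your stalk formula $(\cM_\Z)_y \simeq (\cM_x)_\Z \otimes_{(\cO_{X,x})_\Z} \cO_{X_\Z,y}$, the observation that $\cO_{X_\Z,y}$ is a localization (hence flat) of $(\cO_{X,x})_\Z$ --- which is exactly what is needed for the image case in (i) and for descending injectivity in (iv) --- and the section-wise handling of part (iv) via the identity $\cM_\Z(U_\Z)\simeq\cM(U)_\Z$ together with the fact that $M_\Z=0$ forces $M=\{\ast\}$ are precisely the details required to make that reduction rigorous.
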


\subsection{Quasi-coherent sheaves}

If $M$ is an $A$-set and $X=\spec A$, then the association $U_S\mapsto S^{-1}M$ for multiplicative subsets $S \subset A$ defines an $\cO_X$-module $\widetilde M$. A morphism $\varphi:M\to N$ defines a morphism $\widetilde\varphi:\widetilde M\to\widetilde N$ of $\cO_X$-modules.  This yields a functor from $A-\Mod$ to the category of $\cO_X$-modules. If $\p$ is a prime ideal of $A$, then $M_\p$ is an $\cO_{X,\p}$-set.

Let $X$ be an $\Mo$-scheme with structure sheaf $\cO_X$. A \emph{quasi-coherent sheaf on $X$} is an $\cO_X$-module such that there is an affine open cover $\{U_i\}_{i\in I}$ of $X$ and for every $i\in I$, an $\cO_X(U_i)$-set $M_i$ such that $\cM\vert_{U_i}\simeq\widetilde M_i$ as $\cO_{U_i}$-modules. A \emph{morphism of quasi-coherent sheaves} is a morphism of $\cO_X$-modules. We denote the category of quasi-coherent sheaves on $X$ by $\QCoh X$. A \emph{coherent sheaf on $X$} is an $\cO_X$-module such that there is an affine open cover $\{U_i\}_{i\in I}$ of $X$ and for every $i\in I$, a finitely generated $\cO_X(U_i)$-set $M_i$ such that $\cM\vert_{U_i}\simeq\widetilde M_i$ as $\cO_{U_i}$-modules. The category $\Coh X$ is defined as the full subcategory of $\QCoh X$ whose objects are coherent sheaves. We refer to \cite{Deitmar05} for details.

As in usual scheme theory, we obtain the following fact (cf. the proof of \cite[Ch.\ II, Lemma 5.3 and Prop.\ 5.4]{Hartshorne}, which transfers \textit{mutatis mutandis} to our situation).

\begin{theorem}\label{quasi-coherent_any_cover}
 Let $X$ be an $\Mo$-scheme with structure sheaf $\cO_X$. An $\cO_X$-module $\cM$ is quasi-coherent if and only if for every affine open $U$ of $X$, there is an $\cO_X(U)$-set $M$ such that $\cM\vert_U\simeq\widetilde M$ as $\cO_U$-modules.\qed
\end{theorem}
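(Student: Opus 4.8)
The \emph{if} direction is immediate from the definition: a cover by affine opens on each of which $\cM$ has the required form is in particular a cover witnessing quasi-coherence. For the converse I would argue in two stages. First, the property is local and restriction-stable: if $\{U_i\}$ is an affine cover with $\cM\vert_{U_i}\simeq\widetilde{M_i}$ and $U$ is any open subscheme, then $\{U_i\cap U\}$ covers $U$, and covering each $U_i\cap U$ by basic opens $D_g\subseteq U_i\simeq\spec A_i$ gives $\cM\vert_{D_g}\simeq\widetilde{M_i}\vert_{D_g}\simeq\widetilde{(M_i)_g}$; hence $\cM\vert_U$ is again quasi-coherent. Since an affine open $U$ of $X$ is itself an affine $\Mo$-scheme, it therefore suffices to treat the case $X=\spec A$ and produce a single $A$-set $M$ with $\cM\simeq\widetilde M$. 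The natural candidate is the module of global sections $M=\cM(X)$.

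The technical core is the monoid analogue of the two facts in \cite[Ch.\ II, Lemma 5.3]{Hartshorne}: for every $A$-set $N$ and $f\in A$ one has $\widetilde N(D_f)\simeq N_f$ and $\widetilde N\vert_{D_f}\simeq\widetilde{N_f}$. The second is transitivity of localization, where for corresponding multiplicative sets one identifies $(S')^{-1}N$ with a localization of $N_f$ using Lemmas \ref{localization_of_A_and_M} and \ref{localization_and_base_extension_of_A-sets}. The first is where the sheaf axiom enters: I would cover $D_f$ by finitely many basic opens $D_{g_j}$ and write $\widetilde N(D_f)$ as the equalizer of $\prod_j N_{g_j}\rightrightarrows\prod_{j,k}N_{g_jg_k}$; since localization commutes with finite limits by Proposition \ref{prop_localizations_and_limits}, this equalizer is the localization at $f$ of the corresponding equalizer computed over $A$, namely $N_f$. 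Quasi-compactness of $\spec A$, needed to pass to a finite cover, is automatic here: $A$ has the unique maximal ideal $\m=A-A^\times$, and the only basic open containing $\m$ is $\spec A$ itself.

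With this lemma in hand I would construct the comparison morphism $\theta\colon\widetilde M\to\cM$: over each $D_f$ the restriction $\cM(X)\to\cM(D_f)$ factors through $M_f=\widetilde M(D_f)$, and these maps glue to a morphism of $\cO_X$-modules. To prove $\theta$ an isomorphism I would check stalks. For a prime $\p$, the construction of $\widetilde M$ (cf.\ Lemma \ref{localization_of_A_and_M}) gives $(\widetilde M)_\p\simeq M_\p$; choosing a cover member $U_i\ni\p$ with $\cM\vert_{U_i}\simeq\widetilde{M_i}$ gives $\cM_\p\simeq(\widetilde{M_i})_\p\simeq(M_i)_\p$. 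The key lemma, applied globally, yields $M_\p\simeq(M_i)_\p$ compatibly with $\theta_\p$, so each $\theta_\p$ is an isomorphism. A morphism of $\cO_X$-modules that is an isomorphism on every stalk is an isomorphism, which finishes this route.

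The \textbf{main obstacle} is exactly the global identification $\cM(D_f)\simeq M_f$ for the \emph{given} quasi-coherent $\cM$, rather than for some $\widetilde N$: it forces one to compare the gluing of the local data $M_i$ across overlaps with the localization of global sections, and it is here that the sheaf axiom must be combined with the finite-limit-commuting property of localization from Proposition \ref{prop_localizations_and_limits}. I would point out, however, that the peculiar geometry of $\Mo$-schemes supplies a shortcut that avoids all gluing. Since $\spec B$ has the unique closed point $\m_B=B-B^\times$ whose only open neighbourhood in $\spec B$ is $\spec B$ itself, \emph{any} open of $X$ meeting $\m_B$ contains all of $U=\spec B$; consequently every affine open $U$ lies inside a single cover member $U_{i_0}$. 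Writing $U=U_S$ as a localization of $U_{i_0}$ (open immersions of affine $\Mo$-schemes are localizations), one obtains $\cM\vert_U\simeq\widetilde{M_{i_0}}\vert_U\simeq\widetilde{S^{-1}M_{i_0}}$ directly, so the theorem holds with $M=S^{-1}M_{i_0}$.
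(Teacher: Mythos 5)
Your closing ``shortcut'' is a correct and complete proof, and it takes a genuinely different route from the paper's. The paper gives no written argument at all: it states the theorem with a \qed{} after remarking that the proof of \cite[Ch.~II, Lemma 5.3 and Prop.~5.4]{Hartshorne} transfers \emph{mutatis mutandis}, i.e.\ the classical route through global sections ($M=\cM(U)$, identification $\cM(D_f)\simeq M_f$ via the sheaf condition on basic covers, then gluing). Your shortcut replaces all of this by the focal-point property of monoid spectra: the closed point $\m_B=B-B^\times$ of an affine open $U=\spec B$ has no open neighbourhood in $U$ other than $U$ itself, so $U$ lies inside a single chart $U_{i_0}=\spec A_{i_0}$ of the given cover, and is in fact a basic open $D_f$ of $U_{i_0}$ (the same focal-point argument run inside $U_{i_0}$); then $\cM\vert_U\simeq\widetilde{M_{i_0}}\vert_{D_f}\simeq\widetilde{(M_{i_0})_f}$ by transitivity of localization. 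This buys a proof with no gluing, no quasi-compactness, and no global sections, and it makes Corollary \ref{Cor_equiv_A-Mod_and_Coh_X} immediate (for $X=\spec A$, the chart containing the closed point must be $X$ itself). It is also the honest justification of the paper's ``mutatis mutandis'': Hartshorne's Lemma 5.3 rests on partitions of unity ($f^n=\sum a_ig_i$), which have no monoid analogue, so precisely your focal-point observation is what must replace them. What the classical route buys, once carried out, is the canonical choice $M=\cM(U)$, i.e.\ that the quasi-inverse of $M\mapsto\widetilde M$ is the global-sections functor.

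One caveat on your first (Hartshorne-style) route: the equalizer step is not correct as stated. For a basic cover $D_f=\bigcup_j D_{g_j}$, the element $f$ already acts invertibly on every term of $\prod_j N_{g_j}\rightrightarrows\prod_{j,k}N_{g_jg_k}$, so ``localizing the corresponding equalizer over $A$ at $f$'' reproduces the same equalizer; Proposition \ref{prop_localizations_and_limits} then yields a tautology, not the identification of that equalizer with $N_f$. What makes the identification true is, once again, the focal point: the maximal point of $D_f$ (Corollary \ref{maximal_ideal_in_U_S}) lies in some $D_{g_0}$, which forces $D_f\subseteq D_{g_0}$, hence $D_{g_0}=D_f$ and $N_{g_0}\simeq N_f$ (both are isomorphic to $A_\m\otimes_A N$ for the common maximal point $\m$, by Corollary \ref{maximal_ideal_in_U_S} and Lemma \ref{localization_of_A_and_M}), after which the sheaf condition is trivial. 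Since your shortcut supersedes this route entirely, the theorem stands proved; but the first argument, taken on its own, has this gap.
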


\begin{corollary}\label{Cor_equiv_A-Mod_and_Coh_X}
 Let $A$ be a monoid and $X=\spec A$. Then the functor sending $M$ to $\widetilde M$ is an equivalence between the category of $A$-sets and the category of quasi-coherent sheaves on $X$. \qed
\end{corollary}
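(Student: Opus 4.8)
The plan is to produce a quasi-inverse to $\widetilde{(-)}$, namely the global sections functor $\Gamma(X,-)\colon\QCoh X\to A-\Mod$, and to exhibit natural isomorphisms for the two composites. First I would check that $\Gamma(X,-)$ really takes values in $A-\Mod$: since no prime ideal of $A$ contains $1$ we have $D_1=X$, so $\cO_X(X)=\cO_X(D_1)=A_1=A$, and therefore the global sections $\cM(X)$ of any $\cO_X$-module carry the structure of an $A$-set, functorially in $\cM$.

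The unit isomorphism $\id_{A-\Mod}\simeq\Gamma(X,-)\circ\widetilde{(-)}$ is then immediate: for any $A$-set $M$ we compute $\Gamma(X,\widetilde M)=\widetilde M(D_1)=S_1^{-1}M=M$, where $S_1=\{1\}$, and this identification is visibly natural in $M$. In particular, applying $\Gamma(X,-)$ to a morphism $\widetilde\varphi$ recovers $\varphi$, so $\widetilde{(-)}$ is already faithful. For the counit I would construct, for each quasi-coherent $\cM$, a canonical comparison morphism $\widetilde{\cM(X)}\to\cM$: on a basic open $D_f$ the restriction $\cM(X)\to\cM(D_f)$ is compatible with $A\to A_f$, and as $\cM(D_f)$ is an $A_f$-set it factors uniquely through $\cM(X)_f=\widetilde{\cM(X)}(D_f)$ by the universal property of localization; these maps respect further restrictions and glue to a morphism of $\cO_X$-modules.

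It remains to see that this comparison morphism is an isomorphism, and here I would invoke Theorem \ref{quasi-coherent_any_cover} with $U=X$: it furnishes an $A$-set $M$ together with an isomorphism $\cM\simeq\widetilde M$, under which $\cM(X)$ is identified with $\widetilde M(X)=M$ and the comparison morphism becomes the identity of $\widetilde M$. Hence $\widetilde{\cM(X)}\simeq\cM$ naturally, the two functors are mutually quasi-inverse, and $\widetilde{(-)}$ is an equivalence. The main obstacle is really discharged by the cited theorem; what remains delicate is only the verification that the glued comparison map is an isomorphism, which localizes on the $D_f$ to the statement that $\widetilde{(-)}$ commutes with localization (Lemma \ref{localization_of_A_and_M} and Proposition \ref{prop_localizations_and_limits}), together with the routine bookkeeping of naturality in $M$ and $\cM$.
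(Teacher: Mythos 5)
Your proposal is correct and takes essentially the same route the paper intends: the corollary is left as an immediate consequence of Theorem \ref{quasi-coherent_any_cover}, with the standard scheme-theoretic argument (Hartshorne, Ch.\ II, transferred \emph{mutatis mutandis}) that the global sections functor $\Gamma(X,-)$ is quasi-inverse to $\widetilde{(-)}$. Your verification of the unit via $\Gamma(X,\widetilde M)=S_1^{-1}M=M$ and of the counit via the localized comparison maps $\cM(X)_f\to\cM(D_f)$ is precisely that argument carried out for $A$-sets.
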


\begin{corollary}\label{corollary_char_q-coh}
 An $\cO_X$-module $\cM$ is quasi-coherent if and only if for all affine open subsets $V\subset U\subset X$, the restriction map $\res_{U,V}:\cM(U)\to\cM(V)$ extends to an isomorphism $S^{-1}\cM(U)\stackrel\sim\longrightarrow\cM(V)$ where $S$ is a multiplicative subset of $\cO_X(U)$ such that $S^{-1}\cO_X(U)\simeq\cO_X(V)$.\qed
\end{corollary}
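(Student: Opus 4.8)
The plan is to prove both implications, using Theorem~\ref{quasi-coherent_any_cover} for the forward direction and a gluing-over-a-basis argument for the converse. Throughout I use the description of $\widetilde M$, namely $\widetilde M(U_S)=S^{-1}M$ and in particular $\widetilde M(D_f)=M_f$, together with the fact recorded in the discussion of base extension that an open inclusion $V\hookrightarrow U$ of affine $\Mo$-schemes corresponds to a localization $\cO_X(V)\simeq S^{-1}\cO_X(U)$ at a multiplicative subset $S\subset\cO_X(U)$.

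For the forward direction, suppose $\cM$ is quasi-coherent and let $V\subset U\subset X$ be affine opens. By Theorem~\ref{quasi-coherent_any_cover} there is an $\cO_X(U)$-set $M$ with $\cM\vert_U\simeq\widetilde M$; evaluating on $U=U_{\{1\}}$ itself gives $\cM(U)\simeq M$. Writing $V=U_S$ with $\cO_X(V)\simeq S^{-1}\cO_X(U)$, I evaluate the isomorphism $\cM\vert_U\simeq\widetilde M$ on $V$ to obtain $\cM(V)\simeq\widetilde M(V)=S^{-1}M\simeq S^{-1}\cM(U)$. Under these identifications the restriction $\res_{U,V}$ becomes the canonical localization map $M\to S^{-1}M$, so it extends to the asserted isomorphism $S^{-1}\cM(U)\xrightarrow{\sim}\cM(V)$.

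For the converse, assume the restriction condition and fix an affine open cover $\{U_i\}$ with $U_i=\spec A_i$. Put $M_i=\cM(U_i)$, an $A_i$-set; I claim $\cM\vert_{U_i}\simeq\widetilde{M_i}$, which exhibits $\cM$ as quasi-coherent. The sets $D_f$ with $f\in A_i$ form a basis of affine opens of $U_i$ (Lemma~\ref{D_f-topology}), and each inclusion $D_f\subset U_i$ is a pair of affine opens to which the hypothesis applies, yielding an isomorphism $\theta_f\colon\widetilde{M_i}(D_f)=(M_i)_f\xrightarrow{\sim}\cM(D_f)$ that extends the restriction $\res_{U_i,D_f}\colon M_i\to\cM(D_f)$. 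Since both $\cM\vert_{U_i}$ and $\widetilde{M_i}$ are sheaves, it suffices to check that the $\theta_f$ are compatible with restrictions, i.e.\ that for any inclusion $D_{f'}\subset D_f$ of basic opens the square formed by $\theta_f$, $\theta_{f'}$ and the two (localization) restriction maps commutes; this produces an isomorphism of sheaves on the basis and hence $\cM\vert_{U_i}\simeq\widetilde{M_i}$.

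The main obstacle is precisely this compatibility, and it is resolved by uniqueness of the extension to a localization: a morphism out of $(M_i)_f$ is determined by its composite with the localization map $\lambda_f\colon M_i\to(M_i)_f$, since $\lambda_f$ has ``dense'' image under the $S^{-1}A_i$-action (this is the universal property underlying Lemma~\ref{localization_of_A_and_M}). Composing either way around the square with $\lambda_f$ yields the single restriction $\res_{U_i,D_{f'}}\colon M_i\to\cM(D_{f'})$ by functoriality of restriction and the defining property $\theta_f\circ\lambda_f=\res_{U_i,D_f}$; hence the two composites agree and the square commutes. This completes the construction of the isomorphism $\cM\vert_{U_i}\simeq\widetilde{M_i}$ and therefore the proof.
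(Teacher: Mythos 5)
Your proof is correct and matches the paper's (implicit) approach: the paper records this statement as an immediate corollary of Theorem \ref{quasi-coherent_any_cover}, with no written proof, and your argument is the standard filling-in of that deduction. Specifically, your forward direction is just the evaluation of an isomorphism $\cM\vert_U\simeq\widetilde M$ on $V=U_S$, and your converse---gluing the isomorphisms $(M_i)_f\stackrel\sim\longrightarrow\cM(D_f)$ over the basis $\{D_f\}$ of $U_i$, using that a morphism of $(A_i)_f$-sets out of the localization is determined by its composite with $M_i\to(M_i)_f$---is exactly the omitted verification that $\cM\vert_{U_i}\simeq\widetilde{M_i}$ for a cover, which is the definition of quasi-coherence.
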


Again, as in usual scheme theory (cf.\ \cite[Ch.\ II, Prop.\ 5.4]{Hartshorne}), we obtain the following fact.

\begin{theorem} \label{coherent_on_Noetherian_scheme}
 Let $X$ be a Noetherian $\Mo$-scheme with structure sheaf $\cO_X$.  An $\cO_X$-module $\cM$ is coherent if and only if for every affine open $U$ of $X$, there is a  finitely generated $\cO_X(U)$-set $M$ such that $\cM\vert_U\simeq\widetilde M$ as $\cO_U$-modules.\qed
\end{theorem}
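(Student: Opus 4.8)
The plan is to settle the ``if'' direction at once and to reduce the ``only if'' direction to the single finiteness statement that $M=\cM(U)$ is a finitely generated $A$-set, which I then extract by the usual ``clear denominators and descend'' method. The ``if'' direction is immediate: if for every affine open $U$ there is a finitely generated $\cO_X(U)$-set $M$ with $\cM|_U\simeq\widetilde M$, then any affine open cover of $X$ already exhibits $\cM$ as coherent. For the converse, assume $\cM$ is coherent and fix an arbitrary affine open $U=\spec A$; since $X$ is Noetherian, $A$ is a finitely generated monoid. A coherent sheaf is in particular quasi-coherent, so Theorem \ref{quasi-coherent_any_cover} gives $\cM|_U\simeq\widetilde M$ for $M=\cM(U)$, and everything reduces to showing that this $M$ is finitely generated.

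First I would cover $U$ by finitely many basic opens on which $\cM$ is a finitely generated localization of $M$. Starting from a coherent cover $\{V_j\}$ of $X$ and passing to the intersections $U\cap V_j$, which are affine, each $U\cap V_j$ is a union of basic opens $D_g$ ($g\in A$) on which Corollary \ref{corollary_char_q-coh} identifies $\cM(D_g)$ with $S_g^{-1}M$; this is finitely generated over $A_g$ because it is a localization of the finitely generated $\cO_X(U\cap V_j)$-set. As a Noetherian $\Mo$-scheme has only finitely many points, $U$ is quasi-compact, so finitely many such $D_{g_1},\dots,D_{g_r}$ cover $U$. Choosing finitely many $A_{g_i}$-generators of each $S_{g_i}^{-1}M$ and clearing denominators (which only alters a generator by the unit $g_i$), I may take them of the form $\mu_{ij}/1$ with $\mu_{ij}\in M$. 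Let $M'\subseteq M$ be the $A$-subset generated by the finitely many $\mu_{ij}$.

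It remains to prove $M'=M$. By the choice of generators the inclusion $M'\hookrightarrow M$ becomes surjective after applying $S_{g_i}^{-1}(-)$, and it stays injective since localization commutes with finite limits and so preserves monomorphisms (Proposition \ref{prop_localizations_and_limits}); hence $S_{g_i}^{-1}M'\to S_{g_i}^{-1}M$ is an isomorphism for each $i$. Because every prime $\p$ of $A$ lies in some $D_{g_i}$, the stalk maps $M'_\p\to M_\p$ are all isomorphisms, so by the equivalence of Corollary \ref{Cor_equiv_A-Mod_and_Coh_X} the induced map $\widetilde{M'}\to\widetilde M$ is an isomorphism of sheaves and therefore $M'=M$ on global sections, proving $M$ finitely generated. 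The one step needing care is precisely this descent: one must know that localization of $A$-sets preserves both injectivity and surjectivity and that an isomorphism of quasi-coherent sheaves can be tested stalkwise over the covering $\{D_{g_i}\}$.

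A cleaner alternative avoids the explicit finiteness bookkeeping by base change to $\Z$. A coherent $\cO_X$-module $\cM$ has coherent base extension $\cM_\Z$ on the Noetherian scheme $X_\Z$, since on a coherent cover $\{V_j\}$ the pieces $(\cM(V_j))_\Z$ are finitely generated by Lemma \ref{lemma:MIsFGIffMZIsFG}. Applying the classical statement \cite[Ch.\ II, Prop.\ 5.4]{Hartshorne} to the affine open $U_\Z=\Spec A_\Z$ then yields that $\cM_\Z(U_\Z)\simeq\cM(U)_\Z=M_\Z$ is a finitely generated $A_\Z$-module, whence $M$ is finitely generated over $A$ by Lemma \ref{lemma:MIsFGIffMZIsFG} again. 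Here the main obstacle shifts to checking that base extension carries coherent $\Mo$-sheaves to coherent sheaves, which rests on Lemma \ref{base_extension_properties_for_sheaves} together with the identity $\cM_\Z(U_\Z)\simeq\cM(U)_\Z$.
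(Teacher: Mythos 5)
Your first argument is correct and is essentially the paper's own proof: the paper prints no argument at all, but asserts that Hartshorne's proof of \cite[Ch.\ II, Prop.\ 5.4]{Hartshorne} transfers \emph{mutatis mutandis}, and that classical proof is exactly the clear-denominators-and-descend argument you spell out; every monoid-level ingredient you need (localization preserving monomorphisms via Proposition \ref{prop_localizations_and_limits}, the stalkwise criterion, Corollary \ref{Cor_equiv_A-Mod_and_Coh_X}) is indeed available. Two remarks. First, in the monoid setting your covering machinery collapses to something much shorter: the maximal ideal $\m=A-A^\times$ is a point of $U=\spec A$ whose \emph{only} open neighborhood is $U$ itself (if $\m\in D_f$ then $f\in A^\times$, so $D_f=\spec A$); hence among the opens $U\cap V_j$ one already equals $U$, so $U\subseteq V_j$ for some $j$, and $M=\cM(U)$ is directly a localization of the finitely generated $\cO_X(V_j)$-set $\cM(V_j)$, finishing the proof with no choice of generators, no clearing of denominators, and no gluing. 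Second, your alternative route through $\Z$ is valid but needs one caveat made explicit: the paper's later theorem that $(-)\otimes_{\cO_X}\cO_{X_\Z}$ carries $\Coh X$ into $\Coh X_\Z$ is itself \emph{deduced from} Theorem \ref{coherent_on_Noetherian_scheme}, so to avoid circularity you may only use the quasi-coherent part of that theorem (whose proof, via Lemma \ref{lemma_for_base_ext_of_q-coh_sheaves}, is independent of the statement being proved), namely that $\cM_\Z$ is quasi-coherent with $\cM_\Z(V_{j,\Z})\simeq(\cM(V_j))_\Z$; combined with Lemma \ref{lemma:MIsFGIffMZIsFG} and the classical Hartshorne statement on the Noetherian scheme $X_\Z$, this then yields the result exactly as you describe.
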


\subsubsection{Limits and colimits}

The subcategories $\QCoh X$ and $\Coh X$ of $\cO_X-\Mod$ allow a series of categorical constructions that are compatible with the inclusion to $\cO_X-\Mod$. Namely, the trivial sheaf and the structure sheaf are both coherent sheaves. The homomorphism sheaf $\Hom(\cM,\cN)$ is (quasi-)coherent if $\cM$ and $\cN$ are both (quasi-)coherent. Since $\QCoh X$ and $\Coh X$ are full subcategories of $\cO_X-\Mod$, a (co)limit of a diagram of (quasi-)coherent sheaves in $\cO_X-\Mod$ is a (co)limit in the category of (quasi-)coherent sheaves provided the (co)limit is (quasi-)coherent.

\begin{lemma}
 The category $\QCoh X$ contains finite limits and small colimits. If $X$ is Noetherian, the category $\Coh X$ contains finite limits and finite colimits.
\end{lemma}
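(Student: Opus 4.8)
The plan is to reduce everything to the affine situation and then feed it into Proposition \ref{prop_localizations_and_limits}. Since quasi-coherence and coherence are local and may be tested on any affine open cover (Theorems \ref{quasi-coherent_any_cover} and \ref{coherent_on_Noetherian_scheme}), and since restriction $\cM\mapsto\cM|_U$ to an open subset commutes with limits (which are computed section by section) and with colimits (presheaf colimits are sectionwise, and sheafification is compatible with restriction), it suffices to prove that over an affine $U=\spec A$ a finite limit (resp.\ small colimit) of quasi-coherent sheaves is again quasi-coherent, and that over a Noetherian monoid $A$ a finite limit (resp.\ finite colimit) of coherent sheaves is coherent. By Corollary \ref{Cor_equiv_A-Mod_and_Coh_X}, quasi-coherent sheaves on $\spec A$ are nothing but $A$-sets, and by Proposition \ref{ASetsAreCompleteAndCocomplete} the category $A-\Mod$ is complete and cocomplete; so the only real content is that the (co)limit formed in $\cO_X-\Mod$ stays inside the respective subcategory.

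For $\QCoh X$ I would argue as follows. A finite limit $\cL$ of sheaves is computed sectionwise, so on a distinguished affine $V=U_S\subset U$ one has $\cL(V)=\lim_j S^{-1}M_j$, where $M_j=\cM_j(U)$, and this equals $S^{-1}(\lim_j M_j)$ by Proposition \ref{prop_localizations_and_limits}. Writing $L=\lim_j M_j$ in $A-\Mod$, this shows $\cL$ agrees with $\widetilde L$ on the basis of distinguished opens, whence $\cL\simeq\widetilde L$ is quasi-coherent (one may also invoke Corollary \ref{corollary_char_q-coh} directly). For a small colimit the same computation applies after sheafification: the presheaf colimit sends $V=U_S$ to $\colim_j S^{-1}M_j=S^{-1}(\colim_j M_j)$, again by Proposition \ref{prop_localizations_and_limits}, so it coincides on the basis with the sheaf $\widetilde N$ for $N=\colim_j M_j$. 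Since a sheaf is determined by its values on a basis, the sheafification is $\widetilde N$, which is quasi-coherent.

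For $\Coh X$ with $X$ Noetherian the colimit case is immediate: a finite colimit of finitely generated $A$-sets is built from finite wedges and coequalizers, and both a finite wedge and a quotient of finitely generated $A$-sets are again finitely generated; hence $N=\colim_j M_j$ is finitely generated and $\widetilde N$ is coherent by Theorem \ref{coherent_on_Noetherian_scheme}.

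The hard part is the assertion that finite limits of coherent sheaves are coherent, and the genuine obstacle is the product. Over a Noetherian monoid the Cartesian product of two finitely generated $A$-sets need \emph{not} be finitely generated: for $A=\Fun[T]$ the diagonal action on $\Fun[T]\times\Fun[T]$ has infinitely many orbits, represented by the elements $(1,T^{j})$ with $j\ge 0$. Equalizers and kernels cause no trouble, since the equalizer of $f,g\colon M\to N$ and the kernel of $f$ are sub-$A$-sets of the single finitely generated object $M$, and over a Noetherian monoid every sub-$A$-set of a finitely generated $A$-set is finitely generated by Proposition \ref{prop:FGIffNoet}. The difficulty is therefore concentrated entirely in forming products (and, through pullbacks over the terminal object, in general finite limits): one cannot simply observe that $L=\lim_j M_j\subset\prod_j M_j$ and appeal to Noetherianity of the ambient product, because the latter may already fail to be finitely generated. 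This is the step I expect to require the most care, and it is precisely where the theory of $A$-sets departs from the module-theoretic analogue, in which finite products coincide with finite coproducts and automatically preserve finite generation; a correct treatment must verify finite generation of the particular sub-$A$-set cut out by the diagram rather than of the ambient product.
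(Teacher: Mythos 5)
Your reduction to the affine case and your treatment of $\QCoh X$ and of finite colimits of coherent sheaves coincide with the paper's proof: the paper likewise computes (co)limits sectionwise on an affine cover, invokes Proposition \ref{prop_localizations_and_limits} to commute them past localization, and concludes via Corollary \ref{corollary_char_q-coh}. The divergence is exactly at the step you flag as hard, and there the situation is worse than you suggest --- but the defect is the paper's, not yours. The paper disposes of the coherent case with the single sentence that ``finite (co)limits of finitely generated $\cO_X(U)$-sets are finitely generated if $\cO_X(U)$ is Noetherian,'' and this is precisely the claim your counterexample refutes: $\Fun[T]$ is a Noetherian monoid, yet the product $\Fun[T]\times\Fun[T]$ with the diagonal action contains the elements $(1,T^j)$, $j\geq 0$, no one of which lies in the $A$-orbit of any other element, so every generating set is infinite. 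You have therefore not missed an idea that the paper supplies; the paper's proof of the finite-limit half of the coherent statement is simply wrong at this point.

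Moreover, your closing hope that ``a correct treatment must verify finite generation of the particular sub-$A$-set cut out by the diagram rather than of the ambient product'' cannot be realized: for the discrete diagram with two objects the limit \emph{is} the ambient product, and no smaller object can serve. Indeed, suppose $P$ were a product of $A$ and $A$ (with $A=\Fun[T]$) in the category of finitely generated $A$-sets, with projections $p_1,p_2$. Testing the universal property against the finitely generated object $M=A$ and using the evaluation bijection $\Hom_A(A,N)\cong N$ shows that $x\mapsto (p_1(x),p_2(x))$ is an $A$-equivariant bijection from $P$ onto the Cartesian product $A\times A$ with diagonal action, hence an isomorphism by Lemma \ref{mono-epi}; so $P\cong A\times A$ would have to be finitely generated, a contradiction. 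Thus finite products --- equivalently, pullbacks over the terminal object $\0$ --- do not exist in $\Coh \A^1_\Fun$ at all, and the assertion that $\Coh X$ contains finite limits is false for the Noetherian $\Mo$-scheme $X=\A^1_\Fun=\spec\Fun[T]$. What survives of the coherent statement is what you actually proved: finite colimits exist (finite wedges and quotients of finitely generated $A$-sets are finitely generated), and those limits which are subobjects of a single coherent sheaf --- equalizers and kernels --- are coherent by Proposition \ref{prop:FGIffNoet}. Your proposal is correct as far as it goes, and the part it leaves open is not a gap in your argument but a counterexample to the statement as written.
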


\begin{proof}
 Due to the local nature of sheaves, there is for every (co)limit $\cG$ of a diagram $\cD=\{\cF\}$ of $\cO_X$-modules an affine cover of $X$ such that for every $U$ in that cover, $\cG(U)$ is the (co)limit of the diagram of $\cO_X(U)$-sets $\cD(U)=\{\cF(U)\}$. By Proposition \ref{prop_localizations_and_limits}, finite limits and small colimits commute with localizations, and thus, by Corollary \ref{corollary_char_q-coh}, a finite limit resp.\ small colimit $\cG$ is quasi-coherent if all $\cF$ in $\cD$ are quasi-coherent. Note that finite (co)limits of finitely generated $\cO_X(U)$-sets are finitely generated if $\cO_X(U)$ is Noetherian, which solves the case for coherent sheaves.
\end{proof}

As a consequence, (co)kernels and finite (co)products of (quasi-)coherent sheaves are (quasi-)coherent.

\begin{remark}
 We see that the categories $\QCoh X$ and $\Coh X$ are as close to an abelian category as $A-\Mod$ is. Namely, all statements of Remark \ref{remark_abelian_category} apply \textit{mutatis mutandis} to the category of (quasi-)coherent sheaves.
\end{remark}

\subsubsection{Normal morphisms}

The category of quasi-coherent sheaves admits a characterization of normal morphisms in terms of coverings.

\begin{proposition}
 A morphism $f:\cM\to\cN$ of quasi-coherent sheaves is normal if and only if there is an affine cover $\{U_i\}_{i\in I}$ of $X$ such that for all $i\in I$, the morphism $f(U_i):\cM(U_i)\to\cN(U_i)$ is a normal morphism of $\cO_X(U_i)$-sets.
\end{proposition}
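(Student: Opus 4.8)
The statement is a local characterization of normality for morphisms of quasi-coherent sheaves. The plan is to reduce the claim entirely to the local situation, where normality of $f$ is defined stalkwise, and then exploit the correspondence between $A$-sets and quasi-coherent sheaves on affine patches (Corollary \ref{Cor_equiv_A-Mod_and_Coh_X}) together with the fact that normality of $A$-set morphisms is preserved under localization (Lemma \ref{normal_and_localization}).

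First I would prove the forward direction. Assume $f$ is normal, meaning $f_x:\cM_x\to\cN_x$ is normal for all $x\in X$. Fix any affine cover $\{U_i\}$ of $X$; I must show that each $f(U_i):\cM(U_i)\to\cN(U_i)$ is a normal morphism of $\cO_X(U_i)$-sets. Working on a single affine $U_i=\spec A$ with $\cM|_{U_i}\simeq\widetilde{M}$ and $\cN|_{U_i}\simeq\widetilde N$, the map $f(U_i)$ is a morphism $M\to N$ of $A$-sets whose localizations at each prime $\p$ recover the stalks $f_\p=M_\p\to N_\p$. Using the characterization \eqref{normal3} of Proposition \ref{normal_lemma}, normality of $f(U_i)$ is the condition that each nonbasepoint fibre is a singleton; if this failed, there would be distinct $m,m'\in M$ with $f(m)=f(m')\neq\ast$, and then at the maximal prime $\m_A=A-A^\times$ the images $\frac m1,\frac{m'}1$ would remain distinct in $M_\m$ while having equal (nonbasepoint) image in $N_\m$, contradicting normality of the stalk $f_\m$. (One must check the two classes stay distinct after localizing at $\m$, i.e.\ that no $t\in A^\times$ forces $tm=tm'$; but units act invertibly, so $tm=tm'$ already implies $m=m'$.) Hence $f(U_i)$ is normal.

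For the converse, suppose there is some affine cover $\{U_i\}$ with each $f(U_i)$ normal; I must show $f_x$ is normal for every $x\in X$. Given $x$, choose $i$ with $x\in U_i$, say $U_i=\spec A$ and $x=\p$. Then $f_x$ is the localization $S^{-1}f(U_i)$ at $S=A-\p$ of the normal morphism $f(U_i)$, and Lemma \ref{normal_and_localization} immediately gives that $f_x=f(U_i)_\p$ is normal. Since this holds at every point, $f$ is normal by definition.

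The main subtlety I anticipate is the forward direction: one must verify that the \emph{stalkwise} hypothesis genuinely forces the naive fibre condition on global sections over each affine patch, i.e.\ that a failure of condition \eqref{normal3} for $f(U_i)$ cannot be ``washed out'' by localization. The resolution is that it suffices to test at the single maximal ideal $\m_A$, where localization only inverts units, so distinct elements of $M$ cannot become identified unless they were already equal; thus any genuine fibre collision in $M$ survives to a fibre collision in $M_\m$, violating normality of the stalk. Once this point is handled, both implications are routine applications of the $A$-set results already established.
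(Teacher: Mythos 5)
Your proof is correct and takes essentially the same approach as the paper's. The converse direction is handled identically via Lemma \ref{normal_and_localization} (stalks of quasi-coherent sheaves are localizations of the sections over an affine patch), and your forward direction---a fibre collision in $\cM(U_i)$ survives localization at $\m_A=A-A^\times$ because only units get inverted---is just an unwound version of the paper's one-line observation that $f(U_i)=f_x$ where $x$ is the maximal point of $U_i=\spec A$.
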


\begin{proof}
 By Lemma \ref{normal_and_localization}, the localization of a normal morphism is normal. Thus all localizations $f(U)$ where $U$ is affine and $U\subset U_i$ for some $i\in I$ are normal and those $U$ form a basis for the topology of $X$. Therefore all morphisms between the stalks are normal. If conversely, one of the restrictions $f(U_i)$ is not normal, then $f$ is not normal since $f(U_i)=f_x:\cM_x\to\cN_x$ where $x$ is the maximal point of $U_i$.
\end{proof}

\subsubsection{Base extension to $\Z$}

Let $X$ be an $\Mo$-scheme and $\beta:X_\Z\to X$ the canonical map. Let $\cM$ be an $\cO_X$-module. Recall that we defined the base extension of $\cM$ to $\Z$ as $\cM_\Z = \beta^{-1}\cM\otimes_{\beta^{-1}\cO_X}\cO_{X_\Z}$. Aim of this section is to show that the base extension of a quasi-coherent sheaf is quasi-coherent and, provided $X$ is Noetherian, that the base extension of a coherent sheaf is coherent.

We shall need the following lemma.

\begin{lemma}\label{lemma_for_base_ext_of_q-coh_sheaves}
 Let $\cM\in\QCoh X$. Let $U$ be an affine open subset of $X_\Z$ such that there is an affine open subset $W$ of $X$ containing $\beta(U)$. Let $S$ be a multiplicative subset of $\cO_{X_\Z}(W_\Z)$ such that $S^{-1}\cO_{X_\Z}(W_\Z)\simeq\cO_{X_\Z}(U)$. Then
 $$ \lim_{\substack{\longrightarrow \\ V\subset X\text{ open}\\ \text{s.t.\ }\beta(U)\subset V}}(\,\cM(V)\otimes_{\cO_X(V)}\cO_{X_\Z}(U)\,) \quad = \quad S^{-1}(\cM(W)_\Z). $$
\end{lemma}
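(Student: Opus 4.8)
The plan is to compute the colimit by comparing each of its terms with the constant object $S^{-1}(\cM(W)_\Z)$. Write $A=\cO_X(W)$ and $M=\cM(W)$; then $M$ is an $A$-set and $\cM\vert_W\simeq\widetilde M$ by Theorem \ref{quasi-coherent_any_cover}. Since $\beta^{-1}(W)=W_\Z$, the hypothesis $\beta(U)\subset W$ means $U\subset W_\Z=\Spec A_\Z$, so $U$ is an affine open of $\Spec A_\Z$; thus $\cO_{X_\Z}(U)=S^{-1}A_\Z=:R$ for a multiplicative subset $S\subset A_\Z$, and by the definition of base extension the right-hand side is $S^{-1}(M_\Z)=M_\Z\otimes_{A_\Z}R=M\otimes_A R$. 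As $W$ itself lies in the index category and is its initial object (the largest admissible $V$), restriction induces a natural map $M\otimes_A R\to\cM(V)\otimes_{\cO_X(V)}R$ for every admissible $V$, compatibly with the transition maps; so it suffices to prove that each of these maps is an isomorphism, for then the colimit is canonically $M\otimes_A R=S^{-1}(\cM(W)_\Z)$.

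First I would cut the index category down to the opens contained in $W$. For an arbitrary open $V\supset\beta(U)$ one has $U\subset V_\Z\cap W_\Z=(V\cap W)_\Z$, so the structure map $\cO_X(V)\to R$ factors through the restriction $\cO_X(V)\to\cO_X(V\cap W)$. Combined with quasi-coherence (Corollary \ref{corollary_char_q-coh} together with Lemma \ref{localization_of_A_and_M}, which gives $\cM(V\cap W)\simeq\cM(V)\otimes_{\cO_X(V)}\cO_X(V\cap W)$), this shows that the transition $\cM(V)\otimes_{\cO_X(V)}R\to\cM(V\cap W)\otimes_{\cO_X(V\cap W)}R$ is an isomorphism. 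Hence the subsystem $\{V\mid\beta(U)\subset V\subset W\}$ is cofinal, and it is enough to treat these $V$.

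For an affine $V=\spec T^{-1}A$ the verification is direct. Quasi-coherence gives $\cM(V)\simeq T^{-1}M$, and $U\subset V_\Z$ forces the image of $T$ in $R$ to consist of units. Rewriting $(T^{-1}M)_\Z\simeq T^{-1}(M_\Z)$ and $(T^{-1}A)_\Z\simeq T^{-1}(A_\Z)$ by Lemmas \ref{localization_and_base_extension} and \ref{localization_and_base_extension_of_A-sets}, one computes
$$\cM(V)\otimes_{\cO_X(V)}R\ \simeq\ T^{-1}(M_\Z)\otimes_{T^{-1}(A_\Z)}R\ \simeq\ M_\Z\otimes_{A_\Z}R\ =\ S^{-1}(M_\Z),$$
the middle isomorphism holding because localization at $S$ already inverts $T$. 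Thus each affine term is canonically $S^{-1}(M_\Z)$, compatibly with the restriction maps.

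The real difficulty is that the affine $V$ are in general \emph{not} cofinal: the minimal open neighbourhood of $\beta(U)$ can fail to be affine (already for $A=\Fun[T_1,\dots,T_n]$ a suitable $U$ has as minimal neighbourhood the complement of the closed point of $\A^n_\Fun$, which is not affine). For such a $V$ one picks a finite affine cover and glues, and here lies the main obstacle: base extension $(-)\otimes_\Fun\Z$ is not left exact (cf.\ Lemma \ref{extension_of_normal_morphism_to_Z}), so it does not commute with the equalizer expressing the sheaf condition, and one cannot pass to the affine case naively. The remedy I would use is to localize at $S$ before gluing: since $S^{-1}(-)$ is an exact functor on $A_\Z$-modules, it commutes with the finite equalizer, and both $M\otimes_A R$ and $\cM(V)\otimes_{\cO_X(V)}R$ are thereby identified with the sections over $U$ of the ordinary quasi-coherent sheaf $\widetilde{M_\Z}$ on $\Spec A_\Z$, namely $S^{-1}(M_\Z)$, in a manner compatible with the affine-local values found above. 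This exhibits every term of the colimit as canonically isomorphic to $S^{-1}(\cM(W)_\Z)$ and proves the lemma.
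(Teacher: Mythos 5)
Your reduction to the subsystem $\{V\mid\beta(U)\subset V\subset W\}$ (cofinality is immediate from $V\cap W\subset V$; the isomorphism claim you attach to it is not needed there) and your computation at affine $V=\spec T^{-1}A$ are both correct, and to that point you are on the paper's route: the paper likewise argues that the terms of the system are constant, using Corollary \ref{corollary_char_q-coh} together with Lemmas \ref{localization_of_A_and_M}, \ref{localization_and_base_extension} and \ref{localization_and_base_extension_of_A-sets}, and then evaluates the colimit at $V=W$. You are also right that the affine members are in general not cofinal; this is a genuine subtlety which the paper's own proof passes over silently, since it applies Corollary \ref{corollary_char_q-coh} to \emph{arbitrary} pairs $V_2\subset V_1$ in the system although that corollary is proved only for affine opens.

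Precisely at this point, however, your proposal asserts rather than proves the key step. The term of the colimit at a non-affine $V$ is $\cM(V)\otimes_{\cO_X(V)}R=\cM(V)_\Z\otimes_{\cO_X(V)_\Z}R$, where $\cM(V)$ is the equalizer, \emph{in pointed sets}, of $\prod_i\cM(V_i)\rightrightarrows\prod_{i,j}\cM(V_i\cap V_j)$: the gluing happens before linearization, and one cannot decree that it happen afterwards. Exactness of $S^{-1}(-)$ lets you commute localization with finite limits of $A_\Z$-\emph{modules}, but this is not a diagram of modules, and $(-)_\Z$ does not commute with its equalizer; localizing afterwards does not formally repair that failure. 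Concretely, in your own example take $A=\Fun[x_1,x_2]$, $M=A/(x_1x_2)$, $W=\spec A$, $V=\spec A\setminus\{\m_A\}$, $U=D(x_1+x_2)\subset\Spec A_\Z$ and $S$ generated by $x_1+x_2$, so that $\beta(U)\subset V$. Since $M_{x_1x_2}=\{\ast\}$, the sheaf condition gives $\cM(V)=M_{x_1}\times M_{x_2}$, the \emph{full} product, so $\cM(V)_\Z$ is free on the set of pairs, whereas the equalizer of the linearized diagram (the sections over $V_\Z$ of the ordinary quasi-coherent sheaf on $\Spec A_\Z$ associated with $M_\Z$) is $(M_\Z)_{x_1}\oplus(M_\Z)_{x_2}$. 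These are genuinely non-isomorphic $A_\Z$-modules: the element $(1,1)-(x_1,1)-(1,x_2)+(x_1,x_2)$ of $\cM(V)_\Z$ is nonzero and annihilated by both $x_1$ and $x_2$, while no nonzero element of the direct sum is. They do become isomorphic after inverting $S$, through relations such as $(x_1+x_2)\cdot(x_1^a,x_2^b)=(x_1^{a+1},\ast)+(\ast,x_2^{b+1})$, but exactly this collapse --- that $S^{-1}\bigl(\cM(V)_\Z\bigr)$ agrees with the $S$-localization of the glued module, using $U\subset V_\Z$ --- is what your phrase ``thereby identified'' takes for granted. Until that identification (equivalently, the statement that every transition map of the system becomes an isomorphism after $\otimes_{\cO_X(V)}R$) is actually proved, the non-affine case, and with it the lemma, is not established by your argument; note that the same supplementary argument is what would also be needed to make the paper's own proof complete.
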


\begin{proof}
 Note that $W$ occurs in the system of all open subsets $V$ of $X$ that contain $\beta(U)$. We call this system $\cD$ for short.

 Let $V_1,V_2\in\cD$ such that $V_2\subset V_1$ and let $\res_{V_1,V_2}:\cM(V_1)\to\cM(V_2)$ be the restriction map. By Corollary \ref{corollary_char_q-coh}, $S_{V_1,V_2}^{-1}\cM(V_1)\simeq\cM(V_2)$ when $S_{V_1,V_2}$ is a multiplicative subset of $\cO_X(V_1)$ such that $S_{V_1,V_2}^{-1}\cO_X(V_1)\simeq\cO_X(V_2)$. By Lemma \ref{localization_of_A_and_M}, $S_{V_1,V_2}^{-1}\cM(V_1)\simeq\cM(V_1)\otimes_{\cO_X(V_1)}\cO_X(V_2)$, and thus, in turn,
 \begin{equation*}\begin{split} \cM(V_2)\otimes_{\cO_X(V_2)}\cO_{X_\Z}(U) & \quad \simeq \quad S_{V_1,V_2}^{-1}\cM(V_1)\otimes_{\cO_X(V_2)}\cO_{X_\Z}(U) \\ & \quad \simeq \quad \cM(V_1)\otimes_{\cO_X(V_1)}\cO_X(V_2)\otimes_{\cO_X(V_2)}\cO_{X_\Z}(U) \\ & \quad \simeq \quad \cM(V_1)\otimes_{\cO_X(V_1)}\cO_{X_\Z}(U). \end{split}\end{equation*}

 Note for all $V_1,V_2\in\cD$, also $V_1\cap V_2\in\cD$. The colimit of the lemma is taken over a system of isomorphisms, and is isomorphic to $\cM(V)\otimes_{\cO_X(V)}\cO_{X_\Z}(U)$ for every $V\in\cD$. In particular the case when $V=W$ the colimit is $\cM(W)\otimes_{\cO_X(W)}\cO_{X_\Z}(U)$.

 To finish the proof, we calculate
 \begin{equation*}\begin{split} \cM(W)\otimes_{\cO_X(W)}\cO_{X_\Z}(U) & \quad \simeq \quad \cM(W)\otimes_{\cO_X(W)}\cO_{X}(W)_\Z\otimes_{\cO_{X}(W)_\Z}\cO_{X_\Z}(U) \\ & \quad \simeq \quad \cM(W)_\Z\otimes_{\cO_{X}(W)_\Z}\cO_{X_\Z}(U) \\ & \quad\simeq\quad S^{-1}\cM(W)_\Z. \hspace{5,4cm}\qedhere  \end{split}\end{equation*}
\end{proof}

Let $\QCoh X_\Z$ and $\Coh X_\Z$ denote the categories of quasi-coherent resp.\ coherent sheaves on $X_\Z$.

\begin{theorem}
 The base extension functor to $\Z$ restricts to a functor
 $$ (-)\otimes_{\cO_X}\cO_{X_\Z}:\QCoh X \to \QCoh X_\Z. $$
 If $X$ is Noetherian, then the base extension functor to $\Z$ restricts to a functor
 $$(-)\otimes_{\cO_X}\cO_{X_\Z}:\Coh X \to \Coh X_\Z.$$
 In particular, if $\cM$ is a quasi-coherent sheaf, $\{U_i\}$ is an affine cover of $X$ and $M_i=\cM(U_i)$, then the quasi-coherent sheaf $\cM_\Z$ is defined by the $\cO_{X_\Z}(U_i)$-modules $\cM_\Z(U_{i,\Z})=M_{i,\Z}$.
\end{theorem}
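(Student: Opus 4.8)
The plan is to reduce the global statement to the local computation already carried out in Lemma~\ref{lemma_for_base_ext_of_q-coh_sheaves} and then to recognize the resulting data as an associated quasi-coherent sheaf in the usual scheme-theoretic sense. Fix a quasi-coherent sheaf $\cM$ on $X$ and choose an affine open cover $\{U_i\}$ of $X$; write $A_i=\cO_X(U_i)$ and $M_i=\cM(U_i)$, which is an $A_i$-set because $\cM$ is quasi-coherent. Since $\beta^{-1}(U_i)=U_{i,\Z}$ (the continuity statement for $\beta$ established above) and $U_{i,\Z}=\Spec A_{i,\Z}$, the base extensions $\{U_{i,\Z}\}$ form an affine open cover of $X_\Z$. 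As quasi-coherence is local, it therefore suffices to show that $\cM_\Z|_{U_{i,\Z}}$ is isomorphic to the quasi-coherent $\cO_{U_{i,\Z}}$-module $\widetilde{M_{i,\Z}}$ associated to the $A_{i,\Z}$-module $M_{i,\Z}$.

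First I would fix $i$ and verify this isomorphism on a basis of affine opens of $U_{i,\Z}$. For an affine open $U\subset U_{i,\Z}$ we have $\beta(U)\subset U_i$, so Lemma~\ref{lemma_for_base_ext_of_q-coh_sheaves} applies with $W=U_i$: the colimit defining $\cM_\Z$ on $U$ equals $S^{-1}(M_{i,\Z})$, where $S$ is any multiplicative subset of $A_{i,\Z}$ with $S^{-1}A_{i,\Z}\simeq\cO_{X_\Z}(U)$. By definition of the associated sheaf, $S^{-1}(M_{i,\Z})=\widetilde{M_{i,\Z}}(U)$, and these identifications are compatible with the localization maps serving as restriction maps on both sides. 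Since $\widetilde{M_{i,\Z}}$ is already a sheaf and the two sides agree compatibly on the basic affine opens---equivalently, have the same stalks $(M_{i,\Z})_{\p}$ at every point $\p$---the sheafification producing $\cM_\Z$ yields $\cM_\Z|_{U_{i,\Z}}\simeq\widetilde{M_{i,\Z}}$. This proves that $\cM_\Z$ is quasi-coherent, and taking $U=U_{i,\Z}$ gives the ``in particular'' statement $\cM_\Z(U_{i,\Z})=M_{i,\Z}$.

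For the coherent case I would add the finiteness bookkeeping. Assume $X$ is Noetherian and $\cM$ coherent, so each $M_i$ is a finitely generated $A_i$-set and each $A_i$ is a finitely generated monoid. By Lemma~\ref{lemma:MIsFGIffMZIsFG} the base extension $M_{i,\Z}$ is then a finitely generated $A_{i,\Z}$-module, and by Lemma~\ref{monoid_of_finite_type_and_base_extension} each $A_{i,\Z}$ is a finitely generated $\Z$-algebra, hence a Noetherian ring. Consequently $\widetilde{M_{i,\Z}}$ is coherent, and by the quasi-coherent case just proved so is $\cM_\Z$.

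The step requiring the most care is the second paragraph: Lemma~\ref{lemma_for_base_ext_of_q-coh_sheaves} computes only the value of the presheaf whose sheafification is $\cM_\Z$, so I must justify that testing on the basis of affine opens of $U_{i,\Z}$ (equivalently, on stalks) is legitimate. The clean way is to observe that the presheaf and its sheafification have identical stalks, that on each basic open the presheaf value is exactly $\widetilde{M_{i,\Z}}(U)$, and that $\widetilde{M_{i,\Z}}$ is a genuine sheaf; matching stalks then forces the isomorphism of sheaves. Everything else---the covering statement, the localization identities, and the Noetherian reductions---is routine given the cited results.
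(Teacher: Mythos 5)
Your proof is correct and follows essentially the same route as the paper: both reduce to the affine-local computation of Lemma \ref{lemma_for_base_ext_of_q-coh_sheaves} and deduce quasi-coherence on the cover $\{U_{i,\Z}\}$ (the paper via the localization criterion of Corollary \ref{corollary_char_q-coh}, you by exhibiting $\widetilde{M_{i,\Z}}$ directly, which amounts to the same check), with the coherent case settled by the same finiteness lemmas. If anything, your closing paragraph handles the presheaf-versus-sheafification point more explicitly than the paper's proof, which silently identifies $\cM_\Z(U)$ with the value of the defining presheaf.
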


\begin{proof}
 Since in general for $\cM\in\cO_X-\Mod$, we have that $\cM_\Z(U_\Z)=\cM(U)_\Z$ for an open subset $U$ of $X$, the last statement follows from the first statement. Similarly, the second statement follows from the first statement by Theorem \ref{coherent_on_Noetherian_scheme} and the Lemma \ref{lemma:MIsFGIffMZIsFG}. Thus we are left with the first statement.

 Let $\cM$ be a quasi-coherent sheaf on $X$. Let $\cB$ be the collection of all affine open subsets of $X$. Then $\{V_\Z\}_{V\in\cB}$ covers $X_\Z$ and thus it suffices by Corollary \ref{corollary_char_q-coh} to show that for any open subset $U$ of any $W_\Z$ where $W\in\cB$, the restriction map $\res_{W_\Z,U}:\cM_\Z(W_\Z)\to\cM_\Z(U)$ extends to an isomorphism $S^{-1}\cM_\Z(W_\Z)\to\cM_\Z(U)$ where $S$ is a multiplicative subset of $\cO_{X_\Z}(W_\Z)$ such that $S^{-1}\cO_{X_\Z}(W_\Z)=\cO_{X_\Z}(U)$.

 Let $\cB(U)$ be the system of all $V\in\cB$ such that $\beta(U)\subset V$ together with the inclusion maps. Recall that $\cM_\Z(U)$ is defined as $\colim\cM(V)\otimes_{\colim\cO_X(V)}\cO_{X_\Z}(U)$ where the colimit is taken over $\cB(U)$. The universal properties of the tensor product and the colimit define mutually inverse isomorphisms
 $$ \xymatrix{\colim\cM(V)\otimes_{\colim\cO_X(V)}\cO_{X_\Z}(U)\ \ar[r]<3pt> & \ \colim \bigl(\, \cM(V)\otimes_{\cO_X(V)}\cO_{X_\Z}(U)\,\bigr) \ar[l]<3pt>}  $$
 that are induced by the identity maps $\cM(V)\to\cM(V)$ and $\cO_{X_\Z}(U)\to \cO_{X_\Z}(U)$. By Lemma \ref{lemma_for_base_ext_of_q-coh_sheaves}, the right colimit is isomorphic to $S^{-1}\cM(W)_\Z$, which is all we have to prove.
\end{proof}

\subsubsection{Locally  projective $\cO_X$-modules}

An $\cO_X$-module $\cM$ is said to be \emph{locally free (of rank $r$)} if it is quasi-coherent and if there is an affine cover $\{U_i\}$ of $X$ such that $\cM(U_i)$ is a free $\cO_X(U_i)$-set (of rank $r$). If $X=\Spec(R)$ is a usual affine scheme of a ring $R$, then locally free sheaves over $X$ correspond to projective $R$-modules. However, if $X=\Spec(A)$ is an affine $\Mo$-scheme, locally free sheaves over $X$ correspond to free $A$-sets because of the existence of the unique maximal ideal of $A$. This fact leads us to the following definition.

\begin{definition}\label{definition:LocallyProjectiveSheaves}
 An $\cO_X$-module $\cM$ is said to be \emph{locally projective} if it is quasi-coherent and if there is an affine cover $\{U_i\}$ of $X$ such that $\cM(U_i)$ is a projective $\cO_X(U_i)$-set.
\end{definition}

We let $\Proj X$ denote the full subcategory of $\Coh X$ which contains only the locally projective sheaves. So an $\cO_X$-module $\cM$ is in $\Proj X$ if and only if $\cM$ is locally projective and $\cM$ is coherent. By Corollary \ref{coro:LocalizationAndBaseExtensionOfProjectivesAreProjectives}, the base extension of a locally projective $\cO_X$-module is a locally free $\cO_{X_\Z}$-module. Thus we obtain a map $(-)\otimes_{\cO_X}\cO_{X_\Z}: \Proj X  \to \Bun (X_\Z)$, which is functorial in $X$.

\subsubsection{Examples}
\label{coherent_examples}

Let $A$ be a monoid and $X=\spec A$. By Corollary \ref{Cor_equiv_A-Mod_and_Coh_X}, the category of finitely generated $A$-modules is equivalent to $\Coh X$. We described these categories already in section \ref{A-set_examples}.

As an example that does not come from affine schemes, we describe the locally free sheaves on $X=\P^1_\Fun$. Recall from Section \ref{Examples_of_Mo-schemes} that $\P^1_\Fun$ is covered by two affine subschemes $U_1=\spec \Fun[T_1]$ and $U_2=\spec \Fun[T_2]$, which are both isomorphic to $\A^1_\Fun$ and that intersect in an affine open subscheme $U=\spec \Fun[T,T^{-1}]$, which is isomorphic to $\G_{m,\Fun}$. 
Let $S_i=\Fun[T_i]^\int=\Fun[T_i]-\{0\}$ (for $i=1,2$). A locally free sheaf on $\P^1_\Fun$ corresponds to a pair $(M_1,M_2)$ where $M_i$ is a finitely generated $\Fun[T_i]$-module (for i=$1,2$) together with an isomorphism $S_1^{-1}M_1\to S_2^{-1}M_2$. This means that $M_1$ and $M_2$ are necessarily of the same rank $r$. Let $m_1,\dotsc,m_r\in M_1$ such that $M_1=\cup_{i=1,\dotsc,r}\Fun[T_1].m_i$. If the rank is $r=1$, then the choice of an isomorphism corresponds to the choice of image of $m_1$ in $(S_2^{-1}M_2-\{0\})\simeq (\Fun[T_2^{\pm1}]-\{0\})$. Thus every $l\in \Z$ defines an isomorphism class of locally free sheaves of rank $1$ on $\P^1_\Fun$ by mapping $m_1$ to $T_2^l$.

The tensor product of two locally free sheaves of rank $1$ is again a locally free sheaf of rank $1$. This endows the set of isomorphism classes of locally free sheaves of rank $1$ with a group structure. We call this group $\Pic X$, the \emph{Picard group of $X$}, and $\Pic\P^1_\Fun\simeq \Z$ as a group. This corresponds to the situation of usual scheme theory and, indeed, the base extension to $\Z$ yields a group isomorphism $\Pic\P^1_\Fun\stackrel\sim\longrightarrow\Pic \P^1_\Z$.

In the case of higher rank $r$, the choice of an isomorphism $S_1^{-1}M_1\to S_2^{-1}M_2$ corresponds to the choice of elements $n_1,\dotsc,n_r\in S_2^{-1}M_2-\{0\}$ such that their orbits are pairwise different. Thus we obtain a bijection between the orbits of $S_1^{-1}M_1$ and the orbits of $S_2^{-1}M_2$. This shows that every rank $r$ bundle is the wedge product of coherent sheaves of rank $1$. This is the analogue to the result of Grothendieck in usual scheme theory that states that every vector bundle over the projective line decomposes into a direct sum of line bundles. Indeed, the base extension to $\Z$ yields a bijection $\Bun\P^1_\Fun\to\Bun\P^1_\Z$ between the isomorphism classes of locally free sheaves of rank $r$ on $\P_\Fun^1$ and the isomorphism classes of vector bundles of rank $r$ on $\P_\Z^1$.


\section{$\Fun$-schemes and sheaves}\label{SheavesOfModulesOnF1Schemes}

In this section, we review the notion of $\Fun$-schemes as introduced by Connes and Consani in \cite{CC09} and give an overview of examples with reference to \cite{LL09a}. We will proceed with establishing sheaves on $\Fun$-schemes. After proving some general results, we define admissible short exact sequences of locally free sheaves, which will be needed for the definition of $G$-theory and $K$-theory in the next section.

\subsection{$\Fun$-schemes}

After defining $\Fun$-schemes and the properties of $\Fun$-schemes that we will need in the rest of the paper, we will describe a selection of examples that shall give a flavour of what an $\Fun$-scheme is.

\subsubsection{Definition}

Recall from \cite{CC09} that an \emph{$\Fun$-scheme} is a triple $\cX=(\msch X, X, e_X)$ where $\msch X$ is an $\Mo$-scheme, $X$ is a scheme and $e_X: \msch X_\Z\to X$ is a morphism of schemes such that $e_X(k):\msch X_\Z(k)\to X(k)$ is a bijection of sets for every field $k$. We call $e_X$ the \emph{evaluation map}.

Let $\cX=(\msch X, X, e_X)$ and $\cY=(\msch Y,Y,e_Y)$ be $\Fun$-schemes. A \emph{morphism of $\Fun$-schemes $\cX\to\cY$} is a pair $\Phi=(\msch\varphi,\varphi)$ where $\msch\varphi:\msch X\to\msch Y$ is a morphism of $\Mo$-schemes and $\varphi:X\to Y$ is a morphism of schemes such that the diagram
$$\xymatrix{\msch X_\Z\ar[rr]^{\msch\varphi_\Z}\ar[d]_{e_X}&& \msch Y_\Z\ar[d]^{e_Y}\\ X\ar[rr]^{\varphi} && Y} $$
commutes.

The \emph{base extension functor $-\otimes_\Fun\Z$} associates to an $\Fun$-scheme $\cX=(\msch X, X,e_X)$ the scheme $X$ and to a morphism $\Phi=(\tilde\varphi,\varphi)$ of $\Fun$-schemes the morphism $\varphi$ of schemes.

The following is proven in the same way as \cite[Lemma 1.3 (1)]{LL09a}.

\begin{lemma}\label{e_x_is_a_continuous_bijection}
 Let $\cX=(\msch X,X,e_X)$ be an $\Fun$-scheme. Viewed as a map between the underlying topological spaces, $e_X:\msch X_\Z\to X$ is a continuous bijection.
\end{lemma}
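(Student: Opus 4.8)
The continuity of $e_X$ is immediate: as a morphism of schemes it is by definition continuous on the underlying topological spaces, so the real content of the lemma is the bijectivity of the induced map of sets $e_X\colon\msch X_\Z\to X$. The plan is to deduce this from the hypothesis that $e_X(k)$ is a bijection for every field $k$ by passing through field-valued points. I would use the standard fact that for any scheme $Y$ and any point $y\in Y$ with residue field $\kappa(y)$ there is a canonical morphism $\alpha_y\colon\Spec\kappa(y)\to Y$ whose image is $\{y\}$, and that, conversely, a morphism $\Spec K\to Y$ from the spectrum of a field is determined by its image point together with the induced homomorphism of residue fields.

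For surjectivity, I would take a point $x\in X$ and consider the canonical morphism $\alpha_x\colon\Spec\kappa(x)\to X$. Since $e_X(\kappa(x))$ is surjective, there is a morphism $\beta\colon\Spec\kappa(x)\to\msch X_\Z$ with $e_X\circ\beta=\alpha_x$. As $\Spec\kappa(x)$ has a single point, $\beta$ has a unique image point $\tilde x\in\msch X_\Z$, and then $e_X(\tilde x)$ equals the image of $e_X\circ\beta=\alpha_x$, namely $x$. Hence $e_X$ is surjective.

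For injectivity, suppose $\tilde x_1,\tilde x_2\in\msch X_\Z$ satisfy $e_X(\tilde x_1)=e_X(\tilde x_2)=x$. Writing $k_i=\kappa(\tilde x_i)$ and $\beta_i\colon\Spec k_i\to\msch X_\Z$ for the canonical morphisms, each $k_i$ receives a homomorphism from $\kappa(x)$ via $e_X$, so both are extensions of $\kappa(x)$. The key step is to choose a common field extension $K$ of $k_1$ and $k_2$ over $\kappa(x)$---for instance the residue field at a maximal ideal of the nonzero ring $k_1\otimes_{\kappa(x)}k_2$---and to form the two composite $K$-points $\gamma_i\colon\Spec K\to\Spec k_i\xrightarrow{\beta_i}\msch X_\Z$. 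Because the two structure maps $\kappa(x)\to K$ induced by $\gamma_1$ and $\gamma_2$ coincide (both are the structure map of the $\kappa(x)$-algebra $K$), the morphisms $e_X\circ\gamma_1$ and $e_X\circ\gamma_2$ have image $x$ and induce the same map on residue fields, hence agree as $K$-points of $X$. Injectivity of $e_X(K)$ then forces $\gamma_1=\gamma_2$; since the image of $\gamma_i$ in $\msch X_\Z$ coincides with that of $\beta_i$, namely $\tilde x_i$, we conclude $\tilde x_1=\tilde x_2$.

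I expect the main obstacle to be precisely the injectivity argument: the reduction to a single common field $K$ and the bookkeeping confirming that $e_X\circ\gamma_1$ and $e_X\circ\gamma_2$ genuinely coincide as $K$-valued points. This hinges on the non-vanishing of $k_1\otimes_{\kappa(x)}k_2$, which supplies $K$, together with the recovery of a $K$-point of a scheme from its image point and the accompanying residue-field homomorphism; everything else is routine.
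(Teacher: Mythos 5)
Your proof is correct and is essentially the same argument as the paper's, which gives no independent proof but instead cites Lemma 1.3(1) of L\'opez Pe\~na--Lorscheid \cite{LL09a}; that proof is exactly this reduction to field-valued points, using the canonical $\kappa(x)$-point for surjectivity and a common extension field $K$ (obtained from the nonzero ring $k_1\otimes_{\kappa(x)}k_2$) together with the fact that a $K$-point of a scheme is determined by its image point and the residue-field homomorphism for injectivity. Nothing is missing.
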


We call an $\Fun$-scheme $\cX=(\msch X, X, e_X)$ \emph{integral} (\emph{Noetherian} / \emph{locally of finite type} / \emph{of finite type}) if both $\msch X$ and $X$ are integral (Noetherian / locally of finite type / of finite type). The $\Fun$-scheme $\cX$ is \emph{separable} if both $\msch X_\Z$ and $X$ are separable schemes.

\subsubsection{Examples}

 Every $\Mo$-scheme $\msch X$ has an associated $\Fun$-scheme $(\msch X,\msch X_\Z, \id)$. A morphism of $\Mo$-schemes $\msch\varphi:\msch X\to\msch Y$ defines the morphism $(\msch\varphi,\msch\varphi_\Z)$ of the associated $\Fun$-schemes, and, vice versa, every morphism $(\msch\varphi,\varphi)$ between the associated $\Fun$-schemes of $\msch X$ and $\msch Y$ is of this form since the evaluation maps are isomorphisms. Thus we obtain a fully faithful embedding of the category of $\Mo$-schemes into the category of $\Fun$-schemes. Its essential image are those $\Fun$-schemes whose evaluation map is an isomorphism.

 From now on, we will identify the category of $\Mo$-schemes with the essential image of this embedding and use the term $\Mo$-scheme for $\Fun$-schemes whose evaluation map is an isomorphism. In particular, if $A$ is a monoid, we will write $\Spec A$ for the $\Fun$-scheme $(\spec A, \Spec A_\Z, \id)$.

 A larger class of examples is delivered by the concept of a torified variety as introduced by L\'opez Pe\~na and the second author in \cite{LL09a}. A \emph{torified scheme} is a scheme together with a \emph{torification}, i.e.\ a morphism $e_T: T\to X$ from a scheme $T=\coprod_{i\in I}\Gm^{d_i}$ to $X$ (for a certain index set $I$ and $d_i\geq0$) such that for every $i\in I$, the restriction $\Gm^{d_i}\to X$ is a (locally closed) immersion and such that for every field $k$, the map $e_T(k):T(k)\to X(k)$ is a bijection. Examples of schemes that admit a torification are toric varieties, Schubert varieties and split reductive group schemes (cf. \cite[section 1.3]{LL09a}).

The scheme $T=\coprod_{i\in I}\Gm^{d_i}$ is the base extension of the $\Mo$-scheme $\msch T=\coprod_{i\in I}\G_{m,\Fun}^{d_i}$ to $\Z$. Thus a torification $e_T:T\to X$ yields an $\Fun$-scheme $\cT=(\msch T, X, e_T)$. The base extension of $\cT$ to $\Z$ is $X$, thus we obtain models of toric varieties, Schubert varieties and split reductive group schemes over $\Fun$. Note that $\msch T$ is affine and that its topological space is discrete and note that $\cT=(\msch T, X, e_T)$ is of finite type if the index set $I$ is finite.


\subsection{Sheaves on $\Fun$-schemes}

Let $\cX=(\msch X,X,e_X)$ be an $\Fun$-scheme. A \emph{sheaf} on $\cX$ is a triple $\cM=(\msch M,M,\epsilon_M)$ where $\msch M$ is a sheaf on $\msch X$, $M$ is a sheaf on $X$ and $\epsilon_M$ is an isomorphism $\epsilon_M: \msch M_\Z\stackrel\sim\longrightarrow e_X^\ast M$ of sheaves on $\msch X_\Z$.

Let $\cM=(\msch M,M,\epsilon_M)$ and $\cN=(\msch N,N,\epsilon_N)$ be sheaves on $\cX$. A \emph{morphism $\Psi:\cM\to\cN$ of sheaves on $\cX$} is a pair $\Psi=(\msch\psi,\psi)$ where $\msch\psi: \msch M\to\msch N$ is a morphism of sheaves on $\msch X$ and $\psi:M\to N$ is a morphism of sheaves on $X$ such that the diagram
$$\xymatrix{\msch M_\Z\ar[d]_{\epsilon_M}\ar[rr]^{\msch\psi_\Z} && \msch N_\Z\ar[d]^{\epsilon_N} \\ e_X^\ast M\ar[rr]^{e_X^\ast(\psi)}&& e_X^\ast N} $$
commutes.

We call a sheaf $\cM=(\msch M,M,\epsilon_M)$ an \emph{$\cO_\cX$-module} if $\msch M$ is an $\cO_{\msch X}$-module and $M$ is an $\cO_X$-module. A morphism of $\cO_\cX$-modules is a morphism of sheaves that respects the $\cO_{\msch X}$-module and the $\cO_X$-module structure. We denote the category of $\cO_\cX$-modules by $\cO_\cX-\Mod$. We call $\cM$ a \emph{(quasi-) coherent sheaf on $\cX$} if both $\msch M$ and $M$ are (quasi-)coherent, and we denote the full subcategories of $\cO_\cX-\Mod$ whose objects are quasi-coherent (resp.\ coherent) sheaves on $\cX$ by $\QCoh\cX$ (resp.\ $\Coh \cX$). We denote by $\Bun\cX$ the full subcategory of $\Coh\cX$ of \emph{locally free sheaves}, i.e.\ sheaves $\cM=(\msch M,M,\epsilon_M)$ where  $\msch M$  is locally projective and $M$ is locally free.

Let $\cX=(\msch X,X,e_X)$ be an $\Mo$-scheme, i.e.\ $e_X:\msch X_\Z\to X$ is an isomorphism, and let $\msch M$ be a sheaf on $\msch X$. Then $M=(e_X)_\ast \msch M_\Z$ is a sheaf on $X$ and there is a canonical isomorphism $\epsilon:\msch M_\Z\to e_X^\ast M$. The triple $\cM=(\msch M,M,\epsilon_M)$ is thus a sheaf on $\cX$. Consequently, every morphism $\msch\psi:\msch M\to\msch N$ of sheaves on $\msch X$ extends uniquely to a morphism $(\msch\psi,\psi):\cM\to\cN$ between the associated sheaves $\cM$ and $\cN$ of $\msch M$ and $\msch N$, respectively, and, vice versa, every morphism $(\msch\psi,\psi):\cM\to\cN$ is of this sort. This describes an equivalence of categories between the category of sheaves on $\msch X$ and the category of sheaves on $\cX$. Thus we obtain:

\begin{proposition}\label{prop:ModulesOverMoSchemes}
 If $\cX=(\msch X, \msch X_\Z, e_X)$ is an $\Mo$-scheme, then the association $\msch M \mapsto (\msch M,M,\epsilon_M)$ with $M=(e_X)_\ast\msch M_\Z$ and $\epsilon:\msch M_\Z\to e_X^\ast M$ the canonical isomorphism defines an equivalence of categories
 $$ \cO_{\msch X}-\Mod \stackrel\sim\longrightarrow \cO_\cX-\Mod $$
 that restricts to equivalences of the following subcategories:
 $$ \QCoh \msch X \stackrel\sim\longrightarrow \QCoh\cX, \qquad  \Coh \msch X \stackrel\sim\longrightarrow \Coh\cX, \qquad \Proj \msch X \stackrel\sim\longrightarrow \Bun\cX.\qed $$
\end{proposition}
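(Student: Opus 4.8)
The plan is to show that the functor $F\colon\cO_{\msch X}-\Mod\to\cO_\cX-\Mod$ given by $\msch M\mapsto(\msch M,(e_X)_\ast\msch M_\Z,\epsilon_M)$ is fully faithful and essentially surjective, and then that it respects the three subcategory conditions in both directions. The single fact that drives everything is that $e_X\colon\msch X_\Z\to X$ is an isomorphism of schemes, so the adjoint pair $e_X^\ast\dashv(e_X)_\ast$ is an adjoint equivalence between $\cO_X-\Mod$ and $\cO_{\msch X_\Z}-\Mod$. Consequently, in a sheaf $\cM=(\msch M,M,\epsilon_M)$ on $\cX$ the pair $(M,\epsilon_M)$ is redundant: $M$ is forced to be isomorphic to $(e_X)_\ast\msch M_\Z$, and $\epsilon_M$ is then determined up to this isomorphism. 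A convenient quasi-inverse to $F$ is therefore the projection $G\colon(\msch M,M,\epsilon_M)\mapsto\msch M$, and it suffices to check $G\circ F=\id$ and $F\circ G\cong\id$.

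For full faithfulness, note first that $F(\msch\psi)=(\msch\psi,\psi)$ where $\psi=(e_X)_\ast\msch\psi_\Z$, so $G\circ F=\id$ on morphisms and $F$ is faithful. For fullness, let $(\msch\psi,\psi)\colon F(\msch M)\to F(\msch N)$ be any morphism of $\cO_\cX$-modules. The defining commuting square reads $\epsilon_N\circ\msch\psi_\Z=e_X^\ast(\psi)\circ\epsilon_M$; since $\epsilon_M,\epsilon_N$ are isomorphisms this gives $e_X^\ast(\psi)=\epsilon_N\circ\msch\psi_\Z\circ\epsilon_M^{-1}$, and because $e_X^\ast$ is an equivalence $\psi$ is uniquely determined by $\msch\psi$. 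Hence $(\msch\psi,\psi)=F(\msch\psi)$ and $F$ is full. For essential surjectivity, take any sheaf $\cM=(\msch M,M,\epsilon_M)$ on $\cX$ and compare it with $F(\msch M)=(\msch M,M_0,\epsilon_0)$, where $M_0=(e_X)_\ast\msch M_\Z$ and $\epsilon_0$ is the canonical isomorphism. Because $e_X^\ast$ is an equivalence there is a unique $\cO_X$-module isomorphism $\psi\colon M_0\to M$ with $e_X^\ast(\psi)=\epsilon_M\circ\epsilon_0^{-1}$; the pair $(\id_{\msch M},\psi)$ is then an isomorphism $F(\msch M)\stackrel\sim\longrightarrow\cM$, natural in $\cM$. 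Thus $F$ is an equivalence with quasi-inverse $G$.

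It remains to match the subcategories; since all six are full subcategories, it suffices to check that $F$ and $G$ carry objects of one into the other. By definition $\cM=(\msch M,M,\epsilon_M)$ lies in $\QCoh\cX$ (resp.\ $\Coh\cX$, resp.\ $\Bun\cX$) exactly when both components satisfy the corresponding condition, whereas membership of $\msch M$ in $\QCoh\msch X$ (resp.\ $\Coh\msch X$, resp.\ $\Proj\msch X$) constrains only the first component. In the forward direction I would invoke the base extension results proved above: base extension to $\Z$ sends quasi-coherent sheaves to quasi-coherent sheaves, coherent sheaves to coherent sheaves (Lemma \ref{lemma:MIsFGIffMZIsFG} together with Theorem \ref{coherent_on_Noetherian_scheme} in the Noetherian case), and locally projective sheaves to locally free sheaves (Corollary \ref{coro:LocalizationAndBaseExtensionOfProjectivesAreProjectives}); applying $(e_X)_\ast$, which preserves each of these properties since $e_X$ is an isomorphism, shows that $F$ maps $\QCoh\msch X$, $\Coh\msch X$ and $\Proj\msch X$ into $\QCoh\cX$, $\Coh\cX$ and $\Bun\cX$, respectively. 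The reverse inclusions are immediate because $G$ simply reads off the first component, which is quasi-coherent, coherent, or locally projective by the very definition of the $\cX$-side subcategory.

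The only point requiring genuine care—rather than bookkeeping—is verifying that base extension preserves coherence (hence the appeal to the Noetherian hypothesis) and local freeness; but these are precisely the statements established earlier, so once they are in hand the restricted functors are again fully faithful and essentially surjective, and therefore equivalences.
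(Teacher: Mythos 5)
Your proof is correct and takes essentially the same route as the paper: the paper's own (very terse) argument is the discussion paragraph preceding the proposition, which likewise uses the fact that $e_X$ is an isomorphism to conclude that the data $(M,\epsilon_M)$ in a triple is redundant, so that morphisms $(\msch\psi,\psi)$ correspond bijectively to morphisms $\msch\psi$, with the subcategory restrictions left to the previously established base-extension results. Your write-up is more explicit (quasi-inverse $G$, fullness, essential surjectivity, subcategory bookkeeping), and the one blemish---invoking Theorem \ref{coherent_on_Noetherian_scheme} and hence a Noetherian hypothesis for the coherent case, which the proposition does not assume---is inherited from the paper's own formulation of the base-extension theorem and is easily repaired: coherence only demands \emph{some} affine cover with finitely generated sections, so Lemma \ref{lemma:MIsFGIffMZIsFG} together with the identification $\msch M_\Z(U_{i,\Z})\simeq \msch M(U_i)_\Z$ already suffices.
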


Let $\cX=(\msch X,X,e_X)$ be an arbitrary $\Fun$-scheme. Let $\msch 0$ be the zero sheaf on $\msch X$, let $0$ be the zero sheaf on $X$ and let $0:\msch0_\Z\to e_X^\ast 0$ be the zero morphism. Then $\0=(\msch 0,0,0)$ is the zero object in the category of $\cO_\cX$-modules, and it is contained in the subcategories of (quasi-)coherent and locally free sheaves. Given two $\cO_X$-modules $\cM$ and $\cN$, then there is a unique morphism $0:\cM\to\cN$ that factors through $\0$. This turns $\Hom(\cM,\cN)$ into a pointed set and gives it the structure of an $\Fun$-set.

In particular, one can define the kernel and cokernel of a morphism $\varphi=(\msch f,f):\cM\to\cN$ (if it exists) as the equalizer and coequalizer of $\varphi$ and $0$. 

\begin{corollary}
 If $\cX$ is an $\Mo$-scheme, then every morphism of $\cO_\cX$-modules has both a kernel and a cokernel.
\end{corollary}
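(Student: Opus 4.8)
The plan is to reduce the statement entirely to the $\Mo$-side by invoking the equivalence of Proposition \ref{prop:ModulesOverMoSchemes}. Since $\cX$ is an $\Mo$-scheme, the evaluation map $e_X:\msch X_\Z\to X$ is an isomorphism, and Proposition \ref{prop:ModulesOverMoSchemes} furnishes an equivalence of categories $\cO_{\msch X}-\Mod\stackrel\sim\longrightarrow\cO_\cX-\Mod$. An equivalence of categories preserves and reflects all limits and colimits that exist in the source; in particular it carries equalizers to equalizers and coequalizers to coequalizers. Kernels and cokernels are by definition the equalizer resp.\ coequalizer of a morphism together with the zero morphism $0$, so it suffices to show that $\cO_{\msch X}-\Mod$ admits kernels and cokernels of arbitrary morphisms.

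First I would verify that kernels and cokernels exist in $\cO_{\msch X}-\Mod$. Given $f:\msch M\to\msch N$, the presheaf $U\mapsto\ker(f(U))$ is already a sheaf: the kernel of a morphism of $\cO_{\msch X}(U)$-sets exists as the subset $f(U)^{-1}(\ast)$ by Proposition \ref{ASetsAreCompleteAndCocomplete}, and the sheaf axioms for $\msch M$ and $\msch N$ force the separation and gluing conditions on this subpresheaf. Hence the kernel exists without sheafification. The presheaf cokernel $U\mapsto\coker(f(U))$ need not be a sheaf, but its sheafification is again an $\cO_{\msch X}$-module (the sheafification of a presheaf of $\cO_{\msch X}$-modules is an $\cO_{\msch X}$-module), and this sheafification realizes the cokernel in $\cO_{\msch X}-\Mod$. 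Transporting both constructions across the equivalence of Proposition \ref{prop:ModulesOverMoSchemes} then yields kernels and cokernels in $\cO_\cX-\Mod$, which is the assertion.

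The conceptual point worth stressing---and the reason the hypothesis that $\cX$ be an $\Mo$-scheme is indispensable---is that the compatibility isomorphism $\epsilon_M:\msch M_\Z\to e_X^\ast M$ defining a sheaf on a general $\Fun$-scheme need not survive the passage to a kernel. Forming the kernel on the $\Mo$-component and base-extending to $\Z$ does not in general agree with forming the kernel on the scheme-component, precisely because $(-)\otimes_\Fun\Z$ fails to commute with kernels except for normal morphisms (cf.\ the remark following Lemma \ref{extension_of_normal_morphism_to_Z}). When $e_X$ is an isomorphism this obstruction disappears, since the scheme datum is redundant and no compatibility square has to be matched; this is exactly what the equivalence of Proposition \ref{prop:ModulesOverMoSchemes} encodes. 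Thus the only genuine subtlety is already packaged into that equivalence, and the corollary itself is immediate once it is in hand.
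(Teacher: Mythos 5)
Your proof is correct and follows essentially the same route as the paper: reduce via the equivalence of Proposition \ref{prop:ModulesOverMoSchemes} to $\cO_{\msch X}$-modules on the $\Mo$-scheme, where kernels and cokernels exist by the sectionwise constructions resting on Proposition \ref{ASetsAreCompleteAndCocomplete}. You merely spell out what the paper dismisses as "easily solved" (kernel presheaf is already a sheaf, cokernel needs sheafification), and your closing remark correctly identifies the kernel/base-extension obstruction that makes the $\Mo$-hypothesis necessary.
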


\begin{proof}
 Following Proposition \ref{prop:ModulesOverMoSchemes}, the category of $\cO_{\cX}$-modules is equivalent to the category of $\cO_{\tilde X}$-modules. Thus we reduced the question of the existence of kernels and cokernels to $\cM_0$-schemes, which can be easily solved by means of Proposition 2.13.
\end{proof}

Let $\cX=(\msch X,X,e_X)$ be an arbitrary $\Fun$-scheme. Let $\varphi=(\msch f,f):\cM\to\cN$ be a morphism between $\cO_X$-modules $\cM=(\msch M,M,\epsilon_M)$ and $\cN=(\msch N,N,\epsilon_N)$. Since taking the direct image of a sheaf commutes with base extension (cf.\ Lemma \ref{base_extension_properties_for_sheaves}), the triple $\im\varphi:=(\im\msch f,\im f,\epsilon_N\vert_{\im\msch f_Z})$ is a sheaf, and it is the image of $\varphi$. This proves

\begin{lemma}\label{lemma:CokernelExists}
 Let $\cX$ be an arbitrary $\Fun$-scheme and $\varphi$ be a morphism of $\cO_\cX$-modules. Then the cokernel of $\varphi$ exists.\qed
\end{lemma}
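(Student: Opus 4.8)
The plan is to build the cokernel as a triple and then verify its universal property one component at a time, the only nontrivial point being the gluing isomorphism along the evaluation map. Both ingredient categories possess cokernels: on the $\Mo$-scheme side $\coker\msch f$ exists by the coequalizer construction for $\cO_{\msch X}$-modules set up in Section \ref{geometry_of_monoids}, and on the scheme side $\coker f$ is the ordinary cokernel of $\cO_X$-modules. I would therefore take as candidate
$$ \coker\varphi \ := \ \bigl(\coker\msch f,\ \coker f,\ \epsilon_{\coker}\bigr), $$
where $\epsilon_{\coker}$ is an isomorphism still to be produced. Since the paragraph preceding the lemma realizes $\im\varphi$ as a sub-$\cO_\cX$-module of $\cN$, this candidate is exactly the quotient $\cN/\im\varphi$, matching the definition of cokernel used for $A$-sets.

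First I would construct the compatibility isomorphism $\epsilon_{\coker}\colon(\coker\msch f)_\Z\xrightarrow{\ \sim\ }e_X^\ast(\coker f)$ as the composite
$$ (\coker\msch f)_\Z \ \xrightarrow{\ \sim\ }\ \coker(\msch f_\Z) \ \xrightarrow{\ \sim\ }\ \coker(e_X^\ast f) \ \xrightarrow{\ \sim\ }\ e_X^\ast(\coker f). $$
The left isomorphism is Lemma \ref{base_extension_properties_for_sheaves}(1), expressing that base extension to $\Z$ commutes with cokernels on the $\Mo$-scheme side. The middle isomorphism comes from the defining square of $\varphi$ as a morphism of sheaves on $\cX$: the pair $(\epsilon_M,\epsilon_N)$ intertwines $\msch f_\Z$ with $e_X^\ast f$, hence induces an isomorphism on cokernels. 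The right isomorphism is the key input: $e_X$ is a morphism of schemes, so $e_X^\ast$ is left adjoint to $(e_X)_\ast$ and therefore preserves all colimits, in particular cokernels.

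With $\epsilon_{\coker}$ in hand I would verify the universal property. Given an $\cO_\cX$-module $\cP$ and a morphism $\Psi=(\msch\psi,\psi)\colon\cN\to\cP$ with $\Psi\circ\varphi=0$, the components satisfy $\msch\psi\circ\msch f=0$ and $\psi\circ f=0$, so each factors uniquely through its cokernel, giving $\overline{\msch\psi}$ and $\overline\psi$. It remains to check that $(\overline{\msch\psi},\overline\psi)$ is a morphism of sheaves on $\cX$, i.e.\ that its base-extension square commutes; this follows from functoriality of $(-)_\Z$ and of $e_X^\ast$ together with the construction of $\epsilon_{\coker}$, and uniqueness of the induced morphism follows from uniqueness of each factorization.

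I expect the main obstacle to be the middle step: one must make the identification of $\msch f_\Z$ with $e_X^\ast f$ precise enough that the two right-exactness facts, $(\coker\msch f)_\Z\simeq\coker\msch f_\Z$ and $e_X^\ast\coker f\simeq\coker e_X^\ast f$, glue compatibly through $\epsilon_M$ and $\epsilon_N$ into a single canonical isomorphism $\epsilon_{\coker}$ independent of choices. Everything else is formal manipulation of universal properties.
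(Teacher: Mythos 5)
Your proof is correct, and it reaches the cokernel by a somewhat more direct route than the paper. The paper's own argument (the paragraph preceding Lemma \ref{lemma:CokernelExists}) first realizes the \emph{image} as a triple $\im\varphi=(\im\msch f,\,\im f,\,\epsilon_N\vert_{(\im\msch f)_\Z})$, using $(\im\msch f)_\Z\simeq\im(\msch f_\Z)$ from Lemma \ref{base_extension_properties_for_sheaves}, and then takes the cokernel to be the quotient of $\cN$ by this subsheaf; the induced gluing isomorphism on the quotient is left implicit (it reappears as $\bar{\epsilon}_S$ in the proof of Proposition \ref{prop:QuasiExactStructureOnCoh(X)}, where the cokernel is written componentwise exactly as in your candidate). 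You instead assemble $(\coker\msch f,\,\coker f,\,\epsilon_{\coker})$ directly, with $\epsilon_{\coker}$ the composite of $(\coker\msch f)_\Z\simeq\coker(\msch f_\Z)$ (Lemma \ref{base_extension_properties_for_sheaves}), the isomorphism of cokernels induced by the intertwining pair $(\epsilon_M,\epsilon_N)$, and $\coker(e_X^\ast f)\simeq e_X^\ast(\coker f)$, the last holding because $e_X^\ast$ is left adjoint to $(e_X)_\ast$ and so preserves all colimits. This difference buys you something concrete: your argument uses only preservation of cokernels by the two base-change functors, which holds unconditionally, whereas the image-based route implicitly needs $e_X^\ast(\im f)\simeq\im(e_X^\ast f)$, a comparison that is delicate because $e_X^\ast$ is right exact but not left exact (an image mixes a quotient with a subobject), so your version is the more robust one. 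What the paper's route buys in exchange is the image itself: it exhibits $\im\varphi$ as a sub-$\cO_\cX$-module of $\cN$, a statement the paper records and uses, which your construction does not produce. Finally, your universal-property check is sound as sketched; the one point worth making explicit is that the compatibility square for $(\overline{\msch\psi},\overline\psi)$ is verified by precomposing with the quotient map $\msch N_\Z\to(\coker\msch f)_\Z$, which is an epimorphism by Lemma \ref{base_extension_properties_for_sheaves} and hence right-cancellable.
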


One checks easily that the coproduct of two $\cO_\cX$-modules $\cM=(\msch M,M,\epsilon_M)$ and $\cN=(\msch N,N,\epsilon_N)$ is $\cM\vee\cN=(\msch M\vee\msch N,M\oplus N, \epsilon_M\oplus\epsilon_N)$.

\subsubsection{Normal morphisms}

We extend the notion of normal morphisms to $\Fun$-schemes. If we write scheme in the following, we always mean a scheme in the usual sense.

\begin{definition}
 Let $\varphi: X\to Y$ be a morphism of schemes and $f: M\to N$ be a morphism of $\cO_Y$-modules. Then $f$ is called \emph{$\varphi$-flat} if
 $$0\longrightarrow \varphi^*(\ker(f))\longrightarrow \varphi^*M\longrightarrow \varphi^* N$$
 is an exact sequence of $\cO_X$-modules.
\end{definition}

In other words, $f: M\to N$ is $\varphi$-flat if and only if the canonical morphism $\varphi^*\ker(f)\to \ker(\varphi^*f)$ is an isomorphism. In particular, a monomorphism $f: M\to N$ is $\varphi$-flat if and only if $\ker(\varphi^*f)=\0$.

The notion of a \emph{$\varphi$-flat morphism} between modules over a ring $S$ and a ring homomorphism $\varphi: R\to S$ is defined analogously.

\begin{definition}
 Let $\cX=(\msch X,X,e_X)$ be an $\Fun$-scheme. A morphism of $\cO_\cX$-modules
 $$(\msch f,f):\cM=(\msch M,M,\epsilon_M)\longrightarrow \cN=(\msch N,N,\epsilon_N)$$
 is called \emph{normal} if the morphism $\msch f:\msch M\to \msch N$ of $\cO_{\msch X}$-modules is normal and $f:M \to N$ is $e_X$-flat.
\end{definition}

Since every morphism $f:M\to N$ is $\varphi$-flat if $\varphi$ is an isomorphism and since $\cO_{\msch X}-\Mod \stackrel\sim\longrightarrow \cO_\cX-\Mod$ if $\cX=(\msch X,X, e_X)$ is an $\Mo$-scheme, we have the following fact.

\begin{lemma}
 Let $\cX=(\msch X,\msch X_\Z, e_X)$ be an $\Mo$-scheme and let $(\msch f,f):\cM\to\cN$ be a morphism of $\cO_\cX$-modules. Then $(\msch f,f)$ is normal if and only $\msch f$ is normal.\qed
\end{lemma}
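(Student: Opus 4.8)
The plan is to reduce the statement to a single observation: when $e_X$ is an isomorphism, the $e_X$-flatness half of the normality condition is automatic, so only the normality of $\msch f$ remains. By the definition of a normal morphism of $\cO_\cX$-modules, $(\msch f,f)$ is normal if and only if $\msch f$ is normal \emph{and} $f$ is $e_X$-flat. Hence it suffices to show that every morphism $f:M\to N$ of $\cO_X$-modules is $e_X$-flat whenever $e_X$ is an isomorphism of schemes; the claimed equivalence then follows at once.

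To establish this, I would recall that $f$ is $e_X$-flat precisely when the canonical comparison morphism $e_X^*\ker(f)\to\ker(e_X^*f)$ is an isomorphism, i.e.\ when $e_X^*$ preserves the kernel of $f$. Since $e_X:\msch X_\Z\to X$ is an isomorphism of schemes, the structure map induces an isomorphism $e_X^{-1}\cO_X\stackrel{\sim}{\longrightarrow}\cO_{\msch X_\Z}$ of sheaves of rings, so $e_X^*\cM=e_X^{-1}\cM\otimes_{e_X^{-1}\cO_X}\cO_{\msch X_\Z}$ collapses to the inverse-image functor $e_X^{-1}$ along the homeomorphism $e_X$. As $e_X^{-1}$ is exact, the functor $e_X^*$ is exact and in particular commutes with kernels; therefore the comparison morphism is an isomorphism and $f$ is $e_X$-flat.

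With $e_X$-flatness thus automatic, the two conditions defining normality of $(\msch f,f)$ collapse to the single condition that $\msch f$ be normal, which is exactly the asserted equivalence. I do not expect any genuine obstacle here: the whole content is the exactness of pullback along an isomorphism, which is the scheme-theoretic counterpart of the remark, already recorded just before the lemma, that $\varphi$-flatness is vacuous when $\varphi$ is an isomorphism and that $\cO_{\msch X}-\Mod$ and $\cO_\cX-\Mod$ are equivalent in the $\Mo$-scheme case.
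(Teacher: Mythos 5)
Your proposal is correct and follows exactly the paper's own reasoning: the paper proves this lemma by the one-line observation preceding it, namely that every morphism is $\varphi$-flat when $\varphi$ is an isomorphism (since pullback along an isomorphism is exact and hence preserves kernels), so the $e_X$-flatness condition in the definition of normality is vacuous and only the normality of $\msch f$ remains. Your added justification of why pullback along an isomorphism preserves kernels simply fills in the detail the paper leaves implicit.
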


The above lemma implies that the notion of normal morphisms of sheaves of modules over general $\Fun$-schemes is consistent with the notion of normal morphisms of sheaves of modules over $\Mo$-schemes. In particular, normal morphisms of sheaves of modules over $\Mo$-schemes are closed under compositions by Proposition \ref{prop:NmMrphsmAreClosedUnderComposition}. Over general $\Fun$-schemes, it is not clear to the authors whether normal morphisms are closed under composition or not.

\begin{lemma}\label{lemma_normal_morphisms_over_f1-schemes}
If $(\msch f,f):(\msch M,M,\epsilon_M) \to (\msch N,N,\epsilon_N)$  is a normal morphism of quasi-coherent sheaves, then $\ker(\msch f, f)$ exists.
\end{lemma}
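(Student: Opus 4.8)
The plan is to construct the kernel componentwise and then supply by hand the gluing isomorphism required by the definition of a sheaf on $\cX$; this last step is precisely where the normality hypothesis is used. First I would set $\msch K := \ker(\msch f)$, the kernel of $\msch f:\msch M\to\msch N$ in $\cO_{\msch X}-\Mod$, and $K := \ker(f)$, the kernel of $f:M\to N$ in $\cO_X-\Mod$. Both exist and are quasi-coherent: $\msch K$ because kernels of morphisms of quasi-coherent sheaves on an $\Mo$-scheme are quasi-coherent (kernels are finite limits, which localizations respect by Proposition \ref{prop_localizations_and_limits}), and $K$ by usual scheme theory. The candidate for $\ker(\msch f,f)$ is then $\cK=(\msch K,K,\epsilon_K)$ together with the two componentwise inclusions, once a suitable isomorphism $\epsilon_K:\msch K_\Z\stackrel\sim\longrightarrow e_X^\ast K$ has been produced.

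The construction of $\epsilon_K$ is the heart of the argument and proceeds through a chain of three isomorphisms. First, since $\msch f$ is a normal morphism of $\cO_{\msch X}$-modules, the canonical inclusion $(\ker\msch f)_\Z\to\ker(\msch f_\Z)$ is an isomorphism; this is the sheaf analogue of Lemma \ref{extension_of_normal_morphism_to_Z}, which one checks on stalks (equivalently, on affine sections, where both base extension to $\Z$ and kernels are computed sectionwise). Hence $\msch K_\Z\simeq\ker(\msch f_\Z)$. Second, since $f$ is $e_X$-flat, the defining exactness gives an isomorphism $e_X^\ast\ker(f)\stackrel\sim\longrightarrow\ker(e_X^\ast f)$, so that $e_X^\ast K\simeq\ker(e_X^\ast f)$. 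Third, because $(\msch f,f)$ is a morphism of $\cO_\cX$-modules, the isomorphisms $\epsilon_M$ and $\epsilon_N$ intertwine $\msch f_\Z$ with $e_X^\ast f$, i.e.\ $\epsilon_N\circ\msch f_\Z=(e_X^\ast f)\circ\epsilon_M$; since $\epsilon_M,\epsilon_N$ are isomorphisms, $\epsilon_M$ restricts to an isomorphism $\ker(\msch f_\Z)\stackrel\sim\longrightarrow\ker(e_X^\ast f)$. Composing these three isomorphisms yields $\epsilon_K:\msch K_\Z\stackrel\sim\longrightarrow e_X^\ast K$, and by construction $\epsilon_K$ is exactly what is needed to make the pair of inclusions $(\msch K,K)\hookrightarrow(\msch M,M)$ into a morphism of sheaves on $\cX$.

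It then remains to verify that $\cK$ satisfies the universal property of the kernel, i.e.\ is the equalizer of $(\msch f,f)$ and the zero morphism. Given any sheaf $\cL=(\msch L,L,\epsilon_L)$ and a morphism $(\msch g,g):\cL\to\cM$ with $(\msch f,f)\circ(\msch g,g)=0$, the components $\msch g$ and $g$ factor uniquely through $\msch K=\ker\msch f$ and $K=\ker f$ by the universal properties in the two component categories; since $\epsilon_K$ was defined through these very kernel identifications, the two factorizations are automatically compatible with $\epsilon_L$ and $\epsilon_K$, and so assemble into a unique morphism $\cL\to\cK$ of sheaves on $\cX$. The main obstacle is the construction of $\epsilon_K$, and specifically the two isomorphisms $(\ker\msch f)_\Z\simeq\ker(\msch f_\Z)$ and $e_X^\ast\ker(f)\simeq\ker(e_X^\ast f)$: in general neither $(-)_\Z$ nor $e_X^\ast$ commutes with kernels — this is exactly the failure recorded in the remark following Lemma \ref{extension_of_normal_morphism_to_Z} — and it is precisely the two halves of the normality hypothesis (normality of $\msch f$ and $e_X$-flatness of $f$) that repair this defect and turn the canonical comparison maps from mere inclusions into isomorphisms.
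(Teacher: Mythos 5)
Your proof is correct and follows essentially the same route as the paper: both arguments take the componentwise kernels, use normality of $\msch f$ (via Lemma \ref{extension_of_normal_morphism_to_Z} together with quasi-coherence) and $e_X$-flatness of $f$ to show that $(-)_\Z$ and $e_X^\ast$ respectively commute with these kernels, and then restrict $\epsilon_M$ to obtain the gluing isomorphism $\epsilon_K$. The only difference is presentational --- the paper packages your three isomorphisms into a single commutative diagram with exact rows, and likewise leaves the universal-property check as routine.
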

\begin{proof}
 We can extend the commutative diagram in the definition of a morphism of sheaves to the following diagram:
 $$\xymatrix{  \0 \ar[r]\ar[d]^{\cong} & (\ker\msch f)_\Z \ar[r] & \msch M_\Z \ar[r]^{\msch f_\Z}\ar[d]^{\cong}_{\epsilon_M} &\msch N_\Z\ar[d]^{\cong}_{\epsilon_N}    \\
        \0 \ar[r]              & e_X^\ast\ker f \ar[r]    & e_X^\ast M \ar[r]^{e_X^\ast f}                       &e_X^\ast N.} $$
 Since $\msch f$ is normal, the top row is exact by Lemma \ref{extension_of_normal_morphism_to_Z} and the local characterization of quasi-coherent sheaves. The bottom row is exact because $f$ is $e_X$-flat. It follows from the diagram that $$\epsilon_M\vert_{\ker\msch f_\Z}: (\ker\msch f)_\Z  \stackrel{\cong}\longrightarrow  e_X^\ast\ker f$$ is an isomorphism. So the triple $(\ker(\msch f), \ker(f), \epsilon_M\vert_{\ker\msch f_\Z})$ is an object in $\cO_\cX-\Mod$ and it is a subobject of  $(\msch M,M,\epsilon_M)$. It is routine to check that $(\ker(\msch f), \ker(f), \epsilon_M\vert_{\ker\msch f_\Z})$ is the kernel of $(\msch f, f).$
\end{proof}

\subsubsection{Admissible short exact sequences}

A sequence
$$ \xymatrix{\cM_1\ar[r]^\varphi & \cM_2\ar[r]^\psi & \cM_3 } $$
of $\cO_\cX$-modules is said to be \emph{exact at $\cM_2$} if the kernel of $\psi$ exists and $\im\varphi=\ker\psi$. A \emph{short exact sequence of $\cO_\cX$-modules} is a five term sequence
$$ \xymatrix{\0\ar[r] & \cM_1\ar[r]^\varphi & \cM_2\ar[r]^\psi & \cM_3 \ar[r] & \0} $$
that is exact at $\cM_1$, $\cM_2$ and $\cM_3$. We call this short exact sequence \emph{admissible} if both $\varphi$ and $\psi$ are normal morphisms. A morphism is called an admissible monomorphism or admissible epimorphism  if it appears as $\varphi$ resp.\ $\psi$ in some admissible short exact sequence. Admissible short exact sequences can be characterized more explicitly as follows.

\begin{proposition} \label{prop:DescriptionOfNormalMorphisms}
 Let $\cM_1=(\msch M_1,M_1,\epsilon_{M_1})$, $\cM_2=(\msch M_2,M_2,\epsilon_{M_2})$ and $\cM_3=(\msch M_3,M_3,\epsilon_{M_3})$ be sheaves of modules over the $\Fun$-scheme $\cX=(\msch X, X, e_X)$. A sequence $$\xymatrix{\0\ar[r] & \cM_1\ar[r]^{(\msch f,f)} & \cM_2\ar[r]^{(\msch g,g)} & \cM_3 \ar[r] & \0}$$ is an admissible short exact sequence if and only if the following statements are true.
 \begin{enumerate}
  \item The sequence $\xymatrix{\msch 0 \ar[r] & \msch M_1\ar[r]^{\msch f} & \msch M_2\ar[r]^{\msch g} & \msch M_3 \ar[r] & \msch 0}$ is an admissible short exact sequence of $\cO_{\msch X}$-modules.
  \item The sequence $\xymatrix{0 \ar[r] & M_1\ar[r]^{f} & M_2\ar[r]^{g} &  M_3 \ar[r] & 0}$ is a short exact sequence of $\cO_{X}$-modules and $f$ and $g$ are $e_X$-flat.
 \end{enumerate}
\end{proposition}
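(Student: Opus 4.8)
The plan is to prove both implications at once by reducing the exactness of the sequence on $\cX$ to the exactness of its two component sequences, the point being that on an $\Fun$-scheme the kernel and the image of a normal morphism are formed componentwise. I would begin by disposing of the normality clauses. By definition a morphism $(\msch h,h)$ of $\cO_\cX$-modules is normal if and only if $\msch h$ is a normal morphism of $\cO_{\msch X}$-modules and $h$ is $e_X$-flat. Hence \emph{admissibility} of the sequence on $\cX$, i.e.\ normality of both $(\msch f,f)$ and $(\msch g,g)$, is literally the same as requiring $\msch f,\msch g$ to be normal together with $f,g$ being $e_X$-flat; this matches the normality part of statement (i) and the flatness part of statement (ii) in both directions. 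What then remains is to match up the three exactness assertions.

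Next I would record the componentwise behaviour of kernels and images. For any morphism the image is computed componentwise, $\im(\msch h,h)=(\im\msch h,\im h,\epsilon|_{(\im\msch h)_\Z})$, as in the construction preceding Lemma \ref{lemma:CokernelExists}, which relies on the compatibility of the direct image with base extension (Lemma \ref{base_extension_properties_for_sheaves}). For a \emph{normal} morphism the kernel exists and is likewise componentwise, $\ker(\msch h,h)=(\ker\msch h,\ker h,\epsilon|_{(\ker\msch h)_\Z})$, by Lemma \ref{lemma_normal_morphisms_over_f1-schemes}; its proof uses normality of $\msch h$ (to get $(\ker\msch h)_\Z\simeq\ker\msch h_\Z$) together with $e_X$-flatness of $h$ (to get $e_X^\ast\ker h\simeq\ker e_X^\ast h$) in order to produce the required third component. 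Since a subsheaf of $\cM_2$ is determined by its $\msch X$- and $X$-components, an equality of subsheaves of $\cM_2$ holds if and only if it holds in each of these two components.

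With these facts in place I would simply run through the three exactness conditions of the five term sequence. Exactness at $\cM_1$ says $\ker(\msch f,f)=\0$, i.e.\ $\ker\msch f=\msch 0$ and $\ker f=0$, i.e.\ $\msch f$ and $f$ are injective. Exactness at $\cM_2$ says that $\ker(\msch g,g)$ exists (automatic by normality) and $\im(\msch f,f)=\ker(\msch g,g)$, which by the componentwise computation is exactly $\im\msch f=\ker\msch g$ and $\im f=\ker g$. Exactness at $\cM_3$ says $\im(\msch g,g)=\cM_3$, i.e.\ $\msch g$ and $g$ are surjective. Reading these three lines componentwise shows that the sequence on $\cX$ is short exact precisely when both the sequence of $\cO_{\msch X}$-modules and the sequence of $\cO_X$-modules are short exact; combined with the first step this yields the equivalence with (i) and (ii).

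The only genuine obstacle is the middle step, namely that the triple $(\ker\msch f,\ker f,\cdot)$ really is a sheaf on $\cX$ and is the categorical kernel: this is exactly where normality and $e_X$-flatness are indispensable, for otherwise the structural isomorphism $\epsilon_{M}$ need not restrict to an isomorphism on the kernels, and the triple would fail to be an object of $\cO_\cX-\Mod$. Lemma \ref{lemma_normal_morphisms_over_f1-schemes} supplies exactly this, and the rest of the argument is bookkeeping with subsheaves.
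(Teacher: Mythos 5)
Your proof is correct and follows exactly the route the paper intends: the paper states this proposition without a separate proof, treating it as an immediate consequence of the componentwise definition of normal morphisms of $\cO_\cX$-modules together with the componentwise computation of images (the construction in Lemma \ref{lemma:CokernelExists}) and of kernels of normal morphisms (Lemma \ref{lemma_normal_morphisms_over_f1-schemes}), which is precisely the bookkeeping you carry out. The only cosmetic caveat is your gloss that a trivial kernel means $\msch f$ is injective: for sheaves of $\cO_{\msch X}$-modules this implication needs the normality of $\msch f$ (unlike the abelian side), but since normality is available in both directions of the equivalence, nothing is lost.
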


Tensor product, pull back and push forward of sheaves of modules over $\Fun$-schemes can also be defined in a natural manner. We postpone the discussion of these constructions to the next chapter when we need these constructions to study K-theory.


\section{$G$-theory and $K$-theory of $\Fun$-schemes}\label{subsection:definition_of_k-theory}

In this section we introduce $G$-theory and $K$-theory for $\Fun$-schemes. We begin with the more involved case of $G$-theory in a first subsection and conclude from this that $K$-theory exists in a second subsection.

\subsection{$G$-theory of Noetherian $\Fun$-schemes }
Recall that an $\Fun$-scheme $\cX=(\msch X, X, e_X)$ is Noetherian if and only if both $\msch X$ and $X$ are Noetherian. Also recall that we denote the category of coherent sheaves over $\cX$ by $\Coh\cX$. In this subsection we prove that the collection of admissible short exact sequences in $\Coh\cX$ makes $\Coh\cX$ a quasi-exact category in the sense of \cite{Deitmar06}. As for usual schemes, we require $\cX$ to be Noetherian because we need $\Coh\cX$ to be closed under taking quotients and subobjects.

\subsubsection{Definition}

In order to define $G$-theory we need to prove that for an $\Fun$-scheme $\cX$, the category $\Coh\cX$ together with the class of admissible exact sequences is an exact category, which will be done in Proposition \ref{prop:QuasiExactStructureOnCoh(X)}.

\begin{lemma}\label{lemma:SomeE-FlatMorphismsAreClosed} Let $e:R\to S$ be  a ring homomorphism. Let $f: M\to N$ and $g: N\to K$ be morphisms of $R$-modules.
\begin{enumerate}\item If $f$ and $g$ are injective and $e$-flat, then $g\circ f$ is also injective and $e$-flat.
                 \item If $f$ and $g$ are surjective and $e$-flat, then $g\circ f$ is also surjective and $e$-flat.
                 \item If $f$ is injective and $e$-flat, then the quotient map $N\to N/f(M)$ is $e$-flat; if $g$ is surjective and $e$-flat, then the inclusion map $\ker(g)\to N$ is $e$-flat.
\end{enumerate}
\end{lemma}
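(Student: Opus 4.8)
The plan is to reduce all three parts to two clean characterizations of $e$-flatness---one for injective maps, one for surjective maps---and then to treat the only genuinely non-trivial assertion, the $e$-flatness in part (2), by a diagram chase on base-changed kernel sequences. Throughout I would abbreviate $\overline{(-)}=(-)\otimes_R S=e^*(-)$ and use freely that $\otimes_R S$ is right exact. In particular, for any $R$-module map $h\colon P\to Q$ the canonical map $\overline{\ker h}\to\ker\bar h$ is defined (its composite with $\bar h$ vanishes), and $e$-flatness of $h$ is by definition the assertion that this canonical map is an isomorphism.

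First I would record the two characterizations. \textbf{(A)} If $h$ is injective, then $\ker h=0$, so $h$ is $e$-flat if and only if $\bar h$ is injective. \textbf{(B)} If $h$ is surjective, tensoring the short exact sequence $0\to\ker h\to P\to Q\to 0$ with $S$ gives the right exact sequence $\overline{\ker h}\to\bar P\to\bar Q\to 0$; hence $h$ is $e$-flat if and only if $\overline{\ker h}\to\bar P$ is injective, equivalently $0\to\overline{\ker h}\to\bar P\to\bar Q\to 0$ is exact (the exactness at $\bar P$ demanded by the definition being automatic from right exactness).

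Given these, parts (1) and (3) are immediate. For (1), characterization (A) gives $\bar f,\bar g$ injective, hence $\overline{g\circ f}=\bar g\circ\bar f$ injective; since $g\circ f$ is injective, (A) applied to $g\circ f$ shows it is $e$-flat. For (3), the quotient map $\pi\colon N\to N/f(M)$ is surjective with $\ker\pi=\im f$, so by (B) it is $e$-flat iff $\overline{\im f}\to\bar N$ is injective; but $f$ injective identifies this map with $\bar f$, which is injective by (A). Symmetrically, the inclusion $\iota\colon\ker g\hookrightarrow N$ is injective, so by (A) it is $e$-flat iff $\bar\iota$ is injective, and this $\bar\iota$ is exactly the injection $\overline{\ker g}\to\bar N$ furnished by applying (B) to the surjective $e$-flat map $g$.

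The heart of the lemma, and the step I expect to be the main obstacle, is $e$-flatness in part (2). Here I would use the short exact sequence $0\to\ker f\to\ker(g\circ f)\xrightarrow{\,f\,}\ker g\to 0$, valid because $f$ is surjective (it maps $\ker(g\circ f)=f^{-1}(\ker g)$ onto $\ker g$ with kernel $\ker f$). Applying $\otimes_R S$ yields the right exact top row $\overline{\ker f}\to\overline{\ker(g\circ f)}\to\overline{\ker g}\to 0$. Since $\bar f$ is surjective, the same construction applied to $\bar f,\bar g$ gives a short exact bottom row $0\to\ker\bar f\to\ker(\bar g\bar f)\to\ker\bar g\to 0$; using $e$-flatness of $f$ and $g$ to identify $\ker\bar f\cong\overline{\ker f}$ and $\ker\bar g\cong\overline{\ker g}$, this reads $0\to\overline{\ker f}\to\ker\overline{g\circ f}\to\overline{\ker g}\to 0$. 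The canonical maps $\overline{\ker(-)}\to\ker\overline{(-)}$ assemble these into a commutative ladder whose outer vertical arrows are isomorphisms; a short diagram chase (the four lemma, using that the top row is exact at its middle and right terms and the bottom row is short exact) then forces the middle canonical map $\overline{\ker(g\circ f)}\to\ker\overline{g\circ f}$ to be an isomorphism, which is precisely $e$-flatness of $g\circ f$. The only delicate points are the naturality needed to make the ladder commute and the compatibility of the $e$-flatness identifications with the connecting maps, both routine. Alternatively one could argue via vanishing of the connecting homomorphism $\operatorname{Tor}_1^R(N,S)\to\overline{\ker f}$, but the elementary chase seems cleanest.
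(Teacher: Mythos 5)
Your proposal is correct and follows essentially the same route as the paper: parts (1) and (3) via the observation that $e$-flatness of an injective (resp.\ surjective) map amounts to injectivity of $e^*h$ (resp.\ of $e^*\ker h\to e^*P$), and part (2) via exactly the same ladder comparing the tensored kernel sequence $0\to\ker f\to\ker(g\circ f)\to\ker g\to 0$ with the short exact kernel sequence for $e^*f$, $e^*g$, concluding by a diagram chase that the middle vertical map is an isomorphism. You even supply details (the exactness hypotheses needed for the chase, and the verification of part (3)) that the paper leaves implicit or declares trivial.
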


\begin{proof}
 For the first statement, by the assumption we see that $e^*f$ and $e^*g$ are both injective. So $e^*(g\circ f)$ is also injective which implies that $g\circ f$ is $e$-flat.

 For the second statement, it is easy to see that $g\circ f$ is surjective.  We consider the following commutative diagram of $S$-modules.
 $$\xymatrix{                           & \ker(f)\otimes S \ar[r] \ar[d]^\cong & \ker(g\circ f) \otimes S \ar[r] \ar[d]  & \ker(g) \otimes S \ar[r] \ar[d]^\cong  & 0\\
                        0 \ar[r]             & \ker(e^*f) \ar[r]             & \ker(e^*(g\circ f)) \ar[r]& \ker(e^*g) \ar[r] & 0.                                               }$$
 In the above diagram,  the vertical arrows are all the natural maps and the top row comes from the short exact sequence of $R$-modules
 $$0\longrightarrow \ker(f) \longrightarrow \ker(g\circ f) \longrightarrow \ker(g) \longrightarrow 0.$$
 Since the left and right vertical arrows are both isomorphisms by assumption, the middle arrow is also an isomorphism. So $g\circ f$ is also $e$-flat.

 The proof of the last statement is trivial.
\end{proof}

\begin{lemma}\label{lemma:PullBackOfEFlatMaps} Let $e:R\to S$ be  a ring homomorphism. Let $f: M\to N$ and $g: P\to N$ be $e$-flat morphisms of $R$-modules where $f$ is injective while $g$ is surjective. Let $g^{-1}(f(M))=M\times_NP$ be the fiber product and let $f'$ and $g'$ be the pull back of $f$ and $g$ respectively. Then both $f'$ and $g'$ are $e$-flat. In particular, $(M\times_NP)\otimes S =(M\otimes S)\times_{(N\otimes S)} (P\otimes S)$.
\end{lemma}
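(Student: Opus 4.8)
The plan is to carry everything over to the category of $S$-modules via $-\otimes_R S$ and to use the two hypotheses asymmetrically: the $e$-flatness of the surjection $g$ will control $g'$ and force the fibre product to commute with base change, and only afterwards will the $e$-flatness of the injection $f$ handle $f'$ almost for free. First I would fix notation, writing $Q=M\times_N P=\{(m,p)\mid f(m)=g(p)\}$, so that the pullback of $g$ is the projection $g'\colon Q\to M$, $(m,p)\mapsto m$, and the pullback of $f$ is the projection $f'\colon Q\to P$, $(m,p)\mapsto p$. Since monomorphisms and epimorphisms of modules are stable under pullback, $f'$ is injective (hence $\ker f'=0$) and $g'$ is surjective, with $\ker g'=\{(0,p)\mid p\in\ker g\}$ canonically isomorphic to $\ker g$. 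This identification gives a morphism of short exact sequences of $R$-modules from $0\to\ker g\to Q\xrightarrow{g'}M\to0$ to $0\to\ker g\to P\xrightarrow{g}N\to0$, with vertical maps $\id_{\ker g}$, $f'$ and $f$.

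The central step is to show $g'$ is $e$-flat. Applying $-\otimes_R S$ to the morphism of sequences above, the bottom row becomes short exact precisely because $g$ is $e$-flat, while the top row is a priori only right exact. The point is that the tensored inclusion $(\ker g)\otimes_R S\to Q\otimes_R S$ becomes injective once composed with $f'\otimes_R S$, because that composite is the injective bottom inclusion $(\ker g)\otimes_R S\to P\otimes_R S$; hence the top inclusion is itself injective. Therefore $0\to(\ker g)\otimes_R S\to Q\otimes_R S\xrightarrow{g'\otimes S}M\otimes_R S\to0$ is exact, and since $\ker g'\simeq\ker g$ this is exactly the assertion that the canonical map $(\ker g')\otimes_R S\to\ker(g'\otimes S)$ is an isomorphism, i.e.\ $g'$ is $e$-flat.

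With this in hand I would establish the fibre-product identity by a five-lemma comparison, and then deduce $e$-flatness of $f'$. On one side I have the short exact sequence just produced; on the other side, $g\otimes_R S$ is surjective with kernel $(\ker g)\otimes_R S$ (again by $e$-flatness of $g$), so the fibre product $F:=(M\otimes_R S)\times_{N\otimes_R S}(P\otimes_R S)$ sits in a short exact sequence $0\to(\ker g)\otimes_R S\to F\to M\otimes_R S\to0$ through its projection to $M\otimes_R S$. The canonical map $Q\otimes_R S\to F$ is compatible with both sequences, being the identity on the kernel terms and on $M\otimes_R S$, so the five lemma makes it an isomorphism; this is the ``in particular'' claim. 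Finally, under this isomorphism $f'\otimes_R S$ corresponds to the projection $F\to P\otimes_R S$, which is the pullback of $f\otimes_R S$; the latter is injective because $f$ is injective and $e$-flat, so the projection is injective, and since $\ker f'=0$ this shows $f'$ is $e$-flat.

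The main obstacle is the fibre-product step, because tensoring is only right exact and the identity $(M\times_N P)\otimes_R S\simeq(M\otimes_R S)\times_{N\otimes_R S}(P\otimes_R S)$ genuinely fails for arbitrary $f$ and $g$. The essential insight is that the missing left-exactness is supplied exactly by the $e$-flatness of $g$ and then propagated across the diagram; once that is secured, the remaining ingredients (stability of monomorphisms and epimorphisms under pullback, and the five-lemma bookkeeping) are routine and, in the spirit of Lemma \ref{lemma:SomeE-FlatMorphismsAreClosed}, need only be indicated rather than spelled out.
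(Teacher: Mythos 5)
Your proof is correct and takes essentially the same route as the paper: the paper's own (sketched) proof tensors exactly the same morphism of short exact sequences, from $0\to\ker g\to M\times_NP\to M\to 0$ to $0\to\ker g\to P\to N\to 0$, with $S$, using $e$-flatness of $g$ to keep the bottom row exact and injectivity of $f\otimes \id_S$ on the right column, and then reads off both flatness claims from the resulting diagram. Your write-up simply supplies the details the paper skips --- the injectivity of $(\ker g)\otimes S\to (M\times_NP)\otimes S$ via composition with $f'\otimes \id_S$, and the five-lemma identification $(M\times_NP)\otimes S\simeq (M\otimes S)\times_{N\otimes S}(P\otimes S)$ --- so it is a faithful completion of the paper's argument rather than a different one.
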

\begin{proof}Notice that  $f'$ is injective and $g'$ is surjective. Also notice that $\coker(f)=\coker(f')$ and $\ker(g)=\ker(g')$. The proof is similar to the proof of Lemma \ref{lemma:SomeE-FlatMorphismsAreClosed} by considering the following commutative diagram of $S$-modules. Details are skipped.
 $$\xymatrix{                          &                                                   &                                                              & 0\ar[d]  \\
                                                 & \ker(g)\otimes S \ar[r] \ar[d]   &(M\times_NP) \otimes S \ar[r] \ar[d]^{f'\otimes Id}  & M \otimes S \ar[r] \ar[d]^{f\otimes Id} &  0\\
                        0 \ar[r]             & \ker(e^*g) \ar[r]             & P \otimes S \ar[r]^{g\otimes Id} & N\otimes S \ar[r] & 0.
}$$
 \end{proof}

The next proposition implies that the collection of admissible exact sequences makes $\Coh\cX$ a {\it quasi-exact category} in the sense of \cite{Deitmar06}.
\begin{proposition}\label{prop:QuasiExactStructureOnCoh(X)} Let $\cX=(\msch X, X, e_X)$ be a Noetherian $\Fun$-scheme. The category  $\Coh\cX$ is a quasi-exact category.  That is, the following statements are true in  $\Coh\cX$.
\begin{enumerate}
 \item A five term sequence that is isomorphic to an admissible short exact sequence is an admissible short exact sequence.
 \item Admissible monomorphisms are closed under composition; so are admissible epimorphisms.
 \item Given an admissible monomorphism $(\msch i, i): \cM\to \cN$ and an admissible epimorphism $(\msch j, j): \cP \to \cN$, the pull back exists in $\Coh\cX$ and the pull back of $i$ is an admissible monomorphism and the pull back of $j$ is an admissible epimorphism.
\end{enumerate}
\end{proposition}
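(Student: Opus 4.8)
The plan is to treat the three statements separately, and throughout to split every assertion into its two constituent parts via the characterization in Proposition \ref{prop:DescriptionOfNormalMorphisms}: the condition on the $\msch X$-component (an admissible sequence of $\cO_{\msch X}$-modules, governed by the $A$-set theory of Section \ref{TheSectionOnMonoids}) and the condition on the $X$-component (an honest short exact sequence whose maps are $e_X$-flat). Statement (i) is then immediate: an isomorphism of five-term sequences restricts to isomorphisms on each component, and normality (characterized fibrewise by Proposition \ref{normal_lemma}), exactness, and $e_X$-flatness are all invariant under isomorphism, so the defining conditions of Proposition \ref{prop:DescriptionOfNormalMorphisms} transport across.

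For statement (ii) I would argue the monomorphism and epimorphism cases dually. Given two composable admissible monomorphisms $(\msch f,f)$ and $(\msch f',f')$, on the $\msch X$-side the composite $\msch f'\circ\msch f$ is again a normal monomorphism by Proposition \ref{prop:NmMrphsmAreClosedUnderComposition}, and on the $X$-side $f'\circ f$ is injective and $e_X$-flat by part (1) of Lemma \ref{lemma:SomeE-FlatMorphismsAreClosed}. To see the composite is genuinely \emph{admissible} I would exhibit it inside a short exact sequence via its cokernel, which exists by Lemma \ref{lemma:CokernelExists}: the quotient map is a normal epimorphism on the monoid side (every quotient by an $A$-subset is normal, Lemma \ref{normal_mono_and_epi}) and is $e_X$-flat on the scheme side by part (3) of Lemma \ref{lemma:SomeE-FlatMorphismsAreClosed}. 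The epimorphism case is parallel, using parts (2) and (3) of Lemma \ref{lemma:SomeE-FlatMorphismsAreClosed} together with the existence of kernels of normal morphisms (Lemma \ref{lemma_normal_morphisms_over_f1-schemes}); coherence of the resulting sub- and quotient objects is guaranteed by the Noetherian hypothesis.

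For statement (iii) I would first build the pullback componentwise. On the $\msch X$-side the pullback $\msch Q=\msch M\times_{\msch N}\msch P$ exists because $\Coh\msch X$ has finite limits, and a direct computation on fibres shows that the projection $\msch Q\to\msch P$ is injective (hence a normal monomorphism), while the projection $\msch Q\to\msch M$ is surjective with singleton fibres away from the base point—this uses that $\msch i$ is injective, so $\msch i(m)\neq\ast$ for $m\neq\ast$, together with normality of $\msch j$—hence a normal epimorphism. On the $X$-side the fibre product $Q=M\times_N P$ exists in $\Coh X$, and Lemma \ref{lemma:PullBackOfEFlatMaps} shows that the two projections are respectively an injective $e_X$-flat and a surjective $e_X$-flat morphism. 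The two short exact sequences witnessing the admissibility of the pullback of $i$ and of the pullback of $j$ are then obtained by pulling back the original sequences of $i$ and $j$.

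The main obstacle is assembling these two pullbacks into a genuine sheaf on $\cX$, i.e.\ producing the comparison isomorphism $\epsilon_Q:\msch Q_\Z\xrightarrow{\sim}e_X^\ast Q$. On the scheme side this is supplied by the last assertion of Lemma \ref{lemma:PullBackOfEFlatMaps}, which says base extension commutes with the fibre product of $e_X$-flat maps, so that $e_X^\ast M\times_{e_X^\ast N}e_X^\ast P\simeq e_X^\ast Q$. The delicate point is the monoid side: since $(-)_\Z$ is a left adjoint it need not commute with the limit $\msch Q$. Here I would use normality crucially, identifying $\msch Q$ with the kernel of the composite $\msch h:\msch P\xrightarrow{\msch j}\msch N\to\coker\msch i$; this $\msch h$ is normal as a composite of normal morphisms (Proposition \ref{normal_lemma}), so Lemma \ref{extension_of_normal_morphism_to_Z} gives $(\ker\msch h)_\Z\simeq\ker\msch h_\Z$. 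Since base extension commutes with cokernels and carries injections to injections (Lemma \ref{easy_base_extension_properties}), the right-hand side is exactly the fibre product $\msch M_\Z\times_{\msch N_\Z}\msch P_\Z$, yielding $\msch Q_\Z\simeq\msch M_\Z\times_{\msch N_\Z}\msch P_\Z$. Transporting along $\epsilon_M,\epsilon_N,\epsilon_P$ and composing with the scheme-side isomorphism then defines $\epsilon_Q$, and it remains only to check—routinely—that $\cQ=(\msch Q,Q,\epsilon_Q)$ satisfies the universal property of the pullback in $\Coh\cX$.
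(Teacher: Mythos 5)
Your proposal is correct and follows essentially the same route as the paper: statement (ii) via cokernels/kernels of the composites together with Lemma \ref{lemma:SomeE-FlatMorphismsAreClosed} and the characterization in Proposition \ref{prop:DescriptionOfNormalMorphisms}, and statement (iii) via componentwise pullbacks compared through base extension and $e_X^\ast$, with Lemma \ref{lemma:PullBackOfEFlatMaps} handling the scheme side. The one point where you go beyond the paper is welcome: where the paper merely asserts that the base extension of the $\Mo$-side Cartesian square is ``immediately'' Cartesian, you supply an actual proof by identifying $\msch M\times_{\msch N}\msch P$ with $\ker\bigl(\msch P\to\coker\msch i\bigr)$, noting this composite is normal, and invoking Lemma \ref{extension_of_normal_morphism_to_Z} together with Lemma \ref{easy_base_extension_properties}.
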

\begin{proof} The first statement is obvious.

For the second statement, assume that we have admissible short exact sequences
$$\xymatrix{\0\ar[r] & \cM\ar[r]^{(\msch \alpha,\alpha)} & \cN\ar[r]^{(\msch \beta,\beta)} & \cK \ar[r] & \0 \quad \mbox{ and } \quad
            \0\ar[r] & \cN\ar[r]^{(\msch f,f)} & \cS\ar[r]  & \cT \ar[r] & \0
}$$
in $\Coh\cX$ where $\cM=(\msch M, M, \epsilon_M), \cN=(\msch N, N, \epsilon_N)$ and $\cS=(\msch S, S, \epsilon_S).$ By Lemma \ref{lemma:CokernelExists}, the cokernel of $(\msch f, f)\circ (\msch \alpha, \alpha)$ exists and equals $$(\coker(\msch f \circ \msch \alpha), \coker(f \circ \alpha), \bar{\epsilon}_S).$$ Let $(\msch \pi, \pi)$ be the natural projection $\cS \to \coker((\msch f, f)\circ (\msch \alpha, \alpha) ).$ It is obvious that the following sequence is an admissible short exact sequence in $\cO_{\msch X}$.
$$\xymatrix{\msch 0\ar[r] & \msch M\ar[r]^{\msch f\circ \msch \alpha} & \msch S\ar[r]^-{\msch \pi} & \coker(\msch f \circ \msch \alpha) \ar[r] & \msch 0. }$$
By Lemma \ref{lemma:SomeE-FlatMorphismsAreClosed}, the morphisms in the following exact sequence of $\cO_X$-modules are $e_X$-flat.
$$\xymatrix{0\ar[r] & M\ar[r]^{f\circ \alpha} & S \ar[r]^-{\pi} & \coker( f \circ \alpha) \ar[r] & 0. }$$
So, by Proposition \ref{prop:DescriptionOfNormalMorphisms}, we have the following admissible short exact sequence which proves that $(\msch f, f)\circ (\msch \alpha, \alpha)$ is an admissible monomorphism.
$$\xymatrix{
\0\ar[r] & \cM\ar[rr]^{(\msch f, f)\circ(\msch \alpha, \alpha)} && \cS\ar[r]^-{(\msch \pi,\pi)} & \coker((\msch f,f)\circ(\msch \alpha, \alpha)) \ar[r] & \0.
}$$

Assume that there is an admissible short exact sequence in $\Coh\cX$
$$\xymatrix{ \0 \ar[r] & \cU \ar[r] & \cK\ar[r]^{(\msch g,g)} & \cV \ar[r] & \0. }$$ By Lemma \ref{lemma:SomeE-FlatMorphismsAreClosed}, we see that
$(\msch g,g)\circ (\msch \beta,\beta) $ is normal and hence it has a kernel, which is given by
$$(\ker(\msch g \circ \msch \beta), \ker(g \circ \beta), \epsilon_N).$$ Similar to the case of admissible monomorphisms, one checks that the following sequence is an admissible short exact sequence in $\cO_\cX$
$$\xymatrix{
\0\ar[r] & \ker((\msch g,g)\circ (\msch \beta,\beta)) \ar[r] & \cN\ar[rr]^-{(\msch g,g)\circ (\msch \beta,\beta)} && \cV \ar[r] & \0.
} $$
This completes the proof of the second statement.

Now we prove the last statement. The diagram $\cM\to \cN \leftarrow \cP$ gives two pull back diagrams
$$\xymatrix{ \msch K \ar[r]^{\msch{ i'}} \ar[d]_{\msch{j'}} & \msch P \ar[d]^{\msch{j}} & \ \ar@{}[d]|{{\text{\normalsize and}}}& K \ar[r]^{i'} \ar[d]_{j'} & P \ar[d]^{j}\\
                 \msch M \ar[r]^{\msch{i}}                   & \msch N                  & & M \ar[r]^i                   & N                                 }$$
in the categories $\Coh(\msch X)$ resp.\ $\Coh(X)$ where $\cP=(\msch P, P, \epsilon_P)$.
Applying the base extension functor to $\Z$ to the left Cartesian square and apply the pull back functor $e_X^*$ to the right Cartesian square, we get the diagrams
$$\xymatrix{ \msch K_\Z \ar[r]^{\msch{ i'}_\Z} \ar[d]_{\msch{j'}_\Z} & \msch P_\Z \ar[d]^{\msch{j}_\Z} & \ \ar@{}[d]|{{\text{\normalsize and}}} &  e_X^*K \ar[r]^{e_X^*i'} \ar[d]_{e_X^*j'} & e_X^*P \ar[d]^{e_X^*j}\\
                 \msch M_\Z \ar[r]^{\msch{i_\Z}}                   & \msch N_\Z  && e_X^*M \ar[r]^{e_X^*i}                   & e_X^*N
}$$
of $\cO_{\msch X_\Z}$-modules. One verifies immediately that the left diagram is a Cartesian diagram. The right diagram is Cartesian by Lemma \ref{lemma:PullBackOfEFlatMaps}.
Being the pull back of isomorphic diagrams, $\msch K_\Z$ and $ e_X^*K$ are naturally isomorphic. Let $\epsilon_K$ denote the isomorphism, then $(\msch K, K, \epsilon_K)$ is an object in $\Coh\cX$. It is routine to check that $\cK=(\msch K, K, \epsilon_K)$ makes the diagram
$$\xymatrix{ \cK \ar[r]^{(\msch i', i')} \ar[d]_{(\msch{j'}, j')} & \cP \ar[d]^{(\msch{j},j)} \\
                 \cM \ar[r]^{(\msch{i}, i)}                   & \cN
}$$
Cartesian in $\Coh\cX$ and that $(\msch i',i')$ and $(\msch j', j')$ are admissible monomorphism and epimorphism, respectively.
\end{proof}

By Proposition \ref{prop:QuasiExactStructureOnCoh(X)}, one can apply Quillen's Q-construction \cite{Quillen341} to the category $\Coh\cX$ as usual. Let $Q\Coh\cX$ be the category having the same objects as $\Coh\cX$. A morphism from $\cM$ to $\cN$ in $Q\Coh\cX$
is an isomorphism class of diagrams in $\Coh\cX$
$$\xymatrix{ \cM & \cP \ar@{->>}[l]-_j \ar@{>->}[r]^i & \cN,
}$$ where $i$ is an admissible monomorphism and $j$ is an admissible epimorphism. The above diagram is isomorphic to
$$\xymatrix{ \cM & \cP' \ar@{->>}[l]-_{j'} \ar@{>->}[r]^{i'} & \cN
}$$ if and only if there is an isomorphism $\tau: \cP \to \cP'$ such that $j=j'\circ \tau $ and $i= i'\circ \tau$. The composition can be defined in the same way as in \cite{Quillen341} because of Proposition \ref{prop:QuasiExactStructureOnCoh(X)}. Since the isomorphism classes of objects in $Q\Coh\cX$ form a set, the geometric realization of the nerve of $Q\Coh\cX$ is well defined, which is denoted as $|Q\Coh\cX|$.

\begin{definition} Let $\cX=(\msch X,X,e_X)$ be a Noetherian $\Fun$-scheme. The \emph{algebraic $G$-theory of $\cX$} is defined as
$$G_i(\cX)=\pi_{i+1} |Q\Coh\cX|, \quad \mbox{ where } i\geq 0.$$
\end{definition}

\subsection{$K$-theory of  $\Fun$-schemes}

Let $\cX$ be a general $\Fun$-scheme. In this subsection, we define algebraic $K$-theory of $\cX$ using the category $\Bun\cX$ of locally free $\cO_\cX$-modules of finite rank.

 \subsubsection{Definition}
 Let $\cX=(\msch X, X, e_X)$ be an $\Fun$-scheme. \emph{An admissible short exact sequence in $\Bun\cX$} is an admissible short exact sequence in $\Coh\cX$
\begin{equation}\label{ShortExactSequaceOfLocallyFreeSheavesOverF1Schemes}\0\longrightarrow (\msch{M}, M, \epsilon_M) \stackrel{(\msch i, i)}{\longrightarrow} (\msch{N}, N, \epsilon_N) \stackrel{(\msch j, j)}{\longrightarrow} (\msch{K}, K, \epsilon_K)\longrightarrow \0\end{equation}
where all objects are in $\Bun\cX$.
A morphism in $\Bun\cX$ is called a $K$-admissible monomorphism (resp.\ epimorphism) if it appears as the map $(\msch i, i)$ (resp.\ $(\msch j, j)$) in some exact sequence as (\ref{ShortExactSequaceOfLocallyFreeSheavesOverF1Schemes}).

\begin{proposition}\label{prop:DescriptionOfKAdmissibleMonoAndEpis}Let $\cX=(\msch X, X, e_X)$ be an $\Fun$-scheme. The sequence
$$\0\longrightarrow (\msch{M}, M, \epsilon_M) \stackrel{(\msch i, i)}{\longrightarrow} (\msch{N}, N, \epsilon_N) \stackrel{(\msch j, j)}{\longrightarrow} (\msch{K}, K, \epsilon_K)\longrightarrow \0$$ is an admissible short exact sequence in $\Bun\cX$ if and only if
$$\msch 0\longrightarrow \msch{M} \stackrel{\msch i}{\longrightarrow} \msch{N}\stackrel{\msch j}{\longrightarrow} \msch{K}\longrightarrow \msch 0 $$
is an admissible short exact sequence of locally projective $\cO_{\msch X}$-modules and
$$0\longrightarrow M \stackrel{i}{\longrightarrow} N \stackrel{j}{\longrightarrow} K \longrightarrow 0$$ is an exact sequence of locally free $\cO_X$-modules.
\end{proposition}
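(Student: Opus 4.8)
The plan is to deduce this directly from Proposition \ref{prop:DescriptionOfNormalMorphisms}, which already characterizes admissible short exact sequences in $\Coh\cX$. Recall that an admissible short exact sequence in $\Bun\cX$ is by definition an admissible short exact sequence in $\Coh\cX$ whose three objects happen to lie in the full subcategory $\Bun\cX$, and that an object $(\msch M,M,\epsilon_M)$ lies in $\Bun\cX$ exactly when $\msch M$ is locally projective and $M$ is locally free. Thus the proposition will follow once the two conditions listed here are matched against the two conditions of Proposition \ref{prop:DescriptionOfNormalMorphisms}.

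For the forward implication I would simply invoke Proposition \ref{prop:DescriptionOfNormalMorphisms}: if the given sequence is admissible short exact in $\Bun\cX$, then the $\cO_{\msch X}$-component is admissible short exact and the $\cO_X$-component is short exact with $e_X$-flat maps; since the three objects lie in $\Bun\cX$, the first sequence consists of locally projective sheaves and the second of locally free sheaves, which is precisely the asserted conclusion.

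The converse is where the one genuine point sits. Assuming the two stated conditions, the admissibility of the $\cO_{\msch X}$-sequence and the short exactness of the $\cO_X$-sequence are given outright, so by Proposition \ref{prop:DescriptionOfNormalMorphisms} the only thing left to verify is that $i$ and $j$ are $e_X$-flat; I expect this to be the main, and indeed only nonformal, step. The plan is to exploit that $M$, $N$ and $K$ are locally free, hence flat, so that the sequence $0\to M\to N\to K\to 0$ stays exact after pullback along $e_X$. Concretely, on an affine chart over which $e_X$ is induced by a ring homomorphism $R\to S$ and $K$ is free, I would read off from the long exact Tor-sequence
$$\mathrm{Tor}_1^R(K,S)\longrightarrow M\otimes_R S\longrightarrow N\otimes_R S\longrightarrow K\otimes_R S\longrightarrow 0$$
that $\mathrm{Tor}_1^R(K,S)=0$ by flatness of $K$, whence $0\to e_X^\ast M\to e_X^\ast N\to e_X^\ast K\to 0$ is exact. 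From this, $e_X$-flatness of $i$ is immediate since $\ker i=0$ and $e_X^\ast i$ is injective, and $e_X$-flatness of $j$ follows since $\ker j=\im i\cong M$ and the pulled-back sequence is exact at $e_X^\ast N$. Once this is in hand, Proposition \ref{prop:DescriptionOfNormalMorphisms} gives that the sequence is admissible short exact in $\Coh\cX$, and as all three objects lie in $\Bun\cX$ by hypothesis, it is admissible short exact in $\Bun\cX$.
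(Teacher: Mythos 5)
Your proposal is correct and follows essentially the same route as the paper: reduce everything to Proposition \ref{prop:DescriptionOfNormalMorphisms} and observe that $e_X$-flatness of $i$ and $j$ is automatic because the pullback along $e_X$ of a short exact sequence of locally free sheaves remains exact. The paper simply asserts this exactness ("$e_X^\ast$ is exact on locally free sheaves") where you justify it with the $\mathrm{Tor}_1$ vanishing argument, which is a fair and correct filling-in of the same step.
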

\begin{proof} Since $e_X^*$ is exact on locally free sheaves, the pull back along $e_X$ of the exact sequence $0\longrightarrow M \stackrel{i}{\longrightarrow} N \stackrel{j}{\longrightarrow} K \longrightarrow 0$ remains exact. This shows that $i$ and $j$ are automatically $e_X$-flat. So Proposition \ref{prop:DescriptionOfNormalMorphisms} applies here to give the result.
\end{proof}

\begin{lemma}\label{lemma:QuasiExactStructureOnBun(X)} Let $\cX=(\msch X, X, e_X)$ be an $\Fun$-scheme.
\begin{enumerate}
 \item A five term sequence that is isomorphic to an admissible short exact sequence in $\Bun\cX$ is an admissible short exact sequence in $\Bun\cX$.
 \item $K$-admissible monomorphisms are closed under compositions, so as $K$-admissible epimorphisms.
 \item Given a $K$-admissible monomorphism $(\msch i, i): \cM\to \cN$ and a $K$-admissible epimorphism $(\msch j, j): \cP \to \cN$, the pull back $\cM \times_{\cN} \cP$ is an object in $\Bun\cX$ and the pull back of $i$ is a $K$-admissible monomorphism and the pull back of $j$ is a $K$-admissible epimorphism.
\end{enumerate}
\end{lemma}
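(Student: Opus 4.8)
The plan is to deduce all three statements from their counterparts for $\Coh\cX$ already proved in Proposition \ref{prop:QuasiExactStructureOnCoh(X)}, exploiting that $\Bun\cX$ is a \emph{full} subcategory of $\Coh\cX$ and that, by Proposition \ref{prop:DescriptionOfKAdmissibleMonoAndEpis}, a short exact sequence of coherent sheaves whose three terms all lie in $\Bun\cX$ is automatically admissible in $\Bun\cX$. Consequently, the only new content beyond Proposition \ref{prop:QuasiExactStructureOnCoh(X)} is to verify that the objects produced by the three constructions in loc.\ cit.\ — the cokernel of a composite $K$-admissible monomorphism, the kernel of a composite $K$-admissible epimorphism, and the pullback $\cM\times_\cN\cP$ — are again locally free, i.e.\ remain in $\Bun\cX$. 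Statement (i) is then immediate, since local freeness of both the $\cO_{\msch X}$-component and the $\cO_X$-component is invariant under isomorphism.

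For (ii) and (iii) I would treat the scheme component $X$ and the monoid component $\msch X$ separately, as licensed by Proposition \ref{prop:DescriptionOfKAdmissibleMonoAndEpis}. On the scheme side every admissible sequence is an ordinary short exact sequence of locally free $\cO_X$-modules, and I would invoke the classical fact that an extension $0\to\cF'\to\cF\to\cF''\to\0$ with $\cF'$ and $\cF''$ locally free forces $\cF$ locally free (locally, $\cF''$ is free so the sequence splits). Applying this to the standard sequences relating the cokernel of a composite monomorphism to the two outer quotients, the kernel of a composite epimorphism to the two outer kernels, and — using Lemma \ref{lemma:PullBackOfEFlatMaps} so that the pullback square base-changes correctly along $e_X$ — the pullback $K$ to $M$ and $\ker(j)$, shows that each resulting $\cO_X$-module is locally free.

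On the monoid side $\msch X$ I would use Proposition \ref{prop:SplittingOfAdmissibleShortExactSequences}: since in each admissible sequence of locally projective $\cO_{\msch X}$-modules the quotient term is itself locally projective, every such sequence splits locally, so locally the middle term is a wedge of the two outer terms. Hence the cokernel of a composite monomorphism and the kernel of a composite epimorphism are, locally, wedge summands of locally projective sheaves, and are therefore locally projective by Lemma \ref{lemma:ProjectiveASetsSplitsIntoFree}. For the pullback one checks directly that, after writing $\msch P\cong\msch N\vee\ker\msch j$ locally with $\msch j$ the projection, the condition $\msch i(m)=\msch j(p)$ together with injectivity of $\msch i$ yields $\msch K\cong\msch M\vee\ker\msch j$ locally, which is again locally projective. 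Combining the two halves shows the constructed triples lie in $\Bun\cX$, and since Proposition \ref{prop:QuasiExactStructureOnCoh(X)} already provides the admissibility of the relevant morphisms and the Cartesian property of the pullback square in $\Coh\cX$, the same data witness the assertions in $\Bun\cX$.

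The step I expect to require the most care is (iii): one must confirm that \emph{both} components of the pullback formed in $\Coh\cX$ are locally free and that the comparison isomorphism $\epsilon_K$ supplied by the proof of Proposition \ref{prop:QuasiExactStructureOnCoh(X)} is unaffected. Because local freeness is a property and not additional structure, no compatibility beyond what is already established there is needed, so I anticipate this to be bookkeeping rather than a genuine obstacle.
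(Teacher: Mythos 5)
Your strategy is essentially the paper's: reduce to the coherent-sheaf case and then check that the three new objects --- the cokernel of a composite $K$-admissible monomorphism, the kernel of a composite $K$-admissible epimorphism, and the pullback $\cM\times_\cN\cP$ --- remain in $\Bun\cX$, using Proposition \ref{prop:SplittingOfAdmissibleShortExactSequences} on the $\Mo$-side (your local identification of the pullback with $\msch M\vee\ker(\msch j)$ is exactly the paper's computation) and locally-free extension arguments on the scheme side; those verifications are correct.

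There is, however, a genuine gap in how you set up the reduction. You invoke Proposition \ref{prop:QuasiExactStructureOnCoh(X)} as a \emph{statement} (for ``the admissibility of the relevant morphisms and the Cartesian property of the pullback square in $\Coh\cX$''), but that proposition is stated, and is only available, for \emph{Noetherian} $\Fun$-schemes: without Noetherianness $\Coh\cX$ need not be closed under kernels and pullbacks, which is exactly why the $G$-theory section imposes that hypothesis. Lemma \ref{lemma:QuasiExactStructureOnBun(X)}, by contrast, carries no Noetherian hypothesis, and this extra generality is the point of the $K$-theory section. The paper avoids the problem by reusing the \emph{proof} of Proposition \ref{prop:QuasiExactStructureOnCoh(X)} rather than its conclusion: all of its ingredients (Lemma \ref{lemma:CokernelExists}, Lemma \ref{lemma_normal_morphisms_over_f1-schemes}, Lemmas \ref{lemma:SomeE-FlatMorphismsAreClosed} and \ref{lemma:PullBackOfEFlatMaps}, Proposition \ref{prop:DescriptionOfNormalMorphisms}) hold for arbitrary $\Fun$-schemes, and the sole role of the Noetherian hypothesis --- guaranteeing that the constructed objects stay coherent --- is taken over, in $\Bun\cX$, by the splitting/projectivity argument, which yields finite generation at the same time. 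As written, your proof establishes the lemma only for Noetherian $\cX$; to obtain the stated generality you must rerun the proposition's argument inside $\Bun\cX$ instead of citing its conclusion. A second, smaller omission: on the $\Mo$-side you verify only local projectivity, but membership in $\Bun\cX$ also requires coherence; this does follow from your wedge descriptions (a finite wedge of finitely generated projective $A$-sets is finitely generated and projective), but it should be stated explicitly, since coherence is precisely what Noetherianness was compensating for in the coherent-sheaf setting.
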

\begin{proof} The proof of Proposition \ref{prop:QuasiExactStructureOnCoh(X)} works here. Using the same notations as in the proof of Proposition \ref{prop:QuasiExactStructureOnCoh(X)}, we only need to check that the objects
$\coker((\msch f, f)\circ (\msch \alpha, \alpha) )$, $\ker((\msch g,g)\circ (\msch \beta,\beta))$ and $\cM \times_{\cN} \cP$ are objects in $\Bun\cX$. It is clear that $\coker( f\circ \alpha )$, $\ker(g\circ \beta)$ and $M \times_{N} P$ are locally free of finite rank over $X$. It remains to show that $\coker( \msch f \circ \msch\alpha )$, $\ker(\msch g\circ \msch\beta)$ and $\msch M \times_{\msch N} \msch P$ are locally projective and coherent on $\msch X$. This can be checked by assuming that $\msch X=\Spec(A)$ is affine. We prove in detail that $\msch M \times_{\msch N} \msch P$ is projective and a finitely generated $A$-set. The proof of the fact that  $\coker( \msch f \circ \msch\alpha )$ and $\ker(\msch g\circ \msch\beta)$ are projective and finitely generated is similar and hence skipped.

Since $\msch i: \msch M \to \msch N$ is admissible, $\coker(\msch i)$ is projective and there is an admissible short exact sequence
$$\msch 0\longrightarrow \msch M \longrightarrow \msch N \longrightarrow \coker(\msch i) \longrightarrow \msch0.$$ By Proposition \ref{prop:SplittingOfAdmissibleShortExactSequences}, we can assume that $\msch N=\msch M \vee \coker(\msch i)$ and $\msch i$ is the canonical inclusion. Similarly, we can assume that $P=\msch N \vee \ker(\msch j)$ and $\msch j$ is the canonical projection. This implies that $\ker(\msch j)$ is projective and finitely generated by Proposition \ref{prop:DescriptionOfProjectiveAsets}.  So $\msch M \times_{\msch N} \msch P = \msch M \vee \ker(\msch j)$. Since both $ \msch M$ and $\ker(\msch j)$ are projective and finitely generated, $\msch M \times_{\msch N} \msch P$ is also projective and finitely generated as desired.\end{proof}

Lemma \ref{lemma:QuasiExactStructureOnBun(X)} implies that the collection of admissible exact sequences in $\Bun\cX$ make $\Bun\cX$ a  quasi-exact category. In particular, one can apply Quillen's Q-construction to the category $\Bun\cX$ as usual. Let $Q\Bun\cX$ be the resulting category  and let $|Q\Bun\cX|$ denote the geometric realization of the nerve of $Q\Bun\cX$.

\begin{definition} Let $\cX$ be an $\Fun$-scheme. The \emph{algebraic $K$-groups of $\cX$} are defined as
$$K_i(\cX)=\pi_{i+1}(|Q\Bun\cX|), \quad \mbox{ where }i\geq 0.$$
\end{definition}

\subsection{The $K$-theory spectrum}

In this section, we employ Waldhausen's $S_{\bullet}$-construction to show that the $K$-theory of an $\Fun$-scheme $\cX$ is the infinite loop space $\Omega|S_{\bullet} (\Bun \cX)|$. In subsequent paragraphs, we investigate certain properties of the theory developed so far: if $H$ is an abelian group, then the $K$-theory of $\Spec\Fun[H]$ is isomorphic to the stable homotopy of $BH_+$ and $G_0(\Spec\Fun[H])$ is the Burnside ring of $H$. If $H$ is simple, then the $K$-theory of $\Spec\Fun[H]$ is a direct summand of its $G$-theory; exemplary, we calculate the $K$-theory of the affine line and of the spectrum of a monoid with a non-trivial idempotent. 

In contrast to the rest of the paper, in the following sections we assume a certain familiarity of the reader with the mentioned concepts. We therefore do not repeat all definitions and statements we use from topology, and  provide references as required.

A Waldhausen category is a category together with two distinct subclasses of morphisms, namely, cofibrations and weak equivalences, that are subject to certain axioms (see \cite[sections 1.1 and 1.2]{W85}). For a small Waldhausen category, one can apply Waldhausen's $S_{\bullet}$-construction to obtain an infinite loop space, and hence a spectrum, which is defined as the {$K$-theory space} or spectrum of the Waldhausen category, see  \cite[section 1.3]{W85}.

A Quillen exact category is naturally a Waldhausen category if we define cofibrations as admissible monomorphisms and define weak equivalences as isomorphisms (cf.\ \cite[section 1.9]{W85}). One can define a Waldhausen category structure on a quasi-exact category in the same way---we only need to verify that admissible monomorphisms in the quasi-exact category are stable under arbitrary cobase changes. This fact is checked in the following lemma. 

\begin{lemma} \label{cobasechange} Let $\Bun\cX$ be the category of locally projective sheaves over an $\Fun$-scheme $\cX=(\msch X, X, e_X)$. Let $i: \cM \to \cN$ be a $K$-admissible monomorphism and let $f: \cM \to \cP$ be an arbitrary morphism in $\Bun\cX$. Then the push-out $\cN\cup_{\cM}\cP$ exists in $\Bun\cX$ and the natural map $\cP \to \cN\cup_{\cM}\cP$ is $K$-admissible.
\end{lemma}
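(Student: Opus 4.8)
The plan is to construct the push-out componentwise on the $\Mo$-scheme part and on the scheme part, and then to glue the two pieces, in close analogy with the construction of pull-backs in the proof of Lemma~\ref{lemma:QuasiExactStructureOnBun(X)}. The essential simplification compared with that proof is that a push-out is a \emph{colimit}: the base extension $(-)_\Z=\beta^\ast$ on $\cO_{\msch X}$-modules and the inverse image $e_X^\ast$ on $\cO_X$-modules are both instances of the pull-back of modules, hence left adjoints (to $\beta_\ast$ and $e_{X\ast}$), and so preserve all colimits, in particular push-outs. Thus, in contrast to the dual pull-back situation, no $e_X$-flatness input of the kind in Lemma~\ref{lemma:PullBackOfEFlatMaps} is required; the base extensions automatically commute with the push-outs I form.

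First I would treat the $\msch X$-component. Write $\cM=(\msch M,M,\epsilon_M)$, and similarly for $\cN,\cP$. Since $(\msch i,i)$ is $K$-admissible, it fits into an admissible short exact sequence $\0\to\cM\xrightarrow{(\msch i,i)}\cN\to\cK\to\0$ in $\Bun\cX$ with cokernel $\cK=(\msch K,K,\epsilon_K)$, and by Proposition~\ref{prop:DescriptionOfKAdmissibleMonoAndEpis} the sequence $\msch0\to\msch M\xrightarrow{\msch i}\msch N\to\msch K\to\msch0$ is an admissible short exact sequence of locally projective $\cO_{\msch X}$-modules. Working on an affine cover on which all sheaves are projective $A$-sets, Proposition~\ref{prop:SplittingOfAdmissibleShortExactSequences} lets me assume $\msch N=\msch M\vee\msch K$ with $\msch i$ the canonical inclusion, so that the push-out of $\msch M\to\msch M\vee\msch K$ along $\msch f:\msch M\to\msch P$ is $\msch L:=\msch P\vee\msch K$. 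As push-outs of $\cO_{\msch X}$-modules exist (the category of $A$-sets is cocomplete by Proposition~\ref{ASetsAreCompleteAndCocomplete}) and are computed locally, and as $\msch P,\msch K$ are locally projective and coherent, $\msch L\in\Proj\msch X$, and the canonical map $\msch P\to\msch L$ is an admissible monomorphism with cokernel $\msch K$.

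Next I would form the push-out $L:=N\cup_M P$ in $\cO_X$-modules. Because cokernels agree across a push-out square, $\coker(P\to L)\cong\coker(i)=K$, giving a short exact sequence $0\to P\to L\to K\to0$ of $\cO_X$-modules; as $K$ is locally free this splits locally, so $L$ is locally free and $P\to L$ is an admissible monomorphism in $\Bun X$. It remains to glue: applying $(-)_\Z$ to the first square and $e_X^\ast$ to the second, and using that both functors preserve push-outs, I obtain that $\msch L_\Z$ and $e_X^\ast L$ are the push-outs of the diagrams $\msch N_\Z\leftarrow\msch M_\Z\to\msch P_\Z$ and $e_X^\ast N\leftarrow e_X^\ast M\to e_X^\ast P$. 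These diagrams are identified via $\epsilon_M,\epsilon_N,\epsilon_P$, so their push-outs are canonically isomorphic; I take this isomorphism to be $\epsilon_L$. Then $\cL:=(\msch L,L,\epsilon_L)$ lies in $\Bun\cX$, and a routine check of the universal property---using uniqueness of the induced maps to see that the $\epsilon$-squares commute---shows $\cL=\cN\cup_\cM\cP$. Finally, the sequences $\msch0\to\msch P\to\msch L\to\msch K\to\msch0$ and $0\to P\to L\to K\to0$ assemble, by Proposition~\ref{prop:DescriptionOfKAdmissibleMonoAndEpis}, into an admissible short exact sequence in $\Bun\cX$ with cokernel $\cK$, exhibiting $\cP\to\cL$ as $K$-admissible.

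The main obstacle, and the only place needing care, is the gluing isomorphism $\epsilon_L$: a priori the push-out of sheaves on $\cX$ need not exist, and one must know that forming the push-out commutes with both base-change functors so that the two components stay compatible. As indicated above, this is exactly where the colimit nature of push-outs is decisive---both $(-)_\Z$ and $e_X^\ast$ preserve colimits---which is what makes the argument succeed without the flatness hypotheses that were indispensable in the pull-back case. The secondary point, that the candidate components $\msch L$ and $L$ genuinely lie in $\Proj\msch X$ and $\Bun X$, is handled by the local splitting supplied by Proposition~\ref{prop:SplittingOfAdmissibleShortExactSequences}.
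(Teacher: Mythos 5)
Your proposal is correct and takes essentially the same route as the paper's proof: construct the push-out on the $\Mo$-scheme side by splitting the admissible sequence locally via projectivity (Proposition \ref{prop:SplittingOfAdmissibleShortExactSequences}), handle the scheme side by the standard abelian-category argument, and glue the two components using the fact that both $(-)_\Z$ and $e_X^\ast$ preserve colimits. Your write-up simply fills in the details that the paper dispatches as ``standard'' and ``easily checked,'' including the helpful observation that no flatness hypothesis is needed here precisely because push-outs are colimits.
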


\begin{proof} First assume that $\cX=\spec(A)$ for some monoid $A$. Then $\cM=\msch M, \cN=\msch N, \cP=\msch P$ are associated to projective $A$-sets and, up to isomorphism, $i$ can be assumed to be the embedding $\msch M\subset\msch  N=\msch M\vee\msch  M'$ for some projective $A$-set $\msch M'$. So the push-out is $\msch M'\vee \msch P$ and the map $\msch f$ is, up to isomorphism, the embedding $\msch P\subset\msch  M'\vee \msch P$, which is $K$-admissible. Thus the lemma is valid for affine schemes. From this, the lemma is immediate for $\Mo$-schemes.

For an arbitrary $\Fun$-scheme $\cX$, assume that $\cM=(\msch M, M, \epsilon_M), \cN=(\msch N, N, \epsilon_N)$ and $\cP=(\msch P, P, \epsilon_P)$. We just showed that the push-out $\msch N\cup_{\msch M} \msch P$ exists and the map $\msch f$ is $K$-admissible in $\Bun \msch X$. Similar statements for $N\cup_{M}P$ and $f$ in $\Bun X$ are standard. We only need to show that these two pairs of data are compatible when compared on $\msch X_\Z.$ This is easily checked because both  base extension to $\Z$ functor and the pull back functor $e_X^*$ preserve colimits. 
\end{proof}

We define the \emph{$K$-theory space of $\cX$}, denoted as $\cK(\cX)$,  as $$\Omega|S_{\bullet} (\Bun \cX)|$$ where we use $\bullet$ to denote the simplicial degree.   
Using Lemma \ref{cobasechange} and the same proof as in \cite[section 1.9]{W85}, we obtain that $|S_{\bullet} (\Bun \cX)|$ is weakly homotopy equivalent to $|Q\Bun\cX|$ as a topological space. This shows that the $K$-theory of an $F_1$-scheme is in fact an infinite loop space and allows us to talk about the $K$-theory spectrum of an $\Fun$-scheme.

\subsubsection{The $K$-theory of $\Fun[H]$ and the Burnside ring}


 Let $\cX=\Spec(\Fun[H])$ where $H$ is an abelian group. By Corollary \ref{Cor_equiv_A-Mod_and_Coh_X} and  Proposition \ref{prop:ModulesOverMoSchemes},  the category  $\Coh\cX$ is equivalent to the category of finite $H$-sets  with $H$-equivariant maps. Since the admissible exact sequences in $\Coh\cX$ split, the proof in \cite[sections~1.8, 1.9]{W85} applies to show that the $K$-theory of $\cX$ is equivalent to the $K$-theory of the groupoid category of finite $H$-sets.   Carlsson, Douglas and Dundas show in \cite{CDD} that the $K$-theory of this category is equivalent to the $H$-fixed points of the equivariant sphere spectrum. In particular, we have
$$G_0(\Spec(\Fun[H]]) \cong \Omega[H]$$
where $\Omega[H]$ is the Burnside ring of the group $H$. It is hard to describe higher $G$-groups of $\Spec(\Fun[H])$, but we can use stable homotopy groups to characterize all K-groups of $\Spec(\Fun[H])$.

\begin{theorem}\label{KTheoryOfSpecF1[H]} Let $H$ be an abelian group and let $\Fun[H]$ be the associated monoid. Let $\Spec(\Fun[H])$ be the $\Fun$-scheme $(\Spec\Fun[H], \Spec \Fun[H]_\Z, \id)$, then we have $K_i(\Spec\Fun[H])\cong \pi_i^s(BH_+)$, where $\pi^s_i$ denotes the $i$-th stable homotopy group. In particular $K_i(\Spec(\Fun))\cong \pi^s_i(S^0)$.
\end{theorem}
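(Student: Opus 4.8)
The plan is to identify the quasi-exact category $\Bun\cX$ explicitly, to observe that all of its admissible short exact sequences split, and then to reduce the computation to the group completion of a symmetric monoidal groupoid of wreath products, to which an equivariant form of the Barratt--Priddy--Quillen theorem applies.

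First I would determine the objects of $\Bun\cX$. Since $\cX=\Spec(\Fun[H])$ is an $\Mo$-scheme, Proposition \ref{prop:ModulesOverMoSchemes} gives an equivalence $\Proj\msch X\stackrel{\sim}{\longrightarrow}\Bun\cX$, and by Corollary \ref{Cor_equiv_A-Mod_and_Coh_X} the coherent sheaves on $\msch X=\spec\Fun[H]$ are precisely the finitely generated $\Fun[H]$-sets; hence $\Bun\cX$ is equivalent to the category of finitely generated projective $\Fun[H]$-sets. By Proposition \ref{prop:DescriptionOfProjectiveAsets} every such object is a wedge $\bigvee_i e_i\Fun[H]$ of idempotents $e_i$, and since $\Fun[H]=H\cup\{0\}$ and $H$ is a group, its only idempotents are $0$ and $1$. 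Thus every projective $\Fun[H]$-set is free, and $\Bun\cX$ is equivalent to the category whose objects are the finite free pointed $H$-sets $\Fun[H]^{\vee n}$ and whose morphisms are all $H$-equivariant maps.

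Next I would exploit that the quasi-exact structure is split: by Proposition \ref{prop:SplittingOfAdmissibleShortExactSequences} every admissible short exact sequence in $\Bun\cX$ is isomorphic to a canonical split sequence $\0\to\cM\to\cM\vee\cP\to\cP\to\0$. Consequently the $K$-theory space $\cK(\cX)=\Omega|S_\bullet(\Bun\cX)|$ (well defined by Lemma \ref{cobasechange} and Lemma \ref{lemma:QuasiExactStructureOnBun(X)}) should agree with the $K$-theory of the symmetric monoidal groupoid $(\mathrm{iso}\,\Bun\cX,\vee)$ of isomorphisms: for a split quasi-exact category the admissible filtrations carry no information beyond the additive $\vee$-structure, so the comparison between Waldhausen's $S_\bullet$-construction and the group completion of the underlying permutative groupoid used in \cite[\S1.8]{W85} applies verbatim. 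Explicitly, $\mathrm{iso}\,\Bun\cX\simeq\coprod_{n\ge0}B\operatorname{Aut}(\Fun[H]^{\vee n})$, since a free $\Fun[H]$-set is determined up to isomorphism by its rank; and an $H$-equivariant automorphism of $\Fun[H]^{\vee n}$ permutes the $n$ free orbits and translates within each orbit, so $\operatorname{Aut}(\Fun[H]^{\vee n})\cong H\wr\Sigma_n=H^n\rtimes\Sigma_n$.

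Then the computation reduces to group completing $\coprod_{n\ge0}B(H\wr\Sigma_n)$. Since $B(H\wr\Sigma_n)\simeq E\Sigma_n\times_{\Sigma_n}(BH)^n$, this monoid is the free $E_\infty$-space on $BH$, whose group completion is the free infinite loop space $\Omega^\infty\Sigma^\infty(BH_+)$; this is exactly the wreath-product (equivariant) form of the Barratt--Priddy--Quillen theorem (\cite{Barratt, Priddy}). Taking homotopy groups yields $K_i(\Spec\Fun[H])\cong\pi_i\,\Omega^\infty\Sigma^\infty(BH_+)=\pi_i^s(BH_+)$ for all $i\ge0$. Finally, specializing to the trivial group $H=\{1\}$ gives $\Fun[H]=\Fun$, $H\wr\Sigma_n=\Sigma_n$, and $BH_+=S^0$, so $K_i(\Spec\Fun)\cong\pi_i^s(S^0)$, in agreement with equation \eqref{sphere}. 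I expect the main obstacle to be the final identification: making rigorous the passage from the split quasi-exact $K$-theory of $\Bun\cX$ to the group completion of its permutative groupoid in the present non-abelian setting, and then invoking the wreath-product Barratt--Priddy--Quillen theorem to recognize that group completion as $\Omega^\infty\Sigma^\infty(BH_+)$.
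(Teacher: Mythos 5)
Your proposal is correct and follows essentially the same route as the paper: identify $\Bun(\Spec\Fun[H])$ with finite free pointed $H$-sets (using that $\Fun[H]$ has no non-trivial idempotents), use Proposition \ref{prop:SplittingOfAdmissibleShortExactSequences} to reduce the split quasi-exact $K$-theory to the symmetric monoidal groupoid of isomorphisms, and then recognize the group completion as $\Omega^\infty\Sigma^\infty(BH_+)$. Your explicit unpacking via $\operatorname{Aut}(\Fun[H]^{\vee n})\cong H\wr\Sigma_n$ and the free $E_\infty$-space on $BH$ is precisely what the paper delegates to ``the general machinery of \cite{Segal}'', so the two arguments differ only in how much of that machinery is spelled out.
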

\begin{proof} This theorem is based on facts that are well known to topologists while the detailed proofs are missing in the literature. We only give an outline of the complete proof in order to avoid digression into algebraic topology.

By Propositions  \ref{prop:DescriptionOfProjectiveAsets} and \ref{prop:ModulesOverMoSchemes}, we see that the category $\Bun(\Spec(\Fun[H]))$ is naturally equivalent to the category of based free $H$-sets of finite cardinality. Let $\mathcal{S}$ denote the associated groupoid of $\Bun(\Spec(\Fun[H]))$. That is, $\mathcal{S}$ has the same objects as $\Bun(\Spec(\Fun[H]))$, but has only isomorphisms in $\Bun(\Spec(\Fun[H]))$. Since exact sequences in $\Bun(\Spec(\Fun[H]))$ split, $K$-theory of $\Spec(\Fun[H])$ is the $K$-theory of the category of based free $H$-sets of finite cardinality. The general machinery of \cite{Segal} then applies to show that $K$-theory of $\Spec(\Fun[H])$ is the infinite loop space $Q(BH_+):=\Omega^\infty\Sigma^\infty BH_+$. In the special case that $H$ is the trivial group and $BH_+$ is the zero sphere  $S^0$, the fact that $K$-theory of finite sets  is weakly homotopy equivalent to $QS^0$ is known as the Barratt-Priddy-Quillen Theorem (see \cite{Barratt, Priddy}).

We summarize the previous discussions by the following chain of isomorphisms:

\begin{eqnarray*}  
  K_i(\Spec(\Fun[H])) \quad & \simeq & \quad \pi_{i+1}\Big(|Q\Bun\big(\Spec(\Fun[H])\big)|\Big)  \\ & \simeq & \quad  \pi_{i}\big(Q(BH_+)\big)  \\ & \simeq &\quad \pi^s_i(BH_+)
\end{eqnarray*}
 where $\pi^s_i(BH_+)$ denotes the $i$-th stable homotopy group of $BH_+$.
\end{proof}

\subsubsection{$K$-theory as a summand of $G$-theory}

 If $\cX=\Spec(\Fun)$, it follows from the definitions that $G_i(\cX)=K_i(\cX)$ because $\Bun\cX=\Coh\cX$.  This coincides with the fact that $\Fun$ is a ``field'' and  usual $G$-theory equals $K$-theory over a field. If  $\cX=\Spec(\Fun[H])$ where  $H \cong \Z/p$ for some prime number $p$ so that $H$ is simple, then $Q\Coh\cX$ is equivalent to the product category $Q\Coh\Spec(\Fun) \times Q\Bun\cX$. So $G_i(\cX)\cong K_i(\Fun)\oplus K_i(\cX)=\pi^s_i(S^0)\oplus \pi^s_i(BH_+)$ by Theorem \ref{KTheoryOfSpecF1[H]}. Note that this splitting is the tom Dieck splitting \cite{LMS, tom} of the equivariant sphere spectrum in the case when the group acting is simple.

  More generally, if $\cX=\Spec(\F_1[H])$ for any finite abelian group, then $\Bun\cX$ is the category of all free finite $H$-sets and $\Coh \cX$ is the category of all  finite $H$-sets. Since every finite $H$-set is isomorphic to copies of left cosets of $H$ for various subgroups,  one checks that the inclusion of the category $Q\Bun \cX$ into $Q\Coh\cX$ splits off, so the group  $K_*(\cX)$  splits off from $G_*(\cX)$ in this case.  But, if $H$ is not simple, it is hard to characterize the quotient group $G_*(\cX)/K_*(\cX)$.

 In general, $K$-theory does not embed into $G$-theory (cf.\ Theorem \ref{thm: image of K0 of Pn in G0}). However, there is hope that the image of $K$-theory in $G$-theory splits off (cf.\ Theorem \ref{thm: G0 of the projective line}).

\subsubsection{Comparison with $K$-theory of $\Z$-schemes}

 Let $\cX=(\msch X, X, e_X)$ be an $\Fun$-scheme. It follows from the constructions that we have natural maps $K_*(\cX)\longrightarrow K_*(\msch X)$, $K_*(\cX)\longrightarrow K_*(X)$  and $K_*(\cX)\longrightarrow K_*(\msch X_\Z).$ The map $K_*(\cX)\longrightarrow K_*(X)$ factors through $K_*(\cX)\longrightarrow K_*(\msch X_\Z)$. In the case that $\cX$ is given by an $\Mo$-scheme, the map $K_*(\cX)\longrightarrow K_*(\msch X_\Z)$ has been studied by many authors, for example see \cite{DFM}. The map $K_*(\Fun) \to K_*(\Z) \to K_*(\F_p)$ is homotopically equivalent to the projection onto the image of the $J$-homomorphism after localization at a prime $l\neq p$ (see \cite{Quillen551}).

\subsubsection{Examples}

\subsubsection*{The $K$-theory of the affine line}

 Let $\cX=\A^1_\Fun$ be the spectrum of $\Fun[T]$. Then $\Bun\cX$ is the category of free $\Fun[T]$-sets as $\Fun[T]$ does not have any non-trivial idempotents. Since the only isomorphism from $\Fun[T]$ to $ \Fun[T]$ is the identity, the set of isomorphism class of free $\Fun[T]$-sets  of rank $k$ is given by $\Sigma_k$. So we see that  $K(\Fun[T])\cong K(\F_1)$, if we use Segal's machinery \cite{Segal} to compute $K$-groups as explained in the proof of Theorem \ref{KTheoryOfSpecF1[H]}. This is consistent with the fact that   $K$-theory is homotopy invariant over regular $\Z$-schemes. \\
We calculate the $K_0$-term of projective spaces in Section \ref{subsection: G0 and K0 of projective space}.

\subsubsection*{The $K$-theory of a monoid with a non-trivial idempotent}

 As another example, consider $\cX=\Spec A $ for $A=\{0,1,e\}$ where $e^2=e$ is an idempotent. The monoid $A$ has thus the two non-zero idempotents $e$ and $1$. Any finite projective $A$-set is isomorphic to $(\vee_{i\in I} eA) \vee (\vee_{j\in J} A)$, for some finite indexing sets $I$ and $J$. The set of isomorphisms of such a set is equivalent to  $\Sigma_n \times \Sigma_m$ where $n$, $m$ are cardinalities of $I$ and $J$. This is because there is no isomorphism from $eA$ to $A$. Using Segal's machinery \cite{Segal}, we see that the $K$-theory spectrum of $\cX$ is equivalent to the product of the sphere spectrum with itself, that is, $K(\cX)= Q\S^0 \times Q\S^0$. 


\subsection{$K$-theory as a ring spectrum}

As proved in the following Proposition \ref{biexact},  the category $\Bun \cX$ is strict symmetric monoidal with a tensor product for any $\Fun$-scheme $\cX$. This yields a pairing on the $K$-theory of an $\Fun$-scheme (cf.\ \cite[section 1.5]{W85}), i.e.\ a graded commutative ring structure on $K_*(\cX)=\pi_*(\Omega|S_{\bullet} (\Bun \cX)|)$. More precisely, we show in this section that $K$-theory is a contravariant functor from the category of $\Fun$-schemes into the category of symmetric ring spectra. A comparison with $K$-theory for bipermutative categories \cite{EM} yields that the  K-theory ring spectrum  is equivalent to the  sphere spectrum in the case of $\cX=\Spec\Fun$. We end this paper with a calculation of $K_0$ and $G_0$ of projective space.

 We shall explain that the $K$-theory spectrum  is a ring spectrum. Since $K$-theory naturally takes values in the category of symmetric spectra (cf.\ \cite{GH}), we choose this category to be our model of spectra. We refer to \cite{HoShSm} for definitions and further details on symmetric spectra.

\begin{proposition}\label{biexact}
 Let $\cX=(\msch X, X, e_X)$ be an $\Fun$-scheme. Then there is a  tensor product  $$\otimes:\quad \QCoh\cX \times \QCoh\cX \longrightarrow \QCoh\cX$$ which sends the pair $(\cM=(\msch M, M, \epsilon_M), \ \cN=(\msch N, N, \epsilon_N))$ to $$\cM\otimes\cN=( \msch M \otimes \msch N, M \otimes N, \epsilon_M \otimes \epsilon_N).$$ The category $\Bun\cX$ is closed under the tensor product. Moreover,  the tensor product is bi-exact on $\Bun\cX$, which means:
\begin{enumerate}\item[(1)]
$\0\otimes \cM =\0$ for any $\cM\in \Bun\cX$;
\item[(2)] for any $\cM\in \Bun\cX$, $\cM\otimes-$ and $-\otimes \cM$ send isomorphisms to isomorphisms, $K$-admissible monomorphisms to $K$-admissible monomorphisms and push-outs along $K$-admissible monomorphisms to push-outs along $K$-admissible monomorphisms;
\item[(3)] for any $K$-admissible monomorphisms $f:\cM \to \cM'$ and $g: \cN \to \cN'$, the natural map $f\otimes g: (\cM'\otimes \cN)\underset{(\cM\otimes \cN)}{\cup}(\cM\otimes \cN')\to (\cM'\otimes \cN')$ is a $K$-admissible monomorphism.
\end{enumerate}
\end{proposition}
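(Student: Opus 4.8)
The plan is to verify each clause componentwise on the two parts $\msch X$ and $X$ of $\cX$ and then check that the gluing isomorphisms $\epsilon$ are compatible; throughout, the $\msch X$-side is governed by the combinatorics of wedges together with the splitting result for projective $A$-sets, while the $X$-side is the classical theory of vector bundles on a scheme. First I would establish that the componentwise formula lands in $\QCoh\cX$ and restricts to $\Bun\cX$. For well-definedness I need $\epsilon_M\otimes\epsilon_N$ to be an isomorphism $(\msch M\otimes\msch N)_\Z\to e_X^\ast(M\otimes N)$; this follows by combining the isomorphism $(\msch M\otimes\msch N)_\Z\simeq \msch M_\Z\otimes\msch N_\Z$ of Lemma \ref{base_extension_properties_for_sheaves}, the given isomorphisms $\epsilon_M,\epsilon_N$, and the fact that pullback commutes with tensor products of quasi-coherent sheaves, so that $e_X^\ast M\otimes e_X^\ast N\simeq e_X^\ast(M\otimes N)$. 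That $\msch M\otimes\msch N$ is quasi-coherent (resp.\ locally projective) and $M\otimes N$ quasi-coherent (resp.\ locally free) reduces to the affine case. For locally free sheaves on $X$ this is standard; on the monoid side I would invoke Proposition \ref{prop:DescriptionOfProjectiveAsets} to reduce to $P=eA$, $P'=e'A$ with $e,e'$ idempotent, and observe that $eA\otimes_A e'A\simeq ee'A$ with $ee'$ again idempotent, then distribute over wedges to treat the general projective. Finite generation is preserved by finite wedges, giving closure of $\Bun\cX$.

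Next, for properties (1) and (2): (1) is immediate from $\msch 0\otimes\msch M=\msch 0$ and $0\otimes M=0$. For (2), preservation of isomorphisms is functoriality. To see that $\cM\otimes-$ preserves $K$-admissible monomorphisms I use Proposition \ref{prop:DescriptionOfKAdmissibleMonoAndEpis}: on the $\msch X$-component, a $K$-admissible monomorphism is, by Proposition \ref{prop:SplittingOfAdmissibleShortExactSequences}, up to isomorphism the inclusion $\msch N\hookrightarrow \msch N\vee\msch K$ with $\msch K$ projective, and tensoring with $\msch M$ yields the split inclusion $\msch M\otimes\msch N\hookrightarrow (\msch M\otimes\msch N)\vee(\msch M\otimes\msch K)$; on the $X$-component, tensoring an exact sequence of locally free sheaves by the locally free (hence flat) sheaf $M$ keeps it exact with locally free cokernel, and $e_X$-flatness is automatic for locally free sheaves. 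Preservation of the relevant push-outs then follows because, by the explicit description in the proof of Lemma \ref{cobasechange}, such a push-out is obtained (up to isomorphism) by adjoining a wedge summand, and tensoring distributes over wedges.

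Finally, property (3), the pushout-product axiom, is where the main work lies. Here I would again split the verification. On the $\msch X$-side, using Proposition \ref{prop:SplittingOfAdmissibleShortExactSequences} I may assume $\msch M'=\msch M\vee\msch M_0$ and $\msch N'=\msch N\vee\msch N_0$ with $\msch f,\msch g$ the canonical inclusions; expanding $\msch M'\otimes\msch N'$ into the fourfold wedge and computing the push-out $(\msch M'\otimes\msch N)\cup_{\msch M\otimes\msch N}(\msch M\otimes\msch N')$ as the threefold wedge omitting the summand $\msch M_0\otimes\msch N_0$, the map $\msch f\otimes\msch g$ becomes the inclusion of a wedge summand, hence a split admissible monomorphism. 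On the $X$-side, the same statement is the classical fact that the tensor product is bi-exact on the category of vector bundles over a scheme, verified Zariski-locally where the short exact sequences split and the cokernel of $f\otimes g$ is again locally free. The delicate point, which I expect to be the genuine obstacle, is coordinating the two components through the isomorphisms $\epsilon$ and confirming that the resulting map is $e_X$-flat and $K$-admissible in the sense of Proposition \ref{prop:DescriptionOfKAdmissibleMonoAndEpis}; once both components are identified as (locally split) monomorphisms with locally free resp.\ locally projective cokernel, bi-exactness of $\otimes$ on $\Bun\cX$ follows.
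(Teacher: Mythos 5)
Your proposal is correct and follows the same route as the paper: the well-definedness argument (combining $(\msch M\otimes\msch N)_\Z\simeq\msch M_\Z\otimes\msch N_\Z$, the isomorphisms $\epsilon_M,\epsilon_N$, and compatibility of pullback with tensor products) is exactly the paper's, and the remaining clauses are handled, as in the paper, by checking them separately on the $\msch X$- and $X$-components. The only difference is one of detail: the paper dismisses those componentwise verifications as ``straightforward,'' whereas you carry them out explicitly via the splitting of admissible sequences and wedge decompositions (Propositions \ref{prop:DescriptionOfProjectiveAsets} and \ref{prop:SplittingOfAdmissibleShortExactSequences}) on the monoid side and flatness of locally free sheaves on the scheme side, all of which is sound.
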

\begin{proof} The functor $\otimes$ is well defined because $(\msch M \otimes \msch N)_\Z = \msch M_\Z \otimes \msch N_\Z$  by Lemma \ref{base_extension_properties_for_sheaves}, which is isomorphic to $e_X^*M \otimes e_X^*N$ (through $\epsilon_M \otimes \epsilon_N$). This is further isomorphic to $e_X^*(M \otimes N)$. All the statements of the proposition can be proven in a straightforward manner by considering them separately for $X$ and $\msch X$.
\end{proof}

\subsubsection{Functoriality of $K$-theory}

Let $\Phi=(\msch\varphi,\varphi): \cX=(\msch X, X, e_X) \to \cY=(\msch Y,Y,e_Y)$ be a morphism of  $\Fun$-schemes. We have  the following commutative diagram of $\Z$-schemes
 $$\xymatrix{\msch X_\Z\ar[rr]^{\msch\varphi_\Z}\ar[d]_{e_X}&& \msch Y_\Z\ar[d]^{e_Y}\\ X\ar[rr]^{\varphi} && Y.} $$
 Let $\cM=(\msch M, M,\epsilon_{M})$ be an $\cO_\cY$-module. We have the following isomorphism of $\cO_{\msch Y_\Z}$-modules
 $$\xymatrix{ \msch M_\Z    \ar[rr]^{\epsilon_M} && e_Y^*M.  }$$ Pulling back along $\msch \varphi _\Z$, we have an isomorphism of $\cO_{\msch X_\Z}$-modules
 $$\xymatrix{ \msch \varphi_\Z ^* \msch M_\Z    \ar[rr]^{\msch \varphi_\Z ^*\epsilon_M} &&\msch \varphi_\Z ^*e_Y^* M. }$$
One checks easily that $\msch \varphi_\Z ^* \msch M_\Z = (\msch \varphi ^* \msch M)_\Z$. One also has  $\msch \varphi_\Z ^*e_Y^* M =  e_X^*\varphi^*M $ because $e_Y\msch\varphi_\Z = \varphi e_X$. So the isomorphism ${\msch \varphi_\Z ^*\epsilon_M}$ can be rewritten as 
$$\xymatrix{ (\msch \varphi ^* \msch M)_\Z    \ar[rr]^{\msch \varphi_\Z ^*\epsilon_M} &&e_X^*(\varphi^*M). }$$
This shows that $(\msch \varphi ^* \msch M, \varphi^*M, {\msch \varphi_\Z ^*\epsilon_M}  )$ is an $\cO_{\cX}$-module, which is denoted as $\Phi^*\cM$ and called the pull back  of $\cM$. Pull back of a morphism $(\msch f, f) $ along $\Phi$ is defined componentwise as $(\msch\varphi^*\msch f, \varphi^*f)$.  One checks that $\Phi^*$ is a functor, which sends quasi-coherent sheaves to quasi-coherent sheaves and sends locally free sheaves to locally free sheaves. 
\begin{theorem}\label{ring} Let $\cX=(\msch X, X, e_X)$ be an $\Fun$-scheme. Then the $K$-theory of $\cX$ is a symmetric ring  spectrum, denoted as  $\cK(\cX)$. A morphism between $\Fun$-schemes  $\Phi: \cX \to \cY$ induces a natural morphism of symmetric ring spectra $\Phi^*: \cK(\cY) \to \cK(\cX)$. 
\end{theorem}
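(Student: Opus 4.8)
The plan is to derive the ring structure from the bi-exact symmetric monoidal pairing provided by Proposition \ref{biexact} and to realize it at the level of symmetric spectra via the multiplicative $S_{\bullet}$-machinery. First I would record that $(\Bun\cX, \otimes, \cO_\cX)$ is a symmetric monoidal Waldhausen category: the unit $\cO_\cX$ is locally free of rank one and hence lies in $\Bun\cX$, the monoidal structure is (strict) symmetric as noted before the statement, and the tensor product is bi-exact by conditions (1)--(3) of Proposition \ref{biexact}. These are exactly the hypotheses under which Waldhausen's theory of pairings (\cite[section 1.5]{W85}) produces a multiplication $|S_{\bullet}\Bun\cX| \wedge |S_{\bullet}\Bun\cX| \to |S_{\bullet}\Bun\cX|$ on $K$-theory spaces, with $\cO_\cX$ supplying the unit.

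To upgrade this from a ring object up to homotopy into a genuine symmetric ring spectrum, I would feed the symmetric monoidal Waldhausen category $\Bun\cX$ into the symmetric-spectrum model of algebraic $K$-theory referred to above (\cite{GH}; compare the bipermutative input of \cite{EM}). The multi-simplicial $S_{\bullet}$-construction then yields a symmetric spectrum $\cK(\cX)$ whose associativity, commutativity and unit coherences descend from the coherence isomorphisms of $(\Bun\cX, \otimes)$, while the bi-exactness guarantees that the structure maps relating the smash powers are well defined. This is where I expect the main obstacle: the cited machinery is formulated for genuinely exact (or Waldhausen) categories, whereas $\Bun\cX$ is only quasi-exact. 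The point to verify is thus that conditions (1)--(3) of Proposition \ref{biexact}, in combination with the stability of $K$-admissible monomorphisms under cobase change (Lemma \ref{cobasechange}), suffice to carry out the pairing and its coherences over the quasi-exact structure; since these conditions are precisely the bi-exactness axioms used in the classical case, the constructions transcribe without change.

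For the functoriality statement I would use the pull-back functor $\Phi^* : \Bun\cY \to \Bun\cX$ constructed just before the theorem, and verify that it is exact and strong symmetric monoidal. Strong monoidality is immediate from the componentwise definition, since pull-back commutes with tensor products on the $\Mo$-scheme part and on the $\Z$-scheme part; this gives $\Phi^*(\cM \otimes \cN) \cong \Phi^*\cM \otimes \Phi^*\cN$ compatibly with the evaluation isomorphisms. For exactness I would argue componentwise: on the scheme side $\varphi^*$ sends a short exact sequence of locally free sheaves to an exact sequence, which is automatically $e_X$-flat; on the monoid side every admissible short exact sequence of locally projective $\cO_{\msch X}$-modules splits by Proposition \ref{prop:SplittingOfAdmissibleShortExactSequences}, and $\msch\varphi^*$ carries a split sequence of projective $A$-sets to a split sequence of projective $A$-sets by Corollary \ref{coro:LocalizationAndBaseExtensionOfProjectivesAreProjectives}, hence to an admissible one. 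By Proposition \ref{prop:DescriptionOfKAdmissibleMonoAndEpis} the pulled-back five-term sequence is again admissible in $\Bun\cX$, so $\Phi^*$ is exact. An exact, strong symmetric monoidal functor induces a morphism of symmetric ring spectra $\Phi^*: \cK(\cY) \to \cK(\cX)$, and naturality follows from the evident identity $(\Phi \circ \Psi)^* \cong \Psi^* \circ \Phi^*$ for pull-backs.
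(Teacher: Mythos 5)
Your proposal is correct and follows essentially the same route as the paper: bi-exactness of the tensor product (Proposition \ref{biexact}) plus the Waldhausen structure on the quasi-exact category $\Bun\cX$ (via Lemma \ref{cobasechange}) feed into the symmetric-spectrum $S_{\bullet}$-machinery of \cite{GH} to produce the ring spectrum, and functoriality comes from the pull-back functor $\Phi^*$ being monoidal and preserving admissible monomorphisms. The only difference is one of detail: where the paper says these checks are "easy," you spell them out (in particular the componentwise exactness of $\Phi^*$ using the splitting of admissible sequences of projective $A$-sets and Corollary \ref{coro:LocalizationAndBaseExtensionOfProjectivesAreProjectives}), which is a faithful elaboration rather than a different argument.
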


\begin{proof} Proposition \ref{biexact} shows that the category $\Bun\cX$ is bi-exact for any $\Fun$-scheme $\cX$. We showed already that we can apply Waldhausen's $S_{\bullet}$-construction in this setting. Analogous to \cite[Proposition 6.11]{GH}, it follows that $K$-theory is a symmetric ring spectrum. A morphism between $\Fun$-schemes  $\Phi: \cX \to \cY$ induces a functor $\Phi^\ast :\Bun \cY \to \Bun \cX$. It is easy to check that $\Phi^*$ respects the tensor product structure on $\Bun \cX$ and $\Bun \cY$, and that it sends cofibrations, i.e., admissible monomorphisms, to cofibrations. Further note that the $S_{\bullet}$-construction is functorial. The verification that the map on $K$-theory of symmetric ring spectra is a map of ring spectra follows from the construction of the ring structure on the $K$-theory spectrum \cite[Appendix 6]{GH}. 
\end{proof}

\subsubsection{K-theory of bipermutative categories and the $S_\bullet$-construction}

 By Proposition \ref{prop:SplittingOfAdmissibleShortExactSequences}, admissible short exact sequences in $\Bun \cX$ split if $\cX$ is the spectrum of a monoid. This fact generalizes to all $\cM_0$-schemes.

\begin{theorem}\label{thm: Bun X is split}
 Let $\cX$ be an $\cM_0$-scheme. Then admissible short exact sequences in the category $\Bun\cX$  split.
\end{theorem}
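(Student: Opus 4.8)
The plan is to reduce the statement to the spectrum-of-a-monoid case already settled in Proposition \ref{prop:SplittingOfAdmissibleShortExactSequences}, and then to globalise the splitting by exploiting the rigidity of normal epimorphisms. First I would invoke Proposition \ref{prop:ModulesOverMoSchemes}: for an $\cM_0$-scheme $\cX=(\msch X,\msch X_\Z,e_X)$ the equivalence $\cO_{\msch X}-\Mod\xrightarrow{\sim}\cO_\cX-\Mod$ carries $\Proj\msch X$ onto $\Bun\cX$ and, since $e_X$ is an isomorphism, identifies normal morphisms and hence admissible short exact sequences on the two sides. Thus it suffices to show that an admissible short exact sequence
$$ \msch 0 \longrightarrow \msch M \stackrel{\msch i}{\longrightarrow} \msch N \stackrel{\msch j}{\longrightarrow} \msch P \longrightarrow \msch 0 $$
of locally projective coherent $\cO_{\msch X}$-modules splits, for which it is enough to construct a section of the normal epimorphism $\msch j$.

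Next I would observe that a section of $\msch j$ over an affine open, if it exists, is \emph{unique}. Restricting to any affine open $V=\spec A$ and using the equivalence of Corollary \ref{Cor_equiv_A-Mod_and_Coh_X} (together with the facts that normality and surjectivity are local, cf.\ Lemma \ref{normal_and_localization} and Lemma \ref{mono-epi}), one obtains an admissible short exact sequence of $A$-sets, so $\msch j(V)\colon\msch N(V)\to\msch P(V)$ is a normal epimorphism of $A$-sets. By Lemma \ref{normal_mono_and_epi} every fibre of $\msch j(V)$ over a non-basepoint is a singleton, and it is nonempty by surjectivity; hence for $p\neq\ast$ the value $s(p)$ is forced to be the unique preimage of $p$, while $s(\ast)=\ast$. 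Consequently $\msch j(V)$ admits at most one section, and this holds over \emph{every} affine open, since localisations of projective $A$-sets are again projective (Corollary \ref{coro:LocalizationAndBaseExtensionOfProjectivesAreProjectives}) and normality localises (Lemma \ref{normal_and_localization}).

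On an affine cover $\{U_i\}$ witnessing the local projectivity of $\msch P$, Proposition \ref{prop:SplittingOfAdmissibleShortExactSequences} provides $A$-equivariant sections $s_{U_i}$ of $\msch j(U_i)$. Since intersections of affine opens of an $\Mo$-scheme are affine and the uniqueness above applies over each $U_i\cap U_j$, the restrictions of $s_{U_i}$ and $s_{U_j}$ agree there; therefore the local sections glue to a global morphism $s\colon\msch P\to\msch N$ of $\cO_{\msch X}$-modules with $\msch j\circ s=\id_{\msch P}$. As in Proposition \ref{prop:SplittingOfAdmissibleShortExactSequences}, the induced map $\msch i\vee s\colon\msch M\vee\msch P\to\msch N$ is then an isomorphism --- this may be checked on the cover, where it is the affine splitting --- so the sequence splits. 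Transporting back along Proposition \ref{prop:ModulesOverMoSchemes} yields the result for $\cX$.

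The main obstacle I anticipate is the absence of any additive structure, which rules out the usual cohomological gluing of local splittings (over $\Z$ the analogous extensions need not split, as the nonvanishing of $\Ext^1$ on $\P^1$ shows). What rescues the argument is precisely the rigidity of normal epimorphisms: their sections are unique wherever they exist, so there is simply no gluing cocycle to trivialise, and the local splittings automatically cohere. The only point requiring care is to confirm uniqueness over arbitrary affine opens rather than merely over the distinguished cover, which is handled by the localisation statements cited above.
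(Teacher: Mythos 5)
Your proof is correct and follows essentially the same route as the paper's: split locally over an affine cover via Proposition \ref{prop:SplittingOfAdmissibleShortExactSequences}, observe that sections of the normal epimorphism are unique (the paper reads this off the explicit form $\msch M\vee\msch P$, you read it off the singleton fibres from Lemma \ref{normal_mono_and_epi} --- the same fact), and glue. Your version merely spells out the reduction via Proposition \ref{prop:ModulesOverMoSchemes} and the uniqueness over arbitrary affine opens, which the paper leaves implicit.
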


\begin{proof}
 The proof is based on the following fact. Let $A$ be a monoid and $M$, $N$ and $P$ be projective $A$-sets fitting into an admissible short exact sequence
 \[
  0 \quad \longrightarrow  \quad M  \quad {\longrightarrow} \quad  N  \quad {\longrightarrow} \quad  P \quad  \longrightarrow  \quad 0.
 \]
By Proposition \ref{prop:SplittingOfAdmissibleShortExactSequences}, this sequence is isomorphic to  the canonical short exact sequence
 \[
  0 \quad \longrightarrow  \quad M  \quad {\longrightarrow} \quad  M\vee P  \quad {\longrightarrow} \quad  P \quad  \longrightarrow  \quad 0.
 \]
 Obviously, it admits a unique section $\sigma: P\to M\vee P$.

 Let $\cX$ be an $\cM_0$-scheme and $0\ \to\ \cM \ \to \ \cN \ \to \ \cP \ \to \ 0$ a short exact sequence of locally projective sheaves on $\cX$. Then this sequence splits locally. Since all the locally defined sections are unique, they glue to a section $\tilde\sigma:\cP\to\cN$.
\end{proof}

Elmendorf and Mandell \cite[Corollary 3.9]{EM} show that the $K$-theory of bipermutative small categories is an $\text{E}_\infty$-symmetric ring spectrum, that is, it is equivalent to a commutative symmetric ring spectrum. Note that this defines an $E_\infty$-symmetric ring spectrum structure on $K(\Fun)$ and  this is equivalent to the sphere spectrum, which is a commutative symmetric ring spectrum. It is easy to check that $\Bun \cX$ is bipermutative if $\cX$ is  the spectrum of a monoid.  When the admissible short exact sequences in the category   $\Bun \cX$ split,  the $K$-theory ring spectrum obtained by the Waldhausen's $S_{\bullet}$-construction and the one using bipermutative categories are equivalent, which is essentially proved in \cite[Theorem 4.3]{Weibel}.  As an immediate consequence, we have the following stronger version of Theorem \ref{KTheoryOfSpecF1[H]}. 

\begin{corollary}\label{KTheoryOfSpecF1[H]AsRingSpectra} Let $H$ be an abelian group. The $K$-theory spectrum  of $\Spec(\Fun[H])$ is an  $\text{E}_\infty$-symmetric ring spectrum and hence equivalent to a commutative symmetric ring spectrum. In particular, the $K$-theory ring spectrum of $\Spec(\Fun)$ is equivalent to the sphere spectrum as a commutative ring spectrum. \qed
\end{corollary}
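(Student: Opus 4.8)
The plan is to identify the $K$-theory spectrum $\cK(\Spec\Fun[H])$, which Theorem \ref{ring} produces from Waldhausen's $S_\bullet$-construction as a symmetric ring spectrum, with the $K$-theory spectrum of a bipermutative category, to which the Elmendorf--Mandell machinery \cite{EM} applies and yields the desired $\text{E}_\infty$-refinement. First I would note that $\Spec\Fun[H]$ is the spectrum of a monoid and hence an $\cM_0$-scheme, so Theorem \ref{thm: Bun X is split} applies and every admissible short exact sequence in $\Bun(\Spec\Fun[H])$ splits. By Propositions \ref{prop:ModulesOverMoSchemes} and \ref{prop:DescriptionOfProjectiveAsets} together with Corollary \ref{Cor_equiv_A-Mod_and_Coh_X}, the category $\Bun(\Spec\Fun[H])$ is equivalent to the category of finite based free $H$-sets, which carries the wedge $\vee$ as its additive symmetric monoidal structure and the tensor product $\otimes$ of Proposition \ref{biexact} as its multiplicative one.

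Next I would verify that these two structures make $\Bun(\Spec\Fun[H])$ a bipermutative category in the sense of \cite{EM}: both $(\vee,\0)$ and $(\otimes,\cO_\cX)$ are (strictly) symmetric monoidal, and $\otimes$ distributes over $\vee$ through the natural isomorphism $(\cM\vee\cN)\otimes\cP\cong(\cM\otimes\cP)\vee(\cN\otimes\cP)$, with the left and right distributivity constraints and the two symmetry isomorphisms satisfying the required coherence identities. On finite based free $H$-sets these identities hold on the nose, since $\vee$ is disjoint union away from the base point and distributivity over it is the set-theoretic identity; this is exactly the remark preceding the corollary that $\Bun\cX$ is bipermutative when $\cX$ is the spectrum of a monoid. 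Granting this, \cite[Corollary 3.9]{EM} shows that the $K$-theory of the bipermutative category $\Bun(\Spec\Fun[H])$ is an $\text{E}_\infty$-symmetric ring spectrum, hence equivalent to a commutative symmetric ring spectrum.

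It then remains to match the two $K$-theoretic outputs. Because the admissible short exact sequences split, the symmetric monoidal (group-completion) $K$-theory of $\Bun(\Spec\Fun[H])$ and the Waldhausen theory $\cK(\Spec\Fun[H])=\Omega|S_\bullet(\Bun(\Spec\Fun[H]))|$ agree as ring spectra by \cite[Theorem 4.3]{Weibel}, applied exactly as in the discussion preceding the statement. Thus the ring structure on $\cK(\Spec\Fun[H])$ from Theorem \ref{ring} is refined to an $\text{E}_\infty$-structure, proving the first assertion. Specializing to the trivial group $H=\{1\}$, where $\Fun[H]=\Fun$ and $\Bun(\Spec\Fun)$ is the bipermutative category of finite pointed sets under $\vee$ and $\otimes_\Fun\simeq\wedge$, the Barratt--Priddy--Quillen theorem \cite{Barratt, Priddy} identifies its $K$-theory with $QS^0$ and the smash product realizes the usual multiplication; combined with the comparison above, $\cK(\Spec\Fun)$ is equivalent to the sphere spectrum as a commutative ring spectrum, as claimed.

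I expect the comparison step to be the main obstacle: one must ensure that \cite[Theorem 4.3]{Weibel} matches not merely the underlying spectra but the full multiplicative structures, so that the $\text{E}_\infty$-structure furnished by the bipermutative machine genuinely transports to the Waldhausen model $\cK(\Spec\Fun[H])$. The verification of bipermutativity, though necessary, is routine once the additive and multiplicative monoidal structures on finite free $H$-sets are made strict.
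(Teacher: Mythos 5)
Your proposal is correct and follows essentially the same route as the paper: splitting of admissible short exact sequences over an $\cM_0$-scheme, bipermutativity of $\Bun(\Spec\Fun[H])$ (identified with finite based free $H$-sets), the Elmendorf--Mandell result \cite[Corollary 3.9]{EM} for the $\text{E}_\infty$-structure, the comparison of the Waldhausen and bipermutative $K$-theories via \cite[Theorem 4.3]{Weibel}, and Barratt--Priddy--Quillen for the identification with the sphere spectrum when $H$ is trivial. Your write-up is somewhat more explicit than the paper's (which treats the corollary as an immediate consequence of the preceding discussion), but no step differs in substance.
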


\subsubsection{On $G_0$ and $K_0$ of projective space}
\label{subsection: G0 and K0 of projective space}

While an explicit calculation of all terms of the $G$-theory and $K$-theory of an $\Fun$-scheme is extremely difficult---even the most simple case of $X=\Spec\Fun$ leads to the stable homotopy groups of the sphere---the zeroth terms are accessible via the Grothendieck groups generated by coherent sheaves resp.\ locally projective sheaves modulo the relations defined by admissible sequences. 

\begin{theorem}\label{thm: K0(X) is generated by Pic X}
 Let $X$ be an integral $\cM_0$-scheme and $\Pic X$ its Picard group. Then $K_0(X)$ is naturally isomorphic to the group ring $\Z[\Pic X]$. In particular, $K_0(X)$ is freely generated by the set of isomorphism classes of line bundles as an abelian group.
\end{theorem}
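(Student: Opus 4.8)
The plan is to identify $K_0(X)$ with the group completion of the commutative monoid of isomorphism classes of locally free sheaves under the wedge sum $\vee$, and then to show that this monoid is free on the set of line bundles. Throughout I use that $X$, being an integral $\cM_0$-scheme, is irreducible with a unique generic point $\eta$ contained in every nonempty open subset, and that an integral monoid $A$ has no idempotents besides $0$ and $1$: if $e^2=e$ with $e\neq 0$, then multiplication by $e$ is injective and $e\cdot e=e\cdot 1$ forces $e=1$. By Proposition \ref{prop:DescriptionOfProjectiveAsets} this shows that over an integral affine every projective $A$-set is free, so ``locally projective'' and ``locally free'' coincide on $X$; via Proposition \ref{prop:ModulesOverMoSchemes} I work throughout with the sheaf theory of $X$ as an $\cM_0$-scheme, described affine-locally by free $\cO_X$-sets.

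First I would record that admissible short exact sequences in $\Bun X$ split, by Theorem \ref{thm: Bun X is split} (equivalently, affine-locally by Proposition \ref{prop:SplittingOfAdmissibleShortExactSequences}). For a quasi-exact category all of whose admissible sequences split, the standard presentation of $K_0$ reduces to the relations $[\cM\vee\cP]=[\cM]+[\cP]$, so that $K_0(X)$ is the Grothendieck group of the abelian monoid $\bigl(\mathrm{Iso}(\Bun X),\vee\bigr)$; moreover the tensor product of Proposition \ref{biexact} descends to the ring multiplication on this group completion. It therefore suffices to prove that $\bigl(\mathrm{Iso}(\Bun X),\vee\bigr)$ is the \emph{free} commutative monoid on the set of isomorphism classes of line bundles. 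Indeed, the group completion of the free commutative monoid on the set $\Pic X$, equipped with the multiplication induced by $\otimes$ (which on line bundles is the group law of $\Pic X$, since $\cL\otimes\cL'$ is again a line bundle), is precisely the group ring $\Z[\Pic X]$, with the line bundles forming a free $\Z$-basis.

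The existence half is the decomposition statement: every locally free sheaf $\cM$ of rank $r$ is isomorphic to a wedge $\cL_1\vee\dots\vee\cL_r$ of line bundles. I would prove this from the monomial structure of automorphisms: over an affine open $\spec A$ trivializing $\cM$, any automorphism of the free $A$-set $\bigvee_{i=1}^r A$ lies in the wreath product $A^\times\wr\Sigma_r$ (one nonzero unit entry per row and column; cf.\ the matrix description in Section \ref{A-set_examples}). Hence the transition data of $\cM$ takes values in $\cO_X^\times\wr\Sigma_r$, and its image in the constant sheaf $\Sigma_r$ is a locally constant $\Sigma_r$-valued cocycle. Because $X$ is irreducible, every open in a trivializing cover contains $\eta$ and is connected, so such a cocycle is a coboundary and the permutation part can be trivialized; after this reduction the structure group is $(\cO_X^\times)^r$, which exhibits $\cM$ as a wedge of $r$ line bundles.

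The uniqueness half is the main obstacle, and it is where irreducibility enters decisively. I would show that the multiset $\{\cL_1,\dots,\cL_r\}$ is an isomorphism invariant of $\cM$. Given an isomorphism $\phi\colon\bigvee_i\cL_i\xrightarrow{\sim}\bigvee_i\cM_i$, restrict it to affine opens $U$ small enough to trivialize every summand; there $\phi$ is again monomial, carrying the $i$-th summand to the $\sigma_U(i)$-th summand for some $\sigma_U\in\Sigma_r$. Compatibility on overlaps makes $\sigma_U$ locally constant, hence globally equal to a single $\sigma\in\Sigma_r$ by irreducibility of $X$, and the components of $\phi$ then give isomorphisms $\cL_i\xrightarrow{\sim}\cM_{\sigma(i)}$ of line bundles. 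Thus the two multisets coincide, so $\bigl(\mathrm{Iso}(\Bun X),\vee\bigr)$ is free commutative on $\Pic X$, yielding the natural ring isomorphism $K_0(X)\cong\Z[\Pic X]$. Naturality is then immediate, since for a morphism the pullback on $\Pic$ and the induced ring map of Theorem \ref{ring} are both computed by pulling back line bundles. The two delicate points to watch are the monomiality of isomorphisms between wedges of line bundles over integral monoids and the promotion of local constancy to global constancy of the permutation, both of which rely essentially on the integrality of $X$.
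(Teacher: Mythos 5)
Your proof is correct, but it takes a genuinely different route through the key decomposition step than the paper does. The paper argues by induction on the rank: since $X$ is integral it has a unique generic point $\eta$, and the choice of a rank-one free submodule of the generic stalk $\cM_\eta$ spreads out to a sub-line bundle $\cL\subset\cM$ via $\cL(U)=\cM(U)\cap\cL_\eta$; the quotient $\cM/\cL$ is again locally projective, so the resulting admissible sequence splits by Theorem \ref{thm: Bun X is split}, and induction on the rank finishes. You instead perform a reduction of the structure group: isomorphisms of free $A$-sets are monomial with unit entries, so the transition data of $\cM$ lies in $\cO_X^\times\wr\Sigma_r$, and the induced $\Sigma_r$-valued cocycle is constant and trivializable because on an irreducible space all pairwise and triple intersections of a cover are nonempty and connected. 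Both arguments use integrality in the same two places (no nontrivial idempotents, hence locally projective equals locally free; and the existence of the generic point/irreducibility), and both invoke Theorem \ref{thm: Bun X is split}---the paper to split off $\cL$, you to identify $K_0(X)$ with the group completion of the monoid of isomorphism classes under $\vee$, a reduction the paper leaves implicit. The main dividend of your approach is uniqueness: the paper asserts that $\cM$ decomposes \emph{uniquely} into line bundles, but its induction only produces \emph{a} decomposition, whereas your monomiality-plus-constancy-of-the-permutation argument actually proves that the multiset of line-bundle summands is an isomorphism invariant, which is precisely what the word ``freely'' in the statement requires. The corresponding cost is the extra care you correctly flag: one must check monomiality of isomorphisms between wedges of line bundles over arbitrary (not just affine) opens, which holds because opens of an irreducible space are connected and locally-unit sections glue to units.
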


\begin{proof}
 First note that all coordinate monoids of $\cX$ are integral and thus without non-trivial idempotents. This means that every locally projective sheaf $\cM$ over $\cX$ is locally free. 

 We show by induction on the rank $n$ of $\cM$ that $\cM$ decomposes uniquely as a sum of line bundles. If $n=1$, there is nothing to prove. If $n>1$, then $\cM$ has a subline bundle $\cL$ for the following reason. Since $X$ is integral, it has a unique generic point $\eta$. The choice of a  rank one free submodule in the stalk $\cM_\eta$ defines a subline bundle $\cL$ of $\cM$ by defining $\cL(U)$ as the intersection of $\cM(U)$ with $\cL_\eta$ inside $\cM_\eta$ where $U$ ranges through all open subsets of $X$. 

 Clearly, the quotient $\cM/\cL$ is locally projective. This means that we obtain an admissible sequence
 \[
  0 \quad \longrightarrow  \quad \cL  \quad {\longrightarrow} \quad  \cM  \quad {\longrightarrow} \quad  \cM/\cL \quad  \longrightarrow  \quad 0
 \]
 in $\Bun X$. By Theorem \ref{thm: Bun X is split}, this sequence splits, i.e.\ $\cM\simeq\cL\vee\cN$ for some locally projective sheaf $\cN$, which is of rank $n-1$. By the induction hypothesis, $\cN$ decomposes into line bundles, which proves the latter claim of the theorem.

 The ring structure of $K_0(X)$ is induced by the tensor product of locally projective sheaves. Since $\Pic X$ freely generates $K_0(X)$ as an abelian group, it is clear that $K_0(X)$ is nothing else than the group ring $\Z[\Pic X]$.
\end{proof}

Let $\cO(n)=\cO_X(n)$ be the twisted sheaf for the projective space $\P^n$. We remark that they are defined over $\Fun$ since a line bundle over $\P^n_\Z$ trivializes over all open sets of the canonical atlas and all their intersection, because all these open subsets are of the form $\A^r_\Z\times \G_{m,\Z}^{n-r}$, a scheme that has only trivial line bundles. Therefore every line bundle over $\P^n$ is toric and descends to $\Fun$ by Deitmar's theorem \cite[Thm.\ 4.1]{Deitmar07}. Also cf.\ Section \ref{coherent_examples} where we examine in detail how the gluing data descends to $\Fun$ in case of the projective line $\P^1$. As an immediate consequence of the previous theorem we derive the following statement.

\begin{corollary}
 The ring $K_0(\P^n_\Fun)$ is isomorphic to $\Z[\cO(n)]_{n\in Z}$ where the product is defined by the rule $\cO(n)\otimes\cO(m)\simeq\cO(n+m)$. \qed
\end{corollary}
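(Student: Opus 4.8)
The plan is to reduce the statement to Theorem \ref{thm: K0(X) is generated by Pic X} together with an explicit identification of the Picard group of $\P^n_\Fun$. First I would verify that $\P^n_\Fun$ is an integral $\cM_0$-scheme, so that the theorem applies: by its construction in Section \ref{Examples_of_Mo-schemes} it is covered by the affine charts $\spec\Fun[T_1,\dotsc,T_n]$, and each free monoid $\Fun[T_1,\dotsc,T_n]$ is integral because multiplication by any nonzero monomial is injective. Hence Theorem \ref{thm: K0(X) is generated by Pic X} yields a natural ring isomorphism
$$ K_0(\P^n_\Fun) \ \simeq \ \Z[\Pic\P^n_\Fun], $$
in which $K_0$ is freely generated as an abelian group by the classes of line bundles and the multiplication is induced by the tensor product of locally free sheaves.

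The remaining step is to identify $\Pic\P^n_\Fun$ with $\Z$, with $\cO(1)$ as a generator. Here I would invoke the preceding remark: every line bundle on $\P^n_\Z$ trivializes on each chart of the canonical atlas and on all of their intersections, since these are of the form $\A^r_\Z\times\Gm^{n-r}$ and carry only trivial line bundles; consequently every line bundle on $\P^n$ is toric and descends to $\Fun$ by Deitmar's theorem \cite[Thm.\ 4.1]{Deitmar07}. This produces a group isomorphism $\Pic\P^n_\Fun\stackrel\sim\longrightarrow\Pic\P^n_\Z\simeq\Z$ under which $\cO(1)$ maps to a generator and $\cO(n)$ maps to $n$ for all $n\in\Z$, exactly as in the rank-one analysis of $\P^1_\Fun$ carried out in Section \ref{coherent_examples}.

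Combining the two isomorphisms gives $K_0(\P^n_\Fun)\simeq\Z[\Pic\P^n_\Fun]\simeq\Z[\Z]$, and writing the generator of the group ring as the class of $\cO(1)$ identifies this with $\Z[\cO(n)]_{n\in\Z}$. Under this identification the group law of $\Pic\P^n_\Fun\simeq\Z$ — which by Theorem \ref{thm: K0(X) is generated by Pic X} is realized by the tensor product of line bundles — becomes precisely the rule $\cO(n)\otimes\cO(m)\simeq\cO(n+m)$, as claimed. I expect the only genuine point requiring care to be the identification $\Pic\P^n_\Fun\simeq\Z$, and in particular the descent of all line bundles from $\Z$ to $\Fun$; once the remark's appeal to Deitmar's theorem is granted, the rest is a formal transcription of the group-ring structure furnished by Theorem \ref{thm: K0(X) is generated by Pic X}.
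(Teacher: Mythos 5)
Your proposal is correct and follows essentially the same route as the paper: the corollary is derived as an immediate consequence of Theorem \ref{thm: K0(X) is generated by Pic X} combined with the preceding remark that every line bundle on $\P^n_\Z$ descends to $\Fun$ via Deitmar's theorem, identifying $\Pic\P^n_\Fun\simeq\Z$ with generator $\cO(1)$. Your added verification that $\P^n_\Fun$ is integral merely makes explicit a hypothesis the paper leaves implicit.
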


The $G_0$-term is harder to compute. In case of $\P^1$ one can use the description of all finitely generated $A$-sets for $A=\Fun[T]$ (cf.\ Section \ref{A-set_examples}) to derive the following characterization. A detailed proof can be found in \cite[Thm.\ 4]{S10}.

\begin{theorem} \label{thm: G0 of the projective line}
 The abelian group $G_0(\P^1_\Fun)$ is freely generated by the class of $\cO$, the class of $\cO(1)$ and countably infinitely many classes $\cC_n$ of coherent sheaves that are not locally projective. \qed
\end{theorem}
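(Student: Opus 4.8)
The plan is to reduce the statement to a combinatorial computation with finitely generated $\Fun[T]$-sets and then to read off a presentation of the Grothendieck group. Since $\P^1_\Fun$ is an $\Mo$-scheme, Proposition \ref{prop:ModulesOverMoSchemes} identifies $\Coh\P^1_\Fun$ with the category of coherent sheaves on the underlying $\Mo$-scheme, and by Proposition \ref{prop:QuasiExactStructureOnCoh(X)} this is a quasi-exact category, so $G_0(\P^1_\Fun)=\pi_1|Q\Coh\P^1_\Fun|$ is the Grothendieck group: the free abelian group on isomorphism classes of coherent sheaves modulo the relations $[\cM_2]=[\cM_1]+[\cM_3]$ coming from admissible short exact sequences. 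Using the atlas $U_1=\spec\Fun[T_1]$, $U_2=\spec\Fun[T_2]$ with overlap $U=\spec\Fun[T,T^{-1}]$ from Section \ref{Examples_of_Mo-schemes} together with Corollary \ref{Cor_equiv_A-Mod_and_Coh_X}, every coherent sheaf is encoded by a triple $(M_1,M_2,\phi)$ with $M_i$ a finitely generated $\Fun[T_i]$-set and $\phi$ an isomorphism of the two generic stalks; here I would lean heavily on the explicit classification of finitely generated $\Fun[T]$-sets recalled in Section \ref{A-set_examples}.

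First I would treat the locally free part. By the splitting established in Section \ref{coherent_examples}, every locally free sheaf is a wedge of line bundles, and $\Pic\P^1_\Fun\simeq\Z$ supplies the classes $[\cO(n)]$. A section of $\cO(1)$ vanishing at one of the two closed points yields, on the corresponding chart, the normal monomorphism $\Fun[T_i]\xrightarrow{\cdot T_i}\Fun[T_i]$ with cokernel the skyscraper $\Fun[T_i]/(T_i)$, hence admissible short exact sequences $0\to\cO(n)\to\cO(n+1)\to\kappa\to0$ with $\kappa$ a length-one skyscraper. Combined with the admissible filtration $0\to\Fun[T_i]/(T_i^{k-1})\to\Fun[T_i]/(T_i^k)\to\Fun[T_i]/(T_i)\to0$, which gives $[\Fun[T_i]/(T_i^k)]=k[\kappa]$, all line-bundle classes and all point-supported torsion classes collapse to $\Z$-combinations of $[\cO]$ and $[\cO(1)]$, with $[\kappa]=[\cO(1)]-[\cO]$ independently of the chosen point.

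Next I would isolate the genuinely new classes. The coherent sheaves that are neither locally free nor point-supported are precisely those whose generic stalk is a finite, hence non-free, $\Fun[T,T^{-1}]$-set; concretely these are built from the ``loop'' modules $\{\ast,m,Tm,\dots,T^{k-1}m\}$ with $T^{k}m=m$ of Section \ref{A-set_examples}. Such a loop module has no nontrivial $A$-subset, so the associated sheaf $\cC_n$ is simple in $\Coh\P^1_\Fun$ and cannot be broken by any admissible sequence. The spanning claim then amounts to a dévissage along the generic stalk: one must show that every coherent sheaf sits in a chain of admissible short exact sequences whose factors are line bundles, skyscrapers, and such loop sheaves, so that $[\cO]$, $[\cO(1)]$ and the countably many $\cC_n$ generate $G_0(\P^1_\Fun)$.

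For freeness I would construct enough additive invariants to separate these generators. Localization at the generic point $\eta$ carries admissible sequences to admissible sequences of $\Fun[T,T^{-1}]$-sets (Lemma \ref{normal_and_localization}, since localization is also exact by Proposition \ref{prop_localizations_and_limits}), and over this group monoid the kernel of a normal morphism is a union of whole orbits, so the orbit decomposition yields a well-defined generic rank $r$ (the number of free $\Z$-orbits) together with, for each $k$, a count $t_k$ of the $\Z/k$-orbits; base extension to $\Z$, exact on admissible sequences by Lemma \ref{base_extension_properties_for_sheaves}, supplies a degree homomorphism into $G_0(\P^1_\Z)\cong\Z^2$. Evaluated on $[\cO]$, $[\cO(1)]$ and the $\cC_n$, the invariants $(r,\deg,t_1,t_2,\dots)$ form a unitriangular system, forcing $\Z$-linear independence. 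The hard part will be the admissible dévissage in the spanning step: because epimorphisms over $\Fun$ are rarely normal, one cannot invoke an abelian localization sequence and must instead build the sub- and quotient sheaves by hand and verify normality chart by chart while keeping the two charts compatible across the gluing $\phi$. A secondary difficulty is enumerating the simple loop sheaves precisely, so that the family $\{\cC_n\}$ is exhaustive and non-redundant.
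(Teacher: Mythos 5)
You should know at the outset that the paper itself contains no proof of this theorem: it is stated with a \qed and the surrounding text defers to Szczesny, \cite[Thm.\ 4]{S10}, indicating only that the proof uses the classification of finitely generated $\Fun[T]$-sets from Section \ref{A-set_examples}. So the comparison is with that cited proof, and your architecture is the same in spirit: reduce to gluing data of finitely generated $\Fun[T_i]$-sets, produce relations by explicit admissible sequences, and separate generators by additive invariants. Your freeness half is correct and essentially complete: localization at $\eta$ preserves admissibility (Lemma \ref{normal_and_localization}, Proposition \ref{prop_localizations_and_limits}), an admissible sequence of finitely generated $\Fun[T,T^{-1}]$-sets is just a partition of orbits, so the counts $t_k$ of $\Z/k$-orbits and the generic rank $r$ are additive; base extension of admissible sequences is exact, giving the degree map to $G_0(\P^1_\Z)$; and these invariants are unitriangular on $[\cO],[\cO(1)],[\cC_n]$. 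The relations $[\kappa]=[\cO(1)]-[\cO]$ (for either closed point) and $[\Fun[T_i]/(T_i^k)]=k[\kappa]$ are also correct, and the filtrations you use for them are indeed admissible.

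The genuine gap is in the spanning step, and it sits exactly where your structural claim is false. It is \emph{not} true that the coherent sheaves that are neither locally free nor point-supported are precisely those with finite generic stalk. Take $M_1$ to be the $\Fun[T_1]$-set with two generators $a,b$ and the single relation $T_1a=T_1b$ (two tails feeding one infinite chain), glued with $M_2=\Fun[T_2]$ free of rank one along the common generic stalk, which is a single \emph{free} orbit: this sheaf is not locally projective ($M_1$ is not free because $T_1$ acts non-injectively, and $\Fun[T_1]$ has no nontrivial idempotents, cf.\ Proposition \ref{prop:DescriptionOfProjectiveAsets}), it is not point-supported, and its generic stalk is infinite; lasso-type sets (a tail feeding a loop) mix both phenomena. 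Consequently the implicit trichotomy \emph{locally free / skyscraper / loop-built} on which your d\'evissage is organized is incomplete, and a d\'evissage read off from the generic stalk alone would miss these sheaves. The repair is a longer chain of admissible subsheaves, each normal chart by chart: first the torsion subsheaf (elements killed by a power of $T_i$), whose class is a multiple of $[\kappa]$; then, in the torsion-free quotient, the subsheaf of eventually periodic elements, whose purely periodic part is a wedge of your $\cC_n$'s with point-supported quotient; finally, in the torsion-free, cycle-free part $\cF$, the subsheaf $T^K\cF$ for $K\gg 0$, which is chart-wise free (all mergings occur in bounded depth by finite generation and the absence of cycles, which would otherwise force torsion or periodicity) and again has point-supported quotient. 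With this corrected d\'evissage your invariant argument finishes the proof; without it, the spanning claim is unproven and, as conceived, fails on the sheaves above.
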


This behaviour contrasts the classical result 
\[
 G_0(\P^{n}_\Z) \ = \ K_0(\P^{n}_\Z) \ = \ K_0(\A^n)^{n+1} \ \simeq\Z^{n+1},
\]
for the $G$-theory and $K$-theory of projective spaces. The reason is easily explained: the short exact sequence
\[
  0 \quad \longrightarrow  \quad \cO_\Z  \quad {\longrightarrow} \quad  \cO(1)_\Z\oplus\cO(1)_\Z  \quad {\longrightarrow} \quad  \cO(2)_\Z \quad  \longrightarrow  \quad 0
\]
is not defined over $\Fun$ since it does not split. However, we see by the above theorem that the image of $K_0(\P^1_\Fun)$ in $G_0(\P^1_\Fun)$ is of rank $2$ and generated by $\cO$ and $\cO(1)$. This generalizes to all projective spaces.

\begin{theorem} \label{thm: image of K0 of Pn in G0}
 The image $\tilde K_0(\P^n_\Fun)$ of the canonical map $K_0(\P^{n}_\Fun)\to G_0(\P^n_\Fun)$ is freely generated by $\cO, \dotsc, \cO(n)$ as an abelian group, and the base extension $G_0(\P^n_\Fun)\to G_0(\P^{n}_\Z)$ restricts to an isomorphism
 \[
  \tilde K_0(\P^n_\Fun) \ \stackrel{\sim}{\longrightarrow} \ G_0(\P^{n}_\Z) \ \simeq \ K_0(\P^{n}_\Z).
 \]
\end{theorem}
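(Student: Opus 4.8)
The plan is to combine the computation of $K_0(\P^n_\Fun)$ from Theorem \ref{thm: K0(X) is generated by Pic X} with relations in $G_0(\P^n_\Fun)$ coming from hyperplane sections, and then to transport everything to $\Z$. Since $\P^n_\Fun$ is an integral $\cM_0$-scheme, Theorem \ref{thm: K0(X) is generated by Pic X} together with the computation $\Pic\P^n_\Fun\simeq\Z$ recorded above (with generator $\cO(1)$) identifies $K_0(\P^n_\Fun)$ with the free abelian group on the classes $[\cO(m)]$, $m\in\Z$. Consequently the image $\tilde K_0(\P^n_\Fun)$ is precisely the subgroup of $G_0(\P^n_\Fun)$ generated by all $[\cO(m)]$, and the whole theorem reduces to (a) producing enough relations in $G_0$ to cut this subgroup down to the $n+1$ classes $[\cO(0)],\dots,[\cO(n)]$, and (b) proving these generators are independent and match the $\Z$-picture.

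For (a), I would first construct, for every $m\in\Z$, an admissible short exact sequence of coherent sheaves on $\P^n_\Fun$
\[ \msch 0 \longrightarrow \cO(m-1) \longrightarrow \cO(m) \longrightarrow i_\ast\cO_H(m) \longrightarrow \msch 0, \]
where $H\simeq\P^{n-1}_\Fun$ is a coordinate hyperplane, the first map is multiplication by the monomial cutting out $H$, and $i_\ast\cO_H(m)$ is the pushforward of the degree-$m$ line bundle on $H$. Because $\P^n_\Fun$ is an $\cM_0$-scheme, normality may be checked on the $\Mo$-part alone (by the lemma following Proposition \ref{prop:DescriptionOfNormalMorphisms}), i.e.\ chart by chart on the affine covering, where the map is multiplication by a single variable and the cokernel is the corresponding $A$-set supported on $H$. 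Taking classes in $G_0$ gives the finite-difference identity $[\cO(m)]-[\cO(m-1)]=[i_\ast\cO_H(m)]$. Iterating the construction inside $H\simeq\P^{n-1}_\Fun$, then $\P^{n-2}_\Fun$, and so on, the $n$-th iterated difference of $m\mapsto[\cO(m)]$ becomes the class of a pushforward from $\P^0=\mathrm{pt}$; since the Picard group of a point is trivial, this class is independent of the twist $m$, so its difference vanishes. Hence $\sum_{i=0}^{n+1}(-1)^i\binom{n+1}{i}[\cO(m-i)]=0$ in $G_0(\P^n_\Fun)$ for all $m$, and this recursion expresses every $[\cO(m)]$ as a $\Z$-linear combination of $[\cO(0)],\dots,[\cO(n)]$ (running the recursion downward for $m>n$ and upward for $m<0$). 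Thus $\tilde K_0(\P^n_\Fun)$ is generated by these $n+1$ classes.

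For (b) and the comparison with $\Z$, I would use the base extension functor $-\otimes_\Fun\Z$. It sends coherent sheaves to coherent sheaves and, because all maps in an admissible sequence are normal, it sends admissible short exact sequences to genuine short exact sequences (Lemma \ref{extension_of_normal_morphism_to_Z} and the remark following it); hence it induces a well-defined homomorphism $\beta\colon G_0(\P^n_\Fun)\to G_0(\P^n_\Z)$ with $\beta([\cO(m)])=[\cO(m)]$. Now invoke the classical identity $G_0(\P^n_\Z)=K_0(\P^n_\Z)\simeq\Z^{n+1}$, with basis $[\cO(0)],\dots,[\cO(n)]$. Since $\beta$ carries the generating set $\{[\cO(0)],\dots,[\cO(n)]\}$ of $\tilde K_0(\P^n_\Fun)$ onto this basis, any $\Z$-linear relation among the $[\cO(i)]$ in $\tilde K_0$ would map to a relation in $G_0(\P^n_\Z)$ and so must be trivial; therefore the $n+1$ generators are independent, $\tilde K_0(\P^n_\Fun)$ is free of rank $n+1$, and $\beta$ restricts to a bijection of bases, i.e.\ an isomorphism $\tilde K_0(\P^n_\Fun)\xrightarrow{\sim}G_0(\P^n_\Z)=K_0(\P^n_\Z)$.

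The main obstacle is step (a): constructing the hyperplane-section sequences over $\Fun$ and verifying that they are admissible. Concretely one must exhibit, on each affine chart, the map of $A$-sets given by multiplication by the chosen linear monomial, check that it is a normal monomorphism of coherent sheaves with the expected cokernel, and check that these local data glue to a global admissible sequence on the $\Mo$-scheme $\P^n_\Fun$ --- and then that the iteration genuinely terminates at the twist-independent class of a point. Everything else (the reduction to $\tilde K_0$, the finite-difference bookkeeping, and the transport to $\Z$) is then routine given the results already established.
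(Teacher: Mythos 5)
Your proposal is correct and follows essentially the same route as the paper: both rest on the hyperplane twisting sequences $0 \to \cO(m-1) \to \cO(m) \to i_*\cO_H(m) \to 0$ (whose admissibility the paper, like you, does not verify in detail) to express every $[\cO(m)]$ as a $\Z$-linear combination of $[\cO],\dotsc,[\cO(n)]$, and both then obtain freeness and the isomorphism by mapping these generators onto the classical basis of $G_0(\P^n_\Z)\simeq K_0(\P^n_\Z)$ under base extension. The only difference is bookkeeping: you iterate the difference operator down to a point to get the explicit relation $\sum_{i=0}^{n+1}(-1)^i\binom{n+1}{i}[\cO(m-i)]=0$, whereas the paper runs a nested induction on $n$ and on the twist $l$, invoking the statement for $\P^{n-1}_\Fun$ to rewrite the cokernel classes $\cK(l)$ in terms of $\cK(0),\dotsc,\cK(n-1)$.
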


\begin{proof}
 The result is trivial for $\P^0_\Fun$. Thus we may assume that it holds for $\P^{n-1}_\Fun$ and prove it by induction for $\P^n_\Fun$. Consider a closed subscheme $Z\simeq\P^{n-1}_\Fun$ of $\P^n_\Fun$ and let $\cK(m)$ be the torsion sheaf with support $Z$ that is the extension of the twisted sheaf $\cO_Z(m)$ of $Z$ by $0$. Then we have for every $m\in\Z$ an exact series
 \[
  0 \quad \longrightarrow  \quad \cO(m-1)  \quad {\longrightarrow} \quad  \cO(m)  \quad {\longrightarrow} \quad  \cK(m) \quad  \longrightarrow  \quad 0
 \]
 in $\Coh \P^n_\Fun$. This yields the relations $\cK(m)=\cO(m)-\cO(m-1)$ in $G_0(\P^n_\Fun)$. We show by a nested induction on $l\geq n+1$ that every sheaf $\cO(l)$ can be expressed by a linear combination of $\cO,\dots,\cO(n)$. Namely, $\cO(l)=\cO(l-1)+\cK(l)$ where the sheaf $\cO(l-1)$ can be expressed as a linear combination of $\cO,\dots,\cO(n)$ by induction hypothesis on $l$ and where $\cK(l)$ can be expressed as a linear combination of $\cK(0),\dots,\cK(n-1)$ by induction hypothesis on $n$, which in turn equals a linear combination of $\cO,\dots,\cO(n)$. A similar argument shows that all $\cO(l)$ with $l\leq -1$ can be written as a linear combination of $\cO,\dots,\cO(n)$.

 This shows that $\cO,\dotsc,\cO(n)$ generate $\tilde K_0(\P^n_\Fun)$. The rest of the theorem follows from the fact that $G_0(\P^n_\Z)$ is freely generated by $\cO_\Z,\dotsc,\cO(n)_\Z$.
\end{proof}

\bibliographystyle{plain}

\begin{thebibliography}{99}
\begin{small}
\bibitem{AM} M.~F. Atiyah and I.~G. MacDonald. \newblock Introduction to Commutative Algebra. \newblock Addison-Wesley, 1969.
\bibitem{Barratt} M.~Baratt. \newblock \emph{A free group functor for stable homotopy}. \newblock Proc. Symp. Math. 22, A. M. S. (1971), 31--35.
\bibitem{Borceux} F.~Borceux. \newblock {Handbook of categorical algebra 2}. \newblock Encyclopedia of Mathematics and its Applications, vol. 51, 1994.
\bibitem{CC09} A.~Connes, C.~Consani. \newblock {\em Schemes over $\F_1$ and zeta functions}. \newblock Compos. Math., 146 (6), 1383--1415, 2010.
\bibitem{CDD} Gunnar Carlsson, Christopher L. Douglas, Bjorn Ian Dundas. \newblock{\em Higher topological cyclic homology and Segal conjecture for tori}. \newblock Preprint, arXiv:0803.2745 [math.AT], 2008.
\bibitem{Deitmar05} A.~Deitmar. \newblock {\em Schemes over {$\mathbb F\sb 1$}}. \newblock Number fields and function fields---two parallel worlds, Progr. Math., vol. 239, 87--100, 2005.
\bibitem{Deitmar06} A.~Deitmar. \newblock {\em Remarks on zeta functions and {$K$}-theory over {${\bf F}\sb 1$}}. \newblock Proc. Japan Acad. Ser. A Math. Sci., vol. 82, 141--146, 2006.
\bibitem{Deitmar07} A.~Deitmar. \newblock {\em {${\bf F}\sb 1$}-schemes and toric varieties}. \newblock Contributions to Algebra and Geometry Vol. 49, No. 2, 517--525, 2008.
\bibitem{DFM} W.G. Dwyer, E.M. Friedlander and S.A. Mitchell. \newblock \emph{The generalized Burnside ring and the {$K$}-theory of ring with roots of unity}. \newblock ${K}$-Theory, 6, 285--300, 1992.
\bibitem{EM} A. D. Elmendorf and M. A. Mandell. \newblock\emph{Rings, modules, and algebras in infinite loop space theory}. \newblock Advances in Mathematics, 205, 163--228, 2006. 
\bibitem{GH} Thomas~Geisser. \newblock Topological cyclic homology of schemes.\newblock Algebraic {$K$}-theory ({S}eattle, {WA}, 1997) \newblock Proc. Sympos. Pure Math., 1999.
\bibitem{Gilmer} R.~Gilmer. \newblock Commutative Semigroup Rings. \newblock Univ. Chicago Press, Chicago, 1984.
\bibitem{Hartshorne} R.~Hartshorne. \newblock {Algebraic Geometry}. \newblock Grad. Texts in Math. 52, Springer-Verlag, New York, 1977.
\bibitem{Haran07} M. J.~Shai Haran. \newblock {\em Non-additive geometry}. \newblock Compositio Math. Vol.143, 618--688, 2007.
\bibitem{HoShSm} Mark Hovey, Brooke Shiplely and Jeff Smith. \newblock{Symmetric Spectra}. \newblock Journal of American Matheamtical Society 13, 149--208, 2000.
\bibitem{H02} T. H\"uttemann. \newblock\emph{Algebraic ${K}$-Theory of Non-Linear Projective Spaces}. \newblock J. Pure Appl. Algebra 170, 185--242, 2002.
\bibitem{H-W01} T. H\"uttemann, J. Klein, W. Vogell, F. Waldhausen and B. Williams. \newblock \emph{The Fundamental Theorem for the Algebraic ${K}$-Theory of Spaces I}. \newblock J. Pure Appl. Algebra 160, 21--52, 2001.
\bibitem{KapranovUN} M.~Kapranov, A.~Smirnov. \newblock {\em Cohomology determinants and reciprocity laws: number field case}. \newblock  Unpublished preprint.
\bibitem{Kato94} K.~Kato. \newblock {\em Toric singularities}. \newblock Amer. J. Math., Vol. 116, No. 5, 1073--1099, 1994.
\bibitem{K72}U.~Knauer. \newblock{\em Projectivity of acts and Morita equivalence of monoids}. \newblock Semigroup Forum, 3,  359--370, 1972.
\bibitem{LMS} L.G. Lewis, J.P. May, M. Steinberger, and J. E McClure \newblock{\em Equivariant stable homotopy theory}. \newblock Lecture Notes in Mathematics, 1213, Springer Verlag, 1996.
\bibitem{LL09a} J.~L\'opez Pe\~na, O.~Lorscheid. \newblock {\em Torified varieties and their geometries over {${\bf F}\sb 1$}}. \newblock Math. Z. Vol. 267 (3), 605--643, 2011.
\bibitem{LL09b} J.~L\'opez Pe\~na, O.~Lorscheid. \newblock {\em Mapping $\Fun$-land: an overview of geometries over the field with one element}. \newblock In: Noncommutative Geometry, Arithmetic, and Related Topics. Johns Hopkins University Press, Baltimore, 2012.
\bibitem{L09}O.~Lorscheid.\newblock {\em Algebraic groups over the field with one element}. \newblock To appear in Math. Z., arXiv:0907.3824 [math.AG], 2009.
\bibitem{Manin} Y.I.~Manin.\newblock \emph{Lectures on zeta functions and motives (according to Deninger and Kurokawa)}. \newblock Columbia University Number-Theory Seminar (1992), Ast$\acute{e}$risque No. 228, 4, 121--163, 1995.
\bibitem{MUA} M.~Kilp, U.~Knauer and A. ~Mikhalev. \newblock{Monoids, Acts and Categories: with Applications to Wreath Products and Graphs}. \newblock Expositions in Mathematics 29, Walter de Gruyter, Berlin, 2000.
\bibitem{Priddy} S. Priddy. \newblock\emph{On $\Omega^{\infty}\Sigma^{\infty}$ and the infinite symmetric group}. \newblock Proc. Symp. Math. 22, A.M.S., 217--220, 1971.
\bibitem{Quillen341} D. Quillen. \newblock\emph{Higher algebraic $K$-theory I}. \newblock Algebraic $K$-Theory I: Higher $K$-Theories (Seattle, 1972), Lecture Notes in Math. 341, Springer, Berlin, 1973.
\bibitem{Quillen551} D. Quillen. \newblock\emph{Letter to J.Milnor}. \newblock Lecture Notes in Math. 551, Springer, New York, 135-158, 1976.
\bibitem{Segal} G.B.~Segal. \newblock\emph{Categories and cohomology theories}. \newblock Topology, 13, pp. 213--221, 1974.
\bibitem{Soule2004} C.~Soul\'{e}. \newblock {\em Les vari\'{e}t\'{e}s sur le corps \`{a} un \'{e}l\'{e}ment}. \newblock Mosc. Math. J. 4, 217--244, 2004.
\bibitem{S10}M.~Szczesny.\newblock {\em On the Hall algebra of coherent sheaves on {$\mathbb{P}^1$} over {$\mathbb{F}_1$}}. \newblock Preprint, arXiv:1009.3570v3, 2010.
\bibitem{Tits} J.~Tits.\newblock {\em Sur les analogues alg\'ebriques des groupes semi-simples complexes}.\newblock Colloque d'alg\`ebre sup\'erieure, tenu \`a Bruxelles du 19 au 22 d\'ecembre 1956, 261--289, 1957.
\bibitem{tom} Tammo tom Dieck \newblock {\em Orbittypen und \"aquivariante {H}omologie. {II}}. \newblock Arch. Math. (Basel), Archiv der Mathematik, 26, 650--662, 1975.
\bibitem{W85}  F. Waldhausen. \newblock\emph{Algebraic ${K}$-theory of spaces}. \newblock Algebraic and geometric topology (New Brunswick, N.J., 1983), Lecture Notes in Math., 1126, Springer, Berlin,  318--419, 1985.
\bibitem{Weibel}  C. Weibel. \newblock\emph{A survey of products in algebraic ${K}$-theory}. \newblock{Algebraic ${K}$-theory,} Evanston 1980, Lecture Notes in Math., 854, Springer Verlag, 494--517, 1981.
\end{small}
\end{thebibliography}

\end{document}